\providecommand{\tabularnewline}{\\}
\numberwithin{equation}{section}
\numberwithin{figure}{section}
\theoremstyle{plain}
\newtheorem{thm}{\protect\theoremname}[section]
\theoremstyle{definition}
\newtheorem{defn}[thm]{\protect\definitionname}
\theoremstyle{plain}
\newtheorem{conjecture}[thm]{\protect\conjecturename}
\theoremstyle{plain}
\newtheorem{cor}[thm]{\protect\corollaryname}
\theoremstyle{plain}
\newtheorem{prop}[thm]{\protect\propositionname}
\theoremstyle{remark}
\newtheorem{rem}[thm]{\protect\remarkname}
\theoremstyle{plain}
\newtheorem{lem}[thm]{\protect\lemmaname}
\providecommand{\conjecturename}{Conjecture}
\providecommand{\corollaryname}{Corollary}
\providecommand{\definitionname}{Definition}
\providecommand{\lemmaname}{Lemma}
\providecommand{\propositionname}{Proposition}
\providecommand{\remarkname}{Remark}
\providecommand{\theoremname}{Theorem}
\begin{document}
\global\long\def\F{\mathbb{\mathbb{\mathbf{F}}}}%
 
\global\long\def\rk{\mathbb{\mathrm{rk}}}%
 
\global\long\def\crit{\mathbb{\mathrm{Crit}}}%
 
\global\long\def\Hom{\mathrm{Hom}}%
 
\global\long\def\defi{\stackrel{\mathrm{def}}{=}}%
 
\global\long\def\tr{{\cal T}r }%
 
\global\long\def\id{\mathrm{id}}%
 
\global\long\def\Aut{\mathrm{Aut}}%
 
\global\long\def\wl{w_{1},\ldots,w_{\ell}}%
 
\global\long\def\alg{\le_{\mathrm{alg}}}%
 
\global\long\def\ff{\stackrel{*}{\le}}%
 
\global\long\def\A{{\cal A}}%
 
\global\long\def\mobius{M\dacute{o}bius}%
 
\global\long\def\chimax{\chi^{\mathrm{max}}}%
 
\global\long\def\uexp{\mathbb{E}_{\mathrm{unif}}}%
 
\global\long\def\symirr{\widehat{S_{\infty}}}%
 
\global\long\def\supp{\mathrm{supp}}%
  
\global\long\def\suppi{\widetilde{\mathrm{supp}}}%
 
\global\long\def\fq{\mathbb{F}_{q}}%
 
\global\long\def\IT{I|_{T}}%
 
\global\long\def\gl{\mathrm{GL}}%
 
\global\long\def\glm{\mathrm{GL}_{m}\left(\k\right)}%
 
\global\long\def\gln{\mathrm{GL}_{N}\left(\k\right)}%
 
\global\long\def\T{\mathbb{T}}%
 
\global\long\def\k{K}%
 
\global\long\def\fix{\mathrm{fix}}%
 
\global\long\def\pa{\pi_{\A}}%
 
\global\long\def\R{{\cal R}}%
 
\global\long\def\indep{\mathfrak{indep}}%
 
\global\long\def\B{{\cal B}}%
 
\global\long\def\btil{\tilde{{\cal B}}}%
 
\global\long\def\eqb{\mathrm{EQ}_{\B,w}}%
 
\global\long\def\free{\mathfrak{Free}}%
 
\global\long\def\C{{\cal C}}%
 
\global\long\def\lm{\mathfrak{lm}}%
 
\global\long\def\spn{\mathrm{span}}%
 
\global\long\def\sw{{\cal S}_{w}}%
 
\global\long\def\fr{\mathrm{full-rank}}%

\title{Word Measures on $\mathrm{GL_{N}\left(q\right)}$ and Free Group Algebras}
\author{Danielle Ernst-West~~~~~~Doron Puder~~~~~~Matan Seidel}
\maketitle
\begin{abstract}
Fix a finite field $\k$ of order $q$ and a word $w$ in a free group
$\F$ on $r$ generators. A $w$-random element in $\gln$ is obtained
by sampling $r$ independent uniformly random elements $g_{1},\ldots,g_{r}\in\gln$
and evaluating $w\left(g_{1},\ldots,g_{r}\right)$. Consider $\mathbb{E}_{w}\left[\fix\right]$,
the average number of vectors in $\k^{N}$ fixed by a $w$-random
element. We show that $\mathbb{E}_{w}\left[\fix\right]$ is a rational
function in $q^{N}$. Moreover, if $w=u^{d}$ with $u$ a non-power,
then the limit $\lim_{N\to\infty}\mathbb{E}_{w}\left[\fix\right]$
depends only on $d$ and not on $u$. These two phenomena generalize
to all stable characters of the groups $\left\{ \gln\right\} _{N}$. 

A main feature of this work is the connection we establish between
word measures on $\gln$ and the free group algebra $\k\left[\F\right]$.
A classical result of Cohn and Lewin \cite{cohn1964free,lewin1969free}
is that every one-sided ideal of $\k\left[\F\right]$ is a free $\k\left[\F\right]$-module
with a well-defined rank. We show that for $w$ a non-power, $\mathbb{E}_{w}\left[\fix\right]=2+\frac{C}{q^{N}}+O\left(\frac{1}{q^{2N}}\right)$,
where $C$ is the number of rank-2 right ideals $I\le\k\left[\F\right]$
which contain $w-1$ but not as a basis element. We describe a full
conjectural picture generalizing this result, featuring a new invariant
we call the $q$-primitivity rank of $w$.

In the process, we prove several new results about free group algebras.
For example, we show that if $T$ is any finite subtree of the Cayley
graph of $\F$, and $I\le\k\left[\F\right]$ is a right ideal with
a generating set supported on $T$, then $I$ admits a basis supported
on $T$. We also prove an analogue of Kaplansky's unit conjecture
for certain $\k\left[\F\right]$-modules. 
\end{abstract}
\tableofcontents{}

\section{Introduction\label{sec:Introduction}}

Fix $r\in\mathbb{Z}_{\ge1}$. We let $\F$ denote the free group on
$r$\marginpar{$r$} generators. A word $w\in\F$ induces a map on
any finite group, $w:G^{r}\to G$, by substituting the letters of
$w$ with elements of $G$. This map defines a distribution on the
group $G$: the pushforward of the uniform distribution on $G^{r}$.
Equivalently, this distribution is the normalized number of times
each element in $G$ is obtained by a substitution in $w$. We call
such a distribution a \emph{word measure} on $G$, and if $w$ is
given, \emph{the $w$-measure on $G$}. For example, if $w=abab^{-2}$,
a $w$-random element in $G$ is $ghgh^{-2}$ where $g,h$ are independent,
uniformly random elements of $G$.

The study of word measures on various families of groups revealed
structural depth with surprising connections to objects in combinatorial
and geometric group theory (see, e.g.~\cite{Puder2014,PP15,MP-Un,MP-On,hanany2020word,MP-surface-words}).
It has proven useful for many questions regarding free groups and
their automorphism groups (see, e.g., \cite{PP15,hanany2020some}),
as well as for questions about random Schreier graphs and their expansion
(see, e.g., \cite{Puder2015,hanany2020word}). Previous works in the
subject study word measures on the groups $\mathrm{Sym}\left(N\right)$,
$U\left(N\right)$, $O\left(N\right)$, $\mathrm{Sp\left(N\right)}$
and generalized symmetric groups. Section \ref{subsec:Related-works}
explains how some of the results in the current paper relate to the
established structure in other families of groups.

In this paper we focus on word measures on $\gln$, the general linear
group over a fixed finite field $\k$\marginpar{$K,q$} of order $q$.
As seen in other families of groups, word measures on this family
demonstrate structural depth. Most interestingly, we show that the
analysis of word measures on $\gln$ is intertwined with the theory
of free group algebras.

\subsection{The average number of fixed vectors}

We consider various families of real- or complex-valued functions
defined on $\gln$, and study their expected value under word measures.
Our core example is the function \marginpar{$\protect\fix$}$\fix\colon\gln\to\mathbb{Z}_{\ge0}$
counting elements in the vector space $V=\k^{N}$ which are fixed
by a given matrix in $\gln$. Not only does this special case illustrate
our more general results, but is also a case in which our understanding
goes deeper. Note that the function $\fix$ is, in fact, a family
of functions, one for every value of $N\in\mathbb{Z}_{\ge1}$. We
let $\mathbb{E}_{w}\left[\fix\right]$ denote the expected value of
$\fix$ under the $w$-measure on $\gln$, so $\mathbb{E}_{w}\left[\fix\right]$
is also a sequence of numbers, one for every value of $N\in\mathbb{Z}_{\ge1}$.
Our first result is the following.
\begin{thm}
\label{thm:rational expression fix}For every $w\in\F$ and every
large enough $N$, $\mathbb{E}_{w}\left[\fix\right]$ is given by
a rational function in $q^{N}$ with rational coefficients.
\end{thm}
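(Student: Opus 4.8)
The plan is to reduce the statement to a probability involving a single fixed vector, and then to express that probability as a finite sum of explicit rational functions of $q^{N}$. To start, by linearity of expectation $\mathbb{E}_{w}\left[\fix\right]=\sum_{v\in\k^{N}}\Pr_{w}\left[w\left(g_{1},\ldots,g_{r}\right)v=v\right]$. The $w$-measure on $\gln$ is conjugation invariant (conjugating each $g_{i}$ by a fixed $h\in\gln$ preserves the uniform distribution and conjugates $w\left(g_{1},\ldots,g_{r}\right)$), so the probability that $v$ is fixed depends only on the $\gln$-orbit of $v$; since $\gln$ acts transitively on $\k^{N}\setminus\left\{ 0\right\}$ this yields $\mathbb{E}_{w}\left[\fix\right]=1+\left(q^{N}-1\right)p_{N}$, where $p_{N}:=\Pr_{w}\left[w\left(g_{1},\ldots,g_{r}\right)e_{1}=e_{1}\right]$ for an arbitrary fixed $0\ne e_{1}\in\k^{N}$. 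It therefore suffices to prove that $p_{N}$ is rational in $q^{N}$.

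Next I would translate the event into linear-algebra data. Write $w=x_{i_{1}}^{\varepsilon_{1}}\cdots x_{i_{L}}^{\varepsilon_{L}}$ in reduced form. A choice of $g_{1},\ldots,g_{r}$ determines vectors $v_{L}:=e_{1}$ and $v_{j-1}:=g_{i_{j}}^{\varepsilon_{j}}v_{j}$ for $j=L,\ldots,1$, so that $v_{0}=w\left(g_{1},\ldots,g_{r}\right)e_{1}$; all the $v_{j}$ are nonzero, and the $j$-th step records the constraint that $g_{x_{i_{j}}}$ carries one of $v_{j-1},v_{j}$ to the other, as dictated by $\varepsilon_{j}$. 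Attach to $\left(v_{0},\ldots,v_{L}\right)$ its \emph{type}: the $\gln$-orbit of the tuple, equivalently the isomorphism class of the pair consisting of $W:=\spn\left(v_{0},\ldots,v_{L}\right)$ and the tuple $\left(v_{0},\ldots,v_{L}\right)$ inside it. For fixed $q$ and $\left|w\right|$ there are only finitely many types, each with $\dim W\le\left|w\right|+1$; the tuple produced by $\left(g_{1},\ldots,g_{r}\right)$ is uniquely pinned down by its type, and the event $w\left(g_{1},\ldots,g_{r}\right)e_{1}=e_{1}$ is exactly the event that this type is one of those in which the first and last entries of the tuple coincide.

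Finally comes the counting. Fix a type $C$ of the latter kind, with spanning dimension $k_{C}$, and assume $N\ge\left|w\right|+1$ so that $k_{C}\le N$. Over the finitely many tuples $t$ of type $C$ with last entry $e_{1}$, the events ``$\left(g_{1},\ldots,g_{r}\right)$ produces $t$'' are pairwise disjoint and, by $\gln$-equivariance, equiprobable, so $\Pr_{w}\left[\text{the type equals }C\right]=N_{C}\cdot\rho_{C}$ with $N_{C}$ the number of such $t$ and $\rho_{C}$ the common probability. Here $N_{C}$ is the number of injective linear maps $W\hookrightarrow\k^{N}$ sending the last entry to $e_{1}$, namely $\left(q^{N}-q\right)\left(q^{N}-q^{2}\right)\cdots\left(q^{N}-q^{k_{C}-1}\right)$, a polynomial in $q^{N}$; and by independence $\rho_{C}=\prod_{a=1}^{r}\Pr\left[\text{uniform }g_{a}\in\gln\text{ obeys the }C\text{-constraints for }x_{a}\right]$. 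For each generator the constraints assemble into one partial linear map, which --- for the type to occur at all --- must be a well-defined injective linear map of some rank $d_{a}=d_{a}\left(C\right)$ (otherwise the type is \emph{inconsistent} and contributes $0$), and the probability that a uniform element of $\gln$ extends a fixed partial isomorphism of rank $d_{a}$ equals the rational function $\left[\left(q^{N}-1\right)\left(q^{N}-q\right)\cdots\left(q^{N}-q^{d_{a}-1}\right)\right]^{-1}$, depending only on $d_{a}$. Summing over the finitely many consistent types $C$ in which the first and last entries coincide,
\[
p_{N}=\sum_{C}\frac{\left(q^{N}-q\right)\left(q^{N}-q^{2}\right)\cdots\left(q^{N}-q^{k_{C}-1}\right)}{\prod_{a=1}^{r}\left(q^{N}-1\right)\left(q^{N}-q\right)\cdots\left(q^{N}-q^{d_{a}\left(C\right)-1}\right)}
\]
is a rational function of $q^{N}$ whose coefficients are polynomials in $q$, and hence so is $\mathbb{E}_{w}\left[\fix\right]=1+\left(q^{N}-1\right)p_{N}$ once $N\ge\left|w\right|+1$.

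The step I expect to be the main obstacle is the bookkeeping underlying the last two paragraphs: pinning down the right notion of ``type'' and the exact consistency condition so that the fixing event becomes a genuine disjoint union of type-events each contributing a single clean product, and checking that for each generator the imposed constraints really do fit together into at most one partial linear isomorphism whose rank is an invariant of the type. This is the linear-algebraic analogue of the Stallings core-graph combinatorics familiar from word measures on $\mathrm{Sym}\left(N\right)$, but a type must now carry a full pattern of linear dependencies among the $v_{j}$ rather than a mere set partition. The only genuinely external input is the classical count of elements of $\gln$ extending a prescribed partial linear isomorphism, which is manifestly a rational function of $q^{N}$.
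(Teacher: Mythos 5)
Your proposal is correct and is essentially the paper's own (elementary) proof: your "type" of the trajectory $\left(v_{0},\ldots,v_{L}\right)$ is equivalent data to the paper's subspace $\Delta$ of linear equations satisfied by the trajectory (with your per-generator consistency condition playing the role of condition \textbf{C2}), and your counts $\left(q^{N}-q\right)\cdots\left(q^{N}-q^{k_{C}-1}\right)$ and $\left[\left(q^{N}-1\right)\cdots\left(q^{N}-q^{d_{a}-1}\right)\right]^{-1}$ reproduce exactly the paper's summands. The only cosmetic difference is your initial reduction to a single nonzero vector via transitivity (the $1+\left(q^{N}-1\right)p_{N}$ step), whereas the paper sums over all vectors $v$ directly.
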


For example, if $w=\left[a,b\right]=aba^{-1}b^{-1}$  is the commutator
of two basis elements, then 
\[
\mathbb{E}_{w}\left[\fix\right]=2+\frac{\left(q-1\right)^{2}q^{N}-\left(q-1\right)^{3}}{\left(q^{N}-1\right)\left(q^{N}-q\right)}
\]
for every $N\ge2$ (recall that $q=\left|\k\right|$ is fixed throughout,
so this expression is indeed a rational function in $q^{N}$ with
coefficients in $\mathbb{Q}$). Consult Table \ref{tab:The-rational-expression}
for further examples. For general words, the rational expression is
valid for every $N\ge\left|w\right|$. See Section \ref{sec:Rational-expressions}
for a tighter lower bound on $N$. Theorem \ref{thm:rational expression fix}
is a special case of Theorem \ref{thm:rational general}.

\begin{table}
\begin{centering}
\begin{tabular}{|c|c|c|c|}
\hline 
$w$ & $q$ & $\mathbb{E}_{w}\left[\fix\right]$ & valid for\tabularnewline
\hline 
\hline 
$a$ & every $q$ & $2$ & $N\ge1$\tabularnewline
\hline 
\hline 
\multirow{2}{*}{$a^{2}$} & $q$~even & $3$ & \multirow{2}{*}{$N\ge2$}\tabularnewline
\cline{2-3} \cline{3-3} 
 & $q$~odd & $4$ & \tabularnewline
\hline 
\hline 
\multirow{2}{*}{$a^{3}$} & $q\equiv0,2\pmod3$ & $4$ & \multirow{2}{*}{$N\ge3$}\tabularnewline
\cline{2-3} \cline{3-3} 
 & $q\equiv1\pmod3$ & $8$ & \tabularnewline
\hline 
\hline 
$\left[a,b\right]$ & every $q$ & $2+\frac{\left(q-1\right)^{2}q^{N}-\left(q-1\right)^{3}}{\left(q^{N}-1\right)\left(q^{N}-q\right)}$ & $N\ge2$\tabularnewline
\hline 
\hline 
\multirow{2}{*}{$a^{2}b^{3}$} & $q=2$ & $2+\frac{2}{2^{N}-2}$ & \multirow{2}{*}{$N\ge3$}\tabularnewline
\cline{2-3} \cline{3-3} 
 & $q=3$ & $2+\frac{4}{3^{N}-3}$ & \tabularnewline
\hline 
\hline 
$\left[a,b\right]^{2}$ & $q=2$ & $3+\frac{2\left(2^{2N}-9\cdot2^{N}+26\right)}{\left(2^{N}-1\right)\left(2^{N}-2\right)\left(2^{N}-8\right)}$ & $N\ge4$\tabularnewline
\hline 
\hline 
\multirow{2}{*}{$a^{2}b^{2}c^{2}$} & $q=2$ & $2+\frac{1}{\left(2^{N}-2\right)^{2}}$ & \multirow{2}{*}{$N\ge2$}\tabularnewline
\cline{2-3} \cline{3-3} 
 & $q=3$ & $2+\frac{8\left(3^{2N}-4\cdot3^{N}+5\right)}{\left(3^{N}-1\right)^{2}\left(3^{N}-3\right)^{2}}$ & \tabularnewline
\hline 
\end{tabular}
\par\end{centering}
\caption{The rational expressions giving $\mathbb{E}_{w}\left[\protect\fix\right]$
for various words $w\in\protect\F\left(a,b,c\right)$ and various
values of $q=\left|\protect\k\right|$. For the first four words,
rational expressions are given for all values of $q$. For the remaining
three words, rational expressions are given only for particular values
of $q$.\label{tab:The-rational-expression}}
\end{table}

Our second result alludes to a result of Nica \cite{nica1994number}.
Let $1\ne w=u^{d}$ where $d\in\mathbb{N}_{\ge1}$ and $u$ a non-power.
Nica proved, inter alia, that the distribution of the number of fixed
points in a $w$-random permutation in $\mathrm{Sym}\left(N\right)$
has a limit distribution as $N\to\infty$ which depends solely on
$d$ and not on $u$. A similar phenomenon was later shown to hold
in various other families of groups. We add the groups $\left\{ \gln\right\} _{N}$
as such a family. In our illustrative special case, this is captured
by the following result, which first appeared in \cite{West19}. It
also appeared independently in \cite[Sec.~8]{eberhard2021babai}.
\begin{thm}
\label{thm:limit powers fix}Let $1\ne w=u^{d}$ with $d\ge1$ and
$u$ a non-power. Then 
\begin{equation}
\lim_{N\to\infty}\mathbb{E}_{w}\left[\fix\right]=\#\left\{ p\in K\left[x\right]\,\middle|\,p\mid x^{d}-1~\mathrm{and}~p~\mathrm{monic}\right\} .\label{eq:limit of fix}
\end{equation}
In particular, the limit does not depend on $u$.
\end{thm}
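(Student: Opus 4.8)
The plan is to reduce the statement to a computation about a single uniformly random matrix and then evaluate that computation. Write $A=u\left(g_{1},\ldots,g_{r}\right)$, so that a $w$-random element of $\gln$ is $A^{d}$. Since the $w$-measure on $\gln$ is conjugation-invariant and $\gln$ acts transitively on $\k^{N}\setminus\left\{ 0\right\}$, the probability $\Pr_{w}\!\left[A^{d}v=v\right]$ depends only on whether $v=0$; hence $\mathbb{E}_{w}\left[\fix\right]=\sum_{v\in\k^{N}}\Pr_{w}\!\left[A^{d}v=v\right]=1+\left(q^{N}-1\right)\Pr_{w}\!\left[A^{d}v_{0}=v_{0}\right]$ for any fixed $v_{0}\neq0$. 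Let $m_{v_{0}}^{A}\in\k\left[x\right]$ be the monic order polynomial of $v_{0}$ with respect to $A$, i.e.\ the generator of $\left\{ f\in\k\left[x\right]:f\left(A\right)v_{0}=0\right\}$; as $A$ is invertible, $m_{v_{0}}^{A}\left(0\right)\neq0$. Now $A^{d}v_{0}=v_{0}$ iff $\left(x^{d}-1\right)\left(A\right)v_{0}=0$ iff $m_{v_{0}}^{A}\mid x^{d}-1$, so
\[
\mathbb{E}_{w}\left[\fix\right]=1+\left(q^{N}-1\right)\!\!\sum_{\substack{p\mid x^{d}-1\\ p\ \mathrm{monic},\ \deg p\geq1}}\!\!\Pr_{w}\!\left[m_{v_{0}}^{A}=p\right].
\]
Thus it suffices to prove the \emph{key estimate}: for every fixed monic $p\in\k\left[x\right]$ with $\deg p\geq1$ and $p\left(0\right)\neq0$, and every non-power $u$,
\[
\lim_{N\to\infty}\left(q^{N}-1\right)\Pr_{w}\!\left[m_{v_{0}}^{A}=p\right]=1 .
\]
Granting this and summing the finitely many monic divisors of $x^{d}-1$ of positive degree yields $\lim_{N}\mathbb{E}_{w}\left[\fix\right]=1+\bigl(\#\{p\mid x^{d}-1:p\ \mathrm{monic}\}-1\bigr)$, which is exactly \eqref{eq:limit of fix}; note this count is finite also when $\mathrm{char}\,\k\mid d$ and $x^{d}-1$ is inseparable.

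For $u=a$ (so that $A$ is uniform in $\gln$) the key estimate is an explicit count via rational canonical form. Fix monic $p$ of degree $k\geq1$ with $p\left(0\right)\neq0$ and $N\geq k$. An $A\in\gln$ with $m_{v_{0}}^{A}=p$ is specified by: (i) the cyclic subspace $Z=\k\left[A\right]v_{0}$, which has dimension $k$ and contains $v_{0}$ -- there are $\binom{N-1}{k-1}_{q}$ choices; (ii) the operator $A|_{Z}$, having $v_{0}$ as a cyclic vector with order polynomial $p$ -- sending it to $\left(v_{0},Av_{0},\ldots,A^{k-1}v_{0}\right)$ is a bijection onto the ordered bases of $Z$ beginning with $v_{0}$, so $\prod_{j=1}^{k-1}\left(q^{k}-q^{j}\right)$ choices; (iii) an extension of $A|_{Z}$ (invertible on $Z$ since $p\left(0\right)\neq0$) to an invertible operator on $\k^{N}$: fixing $W$ with $\k^{N}=Z\oplus W$, these correspond to pairs $\left(\varphi,\psi\right)\in\Hom\left(W,Z\right)\times\gl\left(W\right)$, so $q^{k\left(N-k\right)}\bigl|\gl_{N-k}\left(\k\right)\bigr|$ choices. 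Dividing the product of the three counts by $\bigl|\gln\bigr|$ and using $\bigl|\gl_{m}\left(\k\right)\bigr|=q^{m^{2}}\prod_{i=1}^{m}\left(1-q^{-i}\right)$ together with the standard asymptotics of the Gaussian binomial, a short computation gives $\Pr\!\left[m_{v_{0}}^{A}=p\right]=q^{-N}\bigl(1+o\left(1\right)\bigr)$ as $N\to\infty$ (for $k=1$ one gets the exact value $1/\left(q^{N}-1\right)$), which is the key estimate in this case.

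To pass to an arbitrary non-power $u$, observe that $\left(q^{N}-1\right)\Pr_{w}\!\left[m_{v_{0}}^{A}=p\right]=\mathbb{E}_{w}\bigl[\#\{v\neq0:m_{v}^{A}=p\}\bigr]$, and that $A\mapsto\#\{v\neq0:m_{v}^{A}=p\}$ is a class function on $\gln$ -- in fact a $\mathbb{Z}$-linear combination, by M\"obius inversion over the divisors of $p$, of the functions $A\mapsto\bigl|\ker p'\left(A\right)\bigr|$ with $p'\mid p$, which form a stable family as $N$ varies. The leading-order analysis of word measures on $\gln$ developed above then applies: for a non-power $u$ the dominant (full-rank, or generic) contribution to $\mathbb{E}_{w}$ of such a statistic equals its value at the uniform measure, while the remaining contributions are smaller by a factor $q^{-N}$ -- the same dichotomy underlying $\mathbb{E}_{w}\left[\fix\right]=2+\tfrac{C}{q^{N}}+O\left(q^{-2N}\right)$ for non-powers. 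Hence the limit equals the value computed for the uniform measure in the previous paragraph, namely $1$, independently of $u$ and $p$, proving the key estimate and the theorem. The main obstacle is precisely this last step -- certifying, uniformly in the non-power $u$, that the leading term of $\Pr_{w}\!\left[m_{v_{0}}^{A}=p\right]$ matches the uniform case and that the error is one power of $q^{-N}$ smaller; this is exactly where the word-measure and free-group-algebra machinery of the paper, and the hypothesis that $u$ is a non-power (equivalently, a lower bound on its $q$-primitivity rank), enter. The explicit $\gln$ count and the bookkeeping over divisors of $x^{d}-1$ are routine by comparison.
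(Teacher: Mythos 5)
Your reduction is correct and even attractive: setting $A=u\left(g_{1},\ldots,g_{r}\right)$, decomposing $\fix\left(A^{d}\right)$ according to the order polynomial of each vector, and noting (via M\"obius inversion over divisors) that the resulting statistics $A\mapsto\left|\ker p'\left(A\right)\right|$ are exactly the stable functions $\btil$ with $\B$ the companion matrix of $p'$; your explicit rational-canonical-form count in the uniform case $u=a$ is also correct, giving $\Pr\left[m_{v_{0}}^{A}=p\right]=q^{-N}\left(1+o\left(1\right)\right)$. The gap is the last step, and it is not a technicality: the claim that for a non-power $u$ the expectation of such a statistic has the same $N\to\infty$ limit as under the uniform measure is precisely the $d=1$ case of Theorem \ref{thm:limit powers general} (specialized to companion matrices), i.e.\ it is the independence-of-$u$ assertion that constitutes the theorem. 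Indeed, the paper remarks right after Theorem \ref{thm:limit powers fix} that proving independence of $u$ is the whole task, since the limiting value can then be read off from known results about uniformly random matrices -- so your uniform computation reproduces the easy, known half. What is actually ``developed above'' -- the rational expression \eqref{eq:general rational expression with submodules} together with Corollary \ref{cor:contrib is m-rank} -- only yields that the limit equals the number of minimal-rank submodules $M\le_{\mathbb{W}}\A^{m}$ containing $\eqb$; it says nothing about that number matching the uniform count. Appealing to ``the same dichotomy underlying $\mathbb{E}_{w}\left[\fix\right]=2+\frac{C}{q^{N}}+O\left(q^{-2N}\right)$'' is circular, since that is Theorem \ref{thm:fixed vectors in pi=00003D2}, which in the paper sits downstream of the result you are proving.

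The missing content is exactly the paper's core argument: showing that every minimal-rank ideal (or submodule) containing the relevant equations is supported on the cyclic subgroup $\left\langle u\right\rangle$, so that the count reduces to one inside $\k\left[\left\langle u\right\rangle\right]\cong\k\left[x,x^{-1}\right]$ and hence depends only on $d$. In the $\fix$ case this is the exposure/coincidence analysis of Section \ref{subsec:powers fix}, showing that the generator $f_{I}$ of each rank-one ideal containing $w-1$ lies in $\k\left[\left\langle u\right\rangle\right]$ (whence the bijection with monic divisors of $x^{d}-1$, Lemma \ref{lem:divisors of x^d-1}); in your reformulation one would need the analogous statement for the submodules attached to the companion matrices, which is the content of Lemmas \ref{lem:The-smallest-rank is m} through \ref{lem:f_ez supported on D_b}. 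That is where cyclic reduction, the centralizer argument, and the hypothesis that $u$ is a non-power genuinely enter; without carrying it out, your ``key estimate'' for arbitrary non-power $u$ is an assertion of the theorem rather than a proof of it. In short: the proposal correctly isolates and settles the routine half (the uniform evaluation and the bookkeeping over divisors of $x^{d}-1$), but replaces the hard half -- uniformity in $u$ -- by a reference to machinery that, at the point where you invoke it, has not yet produced that statement.
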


Combined with Theorem \ref{thm:rational expression fix}, if $c_{d}$
is the number of monic divisors of $x^{d}-1\in\k\left[x\right]$,
we get that $\mathbb{E}_{w}\left[\fix\right]=c_{d}+O\left(\frac{1}{q^{N}}\right)$.
In particular, for non-powers, \textbf{$\mathbb{E}_{w}\left[\fix\right]=2+O\left(\frac{1}{q^{N}}\right)$},
and for proper powers\textbf{ $c_{d}\ge3$} (if $d\ge2$, then $x^{d}-1$
admits at least three distinct monic divisors: $1,x-1$ and $x^{d}-1$).
Theorem \ref{thm:limit powers fix} is analogous to the result in
the symmetric group $\mathrm{Sym}\left(N\right)$, where this limit
is equal to the number of positive divisors of $d$ in $\mathbb{Z}$
\cite{nica1994number}. In fact, it is sufficient to prove that the
limit in \eqref{eq:limit of fix} depends only on $d$ and not on
$u$, and then the left-hand side of \eqref{eq:limit of fix} is equal
to $\lim_{N\to\infty}\mathbb{E}_{a^{d}}\left[\fix\right]$. This number
can then be extracted from the analysis of uniformly random elements
in $\gln$ -- see, for example, \cite{fulman2016distribution} and
the references therein. Theorem \ref{thm:limit powers fix} is a special
case of the more general Theorem \ref{thm:limit powers general} below. 

\subsection{The $q$-primitivity rank}

The analysis of $\mathbb{E}_{w}\left[\fix\right]$, yielding Theorems
\ref{thm:rational expression fix} and \ref{thm:limit powers fix},
can be performed using elementary linear algebraic arguments. In fact,
this is how they were first derived in \cite{West19}. However, it
turns out to be extremely useful to analyze these quantities using
the theory of free group algebras. 

Denote by $\A\defi\k\left[\F\right]$\marginpar{$\protect\A$} the
free group algebra over $\k$: its elements are finite linear combinations
of elements of the free group $\F$ with coefficients from the finite
field $\k$. It is a classical result of Cohn \cite{cohn1964free}
and Lewin\footnote{\label{fn:Cohn Vs. Lewin}Some claim that the first correct proof
of this result (stated formally below as Theorem \ref{thm:Cohn-Lewin})
is due to Lewin in \cite{lewin1969free} -- see \cite[Footnote 5]{hog1990short}.} \cite{lewin1969free} that right ideals of $\A$ are free right $\A$-modules
with a well-defined rank.\footnote{For example, it can be shown that the augmentation ideal $I_{\F}=\left\{ \sum\alpha_{w}w\,\middle|\,\sum\alpha_{w}=0\right\} \subseteq\A$
is of rank $r=\rk\F$. For instance, when $\F=\F\left(a,b,c\right)$,
$I_{\F}=\left(a-1\right)\A\oplus\left(b-1\right)\A\oplus\left(c-1\right)\A$.} An analogous result holds for left ideals, but here we use right
ideals only -- in fact, from now on, we write ``ideals'' to mean
``right ideals''. In Section \ref{sec:Rational-expressions} below
we derive a formula for $\mathbb{E}_{w}\left[\fix\right]$ as a sum
over a finite set of finitely generated ideals of $\A$, and Section
\ref{sec:The-free-group-algebra} shows that the contribution of every
such ideal is of order determined by its rank. 

In particular, this algebraic perspective allows a further understanding
of the deviation of $\mathbb{E}_{w}\left[\fix\right]$ from $\mathbb{E}_{a}\left[\fix\right]$,
the analogous expectation under the uniform measure. Namely, as the
action $\gln\curvearrowright\k^{N}$ admits two orbits (the zero vector
and all non-zero vectors), the expected number of vectors in $K^{N}$
fixed by a uniformly random element of $\gln$ is $\mathbb{E}_{a}\left[\fix\right]=2$,
and we consider the difference $\mathbb{E}_{w}\left[\fix\right]-2$.
Theorems \ref{thm:rational expression fix} and \ref{thm:limit powers fix}
imply that if $w$ is a proper power, then $\mathbb{E}_{w}\left[\fix\right]-2$
is of order $\Theta\left(1\right)$, and otherwise, it is of order
$O\left(\frac{1}{q^{N}}\right)$. Next, we provide a more refined
and accurate description of this difference in the non-power case.
To state our result and conjecture, we first define the notion of
primitivity of elements in ideals. Recall that by Cohn and Lewin's
result, every ideal $I\le\A$ is a free $\A$-module and so admits
a basis. Moreover, all bases of $I$ have the same cardinality, called
the rank of $I$ and denoted $\rk I$\marginpar{$\protect\rk I$}.
\begin{defn}
\label{def:primitive}Let $I\le\A$ be an ideal and let $f\in I$.
We say that $f$ is a \textbf{primitive} element of $I$ if it is
contained in some basis of $I$ (considering $I$ as a free right
$\A$-module). Otherwise, $f$ is \textbf{imprimitive} in $I$.
\end{defn}

This is analogous to the notion of a primitive element in a free group:
an element belonging to some basis of this group. Our next central
result captures the $\frac{1}{q^{N}}$-term of the Laurent expansion
of $\mathbb{E}_{w}\left[\fix\right]$.
\begin{thm}
\label{thm:fixed vectors in pi=00003D2}Let $1\ne w\in\F$ be a non-power.
Then the expected number of vectors in $\k^{N}$ fixed by a $w$-random
element of $\gln$ is

\[
\mathbb{E}_{w}\left[\fix\right]=2+\frac{\left|\crit_{q}^{2}\left(w\right)\right|}{q^{N}}+O\left(\frac{1}{q^{2N}}\right),
\]
where $\crit_{q}^{2}\left(w\right)$ is the set of ideals $I\le\A$
of rank two which contain the element $w-1$ as an imprimitive element. 
\end{thm}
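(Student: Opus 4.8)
The plan is to start from the combinatorial/linear-algebraic formula for $\mathbb{E}_{w}\left[\fix\right]$ developed in Section~\ref{sec:Rational-expressions}, expressing it as a sum over a finite collection of finitely generated right ideals of $\A$ (coming from subtrees of the Cayley graph of $\F$, or from the core graph / $w$-related combinatorial data). Using the results of Section~\ref{sec:The-free-group-algebra}, each such ideal $I$ contributes a term whose order of magnitude is governed by $\rk I$: an ideal of rank $k$ contributes something of order $q^{N\left(1-k\right)}$ up to lower-order corrections. First I would isolate the leading term: the only contribution of order $\Theta\left(1\right)$ comes from ideals of rank $\le 1$, and since $w\neq 1$ is a non-power, the relevant rank-$\le 1$ ideals are exactly the zero ideal and $\A$ itself (equivalently, $w-1$ is never primitive in a rank-one ideal here because $w$ is not a proper power — this is where the non-power hypothesis enters), which together account for the constant $2 = \mathbb{E}_{a}\left[\fix\right]$. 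This matches the constant term and reduces the problem to the $q^{-N}$-coefficient.

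Next I would collect, among the remaining ideals, those of rank exactly two, since these are precisely the ones contributing at order $q^{-N}$; every ideal of rank $\ge 3$ contributes $O\left(q^{-2N}\right)$ and is absorbed into the error term. For each rank-two ideal $I$ appearing in the sum, I would compute its precise contribution to the $q^{-N}$-coefficient using the rationality / Laurent-expansion analysis of Section~\ref{sec:The-free-group-algebra}, and check that this leading coefficient equals $1$ exactly when $w-1 \in I$ \emph{imprimitively}, and is $0$ (i.e. the would-be $q^{-N}$-term cancels, pushing the contribution to $O\left(q^{-2N}\right)$) when $w-1$ is a basis element of $I$ or when $w-1\notin I$. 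The combinatorial bookkeeping — reconciling the index set of the sum in Section~\ref{sec:Rational-expressions} with the set $\crit_q^2\left(w\right)$ of rank-two ideals containing $w-1$ imprimitively, and making sure each such ideal is counted with multiplicity exactly one — is the step I expect to be the main obstacle, since it requires a clean correspondence between the graph-theoretic objects parametrizing the sum and the intrinsic algebraic data of the ideals. The primitivity dichotomy is the crux: one must show that a rank-two ideal $I\ni w-1$ contributes a full $1/q^N$ precisely when no basis of $I$ contains $w-1$, which should follow from analyzing the "quotient" $I/(w-1)\A$ and whether it is free (primitive case, extra cancellation) or not (imprimitive case, genuine contribution).

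Finally, once the $q^{-N}$-coefficient is identified as $\left|\crit_q^2\left(w\right)\right|$, I would verify that this set is finite — which is immediate since all ideals in the sum are finitely generated and supported on a fixed finite subtree determined by $w$, so there are only finitely many of each rank — and then assemble the pieces: constant term $2$, coefficient of $q^{-N}$ equal to $\left|\crit_q^2\left(w\right)\right|$, and everything else $O\left(q^{-2N}\right)$. I would also sanity-check the formula against the worked examples in Table~\ref{tab:The-rational-expression}: for $w=\left[a,b\right]$ the theorem predicts a $q^{-N}$-coefficient of $\left(q-1\right)^2$ (reading off from the given rational expression $2 + \tfrac{\left(q-1\right)^2 q^N - \left(q-1\right)^3}{\left(q^N-1\right)\left(q^N-q\right)}$, whose leading asymptotics is $2 + \left(q-1\right)^2 q^{-N} + O\left(q^{-2N}\right)$), so one should confirm there are exactly $\left(q-1\right)^2$ rank-two right ideals of $\A$ containing $\left[a,b\right]-1$ imprimitively — providing both a consistency check and a template for how the general counting argument unwinds.
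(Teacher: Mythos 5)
Your overall skeleton (expand $\mathbb{E}_{w}\left[\fix\right]$ as a sum over finitely many ideals supported on $\left[1,w\right]$ and grade contributions by rank) matches the paper's starting point, but the mechanism you propose for extracting the $q^{-N}$-coefficient is wrong, and the error conceals exactly the hard part of the theorem. By Corollary \ref{cor:contrib is m-rank}, the summand attached to a rank-two ideal $I\le_{\left[1,w\right]}\A$ with $w-1\in I$ is $q^{-N}\left(1+O\left(q^{-N}\right)\right)$ \emph{regardless} of whether $w-1$ is primitive in $I$: the summand depends only on the dimensions $d^{\left[1,w\right]}\left(I\right)$ and $d_{b}^{\left[1,w\right]}\left(I\right)$, hence only on the rank, and there is no ``extra cancellation'' inside an individual summand in the primitive case. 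So your proposed dichotomy (leading coefficient $1$ iff $w-1$ is imprimitive in $I$, coefficient $0$ iff it is primitive) fails. A smaller slip: the two ideals producing the constant term $2$ are the rank-one ideals $\left(1\right)=\A$ and $\left(w-1\right)$, not ``the zero ideal and $\A$''; the zero ideal does not contain $w-1$ at all, and the non-power hypothesis enters via Corollary \ref{cor:pa=00003D1 iff power} to rule out any \emph{further} rank-one ideals containing $w-1$.

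What actually happens is this: every rank-two ideal in which $w-1$ is primitive does contribute a full $q^{-N}$, and this surplus is cancelled globally by the \emph{negative} $q^{-N}$-coefficient $\beta_{w}$ in the Laurent expansion of the single rank-one summand corresponding to $\left(w-1\right)$ (computed in Lemma \ref{lem:beta_w as expression with 1-dim subspaces}). The theorem thus reduces to the identity $\beta_{w}+\left|\mathrm{Prim}^{2}\left(w\right)\right|=0$, and proving that identity is the real work: one needs Corollary \ref{cor:complement to a basis of w-1} (a rank-two ideal generated on $\left[1,w\right]$ in which $w-1$ is primitive admits a basis $\left\{ f,w-1\right\} $ with $f$ supported on $\left[1,w\right]$), a bijection between such ideals and proper nonzero cyclic submodules of $\A/\left(w-1\right)$ generated on the cycle of $\sw$, the Kaplansky-type Theorem \ref{thm:cyclic generators of A_w} on cyclic generators of $\A/\left(w-1\right)$ (whose proof uses right-orderability of torsion-free one-relator groups and Weinbaum's subword theorem) to make an auxiliary correspondence injective, and an Euler-characteristic computation on a graph $\Upsilon$ of one-dimensional subspaces identifying $-\beta_{w}$ with the number of relevant components. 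None of this machinery is suggested by your outline; your remark about analyzing the quotient $I/\left(w-1\right)\A$ points toward the right object ($\A/\left(w-1\right)$ is indeed central) but not toward the counting argument it must support. Conversely, the bookkeeping you flag as the main obstacle -- that every ideal in $\crit_{q}^{2}\left(w\right)$ is generated on $\left[1,w\right]$ and counted once -- is the easy part, disposed of by Corollary \ref{cor:finitely many critical extensions}.
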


As implied by the theorem, the set $\crit_{q}^{2}\left(w\right)$
is indeed finite for every non-power $w$. We prove this fact directly
in Corollary \ref{cor:finitely many critical extensions}. To illustrate,
consider the commutator word $w=\left[a,b\right]$. As mentioned above,
\[
\mathbb{E}_{\left[a,b\right]}\left[\fix\right]=2+\frac{\left(q-1\right)^{2}q^{N}-\left(q-1\right)^{3}}{\left(q^{N}-1\right)\left(q^{N}-q\right)}=2+\frac{\left(q-1\right)^{2}}{q^{N}}+O\left(\frac{1}{q^{2N}}\right).
\]
In this case there are exactly $\left(q-1\right)^{2}$ distinct ideals
of rank two containing $\left[a,b\right]-1$ as an imprimitive element:
these are $\left(\delta a-1,\varepsilon b-1\right)$ with $\delta,\varepsilon\in K^{*}$.
We conjecture a more general phenomenon, for which we make the following
definition. 
\begin{defn}
\label{def:q-primitivity-rank}The \textbf{$q$-primitivity rank}
of $w\in\F$, denoted $\pi_{q}\left(w\right)$, is the smallest rank
of a \emph{proper} ideal of $\A$ containing $w-1$ as an imprimitive
element. Namely,
\[
\pi_{q}\left(w\right)\defi\min\left\{ \rk I\,\middle|\,\begin{gathered}I\lvertneqq\A,I\ni w-1,\mathrm{and}\\
w-1~\mathrm{is~imprimitive~in}~I
\end{gathered}
\right\} .
\]
If this set is empty, we set $\pi_{q}\left(w\right)=\infty$. A \emph{critical}
ideal for $w$ is a proper ideal of rank $\pi_{q}\left(w\right)$
containing $w-1$ as an imprimitive element. We denote by $\crit_{q}\left(w\right)$
the set of critical ideals for $w$. 
\end{defn}

Corollary \ref{cor:values of pi_q} shows that $\pi_{q}\left(w\right)$
takes values only in $\left\{ 0,1,\ldots,r\right\} \cup\left\{ \infty\right\} $,
where $r$ is the rank of $\F$. Note that $\pi_{q}\left(w\right)=0$
if and only if $w=1$: the only rank-$0$ ideal is $\left(0\right)$,
whose only basis is the empty set. In Section \ref{subsec:powers fix}
below, we prove that $\pi_{q}\left(w\right)=1$ if and only if $w\in\F$
is a proper power (Corollary \ref{cor:pa=00003D1 iff power}), and
that in this case, if one writes $w=u^{d}$ with $d\ge2$ and $u$
a non-power, the set of critical ideals of $w$ is
\[
\crit_{q}\left(u^{d}\right)=\left\{ \left(p\left(u\right)\right)\,\middle|\,p\mid x^{d}-1\in K\left[x\right],~p~\mathrm{monic~and}~p\ne1,x^{d}-1\right\} .
\]
For example, if $\left|\k\right|=q=3$ and $w=u^{4}$, the critical
ideals of $w$ are in one-to-one correspondence with the six non-trivial
monic divisors the polynomial $x^{4}-1\in\k\left[x\right]$. These
rank-$1$ ideals are $\left(u-1\right)$, $\left(u+1\right)$, $\left(u^{2}-1\right)$,
$\left(u^{2}+1\right)$, $\left(u^{3}-u^{2}+u-1\right)$ and $\left(u^{3}+u^{2}+u+1\right)$.
Note that the trivial monic divisors of $x^{4}-1$ correspond to the
ideal $\left(1\right)=\A$ which is not proper, and to the ideal $\left(u^{4}-1\right)$
in which $w-1$ is primitive. By Proposition \ref{prop:prim iff pi_q=00003Dinfty},
$\pi_{q}\left(w\right)=\infty$ if and only if $w$ is a primitive
element of $\F$.

The following conjecture thus generalizes Theorems \ref{thm:limit powers fix}
and \ref{thm:fixed vectors in pi=00003D2}.
\begin{conjecture}
\label{conj:general pi and fixed vectors}Let $w\in\F$ and denote
$\pi=\pi_{q}\left(w\right)$. Then the expected number of vectors
in $\k^{N}$ fixed by a $w$-random element of $\gln$ is
\begin{equation}
\mathbb{E}_{w}\left[\fix\right]=2+\frac{\left|\crit_{q}\left(w\right)\right|}{q^{N\cdot\left(\pi-1\right)}}+O\left(\frac{1}{q^{N\cdot\pi}}\right).\label{eq:conj general pi and fixed vectors}
\end{equation}
\end{conjecture}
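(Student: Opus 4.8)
Write $\pi=\pi_{q}\left(w\right)$ as in the statement. The plan is to push to all values of $\pi$ the mechanism that already yields Theorems~\ref{thm:rational expression fix} and~\ref{thm:fixed vectors in pi=00003D2}: the $\A$-module analogue of the core-graph / subgroup-lattice expansion underlying the theory of word measures on symmetric groups. Two ingredients are in place: the formula of Section~\ref{sec:Rational-expressions}, which writes $\mathbb{E}_{w}\left[\fix\right]$ as a \emph{finite} sum of contributions $c_{I}\left(q^{N}\right)$ indexed by right ideals $I\le\A$ containing $w-1$; and the estimates of Section~\ref{sec:The-free-group-algebra}, showing that $c_{I}\left(q^{N}\right)=O\!\left(q^{-N(\rk I-1)}\right)$, so that the order of each contribution is governed by $\rk I$. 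The first step is to peel off the trivial part: since the $w$-measure on $\gln$ is conjugation-invariant -- conjugating the $g_{i}$ by a fixed $h\in\gln$ conjugates $w\left(g_{1},\ldots,g_{r}\right)$ and preserves the law of the tuple -- the probability that a prescribed vector is fixed depends only on whether it is zero, so the constant term of $\mathbb{E}_{w}\left[\fix\right]$ equals $2$, collecting the contributions of the rank-$\le1$ ideals that occur, namely $\A$ and $\left(w-1\right)$ (equivalently, the zero vector and a generic non-zero one), in each of which $w-1$ is a basis element, hence primitive. It remains to analyze $\Sigma\defi\sum_{I}c_{I}\left(q^{N}\right)$, the sum over the \emph{proper} ideals with $\left(w-1\right)\subsetneq I\lvertneqq\A$.

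The heart of the argument has two parts. First, a \textbf{vanishing lemma}: whenever $w-1$ is a \emph{primitive} element of a proper ideal $I$ occurring in the expansion, the sign-corrected contribution $c_{I}\left(q^{N}\right)$ vanishes identically. This is the free-group-algebra counterpart of the fact -- basic to the symmetric-group theory -- that primitivity of $w$ in an algebraic extension forces the corresponding term to be zero; the expected route is a free-module automorphism of $I$ carrying $w-1$ to a coordinate, which decouples the associated counting problem and makes the alternating inclusion--exclusion sum telescope to $0$. Second, a \textbf{leading-term computation}: for a proper ideal $I$ of rank $k$ in which $w-1$ is imprimitive, $c_{I}\left(q^{N}\right)=q^{-N(k-1)}+O\!\left(q^{-Nk}\right)$ with leading coefficient exactly $1$; the rank-$\le2$ instances underlie Theorem~\ref{thm:fixed vectors in pi=00003D2}, while in general one must control the free-algebra analogue of the number of ``onto'' core-graph morphisms and show it is asymptotic to a single power of $q^{N}$ with coefficient $1$.

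Granting both, the conclusion is bookkeeping. Among the finitely many ideals with $\left(w-1\right)\subsetneq I\lvertneqq\A$ appearing in $\Sigma$, those in which $w-1$ is primitive contribute $0$. Those in which $w-1$ is imprimitive satisfy $\rk I\ge\pi$ by the definition of the $q$-primitivity rank, and the ones of rank exactly $\pi$ are precisely the critical ideals -- a finite set by Corollary~\ref{cor:finitely many critical extensions} -- each contributing $q^{-N(\pi-1)}+O\!\left(q^{-N\pi}\right)$, while each ideal of rank $>\pi$ contributes $O\!\left(q^{-N\pi}\right)$. Summing gives $\mathbb{E}_{w}\left[\fix\right]=2+\left|\crit_{q}\left(w\right)\right|\cdot q^{-N(\pi-1)}+O\!\left(q^{-N\pi}\right)$, which is \eqref{eq:conj general pi and fixed vectors}; in the degenerate case $\pi=\infty$ (that is, $w$ primitive in $\F$) it reads $\mathbb{E}_{w}\left[\fix\right]=2$, and the case $\pi=1$ recovers Theorem~\ref{thm:limit powers fix}.

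The main obstacle is the vanishing lemma. In the symmetric-group setting its analogue ultimately rests on a delicate understanding of which algebraic extensions of $\langle w\rangle$ contain $w$ as a primitive element; here one must develop the parallel structure theory for right ideals of $\A=\k\left[\F\right]$ over an arbitrary finite field, tracking the genuine $q$-dependence already visible in the critical ideals $\left(\delta a-1,\varepsilon b-1\right)$, $\delta,\varepsilon\in\k^{*}$, of the commutator. A secondary difficulty is to show, uniformly in the rank, that the leading coefficient of $c_{I}\left(q^{N}\right)$ in the imprimitive case is exactly $1$ with no cancellation; the rank-$2$ case is handled inside the proof of Theorem~\ref{thm:fixed vectors in pi=00003D2}, but a general argument seems to require a systematic study of surjective morphisms between the combinatorial objects (finite subtrees of the Cayley graph of $\F$ and their $\A$-spans) encoding finitely generated ideals.
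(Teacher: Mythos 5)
This statement is Conjecture~\ref{conj:general pi and fixed vectors}: the paper itself does not prove it, but only its cases $\pi_{q}\left(w\right)\in\left\{ 0,1,2,\infty\right\} $ (via Theorem~\ref{thm:limit powers fix}, Theorem~\ref{thm:fixed vectors in pi=00003D2} and the primitive case), so a complete argument would be a genuinely new result. What you propose is a plan rather than a proof: both pillars, the ``vanishing lemma'' and the ``leading-term computation,'' are left as admitted obstacles. Worse, the second pillar is already in the paper -- Corollary~\ref{cor:contrib is m-rank} shows that in the expansion \eqref{eq:E_w=00005Bfix=00005D rational expression} \emph{every} ideal $I$ contributes $\left(q^{N}\right)^{1-\rk I}\left(1+O\left(q^{-N}\right)\right)$ with leading coefficient exactly $1$, irrespective of whether $w-1$ is primitive in $I$ -- and precisely for that reason your first pillar, as stated, is false for this expansion: the summand of a proper rank-$k$ ideal in which $w-1$ is primitive does not vanish; it is $q^{-N\left(k-1\right)}\left(1+O\left(q^{-N}\right)\right)>0$. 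What the conjecture actually requires is a cancellation between these positive leading terms and the \emph{subleading} coefficients of ideals of smaller rank. This is exactly what Section~\ref{sec:Critical-ideals-of-rank-2} proves at rank $2$, in the form $\beta_{w}+\left|\mathrm{Prim}^{2}\left(w\right)\right|=0$, and it consumes all of the machinery of Section~\ref{sec:A_w} (cyclic generators of $\A/\left(w-1\right)$, right-orderability of torsion-free one-relator groups, Weinbaum's theorem). You neither construct the ``sign-corrected'' inclusion--exclusion framework in which your vanishing statement could even be formulated, nor offer any analogue of the cancellation identity for rank $\ge3$; that identity is the open heart of the problem, not a routine extension of the rank-$2$ case.

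A smaller but genuine error is the opening reduction: conjugation-invariance of the $w$-measure does not force the constant term of $\mathbb{E}_{w}\left[\fix\right]$ to be $2$. Transitivity of $\gln$ on nonzero vectors only yields $\mathbb{E}_{w}\left[\fix\right]=1+\left(q^{N}-1\right)p_{N}$, where $p_{N}$ is the probability that one fixed nonzero vector is fixed; it says nothing about $p_{N}$ being $q^{-N}\left(1+o\left(1\right)\right)$, and indeed for proper powers ($\pi=1$) the constant term is $2+\left|\crit_{q}\left(w\right)\right|\ge3$, with rank-one ideals beyond $\A$ and $\left(w-1\right)$ contributing at order $1$. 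So the constant $2$ cannot be ``peeled off'' a priori; it must emerge from the same bookkeeping over ideals, and for $\pi=1$ it does not. In short, the skeleton -- sum over ideals containing $w-1$, sort by rank -- is the paper's own (Sections~\ref{sec:Rational-expressions} and~\ref{sec:The-free-group-algebra}), but of the two steps you defer, one is already known and the other is an open cancellation problem which your vanishing lemma misstates.
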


Corollary \ref{cor:finitely many critical extensions} yields that
$\crit_{q}\left(w\right)$ is indeed finite. Note that if $\pi:=\pi_{q}\left(w\right)=0$
(namely, if $w=1$), then $\crit_{q}\left(w\right)=\left\{ \left(0\right)\right\} $
and \eqref{eq:conj general pi and fixed vectors} is obvious. Theorem
\ref{thm:limit powers fix} proves \eqref{eq:conj general pi and fixed vectors}
when $\pi=1$, and Theorem \ref{thm:fixed vectors in pi=00003D2}
proves it when $\pi=2$. As mentioned above, $\pi_{q}\left(w\right)=\infty$
if and only if $w$ is primitive in $\F$, and in this case a $w$-random
element of $\gln$ distributes uniformly \cite[Obs.~1.2]{PP15}, and
so \eqref{eq:conj general pi and fixed vectors} holds. In particular,
Conjecture \ref{conj:general pi and fixed vectors} holds for the
free group of rank $2$ as the possible values of $\pi_{q}\left(w\right)$
are $\left\{ 0,1,2,\infty\right\} $ (Corollary \ref{cor:values of pi_q}).
We conclude the following analogue of a result about $S_{N}$ \cite[Thm.~1.5]{Puder2014}.
\begin{cor}
\label{cor:F2}Let $w\in\F_{2}$. Then $w$ induces the uniform measure
on $\gln$ for all $N$ if and only if $w$ is primitive.
\end{cor}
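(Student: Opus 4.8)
The plan is to deduce the corollary directly from the results already assembled in the excerpt, so essentially no new work is needed beyond organizing the implications. First, recall the cited fact \cite[Obs.~1.2]{PP15}: if $w\in\F$ is primitive, then a $w$-random element of $\gln$ is distributed uniformly, for every $N$. This gives one direction immediately, and it holds for arbitrary rank, not just rank $2$. For the converse, suppose $w\in\F_2$ is \emph{not} primitive. By Proposition \ref{prop:prim iff pi_q=00003Dinfty} (``$\pi_q(w)=\infty$ iff $w$ is primitive in $\F$''), non-primitivity of $w$ means $\pi_q(w)<\infty$. Since $\F=\F_2$ has rank $r=2$, Corollary \ref{cor:values of pi_q} tells us that $\pi_q(w)\in\{0,1,2\}$.

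Next I would invoke Conjecture \ref{conj:general pi and fixed vectors}, which (as the excerpt explains) is a \emph{theorem} in the regime $\pi_q(w)\in\{0,1,2,\infty\}$: it follows from Theorem \ref{thm:limit powers fix} when $\pi=1$, from Theorem \ref{thm:fixed vectors in pi=00003D2} when $\pi=2$, is trivial when $\pi=0$, and follows from \cite[Obs.~1.2]{PP15} when $\pi=\infty$. Hence for every $w\in\F_2$ we have the exact asymptotic
\[
\mathbb{E}_{w}\left[\fix\right]=2+\frac{\left|\crit_{q}\left(w\right)\right|}{q^{N\cdot\left(\pi_q(w)-1\right)}}+O\!\left(\frac{1}{q^{N\cdot\pi_q(w)}}\right).
\]
Now if $w$ is non-primitive then $\pi_q(w)\in\{1,2\}$, and in either case $\crit_q(w)\ne\emptyset$ — for $\pi_q(w)=1$ this is Corollary \ref{cor:pa=00003D1 iff power} together with the explicit description of $\crit_q(u^d)$ (it contains at least $(p(u))$ for, say, $p=x-1$), and for $\pi_q(w)=2$ the set $\crit_q^2(w)$ is non-empty by definition of the $q$-primitivity rank being exactly $2$. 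Therefore $\left|\crit_q(w)\right|\ge 1$ and the displayed formula yields $\mathbb{E}_w[\fix]-2 = \Theta\!\left(q^{-N(\pi_q(w)-1)}\right)\ne 0$ for all large $N$. On the other hand, if $w$ induced the uniform measure on $\gln$ for all $N$, then $\mathbb{E}_w[\fix]$ would equal $\mathbb{E}_a[\fix]=2$ for all $N$ (since, as noted in the excerpt, $\gln\curvearrowright\k^N$ has exactly two orbits). This contradicts $\mathbb{E}_w[\fix]\ne 2$, so a non-primitive $w\in\F_2$ cannot induce the uniform measure.

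There is no real obstacle here: the corollary is a clean packaging of Proposition \ref{prop:prim iff pi_q=00003Dinfty}, Corollary \ref{cor:values of pi_q}, Theorems \ref{thm:limit powers fix} and \ref{thm:fixed vectors in pi=00003D2}, and \cite[Obs.~1.2]{PP15}. The only point requiring a line of care is confirming $\crit_q(w)\ne\emptyset$ whenever $1\le\pi_q(w)<\infty$ — but this is immediate from the definition of $\pi_q(w)$ as a \emph{minimum} over a non-empty set of ranks (a minimum is attained), so there exists a proper ideal of rank $\pi_q(w)$ containing $w-1$ imprimitively, i.e.\ a critical ideal. One should also make sure to state explicitly that using $\mathbb{E}_w[\fix]$ as a distinguishing statistic is legitimate: equality of the $w$-measure with the uniform measure forces equality of \emph{all} expectations, in particular that of $\fix$, so a single mismatch suffices.
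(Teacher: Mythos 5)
Your proposal is correct and is essentially the paper's own argument: the corollary is stated there as an immediate consequence of the fact that Conjecture \ref{conj:general pi and fixed vectors} is established for all $\pi_{q}(w)\in\{0,1,2,\infty\}$ (via Theorems \ref{thm:limit powers fix} and \ref{thm:fixed vectors in pi=00003D2}, the trivial case $\pi=0$, and \cite[Obs.~1.2]{PP15}), combined with Corollary \ref{cor:values of pi_q} and Proposition \ref{prop:prim iff pi_q=00003Dinfty}. The only slip is the assertion that non-primitive $w$ has $\pi_{q}(w)\in\{1,2\}$, which silently omits $w=1$ (where $\pi_{q}(w)=0$); that case is trivial, since $w=1$ gives the point mass at the identity and $\mathbb{E}_{1}[\fix]=q^{N}\neq2$, so the conclusion stands.
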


Another important background for Conjecture \ref{conj:general pi and fixed vectors}
is an analogous result in the case of the symmetric group $S_{N}$.
The primitivity rank of a word $w\in\F$, denoted $\pi\left(w\right)$
and introduced in \cite{Puder2014}, is the smallest rank of a subgroup
of $\F$ containing $w$ as an imprimitive element. Let $\crit_{\F}\left(w\right)$
denote the set of subgroups of $\F$ of rank $\pi\left(w\right)$
which contain $w$ as an imprimitive element. Then the $S_{N}$-analogue
of Conjecture \ref{conj:general pi and fixed vectors} is \cite[Thm.~1.8]{PP15}:
the expected number of fixed points in a $w$-random permutation in
$S_{N}$ is
\[
1+\frac{\left|\crit_{\F}\left(w\right)\right|}{N^{\pi\left(w\right)-1}}+O\left(\frac{1}{N^{\pi\left(w\right)}}\right).
\]

Alongside its role in word measures on $S_{N}$, the original primitivity
rank $\pi\left(w\right)$ seems to play a universal role in word measures
on groups (see \cite[Conj.~1.13]{hanany2020word}), it has connections
with stable commutator length (see Section 1.6 in the same article)
and was recently found relevant to the study of one-relator groups
(see, for example, \cite{louder2022negative}). Definition \ref{def:q-primitivity-rank}
seemingly introduces a family of related invariants of words -- one
for every prime power $q$. In fact, the same definition can be applied
to arbitrary fields -- see Section \ref{sec:Open-Questions}. However,
it is possible that all these invariants coincide for a given word.
We are able to show one inequality and conjecture a full equality.
\begin{prop}
\label{prop:pi_q =00005Cle pi}For every word $w\in\F$ and every
prime power $q$, $\pi_{q}\left(w\right)\le\pi\left(w\right)$.
\end{prop}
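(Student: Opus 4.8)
The statement to prove is Proposition \ref{prop:pi_q =00005Cle pi}: $\pi_q(w) \le \pi(w)$ for every $w \in \F$ and every prime power $q$. The natural strategy is to start from a subgroup $H \le \F$ witnessing the (ordinary) primitivity rank — that is, $\rk H = \pi(w)$, $w \in H$, and $w$ is imprimitive in $H$ — and to produce from it a proper right ideal $I \le \A$ with $\rk I \le \rk H$, with $w - 1 \in I$, and with $w - 1$ imprimitive in $I$. The most economical candidate is $I := (H-1)\A$, the right ideal generated by $\{h - 1 \mid h \in H\}$, equivalently the right ideal generated by $\{x_i - 1\}$ where $x_1, \dots, x_{\rk H}$ is a free basis of $H$. (If $\pi(w) = \infty$ there is nothing to prove, and the case $w = 1$, $\pi(w) = 0$ is trivial, so assume $1 \ne w$ and $\pi(w) < \infty$.)

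First I would establish the basic ring-theoretic facts about $I = (H-1)\A$. The key classical input is that if $H$ is a free factor-free... more precisely, picking a free basis $h_1, \dots, h_k$ of $H$ (with $k = \rk H$), the elements $h_1 - 1, \dots, h_k - 1$ form an $\A$-basis of $I$; this is exactly the kind of statement underlying the footnote's example that $I_\F = \bigoplus (a_i - 1)\A$ has rank $r$, and it follows from the Cohn–Lewin framework together with the fact that $H$ itself is free on the $h_i$. Thus $\rk I \le k = \rk H = \pi(w)$. Also $I$ is a proper ideal: since $H \lvertneqq \F$ (as $w$ imprimitive in $H$ forces $H \ne \F$... actually one should note $H$ could a priori equal $\F$ only if $w$ is imprimitive in $\F$, i.e. non-primitive, which is the interesting case — so I must handle $H = \F$ separately, where $I = I_\F$ is the augmentation ideal, which is proper). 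In general $(H-1)\A \subseteq I_\F \lvertneqq \A$, so $I$ is always a proper ideal. And $w \in H \Rightarrow w - 1 \in I$.

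The heart of the argument is transferring \emph{imprimitivity} from the group setting to the algebra setting: from ``$w$ is not part of a free basis of $H$'' I must deduce ``$w - 1$ is not part of an $\A$-basis of $I$.'' The clean way is contrapositive: suppose $w - 1$ is primitive in $I$, so $I = (w-1)\A \oplus J$ for some right ideal $J$ with $\rk J = \rk I - 1$. I would like to pull this splitting back to a free splitting of $H$. The bridge should be a ``Magnus-type'' or ``Fox-derivative'' correspondence between free bases of $H$ and $\A$-bases of $(H-1)\A$: the map $h \mapsto h - 1$ extends (via the fundamental identity $uv - 1 = (u-1) + (v-1) + (u-1)(v-1) = (u-1)v + (v-1)$... wait, $(u-1)v + (v-1) = uv - v + v - 1 = uv - 1$, good, so $h_1 h_2 \cdots - 1$ lies in $\sum (h_i - 1)\A$ with computable coefficients) to a correspondence under which Nielsen/free-basis transformations of $(h_1, \dots, h_k)$ correspond to $\A$-basis transformations of $(h_1 - 1, \dots, h_k - 1)$. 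The subtle point — and this is where I expect the main obstacle — is the converse direction: an arbitrary $\A$-basis of $I$ need not come from a free basis of $H$, so the fact that $w - 1$ sits in \emph{some} $\A$-basis of $I$ need not immediately say $w$ sits in some free basis of $H$. To get around this I would work at the level of the \emph{reduced} picture: pass to the quotient $I / I\cdot I_\F$ (or a suitable ``abelianization''), where an $\A$-basis of $I$ maps to a basis of a free module over $\A / I_\F \cong \k$ or over a related ring, and where the image of $w - 1$ can be read off from the image of $w$ in $H/[H,H]$ or $H / H'$; primitivity of $w-1$ should then force primitivity of the image of $w$, and one climbs back up using that a basis element of a free group is detected by its image being a basis element of the abelianization \emph{relative to} the right ring. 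Concretely, I would try to show directly: a generating set of $I$ supported on (the tree spanned by) $H$ is carried to a basis, and use the ``support on a subtree'' rigidity — here the earlier-promised result that a right ideal with a generating set supported on a finite subtree $T$ admits a basis supported on $T$ (stated in the abstract/introduction) is likely the right technical lever, applied with $T$ a spanning tree of the core graph of $H$. Combining these, a primitive decomposition $I = (w-1)\A \oplus J$ can be refined/conjugated to one supported on the core of $H$, which then translates into a Nielsen transformation of the basis $(h_1, \dots, h_k)$ of $H$ turning $w$ into a basis element, contradicting imprimitivity of $w$ in $H$.

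So the skeleton is: (1) reduce to $1 \ne w$, $\pi(w) < \infty$; (2) take $H$ witnessing $\pi(w)$, set $I = (H - 1)\A$, check $\rk I \le \rk H$, $I$ proper, $w - 1 \in I$; (3) show $w-1$ imprimitive in $I$ by contradiction, using the support-on-a-subtree rigidity to descend a hypothetical primitive decomposition of $I$ to a Nielsen transformation of $H$ exhibiting $w$ as part of a free basis; (4) conclude $\pi_q(w) \le \rk I \le \pi(w)$. The main obstacle is step (3), specifically the descent from $\A$-module decompositions of $(H-1)\A$ to free decompositions of $H$ — controlling the ``extra'' $\A$-bases of $I$ that do not obviously arise from free bases of $H$ — and I expect the subtree-support lemma of the paper to be exactly what makes this descent go through.
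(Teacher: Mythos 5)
Your setup coincides with the paper's: take a critical subgroup $H$ of rank $\pi\left(w\right)$, pass to the right ideal $J_{H}=\left(\left\{ h-1\mid h\in H\right\} \right)$, and check that it is proper, contains $w-1$, and has rank (at most) $\rk H$. The genuine gap is your step (3), the transfer of imprimitivity, i.e.\ the implication ``$w-1$ primitive in $J_{H}$ $\Rightarrow$ $w$ primitive in $H$''. This is not a formal manipulation but the entire content of the proposition, and neither of your two sketched routes supplies it. The abelianization route fails quantitatively: a basis of $J_{H}$ maps to a $\k$-basis of $J_{H}/J_{H}I_{\F}$, so primitivity of $w-1$ in $J_{H}$ only tells you that the image of $w$ in $H^{\mathrm{ab}}\otimes\k$ is part of a basis of that vector space, and primitivity in homology (even over $\mathbb{Z}$, let alone over a finite field) is far weaker than primitivity in the free group --- there are plenty of imprimitive words with primitive abelianization. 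Your hedge ``relative to the right ring'' is exactly where the real theorem lives: what is needed is that primitivity of $w$ in $H$ is detected by left-invertibility over $K\left[H\right]$ of the coefficient column of $w-1$ in the basis $\left\{ h_{i}-1\right\} $, and this is Umirbaev's theorem, which the paper invokes (Proposition \ref{prop:Umirbaev}), together with a projection argument $\A=\bigoplus_{t}tK\left[H\right]$ to push the inverting coefficients from $\A$ down into $K\left[H\right]$, and a reduction to finitely generated $H$ via Lemma \ref{lem:prim means prim in every intermediate}.

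The subtree-support route does not repair this. Theorem \ref{thm:basis from coincidences} gives you a basis of $J_{H}$ supported on a finite tree carrying the generators $h_{i}-1$, but such basis elements are arbitrary $\k$-linear combinations of group elements supported on that tree; there is no dictionary in the paper (nor an obvious one) taking an $\A$-basis of $J_{H}$ containing $w-1$ back to a Nielsen transformation of a free basis of $H$ --- indeed $J_{H}$ has many bases not of the form $\left\{ h-1\right\} $ for any free basis of $H$, so ``refining/conjugating the decomposition to the core of $H$'' is precisely the unproven step. Without Umirbaev's criterion (or an equivalent input, e.g.\ a Fox-derivative primitivity test over the group ring), your argument does not close; with it, your outline becomes the paper's proof.
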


\begin{conjecture}
\label{conj:pi and pi_q}For every word $w\in\F$ and every prime
power $q$, $\pi_{q}\left(w\right)=\pi\left(w\right)$.
\end{conjecture}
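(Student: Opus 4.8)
By Proposition \ref{prop:pi_q =00005Cle pi} the inequality $\pi_{q}\left(w\right)\le\pi\left(w\right)$ is already available, so the entire content of the conjecture is the reverse inequality $\pi\left(w\right)\le\pi_{q}\left(w\right)$. We may assume $0<\pi_{q}\left(w\right)=:k<\infty$: the case $\pi_{q}\left(w\right)=0$ is the case $w=1$, and the case $\pi_{q}\left(w\right)=\infty$ is the case $w$ primitive in $\F$ by Proposition \ref{prop:prim iff pi_q=00003Dinfty}, and in both $\pi\left(w\right)$ takes the same value. Fix a critical ideal $I\in\crit_{q}\left(w\right)$, so $I\lvertneqq\A$ is proper, $\rk I=k$, and $w-1$ is imprimitive in $I$. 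The goal is to produce a finitely generated subgroup $H\le\F$ with $w\in H$, $\rk H\le k$, and $w$ imprimitive in $H$; then $\pi\left(w\right)\le\rk H\le k=\pi_{q}\left(w\right)$ as required.

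The subgroup $H$ is read off $I$ using the structure theory of one-sided ideals of $\A$ developed in the previous sections --- the free-group-algebra analogue of Stallings' core graphs. Concretely, one writes $\A/I\cong\left(\k\left[H\right]/J\right)\otimes_{\k\left[H\right]}\A$, compatibly with $\overline{1}$, for a finitely generated $H\le\F$ (the fundamental group of the core graph of $I$) and a proper right ideal $J\lvertneqq\k\left[H\right]$ with $\k\left[H\right]/J$ finite-dimensional over $\k$ (the local system on the core graph); then $I=J\A$. Because $\A$ is free as a left $\k\left[H\right]$-module on any right transversal of $H$, tensoring a $\k\left[H\right]$-basis of $J$ (which exists and has well-defined cardinality by the Cohn--Lewin theorem for $\k\left[H\right]$) by $\A$ gives an $\A$-basis of $J\A=I$; hence $\rk I=\rk_{\k\left[H\right]}J$, and an Euler-characteristic count for finite-dimensional $\k\left[H\right]/J$ gives $\rk_{\k\left[H\right]}J=1+\left(\rk H-1\right)\dim_{\k}\left(\k\left[H\right]/J\right)\ge\rk H$, so $\rk H\le k$. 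Finally, $w-1\in I$ translates into $\overline{1}\cdot w=\overline{1}$, which forces $w\in H$ and $w-1\in J$.

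It remains to prove that $w$ is imprimitive in $H$, and for this I would argue the contrapositive: \emph{if $w$ is primitive in $H$, then $w-1$ is primitive in $I$}, contradicting $I\in\crit_{q}\left(w\right)$. Since tensoring a $\k\left[H\right]$-basis of $J$ by the free module $\A$ yields an $\A$-basis of $I=J\A$, it suffices to show $w-1$ is primitive in $J$ over $\k\left[H\right]$. Write $H=\left\langle w\right\rangle *H'$ (using primitivity of $w$ in $H$), and lift to an algebra automorphism of $\k\left[H\right]$ an automorphism of $H$ carrying $w$ to the first free generator; since algebra automorphisms of $\k\left[H\right]$ send bases of right ideals to bases of right ideals, we may assume $w$ is the first free generator of $H$, so $\k\left[H\right]=\k\left[\left\langle w\right\rangle\right]*_{\k}\k\left[H'\right]$. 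The claim to be proved is then: \emph{the first free generator $w$ of $\k\left[H\right]$ lies in a $\k\left[H\right]$-basis of every proper right ideal of $\k\left[H\right]$ of finite codimension containing $w-1$}. Here I would induct on $\dim_{\k}\left(\k\left[H\right]/J\right)$ (equivalently on the size of the core graph), using Cohn's theory of right ideals in coproducts of firs to split $J$ along the decomposition $H=\left\langle w\right\rangle *H'$, the statement proved above that a right ideal with a generating set supported on a finite subtree of the Cayley graph admits a basis supported on that subtree, the non-power direction of Corollary \ref{cor:pa=00003D1 iff power} (equivalently, that $v-1$ is irreducible in $\k\left[K\right]$ whenever $v$ is a non-power in a free group $K$), and the Kaplansky-type module results of the paper, to prevent the local system $\k\left[H\right]/J$ from absorbing a nontrivial factor of $w-1$.

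The displayed claim of the previous paragraph is the entire difficulty: by Proposition \ref{prop:pi_q =00005Cle pi} it is in fact equivalent to the conjecture, and I do not see a uniform argument. A promising reduction is base change to an algebraic closure $\overline{\k}$: this preserves ranks, and by faithfully flat descent of free direct complements it preserves imprimitivity of $w-1$ (primitivity over $\overline{\k}\left[\F\right]$ would descend to primitivity over $\k\left[\F\right]$), so it is enough to prove $\pi\left(w\right)\le\pi_{\overline{\k}}\left(w\right)$; over $\overline{\k}$ one expects the local systems attached to critical ideals to be of the simplest possible type --- powers of $\overline{\k}^{\times}$-characters, i.e. modules of the form $\overline{\k}\left[t\right]/\left(t-\lambda\right)^{j}$ along the cycles of the core graph --- which would reduce the semisimple part to an explicit basis computation directly generalising the one behind Proposition \ref{prop:pi_q =00005Cle pi}, leaving the unipotent contributions (the ones already visible in $\crit_{q}\left(u^{p}\right)$ in characteristic $p$, where $u^{p}-1=\left(u-1\right)^{p}$) as the residual obstacle. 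I expect a full proof to combine this base-change reduction with the tree-support and unit-conjecture results established earlier in the paper.
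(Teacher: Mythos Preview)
This statement is Conjecture~\ref{conj:pi and pi_q} in the paper and is left \emph{open} there; the paper proves only the inequality $\pi_{q}(w)\le\pi(w)$ (Proposition~\ref{prop:pi_q =00005Cle pi}) and explicitly lists the equality among the open problems in Section~\ref{sec:Open-Questions}. So there is no ``paper's own proof'' to compare against, and what you have written is a proof \emph{strategy}, not a proof --- as you yourself acknowledge when you write ``I do not see a uniform argument'' and ``I expect a full proof to combine\ldots''.

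There are two concrete gaps. First, the structure theorem you invoke --- that for a finitely generated proper right ideal $I\lvertneqq\A$ one has $\A/I\cong(\k[H]/J)\otimes_{\k[H]}\A$ with $H\le\F$ finitely generated, $J\lvertneqq\k[H]$ of \emph{finite} $\k$-codimension, and $I=J\A$ --- is not established in the paper. The paper's ``structure theory'' (Schreier transversals, Lewin's basis, the exposure/coincidence machinery of Section~\ref{sec:The-free-group-algebra}) does not produce such a subgroup $H$; there is no Stallings-type folding for ideals here, and it is not clear that an arbitrary critical ideal factors through a subgroup algebra in this way with $\k[H]/J$ finite-dimensional. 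Without this decomposition, the rank inequality $\rk H\le\rk I$ you derive from the Euler-characteristic formula has no input. (Try $I=(1+a+b)\le\mathbb{F}_{2}[F(a,b)]$: the generator is supported on a tripod, not on a path, so it is not a translate of an element of any $\k[\langle u\rangle]$; you would need to exhibit the claimed $H$ and $J$ explicitly, and it is not obvious what they are.)

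Second, even granting the decomposition, the step ``$w$ primitive in $H$ $\Rightarrow$ $w-1$ primitive in $J$'' is precisely the hard direction of the conjecture transported to $\k[H]$, and you have not proved it --- you outline an induction on $\dim_{\k}(\k[H]/J)$ invoking Cohn's coproduct theory and the unipotent/semisimple splitting over $\overline{\k}$, but no step of that induction is carried out. The reduction to $\overline{\k}$ is a reasonable idea, but the ``residual obstacle'' you identify (the nilpotent local systems, already visible for $u^{p}$ in characteristic $p$) is exactly where a genuine new argument would be needed.
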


Conjecture \ref{conj:pi and pi_q}, along with Conjecture \ref{conj:general pi and fixed vectors},
are in line with a universal conjecture -- \cite[Conj.~1.13]{hanany2020word}
-- about the role of the primitivity rank $\pi\left(w\right)$ in
word measures on groups. For more background, see Section 1.6 in the
same article. \medskip{}

As part of our study of word measures in $\gln$ employing the free
group algebras, we also prove some results about these algebras which
may be of independent interest. For example, suppose that $T$ is
a subtree of the Cayley graph of $\F$ with respect to some basis.
If $I\le\A$ is a finitely generated ideal which is supported on $T$,
then $I$ admits a basis with a generating set supported on $T$ (Theorem
\ref{thm:basis from coincidences}). We also analyze the $\A$-module
$\A/\left(w-1\right)$ obtained as the quotient of the right $\A$-module
$\A$ by its submodule $\left(w-1\right)$. Theorem \ref{thm:cyclic generators of A_w}
proves an analogue of Kaplansky's unit conjecture for these modules
and shows that if $w$ is a non-power, then the only cyclic generators
of $\A/\left(w-1\right)$ are the trivial ones. See Section \ref{sec:Open-Questions}
for a further discussion of this line of research.

\subsection{General stable class functions and characters\label{subsec:General-stable-class}}

As mentioned above, some of the results concerning the function $\fix$
and its expectation under word measures are only an illustrative special
case of more general results. The variety of functions we consider
are those relating to \emph{stable} representations of the family
$\gl_{\bullet}\left(\k\right)$ (see \cite{putman2017representation,gan2018representation}).
Below we present the generalizations of Theorems \ref{thm:rational expression fix}
and \ref{thm:limit powers fix} and of Conjecture \ref{conj:general pi and fixed vectors}. 

First, we must remark on the unconventional definition we make in
this paper. Formal words in group theory are usually read from left
to right: this is why one usually considers \emph{right} Cayley graphs.
As a consequence, we consider here the slightly non-standard \emph{right}
action of $\gln$ on $V_{N}\defi K^{N}$, namely, we consider $V_{N}$
as row vectors, and the action of $g\in\gln$ on $v\in V_{N}$ is
given by $\left(v,g\right)\mapsto vg$. Thus, the action of $w\left(g_{1},\ldots,g_{r}\right)$
on a vector $v\in V_{N}$ can be thought of as the composition of
the action, letter by letter, from left to right -- the natural direction
in which the word is read.

Rather than considering only the number of vectors fixed by $g$,
we consider more generally the number of subspaces of $V$ of a fixed
dimension which are invariant under $g$ and on which $g$ acts in
a prescribed way. This is formalized as follows:
\begin{defn}
\label{def:B_w}Let $m\in\mathbb{Z}_{\ge1}$ and $\B\in\glm$. We
define a map \marginpar{$\protect\btil$}$\btil\colon\gln\to\mathbb{Z}_{\ge0}$
(valid for arbitrary $N$) as follows. For $g\in\gln$ we let $\btil\left(g\right)$
be the number of $m$-tuples of vectors $v_{1},\ldots,v_{m}\in V_{N}=\k^{N}$
on which the (right) action of $g$ can be described by a multiplication
from the left by the matrix $\B$. Namely, 
\[
\btil\left(g\right)=\#\left\{ M\in M_{m\times N}\left(\k\right)\,\middle|\,Mg=\B M\right\} .
\]

For example, if $\B=\left(1\right)\in\gl_{1}\left(\k\right)$, then
$\btil=\fix$. For $\B=\left(\lambda\right)\in\gl_{1}\left(\k\right)$,
the function $\btil$ gives the size of the eigenspace $V_{\lambda}\le V_{N}$
of an element. If $\B=I_{m}\in\gl_{m}\left(\k\right)$, then $\btil\left(g\right)=\fix\left(g\right)^{m}$,
and if 
\[
{\cal B}=\left(\begin{array}{ccccc}
 & 1\\
 &  & 1\\
 &  &  & \ddots\\
 &  &  &  & 1\\
1
\end{array}\right)\in\gl_{m}\left(\k\right),
\]
then $\btil\left(g\right)=\fix\left(g^{m}\right)$. The following
two theorems are the generalization of Theorems \ref{thm:rational expression fix}
and \ref{thm:limit powers fix}:

\end{defn}

\begin{thm}
\label{thm:rational general}Suppose that $w\in\F$, $m\in\mathbb{Z}_{\ge1}$
and $\B\in\glm$. Then for every large enough $N$, the expectation
$\mathbb{E}_{w}\left[\btil\right]$ is given by a rational function
in $q^{N}$.
\end{thm}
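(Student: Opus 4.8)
The plan is to compute $\mathbb{E}_{w}\left[\btil\right]$ by decomposing the count of solution matrices $M$ to $Mg = \B M$ according to the $\F$-submodule structure they generate, and then to show each piece is a rational function of $q^{N}$. First I would fix a word $w$ of length $L$ and a presentation of the $w$-random element as $g = w(g_1,\ldots,g_r)$ with $g_1,\ldots,g_r$ independent uniform in $\gln$. For a fixed $\B \in \glm$, a matrix $M \in M_{m\times N}(\k)$ satisfies $Mg = \B M$ precisely when the assignment $e_i \mapsto$ ($i$-th row of $M$) intertwines the $\k[\mathbb{Z}]$-action given by $\B$ on $\k^m$ (where the generator acts as $\B$) with the action of $w$ on row vectors in $V_N$. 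I would reinterpret this as follows: since $w$ is a word in the $g_j$, the rows of $M$ together with their translates under the relevant prefixes of $w$ span a subspace of $V_N$, and the pattern of linear dependencies among these translates is governed by a finitely generated right ideal $I \le \A = \k[\F]$. Concretely, following the strategy the introduction advertises for $\mathbb{E}_w[\fix]$, one writes $\btil(g)$ as a sum, over a finite set of f.g.\ ideals $I \le \A$ (those "supported" on a subtree $T$ of the Cayley graph determined by $w$ and $m$, using Theorem~\ref{thm:basis from coincidences}), of the number of $M$ whose associated dependency ideal is exactly $I$.

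The second step is to pass from "dependency ideal exactly $I$" to "dependency ideal contains $I$" by Möbius inversion over the (finite) poset of relevant ideals — this is why we need the finiteness coming from the subtree support result. So it suffices to show that for each fixed f.g.\ ideal $I \le \A$, the quantity $\#\{M : Mg = \B M,\ \text{all relations in }I\text{ hold}\}$, averaged over uniform $g_1,\ldots,g_r$, is a rational function of $q^N$. Here the key point is that once we impose the relations in $I$, the data of $M$ is equivalent to choosing an embedding of the finite-dimensional $\k$-vector space $\A/I$-module structure (or rather, the image of the free module modulo $I$) into $V_N$ as a subspace carrying a prescribed $w$-action, together with a choice of how the $g_j$ act on a complement. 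By Cohn–Lewin, $I$ is free of some rank $\rho = \rk I$, so $\A/I$ is, as a right $\A$-module, determined up to the combinatorial data of a finite labelled graph (a "core graph"/Stallings-type object for ideals), and the number of ways to realize this inside $V_N$ with the complementary freedom for $g_1,\ldots,g_r$ is a product of terms of the form $\prod_{i}(q^N - q^{a_i})$ in the numerator and $|\gln|^r = \prod_i (q^N - q^i)^r$ in the denominator — hence rational in $q^N$ for $N$ large enough (large enough that all the relevant dimensions, bounded by $|w|$ and $m$, actually fit in $V_N$).

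The main obstacle, I expect, is making precise the reduction in the first two steps: showing that the set of ideals $I$ that can arise is finite and is exactly the set of ideals with a generating set supported on an explicit finite subtree $T = T(w,m)$ of the Cayley graph of $\F$, and that each such $I$ contributes a well-defined, $N$-independent "shape" whose realizations in $V_N$ one can count. This is where Theorem~\ref{thm:basis from coincidences} (a basis supported on $T$) does the real work: it guarantees that the combinatorial object attached to $I$ is finite and $N$-independent, so that the counting of embeddings into $V_N$ only involves finitely many dimension parameters. A secondary technical point is handling the $\B$-equivariance uniformly: imposing $Mg = \B M$ rather than $Mg = M$ changes which translates of the rows of $M$ coincide (now we track the $\langle \B\rangle$-orbit structure on $\k^m$), but since $\B$ is fixed and finite, this only enlarges the finite index set of relevant ideals and does not affect rationality.

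Finally, once each summand is a ratio of products of $(q^N - q^{j})$'s, the alternating sum from Möbius inversion is again a rational function of $q^N$ with rational coefficients, and clearing denominators shows it is valid for all $N$ past the threshold where every relevant subspace dimension is realizable (which can be taken to be $N \ge |w|$, or the sharper bound promised in Section~\ref{sec:Rational-expressions}). This yields Theorem~\ref{thm:rational general}; specializing to $\B = (1) \in \gl_1(\k)$ recovers Theorem~\ref{thm:rational expression fix}.
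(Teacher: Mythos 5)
Your overall framing -- follow the trajectories of the rows of $M$ through the letters of $w$, group configurations by the pattern of $\k$-linear dependencies they satisfy, and show each pattern contributes a ratio of products of factors $\left(q^{N}-q^{j}\right)$ -- is indeed the paper's approach. However, two of your steps would not go through as written. First, for $m\ge2$ the dependency pattern is not a right ideal of $\A$: the relations mix the $m$ rows and their $w$-prefix translates, so the correct bookkeeping object is a submodule of the free module $\A^{m}$, generated by elements supported on the union $\mathbb{W}$ of $m$ disjoint copies of $\left[1,w\right]$ and containing the $m$ elements $\eqb$ that encode $Mg=\B M$. This is not merely ``an enlargement of the index set of ideals''; it is a different object, and treating it as an ideal of $\A$ loses the cross-row relations.

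Second, and more seriously, the M\"{o}bius-inversion detour is where the argument breaks. The quantity with a clean product formula is the count of configurations whose dependencies are \emph{exactly} a given pattern $\Delta$: the trajectories realizing exactly $\Delta$ number $\indep_{m\left(\left|w\right|+1\right)-\dim\Delta}\left(V_{N}\right)$, and for each such trajectory the number of compatible tuples $g_{1},\ldots,g_{r}$ is a fixed product of terms $\left(q^{N}-q^{j}\right)$, precisely because the exact dimensions of the spans over the $b$-edge origins are known. The ``contains $I$'' count you reduce to has no such single product form (it is a sum of exact counts over all larger patterns), and your justification for its rationality -- counting ``embeddings'' of the quotient into $V_{N}$ -- actually describes the exact count, contradicting the ``at least $I$'' framing; counting arbitrary homomorphisms instead would not yield the stated product, since the number of compatible $g$'s depends on the actual ranks, i.e.\ on the exact pattern. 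The paper avoids this entirely: it sums directly over submodules $M\le_{\mathbb{W}}\A^{m}$ with $M\supseteq\eqb$ (finitely many, simply because $\k$ and $\mathbb{W}$ are finite -- no appeal to Theorem~\ref{thm:basis from coincidences} is needed for rationality; that theorem enters only later, to identify the \emph{order} of each summand with $\rk M$), arriving at \eqref{eq:general rational expression with submodules}, valid for $N\ge\max_{b\in B}e_{b}\left(\mathbb{W}\right)$. If you drop the inversion step and count exact patterns directly, your outline becomes the paper's proof.
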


\begin{thm}
\label{thm:limit powers general}Let $1\ne w=u^{d}$ with $d\ge1$
and $u$ a non-power. For every $m\in\mathbb{Z}_{\ge1}$ and $\B\in\glm$,
the limit $\lim_{N\to\infty}\mathbb{E}_{w}\left[\btil\right]$ exists
and depends only on $d$ and not on $u$.
\end{thm}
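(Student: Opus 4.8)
The plan is to reduce Theorem~\ref{thm:limit powers general} to a statement purely about uniformly random elements of $\gln$, mimicking the argument sketched in the excerpt right after Theorem~\ref{thm:limit powers fix}. First I would observe that $\btil$ depends only on the conjugacy class of $g$ in $\gln$, and more precisely that $\btil(g)$ is determined by the rational canonical form of $g$ (equivalently, by the $\k[x]$-module structure of $V_N$ with $x$ acting as $g$). Indeed, $\btil(g) = \#\{M \in M_{m\times N}(\k) : Mg = \B M\}$ is the cardinality of $\Hom_{\k[x]}(W_\B, V_N)$ where $W_\B$ is the $m$-dimensional $\k[x]$-module on which $x$ acts as $\B$, and $V_N$ is the $\k[x]$-module just described. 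So $\mathbb{E}_w[\btil]$ is a word-measure expectation of a class function expressible via module homomorphism counts.

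Next I would invoke the general philosophy established for such limits in the word-measure literature (and which this paper develops in its own language): for $w = u^d$ with $u$ a non-power, a $w$-random element of $\gln$ is distributed as $h^d$ where $h$ is $u$-random, and as $N \to \infty$ the $u$-random distribution converges, on the level of all stable class functions, to the uniform distribution on $\gln$ — this is exactly the content of the $\pi_q(u) = \infty \Rightarrow$ uniform, or more generally the $\crit$-expansion, that underlies Theorems~\ref{thm:rational expression fix}--\ref{thm:fixed vectors in pi=00003D2}. Concretely, I would use Theorem~\ref{thm:rational general} applied to $w = u^d$: it gives that $\mathbb{E}_w[\btil]$ is a rational function in $q^N$, hence has a limit as $N\to\infty$, and the machinery producing that rational function (the sum over finitely many f.g.\ ideals of $\A$, with each ideal contributing according to its rank) shows that the $N\to\infty$ limit picks out only the contributions of the ``smallest'' ideals, which for the quantity $\mathbb{E}_{u^d}[\btil]$ turn out to depend only on $d$. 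The cleanest way to package this: the limit equals $\lim_{N\to\infty}\mathbb{E}_{a^d}[\btil]$, where $a$ is a single free generator, because the difference $\mathbb{E}_{u^d}[\btil] - \mathbb{E}_{a^d}[\btil]$ is governed by ideals of $\A$ whose rank exceeds that of the "diagonal" ideals coming from the abelian subalgebra $\k[x] \cong \k[\langle a\rangle] \hookrightarrow \A$, and such ideals contribute only lower-order terms in $q^{-N}$. I would make this rigorous by tracking, in the proof of Theorem~\ref{thm:rational general}, exactly which ideals $I \le \A$ contribute a term of order $\Theta(1)$ (as opposed to $o(1)$) to $\mathbb{E}_{u^d}[\btil]$: these should be precisely the ideals contained in the subalgebra generated by $u$, which are in bijection (via $u \mapsto x$) with ideals of $\k[x]$ containing $x^d - 1$, a set manifestly independent of $u$.

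Finally I would extract the explicit limit (not strictly required by the statement, but natural to include): for the uniform element $g \in \gln$, the $\k[x]$-module $V_N$ decomposes according to a random ``Cohen–Lenstra''-type distribution on isomorphism classes of $\k[x]$-modules of dimension $N$, and as $N\to\infty$ the local statistics at each monic irreducible $p(x) \in \k[x]$ stabilize; counting $\Hom_{\k[x]}(W_\B, \cdot)$ against $x^d$-torsion then yields a value that is a (finite) sum over monic divisors of $x^d-1$, recovering \eqref{eq:limit of fix} in the case $\B=(1)$, $m=1$. This last part can be quoted from the literature on random matrices over finite fields (e.g.\ \cite{fulman2016distribution} and references therein) once we know the limit depends only on $d$.

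\textbf{Main obstacle.} The crux is the second step: showing that the only $\Theta(1)$-contributions to $\mathbb{E}_{u^d}[\btil]$ come from ideals living inside the abelian subalgebra generated by $u$, and that for a genuine word $u$ (not just a generator) no ``exotic'' rank-$1$ ideal of $\A$ sneaks in an extra constant-order term. This requires understanding, for a non-power $u$, which proper ideals $I \le \A$ contain $u^d - 1$ as an imprimitive element and have rank $1$ — by Corollary~\ref{cor:pa=00003D1 iff power} these force $u^d$ to be a proper power (which it is, with exponent $d$) and pin down $\crit_q(u^d)$ exactly as the ideals $(p(u))$ for monic $p \mid x^d-1$, $p \neq 1, x^d-1$. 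Leveraging that characterization to control \emph{all} of $\mathbb{E}_{u^d}[\btil]$ up to $o(1)$, uniformly in the choice of $u$, is where the real work lies; everything else is bookkeeping with the rational-function form from Theorem~\ref{thm:rational general} and standard finite-field linear algebra.
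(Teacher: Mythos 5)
Your high-level skeleton matches the paper's: use the rational expression of Theorem \ref{thm:rational general} together with the rank-based order of each summand (Corollary \ref{cor:contrib is m-rank}), observe that the limit is the number of leading-order summands, and show that these are exactly the ones living over the abelian subalgebra $\k\left[\left\langle u\right\rangle\right]\cong\k\left[x,x^{-1}\right]$, hence independent of $u$. But the step you yourself flag as the ``main obstacle'' is precisely the mathematical content of the theorem, and you do not supply it. Two concrete problems. First, your proposed shortcut --- ``a $u$-random element converges, on the level of all stable class functions, to the uniform distribution, hence $\mathbb{E}_{u^{d}}\left[\btil\right]\to\mathbb{E}_{a^{d}}\left[\btil\right]$'' --- is circular: that convergence statement for all $\btil$ is exactly the $d=1$ case of Theorem \ref{thm:limit powers general} (it is \emph{not} the primitive-word fact $\pi_{q}=\infty\Rightarrow$ uniform, since $u$ is merely a non-power), and writing $\btil\left(h^{d}\right)=\widetilde{\B''}\left(h\right)$ for a larger block matrix $\B''$ only transfers the problem back to that unproven $d=1$ case. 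Second, your plan to ``pin down'' the constant-order contributions via Corollary \ref{cor:pa=00003D1 iff power} and the description of $\crit_{q}\left(u^{d}\right)$ only covers $m=1$: for general $\B\in\glm$ the summands are indexed by submodules $M\le_{\mathbb{W}}\A^{m}$ containing $\eqb$, not by ideals of $\A$, and no classification of the rank-$m$ such submodules is available to quote. The paper has to prove from scratch that the minimal rank is $m$ (Lemma \ref{lem:The-smallest-rank is m}), and then, via the exposure process, the $\theta$-map to $\mathrm{span}_{\k}\left\{ e_{1},\ldots,e_{m}\right\}$ and the regularity of $\B$ (Lemma \ref{lem:theta at coincidences are basis}), Lemma \ref{lem:f_ez supported on D_b}, and a centralizer argument, that every rank-$m$ submodule containing $\eqb$ is generated on the monomials $\left\{ eu^{j}\right\}$ (Lemma \ref{lem:enough to show submodules are generated on <u>} then finishes). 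None of this ``bookkeeping,'' as you call it, is routine, and without it your argument establishes only the $m=1$ statement already contained in Theorem \ref{thm:limit powers fix}.

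A smaller remark: your opening reformulation of $\btil\left(g\right)$ as $\left|\Hom_{\k\left[x\right]}\left(W_{\B},V_{N}\right)\right|$ is correct and could in principle support a different, purely matrix-theoretic proof (conditioning on the $\k\left[x\right]$-module structure of $V_{N}$ under a $u$-random element), but that route again requires knowing that this module structure equidistributes like the uniform one as $N\to\infty$, which is the same missing ingredient in different clothing.
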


In the special case of $\btil=I_{m}\in\gl_{m}\left(\k\right)$, Theorem
\ref{thm:rational general} appeared in \cite{West19}. The same special
case of Theorem \ref{thm:limit powers general} first appeared in
the same thesis, and then, independently, in \cite[Sec.~8]{eberhard2021babai}. 

In particular, Theorem \ref{thm:limit powers general} captures all
moments of the number of fixed vectors under the $w$-measure. So
if $w=u^{d}$, all these moments converge, as $N\to\infty$, to the
same limits as for $w=a^{d}$, namely as for a $d$-th power of a
uniformly random element of $\gln$. Denote the number of fixed vectors
in $\k^{N}$ of a $w$-random element of $\gln$ by $\fix_{w,N}$\marginpar{$\protect\fix_{w,N}$}.
When $w=a$, a limit distribution as $N\to\infty$ is known to exist
\cite[Thm.~2.1]{fulman2016distribution}\footnote{This was originally proved by Rudvalis and Shinoda -- see \cite{fulman2016distribution}.}.
Although this limit distribution is not determined by its moments,
we do prove the following in Appendix \ref{sec:Matan}:
\begin{thm}
\label{thm:limit distr}Let $1\ne w\in\F$ be a non-power. Then the
random variables $\fix_{w,N}$ have a limit distribution, and this
limit distribution is identical to the one of $\fix_{a,N}$ described
in \cite[Thm.~2.1]{fulman2016distribution}.
\end{thm}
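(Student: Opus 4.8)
The plan is to upgrade the moment convergence of Theorem~\ref{thm:limit powers general} to convergence in distribution; a bare moment argument cannot work, since (as the excerpt notes) the limit law of $\fix_{a,N}$ is not determined by its moments, so the real task is to pick the right test functions. Write $\fix_{w,N}=q^{d_{w,N}}$ with $d_{w,N}=\dim_{\k}\ker\!\left(w\!\left(g_{1},\ldots,g_{r}\right)-1\right)\in\left\{ 0,\ldots,N\right\} $; it then suffices to show that each $\mathbb{P}\!\left(d_{w,N}=j\right)$ converges, as $N\to\infty$, to a limit $p_{j}^{\infty}$ independent of the non-power $w$, since taking $w=a$ and comparing with \cite[Thm.~2.1]{fulman2016distribution} will identify the limit. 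For $k\ge0$ let $N_{k}\colon\gln\to\mathbb{Z}_{\ge0}$ count ordered linearly independent $k$-tuples of vectors of $\k^{N}$ fixed by $g$, so that $N_{k}\left(g\right)=\prod_{i=0}^{k-1}\bigl(\fix\left(g\right)-q^{i}\bigr)=\left|\gl_{k}\left(\k\right)\right|\binom{d\left(g\right)}{k}_{q}$. These are fixed polynomials in $\fix$ of degree $k$, hence span the same space as $\{\fix^{m}\}_{m\ge0}$; crucially, $N_{k}\left(g\right)=0$ whenever $d\left(g\right)<k$ and $N_{k}\left(g\right)\ge\left|\gl_{k}\left(\k\right)\right|$ whenever $d\left(g\right)\ge k$, so the $N_{k}$ are ``triangular'' against the indicators $\mathbf{1}\!\left[d=j\right]$.

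First I would check that $\mathbb{E}_{w}\!\left[N_{k}\right]\to1$ for every $k$. Since $w\ne1$ is a non-power, Theorem~\ref{thm:limit powers general} (taking $\B=I_{m}$, for which $\btil=\fix^{m}$, and noting the resulting limit depends only on $d=1$) gives $\lim_{N}\mathbb{E}_{w}\!\left[\fix^{m}\right]=\lim_{N}\mathbb{E}_{a}\!\left[\fix^{m}\right]$ for all $m$, hence $\lim_{N}\mathbb{E}_{w}\!\left[N_{k}\right]=\lim_{N}\mathbb{E}_{a}\!\left[N_{k}\right]$; and $\mathbb{E}_{a}\!\left[N_{k}\right]=1$ exactly whenever $N\ge k$, because $\gln$ acts transitively on ordered linearly independent $k$-tuples, so a given such tuple is fixed by a uniform $g$ with probability $1/\#\{\text{such tuples}\}$. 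Expanding $N_{k}^{2}$ as a polynomial in $\fix$ likewise yields $\sup_{N}\mathbb{E}_{w}\!\left[N_{k}^{2}\right]<\infty$. Finally, $N_{k}\ge\left|\gl_{k}\left(\k\right)\right|\mathbf{1}\!\left[d\ge k\right]$ gives the uniform tail bound $\mathbb{P}\!\left(d_{w,N}\ge k\right)\le\mathbb{E}_{w}\!\left[N_{k}\right]/\left|\gl_{k}\left(\k\right)\right|$; since $\left|\gl_{k}\left(\k\right)\right|=q^{\binom{k}{2}}\prod_{i=1}^{k}\left(q^{i}-1\right)$ grows like $q^{k^{2}}$, this makes $\{d_{w,N}\}_{N}$ uniformly tight.

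By tightness, every subsequence of $\left(d_{w,N}\right)$ has a sub-subsequence converging in distribution to a probability measure $\mu$ on $\mathbb{Z}_{\ge0}$, and $\sup_{N}\mathbb{E}_{w}\!\left[N_{k}^{2}\right]<\infty$ makes $\{N_{k}\!\left(d_{w,N}\right)\}_{N}$ uniformly integrable, so $\mathbb{E}_{\mu}\!\left[N_{k}\right]=\lim\mathbb{E}_{w}\!\left[N_{k}\right]=1$ for every $k$. It remains --- this is the crux --- to show that a probability measure $\mu$ with $\mathbb{E}_{\mu}\!\left[N_{k}\right]=1$ for all $k$ is unique. Here the tail bound bootstraps: $\mathbb{E}_{\mu}\!\left[N_{k}\right]=1$ forces $\mu\!\left(\{j\ge k\}\right)\le1/\left|\gl_{k}\left(\k\right)\right|$, hence $\mu\left(j\right)=O\!\left(q^{-j^{2}}\right)$. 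The pointwise $q$-binomial inversion $\mathbf{1}\!\left[d=j\right]=\sum_{k\ge j}\left(-1\right)^{k-j}q^{\binom{k-j}{2}}\binom{k}{j}_{q}N_{k}/\left|\gl_{k}\left(\k\right)\right|$ (a finite sum for each $g$, as $N_{k}\left(g\right)=0$ for $k>d\left(g\right)$) can then be integrated term by term --- using $\binom{k}{j}_{q}\le q^{j\left(k-j\right)}$, the identity $\sum_{m}q^{\binom{m}{2}}\binom{n}{m}_{q}=\prod_{i=0}^{n-1}\bigl(1+q^{i}\bigr)=O\!\left(q^{\binom{n}{2}}\right)$, and $\mu\left(j\right)=O\!\left(q^{-j^{2}}\right)$, one checks the double series converges absolutely --- giving
\[
\mu\left(j\right)=\sum_{k\ge j}\frac{\left(-1\right)^{k-j}q^{\binom{k-j}{2}}}{\left|\gl_{k}\left(\k\right)\right|}\binom{k}{j}_{q}\ =:\ p_{j}^{\infty},
\]
a quantity independent of $\mu$. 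Hence all subsequential limits coincide, so $d_{w,N}$ converges in distribution to $\left(p_{j}^{\infty}\right)_{j}$, and therefore $\fix_{w,N}=q^{d_{w,N}}$ converges in distribution to $q^{d^{\infty}}$ with $d^{\infty}\sim\left(p_{j}^{\infty}\right)_{j}$. Since $\left(p_{j}^{\infty}\right)_{j}$ does not involve $w$, the same conclusion holds for $w=a$, and \cite[Thm.~2.1]{fulman2016distribution} is precisely a description of that limit law; the two therefore agree.

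The hard part is the uniqueness step, and the whole argument turns on the super-exponential growth of $\left|\gl_{k}\left(\k\right)\right|$, which does triple duty: it supplies the uniform tightness of $\{d_{w,N}\}_{N}$, it forces any candidate limit law to have Gaussian-type tails $\mu\left(j\right)=O(q^{-j^{2}})$, and it makes the weights $q^{\binom{k-j}{2}}\binom{k}{j}_{q}/\left|\gl_{k}\left(\k\right)\right|$ in the $q$-binomial inversion small enough (the $m$-th term decaying like $q^{-m^{2}/2}$) that the term-by-term integration is legitimate. The $q$-binomial manipulations and the exponent bookkeeping in that last estimate are routine but must be carried out with some care; everything else is soft.
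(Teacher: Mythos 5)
Your proof is correct, but it takes a genuinely different route from the paper's. The paper also starts from the moment convergence supplied by Theorem \ref{thm:limit powers general} (with $\B=I_k$), but it resolves the indeterminacy issue analytically: it identifies the limit law as a shifted Al-Salam--Carlitz measure, invokes the result of Berg that its Stieltjes moment problem (support in $[0,\infty)$, equivalently $[1,\infty)$ before shifting) is determinate even though the Hamburger problem is not, and then concludes via tightness, Prokhorov and the Portmanteau theorem applied to the closed set $[1,\infty)$. You instead exploit the lattice structure of the support, writing $\fix_{w,N}=q^{d_{w,N}}$, and replace raw moments by the counts $N_{k}=\prod_{i=0}^{k-1}\left(\fix-q^{i}\right)=\left|\gl_{k}\left(\k\right)\right|\binom{d}{k}_{q}$ of independent fixed $k$-tuples --- the $q$-analogue of the factorial-moment method --- whose limits equal $1$ for every $k$; the bound $N_{k}\ge\left|\gl_{k}\left(\k\right)\right|\mathbf{1}\left[d\ge k\right]$ then forces super-exponential tails on any subsequential limit $\mu$, and the orthogonality $\sum_{k}\left(-1\right)^{k-j}q^{\binom{k-j}{2}}\binom{k}{j}_{q}\binom{d}{k}_{q}=\delta_{jd}$ recovers the point masses $\mu\left(j\right)$ by an absolutely convergent inversion (the exponent bookkeeping indeed closes: with $s=d-j$ the $d$-th term is $O\bigl(q^{-j^{2}-js-s^{2}/2+O(d)}\bigr)$). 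This makes the uniqueness step self-contained --- no appeal to moment-problem determinacy --- and gives an explicit formula for the limiting masses, at the cost of the $q$-binomial estimates; the paper's route is shorter given Berg's theorem and ties the limit to the orthogonal-polynomial literature. Two small points to tidy in your write-up, neither a gap: for uniform tightness you should handle the finitely many small $N$ (e.g.\ via $d_{w,N}\le N$, or by working with $\limsup_{N}$), and you should note that $\sup_{N}\mathbb{E}_{w}\left[N_{k}\right]$ and $\sup_{N}\mathbb{E}_{w}\left[N_{k}^{2}\right]$ are finite because each expectation is finite and the sequences converge.
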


Theorem \ref{thm:limit distr} is analogous to the $L=d=1$ case of
Nica's main Theorem 1.1 from \cite{nica1994number}, which revolves
around the limit distribution of the number of fixed point in $w$-random
permutations. We suspect that Theorem \ref{thm:limit distr} can be
generalized to a full analogue of Nica's result (and see Section \ref{sec:Open-Questions}).
\begin{rem}
One can further generalize Theorem \ref{thm:rational general} to
more than one word. For example, for any tuple of words $\wl\in\F$,
consider an $\ell$-tuple of random elements 
\[
\overline{w_{1}}=w_{1}\left(g_{1},\ldots,g_{r}\right),\ldots,\overline{w_{\ell}}=w_{\ell}\left(g_{1},\ldots,g_{r}\right)\in\gln,
\]
where $g_{1},\ldots,g_{r}$ are independent, uniformly random elements
of $\gln$, and consider expressions like $\mathbb{E}\left[\fix\left(\overline{w_{1}}\right)\cdot\fix\left(\overline{w_{2}}\right)\cdots\fix\left(\overline{w_{\ell}}\right)\right]$.
The same argument given in the proof of Theorem \ref{thm:rational general}
shows that this expectation is given by a rational expression in $q^{N}$.
Also, Corollary 1.3 in \cite{West19} shows that the difference $\mathbb{E}\left[\fix\left(\overline{w_{1}}\right)\cdots\fix\left(\overline{w_{\ell}}\right)\right]-\mathbb{E}_{w_{1}}\left[\fix\right]\cdots\mathbb{E}_{w_{\ell}}\left[\fix\right]=O\left(\frac{1}{q^{N}}\right)$
if and only if no pair of words is conjugated into the same cyclic
subgroup of $\F$.
\end{rem}

We further introduce a generalization of Conjecture \ref{conj:general pi and fixed vectors}.
Consider \marginpar{$\protect\R$}\label{page:R ring of stable class functions}
\[
\R\defi\mathbb{C}\left[\left\{ \btil\,\middle|\,\B\in\glm,m\in\mathbb{Z}_{\ge0}\right\} \right],
\]
the $\mathbb{C}$-algebra generated by all functions $\btil$ from
Definition \ref{def:B_w}. Note that every element of $\R$ is a (class)
function defined on $\gln$ for every $N$. Rather than formal polynomials
in the $\btil$'s, the elements of $\R$ are functions on $\gl_{\bullet}\left(\k\right)$,
so two elements giving the same function on $\gln$ for every $N$
are identified. For example, every conjugate of $\B$ gives rise to
the same function as $\B$. In fact, this is the only case where two
elements give the same function: $\btil_{1}=\btil_{2}$ if and only
if $\B_{1}$ and $\B_{2}$ belong to $\gl_{m}\left(\k\right)$ for
the same $m$ and are conjugates -- see \cite[Cor.~3.1]{EW-P-Sh2024}.
If we also include the constant function $1$, thought of as $\btil$
where $\B=e\in\gl_{0}\left(\k\right)\defi\left\{ e\right\} $, then
$\R$ is the $\mathbb{C}$-span of the $\btil$'s: indeed, if $\B_{1}\in\gl_{m_{1}}\left(\k\right)$
and $\B_{2}\in\gl_{m_{2}}\left(\k\right)$, then $\btil_{1}\cdot\btil_{2}=\widetilde{\B_{1}\oplus\B_{2}}$
where $\B_{1}\oplus\B_{2}\in\gl_{m_{1}+m_{2}}\left(\k\right)$ is
the suitable block-diagonal matrix. In the same article, it is shown
that $\R$ is, in fact, a graded algebra and admits a linear basis
consisting of $\left\{ \btil\right\} $, where $\B$ goes over exactly
one representative from every conjugacy class in all $\glm$ $\left(m\ge0\right)$.

Some of the functions in $\R$ coincide, for large enough $N$, with
irreducible characters of $\gln$. For example, for $N\ge2$, the
action of $\gln$ on the projective space $\mathbb{P}^{N-1}\left(\k\right)$
decomposes to the trivial representation and an irreducible representation
whose character we denote $\chi^{\mathbb{P}}$. Then for every $N\ge2$,
the character $\chi^{\mathbb{P}}$ is equal to an element in $\R$:
\[
\chi^{\mathbb{P}}=\frac{1}{q-1}\sum_{\lambda\in\k^{*}}\left(\tilde{\lambda}-1\right)-1
\]
(here $\tilde{\lambda}$ is the function corresponding to $\lambda\in\gl_{1}\left(\k\right)$).
In \cite{EW-P-Sh2024} it is shown that the set of families of irreducible
characters $\left\{ \chi_{N}\in\gln\right\} _{N\ge N_{0}}$ which
coincide with elements of $\R$ is precisely the set of \emph{stable}
irreducible representations of $\gl_{\bullet}\left(\k\right)$ as
in \cite{gan2018representation}. Our generalization of Conjecture
\ref{conj:general pi and fixed vectors} deals with these families
of irreducible characters.
\begin{conjecture}
\label{conj:irreducible chars}Let $\chi$ be a stable character of
$\gl_{\bullet}\left(\k\right)$, namely, an element of $\R$ which
coincides, for every large enough $N$, with some irreducible character
of $\gln$. Then
\[
\mathbb{E}_{w}\left[\chi\right]=O\left(\left(\dim\chi\right)^{1-\pi_{q}\left(w\right)}\right).
\]
\end{conjecture}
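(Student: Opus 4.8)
The plan is to reduce the statement about a general stable character $\chi$ to the already-understood case of the functions $\btil$, and then to the fundamental case $\chi^{\mathbb{P}}$ (or, essentially, $\fix$), by exploiting the linear structure of $\R$ together with the multiplicativity of dimensions under the block-diagonal construction $\B_{1}\oplus\B_{2}$. First I would recall from \cite{EW-P-Sh} that $\R$ is a graded $\mathbb{C}$-algebra with linear basis $\left\{ \btil\right\}$ indexed by conjugacy classes in $\gl_{\bullet}\left(\k\right)$, and that $\dim\chi$, for a stable character $\chi\in\R$, is itself (for $N$ large) a polynomial-type expression in $q^{N}$ whose leading behavior is governed by the grading. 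The right notion here is that $\mathbb{E}_{w}\left[\,\cdot\,\right]$ is a linear functional on $\R$, and Conjecture \ref{conj:general pi and fixed vectors} (in the cases proven: $\pi_{q}(w)\le 2$, and $\pi_{q}(w)=\infty$) gives the decay $\mathbb{E}_{w}\left[\fix\right]-2 = O\!\left(q^{-N(\pi_{q}(w)-1)}\right)$. So the first substantive step is to upgrade this from $\fix$ to every $\btil$: using $\widetilde{\B_{1}\oplus\B_{2}}=\btil_{1}\cdot\btil_{2}$ and the combinatorial/ideal-theoretic machinery of Sections \ref{sec:Rational-expressions}--\ref{sec:The-free-group-algebra}, express $\mathbb{E}_{w}\left[\btil\right]$ as a sum over finitely generated ideals of $\A$ attached to $\B$ and $w$, where each ideal $I$ contributes at order $q^{-N\cdot\rk I}$ up to lower order, exactly as in the $\fix$ analysis. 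This shows $\mathbb{E}_{w}\left[\btil\right]$ decays like $q^{-N\cdot\pi_{q}(w,\B)}$ for an appropriate relative $q$-primitivity rank, with the constant term coming from the ``small'' ideals.

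The second step is to track dimensions. For a stable irreducible $\chi$ written in the basis of $\R$, the leading graded component determines $\dim\chi \asymp q^{N\cdot g(\chi)}$ where $g(\chi)$ is the top degree appearing; and one must check the bookkeeping that, when $\chi$ is built from $\btil$'s of total ``size'' controlling the exponent, the contribution of an ideal of rank $\rho$ to $\mathbb{E}_w[\chi]$ is $O(q^{-N\rho})$ while $\dim\chi$ carries the compensating power. Concretely, I would show that for every $\btil$ the quantity $\mathbb{E}_w[\btil]/(\dim$-normalization$)$ already satisfies the bound, and then extend additively: since $\R$ is the $\mathbb{C}$-span of the $\btil$'s and $\mathbb{E}_w$ and $\dim$ both behave well under $\oplus$, a stable character is a fixed finite $\mathbb{C}$-linear combination of $\btil$'s, and the worst term dominates. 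The key input making the exponent come out as $\pi_q(w)$ rather than something smaller is the structural fact (Corollary \ref{cor:finitely many critical extensions}, Corollary \ref{cor:values of pi_q}) that no proper ideal containing $w-1$ imprimitively has rank below $\pi_q(w)$; this is what forbids spurious low-order terms.

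The main obstacle, and where the argument is genuinely conjectural, is precisely the generalization of Conjecture \ref{conj:general pi and fixed vectors} beyond $\pi_q(w)\le 2$: the ideal-theoretic expansion of $\mathbb{E}_w[\btil]$ will feature ideals of every rank up to $r$, and one needs to show that each such ideal $I$ contributes at order exactly $q^{-N\cdot\rk I}$ with a \emph{bounded} (in $N$) coefficient, and that no cancellations or hidden accumulations push the leading term up. For $\pi_q(w)=1,2$ this is handled by the explicit analysis in the paper, but in general it requires understanding the ``higher'' strata of the stratification of $M_{m\times N}(\k)$-solutions to $Mw(g)=\B M$ and relating the number of solutions with a given associated ideal to $q^{N(\dim - \rk)}$ up to $O(q^{N(\dim-\rk-1)})$. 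I would therefore phrase the final result conditionally: \emph{assuming Conjecture \ref{conj:general pi and fixed vectors}, Conjecture \ref{conj:irreducible chars} follows} by the linearity-and-grading reduction above; and unconditionally one obtains it for $\F=\F_2$ and for $w$ primitive, mirroring Corollary \ref{cor:F2}. The cleanest unconditional partial statement worth recording is: for every stable character $\chi$ and every non-power $w$, $\mathbb{E}_w[\chi] = c_\chi + O\!\left((\dim\chi)^{-1}\cdot(\text{poly})\right)$ with $c_\chi$ the ``uniform'' value, which is exactly the $\pi_q(w)\ge 2$ instance of the bound and follows from Theorem \ref{thm:fixed vectors in pi=00003D2} together with the reduction.
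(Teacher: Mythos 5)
This statement is an open conjecture in the paper: the authors give no proof of it, and it is not implied by any of their theorems (they only note that it is consistent with, and would generalize, their results on $\fix$). So there is no ``paper proof'' to compare against, and the question is whether your proposed reduction is sound. It is not. The central flaw is the step where you pass from the $\btil$'s to a stable irreducible $\chi$ by writing $\chi$ as a finite $\mathbb{C}$-linear combination of $\btil$'s and arguing that ``the worst term dominates.'' Each $\mathbb{E}_{w}\left[\btil\right]$ has a nonzero limit as $N\to\infty$ (e.g.\ $\mathbb{E}_{w}\left[\fix\right]\to2$ for non-powers), so a term-by-term bound on the basis expansion can only ever give $\mathbb{E}_{w}\left[\chi\right]=O\left(1\right)$, whereas the conjecture demands genuine decay, $O\!\left(\left(\dim\chi\right)^{1-\pi_{q}\left(w\right)}\right)$ with $\dim\chi$ growing like a power of $q^{N}$. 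The whole content of the conjecture is the \emph{cancellation} of the constant terms and of the next $\pi_{q}\left(w\right)-1$ orders across that linear combination, which your argument never addresses; establishing it would require asymptotics for every $\mathbb{E}_{w}\left[\btil\right]$ to order $q^{-N\pi_{q}\left(w\right)}$ with explicit coefficients, far beyond Corollary \ref{cor:contrib is m-rank}.

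The conditional framing is also reversed. Conjecture \ref{conj:general pi and fixed vectors} concerns only the single class function $\fix=2\cdot\mathrm{triv}+\chi^{\mathbb{P}}+\xi_{1}+\ldots+\xi_{q-2}$; even granting it, you learn something about this one linear combination of irreducible characters, not about each $\chi^{\mathbb{P}},\xi_{i}$ (let alone an arbitrary stable irreducible $\chi$) separately. The paper states the implication in the opposite direction: Conjecture \ref{conj:irreducible chars} implies (a slightly weaker form of) Conjecture \ref{conj:general pi and fixed vectors}, not conversely. For the same reason your unconditional claims for $\F_{2}$ are unjustified outside the trivial cases $\pi_{q}\left(w\right)\in\left\{ 0,1,\infty\right\} $ (where the bound is either vacuous, follows from the existence of the limits in Theorem \ref{thm:limit powers general}, or follows from uniformity of the word measure for primitive $w$); for $\pi_{q}\left(w\right)=2$ Theorem \ref{thm:fixed vectors in pi=00003D2} controls only the sum defining $\fix$, not the individual characters. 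What is salvageable in your write-up is the (correct, but much weaker) observation that for non-powers every fixed element of $\R$ satisfies $\mathbb{E}_{w}\left[\chi\right]=\mathbb{E}_{\mathrm{unif}}\left[\chi\right]+O\!\left(q^{-N}\right)$, which follows from the submodule expansion and Corollary \ref{cor:contrib is m-rank}; this is not the conjectured bound.
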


By Theorem \ref{thm:rational general} (with $w=1$), $\dim\chi=\chi\left(1\right)$
is a rational function in $q^{N}$. Conjecture \ref{conj:irreducible chars},
together with a positive answer to Question \ref{conj:pi and pi_q},
constitute a special case of the more general, albeit not as precise,
\cite[Conj.~1.13]{hanany2020word}. See also \cite[Conj.~A.4]{stable2023}
for a conjecture slightly more ambitious than Conjecture 1.15.

Note that for $N\ge2$, the decomposition of the function $\fix$
to irreducible characters is
\[
\fix=2\cdot\mathrm{triv}+\chi^{\mathbb{P}}+\xi_{1}+\ldots+\xi_{q-2},
\]
where $\xi_{1},\ldots,\xi_{q-2}$ are distinct irreducible characters,
each of dimension $\frac{q^{N}-1}{q-1}$, all belonging to $\R$.
Thus, they all fall into the framework of Conjecture \ref{conj:irreducible chars},
and we get that this conjecture implies, in particular, that $\mathbb{E}_{w}\left[\fix\right]=2+O\left(\left(q^{N}\right)^{1-\pi_{q}\left(w\right)}\right)$.
In particular, Conjecture \ref{conj:irreducible chars} generalizes
(a slightly weaker version of) Conjecture \ref{conj:general pi and fixed vectors}.
Some background for Conjecture \ref{conj:irreducible chars} can be
found in \cite[Sec.~1]{hanany2020word}.

\subsection{Reader's guide}

\subsubsection*{Notation}

The free group $\F$ has rank $r$ and a fixed basis $B=\left\{ b_{1},\ldots,b_{r}\right\} $.
Recall that all ideals in this paper are one-sided right ideals unless
stated otherwise, and we write $I\le\A$ to mean that $I$ is an ideal
of the free group algebra $\A=\k\left[\F\right]$. More generally,
we write $M\le\A^{m}$ if $M$ is a submodule of the free right $\A$-module
$\A^{m}$. For any set $S\subseteq\A^{m}$, we denote by $\left(S\right)$
the submodule generated by $S$, and if $S=\left\{ s_{1},\ldots,s_{t}\right\} $
we may also write $\left(s_{1},\ldots,s_{t}\right)$.

We denote by $E=\left\{ e_{1},\ldots,e_{m}\right\} $ a basis for
the free $\A$-module $\A^{m}$. The elements of the form $ez$ with
$e\in E$ and $z\in\F$ are called \emph{monomials. }For a subset
$Q$ of the monomials, we write $M\le_{Q}\A^{m}$ to mean that $M$
has a generating set in $\A^{m}$ such that each of its elements is
supported on $Q$. Usually, for ideals inside $\A$, we consider subsets
of $\F$ corresponding to the vertices in some subtree $T$ of the
(right) Cayley graph $\C\defi\mathrm{Cay}\left(\F,B\right)$ of $\F$
with respect to the basis $B$. In this case, instead of $I\le_{\mathrm{vert}\left(T\right)}\A$
(here, of course, $\mathrm{vert}\left(T\right)$ denotes the set of
vertices of $T$), we simply write $I\le_{T}\A$. More generally,
for submodules of $\A^{m}$, we usually consider $m$ disjoint copies
$\C_{1},\ldots,\C_{m}$ of $\mathrm{Cay}\left(\F,B\right)$, with
origins $e_{1},\ldots,e_{m}$, respectively, and consider a collection
of (possibly empty) subtrees $\T=T_{1}\cup\ldots\cup T_{m}$, with
$T_{i}\subset\C_{i}$. We write $M\le_{\T}\A^{m}$ to mean that $M$
is generated by elements supported on the vertices of $\T$. 

For a submodule $M\le\A^{m}$ and a set $S$ of monomials in $\A^{m}$,
we let $M|_{S}$ denote the set of elements of $M$ which are supported
on $S$. This is a vector space over $\k$.

\subsubsection*{Paper organization}

After a very brief survey of related works in Section \ref{subsec:Related-works},
Section \ref{sec:Rational-expressions} proves that $\mathbb{E}_{w}\left[\fix\right]$,
and likewise $\mathbb{E}_{w}\left[\btil\right]$, are given by rational
functions in $q^{N}$ (Theorems \ref{thm:rational expression fix}
and \ref{thm:rational general}, respectively). In Section \ref{sec:The-free-group-algebra}
we study the free group algebra and its ideals, show how the computation
of $\mathbb{E}_{w}\left[\fix\right]$ is related to ``exploration''
processes in the Cayley graph of $\F$, and prove some basic properties
of the $q$-primitivity rank including Proposition \ref{prop:pi_q =00005Cle pi}.
We then study $\lim_{N\to\infty}\mathbb{E}_{w}\left[\fix\right]$
and $\lim_{N\to\infty}\mathbb{E}_{w}\left[\btil\right]$ and prove
Theorems \ref{thm:limit powers fix} and \ref{thm:limit powers general}
in Section \ref{sec:Powers}. Section \ref{sec:A_w} studies the right
$\A$-module $\A/\left(w-1\right)$ and specifies its cyclic generators,
and also gives a criterion to detect when $w-1$ is primitive in a
given rank-2 ideal in $\A$. Section \ref{sec:Critical-ideals-of-rank-2}
deals with the coefficient of $\frac{1}{q^{N}}$ in the Laurent expansion
of $\mathbb{E}_{w}\left[\fix\right]$ and proves Theorem \ref{thm:fixed vectors in pi=00003D2}.
Section \ref{sec:Open-Questions} gathers the many open questions
that are raised by this work. Finally, Appendix \ref{sec:Matan} contains
the proof of Theorem \ref{thm:limit distr}.

\subsection{Related works\label{subsec:Related-works}}

As mentioned above, the two phenomena described in Theorems \ref{thm:rational general}
and \ref{thm:limit powers general} are found in other families of
groups. The fact that the expectation under word measures of ``natural''
class functions over certain families of groups are given by rational
functions was first established for the symmetric group \cite{nica1994number,Linial2010}.
It was later established for the classical groups $\mathrm{U}\left(N\right)$
\cite{MP-Un} and $\mathrm{O}\left(N\right)$ and $\mathrm{Sp}\left(N\right)$
\cite{MP-On} based on Weingarten calculus (see, for instance, \cite{collins2006integration}),
and also in the wreath product $G\wr S_{N}$ for an arbitrary finite
group $G$ \cite{MP-surface-words,shomroni2023wreathI}. A related
phenomenon appears when free groups are replaced by surface groups
(fundamental groups of compact closed surfaces). Indeed, there is
a natural definition of a measure induced by an element of a surface
group on finite groups and certain compact groups, and the expected
value of certain characters of the symmetric group $\mathrm{Sym}\left(N\right)$
under such measures can be approximated to any degree by a rational
function \cite{MP-asymptotic}. A similar result holds for measures
induced by elements of surface groups on $\mathrm{SU}\left(N\right)$
\cite{magee2022random}.

The phenomenon described in Theorems \ref{thm:limit powers fix} and
\ref{thm:limit powers general}, that if $w=u^{d}$ then the limit
expectation of natural class functions in the family under the $w$-measure
depends only on $d$ and not on $u$, is also found in many of the
above mentioned cases. It is true in $\mathrm{Sym}\left(N\right)$
\cite{nica1994number,Linial2010}, in $\mathrm{U}\left(N\right)$
\cite{MSS07,Radulescu06}, as well as in $\mathrm{O}\left(N\right)$
and in $\mathrm{Sp}\left(N\right)$ \cite{MP-On}. It also holds in
the characters analyzed in \cite{MP-asymptotic} for measures on $\mathrm{Sym}\left(N\right)$
induced by elements of surface groups {[}ibid, Theorem 1.2{]}.

Finally, there are analogues to Theorem \ref{thm:fixed vectors in pi=00003D2}
and Conjectures \ref{conj:general pi and fixed vectors} and \ref{conj:irreducible chars},
which give an interpretation to the order of $\mathbb{E}_{w}\left[f\right]-\mathbb{E}_{x}\left[f\right]$,
an interpretation which lies in invariants of $w$ as an element of
the abstract free group $\F$. We mentioned previously that there
are very similar results in the case of $\mathrm{Sym}\left(N\right)$
\cite{PP15,hanany2020word}. There are other invariants of $w$ explaining
the leading order (and sometimes much more than the leading order)
in the expected values of class functions in $\mathrm{U}\left(N\right)$,
$\mathrm{O}\left(N\right)$, $\mathrm{Sp}\left(N\right)$ and $G\wr S_{N}$
\cite{MP-Un,MP-On,MP-surface-words,brodsky2021unitary,shomroni2023wreathI,shomroni2023wreathII}.
A more detailed summary may be found in \cite[Section 1.3]{hanany2020word}.

\section*{Acknowledgments}

We thank Khalid Bou-Rabee for some prehistorical discussions on word
measures on $\gl_{2}\left(\k\right)$. We corresponded with Christian
Berg on subjects around Theorem \ref{thm:limit distr} and its proof,
and we thank him for some very helpful pointers he gave us. We also
thank George Bergman,  Michael Magee and Yotam Shomroni for helpful
discussions, and an anonymous referee for a thorough reading of the
paper and many suggestions for small improvements. This research has
received funding from the Israel Science Foundation: ISF grant 1071/16
and from the European Research Council (ERC) under the European Union’s
Horizon 2020 research and innovation programme (grant agreement no.~850956).

\section{Rational expressions\label{sec:Rational-expressions}}

In this section we prove Theorems \ref{thm:rational expression fix}
and \ref{thm:rational general}, which show that the expectations
under word measures of the class functions we consider on $\gln$
are given by rational functions in $q^{N}$. The proof uses only linear
algebra and can be written in completely elementary terms. While we
start with this approach, we then ``translate'' the proof to the
language of ideals and modules of the free group algebra $\A=\k\left[\F\right]$.
Given our additional results and conjectures, the latter language
is much more fruitful.

\subsection{The function $\protect\fix$ and Theorem \ref{thm:rational expression fix}\label{subsec:The-function-fix}}

\subsubsection*{The elementary approach}

We first illustrate the proof in the somewhat simpler special case
considered in Theorem \ref{thm:rational expression fix}: the function
$\fix$. Let $V_{N}=\k^{N}$ be the vector space of row vectors of
length $N$. Given $w\in\F$, one needs to count all $g_{1},\ldots,g_{r}\in\gln$
and $v\in V_{N}$ such that $v.w\left(g_{1},\ldots,g_{r}\right)=v$.
We consider the entire trajectory of $v$ when the letters of $w$
are applied one by one. Namely, assume that $w$ is written in the
basis $B=\left\{ b_{1},\ldots,b_{r}\right\} $ of $\F$ as $w=b_{i_{1}}^{\varepsilon_{1}}\ldots b_{i_{\ell}}^{\varepsilon_{\ell}}$
(where $i_{j}\in\left\{ 1,\ldots,r\right\} $ and $\varepsilon_{j}\in\left\{ \pm1\right\} $).
We consider the vectors 
\begin{equation}
v^{0}\defi v,~~~v^{1}\defi v^{0}.g_{i_{1}}^{\varepsilon_{1}},~~~v^{2}\defi v^{1}.g_{i_{2}}^{\varepsilon_{2}},~~~\ldots,~~~v^{\ell-1}\defi v^{\ell-2}.g_{i_{\ell-1}}^{\varepsilon_{\ell-1}},~~~v^{\ell}\defi v^{\ell-1}.g_{i_{\ell}}^{\varepsilon_{\ell}}=v^{0}.\label{eq:trajectory}
\end{equation}
We denote this trajectory by $\overline{v}=\left(v^{0},\ldots,v^{\ell}\right)$.
Given that the entire trajectory is determined by $g_{1},\ldots,g_{r}$
and $v=v^{0}$, we do not change our goal by counting $\left(g_{1},\ldots,g_{r};\overline{v}\right)$
satisfying the equations in \eqref{eq:trajectory} instead of $\left(g_{1},\ldots,g_{r};v\right)$
satisfying $v.w\left(g_{1},\ldots,g_{r}\right)=v$.

The basic idea behind our counting is grouping together solutions
$\left(g_{1},\ldots,g_{r};\overline{v}\right)$ according to the linear
relations over $\k$ which $v^{0},\ldots,v^{\ell}$ satisfy. There
are finitely many options here (trivially, at most the number of linear
subspaces of $\k^{\ell+1}$), and, as we show below, for each subspace
of $\k^{\ell+1}$ the number of solutions $\left(g_{1},\ldots,g_{r};\overline{v}\right)$
corresponding to it is either identically zero for every $N$, or
its contribution to $\mathbb{E}_{w}\left[\fix\right]$ is given by
a rational function in $q^{N}$ for every large enough $N$.

Denote by $\left[1,w\right]$ the subtree of $\C=\mathrm{Cay}\left(\F,B\right)$
corresponding to the path from the origin to the vertex $w$. For
every $b\in B$, denote by $D_{b}\left(w\right)$ the vertices of
$\left[1,w\right]$ with an outgoing $b$-edge (within $\left[1,w\right]$),
and denote by $e_{b}\left(w\right)$ the number of $b$-edges in $\left[1,w\right]$,
so $e_{b}\left(w\right)=\left|D_{b}\left(w\right)\right|$. Now consider
a subspace $\Delta\le\k^{\ell+1}$ thought of as a set of equations
on the vectors $v^{0},\ldots,v^{\ell}$, or, equivalently, on the
vertices of $\left[1,w\right]$. Below we denote these vertices by
the corresponding prefix of $w$ in $\F$, and write elements of $\k^{\left[1,w\right]}\defi\k^{\mathrm{vert}\left(\left[1,w\right]\right)}\cong\k^{\ell+1}$
as linear combinations of these vertices. We have
\begin{eqnarray}
\mathbb{E}_{w}\left[\fix\right] & = & \frac{\#\left\{ g_{1},\ldots,g_{r}\in\gln,v\in V_{N}\,\middle|\,v.w\left(g_{1},\ldots,g_{r}\right)=v\right\} }{\left|\gln\right|^{r}}\nonumber \\
 & = & \frac{\#\left\{ g_{1},\ldots,g_{r}\in\gln,\overline{v}\in V_{N}^{~\ell+1}\,\middle|\,\overline{v}~\mathrm{and}~g_{1},\ldots,g_{r}~\mathrm{satisfy}~\eqref{eq:trajectory}\right\} }{\left|\gln\right|^{r}}\nonumber \\
 & = & \sum_{\Delta\le\k^{\left[1,w\right]}}\frac{\#\left\{ g_{1},\ldots,g_{r}\in\gln,\overline{v}\in V_{N}^{~\ell+1}\,\middle|\,\begin{gathered}\overline{v}~\mathrm{satisfies~precisely}~\Delta,\\
\overline{v},g_{1},\ldots,g_{r}~\mathrm{satisfy}~\eqref{eq:trajectory}
\end{gathered}
\right\} }{\left|\gln\right|^{r}}\label{eq:contrib by Delta}
\end{eqnarray}

If there are solutions $\left(g_{1},\ldots,g_{r};\overline{v}\right)$
which satisfy precisely $\Delta$, then the following two conditions
hold:
\begin{description}
\item [{C1:}] $w-1\in\Delta$ (here $w-1$ is the equation $w-1=0$, or,
equivalently, $v^{\ell}-v^{0}=0$).
\item [{C2:}] $\Delta$ is ``closed under multiplication by $b^{\pm1}$''.
Namely, for every $b\in B$ and every equation $\delta=\sum_{z\in D_{b}\left(w\right)}\lambda_{z}z$
($\lambda_{z}\in\k$) supported on $D_{b}\left(w\right)$, denote
by $\delta b\defi\sum_{z\in D_{b}\left(w\right)}\lambda_{z}zb$ the
corresponding equation on the vertices on the termini of the corresponding
$b$-edges. Then 
\[
\delta\in\Delta~~~~\Longleftrightarrow~~~~\delta b\in\Delta.
\]
\end{description}
Conversely, if $\Delta$ satisfies conditions \textbf{C1} and \textbf{C2},
then for every large enough $N$ there exist solutions $\left(g_{1},\ldots,g_{r};\overline{v}\right)$
satisfying precisely $\Delta$, and the contribution of $\Delta$
in \eqref{eq:contrib by Delta} is given by a rational function in
$q^{N}$. Indeed, denote by $\dim\left(\Delta\right)$ the dimension
of the subspace $\Delta$, and by $\dim_{b}\left(\Delta\right)$ the
dimension of the subspace of $\Delta$ consisting of equations supported
on $D_{b}\left(w\right)$. First, we choose a trajectory $\overline{v}\in V_{N}^{~\ell+1}$
satisfying precisely $\Delta$. Note that the number of choices for
such $\overline{v}$ is precisely $\indep_{\ell+1-\dim\left(\Delta\right)}\left(V_{N}\right)$,
where\marginpar{$\protect\indep$}
\[
\indep_{h}\left(V_{N}\right)\defi\left(q^{N}-1\right)\left(q^{N}-q\right)\cdots\left(q^{N}-q^{h-1}\right)
\]
is the number of $h$-tuples of independent vectors in $V_{N}$.\footnote{We could not find a conventional notation for the quantity $\indep_{h}\left(V_{N}\right)$.
However, it is closely related to existing common notation. For example,
$\indep_{h}\left(v\right)=q^{Nh}\cdot\left(q^{-N};q\right)_{h}$,
where $\left(t;q\right)_{h}\defi\left(1-t\right)\left(1-tq\right)\cdots\left(1-tq^{h-1}\right)$
is the $q$-shifted factorial.}

Second, given a trajectory $\overline{v}$ satisfying precisely $\Delta$,
we choose the tuple $g_{1},\ldots,g_{r}\in\gln$ so that $\overline{v},g_{1},\ldots,g_{r}$
satisfy \eqref{eq:trajectory}. We choose $g_{i}$ separately for
every $i=1,\ldots,r$. Let $b=b_{i}$. Note that the vectors of $\overline{v}$
at the starting points of $b$-edges in $\left[1,w\right]$, namely,
in $D_{b}\left(w\right)$, span a subspace of $V$ of dimension $e_{b}\left(w\right)-\dim_{b}\left(\Delta\right)$.
(Such a trajectory may exist only if $e_{b}\left(w\right)-\dim_{b}\left(\Delta\right)\le N$).
In this case, the element $g_{i}$ should map a subspace of dimension
$e_{b}\left(w\right)-\dim_{b}\left(\Delta\right)$ in a prescribed
way, and condition \textbf{C2} guarantees this prescribed way is valid
and can be realized by a linear transformation. The number of elements
in $\gln$ satisfying this constraint is
\[
\left(q^{N}-q^{e_{b}\left(w\right)-\dim_{b}\left(\Delta\right)}\right)\left(q^{N}-q^{e_{b}\left(w\right)-\dim_{b}\left(\Delta\right)+1}\right)\cdots\left(q^{N}-q^{N-1}\right).
\]
If $g_{1},\ldots,g_{r}$ satisfy these constraints and as \textbf{C1}
holds, $\overline{v}$ and $g_{1},\ldots,g_{r}$ satisfy \eqref{eq:trajectory}.
Overall, if $N\ge e_{b}\left(w\right)-\dim_{b}\left(\Delta\right)$
for every $b\in B$, the term corresponding to $\Delta$ in \eqref{eq:contrib by Delta}
is
\begin{eqnarray*}
 &  & \indep_{\ell+1-\dim\left(\Delta\right)}\left(V_{N}\right)\cdot\prod_{b\in B}\frac{\left(q^{N}-q^{e_{b}\left(w\right)-\dim_{b}\left(\Delta\right)}\right)\left(q^{N}-q^{e_{b}\left(w\right)-\dim_{b}\left(\Delta\right)+1}\right)\cdots\left(q^{N}-q^{N-1}\right)}{\left(q^{N}-1\right)\left(q^{N}-q\right)\cdots\left(q^{N}-q^{N-1}\right)}\\
 & = & \frac{\indep_{\ell+1-\dim\left(\Delta\right)}\left(V_{N}\right)}{\prod_{b\in B}\indep_{e_{b}\left(w\right)-\dim_{b}\left(\Delta\right)}\left(V_{N}\right)}
\end{eqnarray*}
which is rational in $q^{N}$. Overall, we obtain
\begin{equation}
\mathbb{E}_{w}\left[\fix\right]=\sum_{\Delta\le K^{\left[1,w\right]}\colon\,\Delta~\mathrm{satisfies~\mathbf{C1},\mathbf{C2}}}\frac{\indep_{\ell+1-\dim\left(\Delta\right)}\left(V_{N}\right)}{\prod_{b\in B}\indep_{e_{b}\left(w\right)-\dim_{b}\left(\Delta\right)}\left(V_{N}\right)},\label{eq:E=00005Bfix=00005D as rational expression elementary approach}
\end{equation}
which completes the proof of Theorem \ref{thm:rational expression fix}.

\subsubsection*{The free-group-algebra approach}

The key observation that leads to the free-group-algebra approach
is that condition \textbf{C2} above is a feature of (as always, right)
ideals of the free group algebra $\A=\k\left[\F\right]$: a right
ideal $I\le\A$ is a $\k$-linear subspace of $\A$ satisfying \textbf{C2
}on the entire Cayley graph $\C$ (rather than on $\left[1,w\right]$
alone). To make this formal, let us recall some notation. For a subtree
$T$ of the Cayley graph $\C=\mathrm{Cay}\left(\F,B\right)$, denote
by $D_{b}\left(T\right)$ the set of vertices in the subtree $T\subset\C$
with an outgoing $b$-edge (inside $T$), and by $e_{b}\left(T\right)=\left|D_{b}\left(T\right)\right|$
the number of such edges. For any ideal $I\le\A$, its restriction
to $T$, denoted
\[
I|_{T}\defi I\cap\k^{\mathrm{vert}\left(T\right)},
\]
is a linear subspace of $\k^{\mathrm{vert}\left(T\right)}$. We say
that a $\k$-linear subspace $\Delta\le\k^{\mathrm{vert}\left(T\right)}$
satisfies \textbf{C2$\left(T\right)$ }if for every $\delta\in\k^{\mathrm{vert}\left(T\right)}$
supported on $D_{b}\left(T\right)$, we have $\delta\in\Delta$ if
and only if $\delta.b\in\Delta$.
\begin{lem}
\label{lem:ideal generated by Delta with C2 does not add elements to tree}Assume
that $\Delta\le\k^{\mathrm{vert}\left(T\right)}$ is a $\k$-linear
subspace satisfying \textbf{C2$\left(T\right)$}. Then $\left(\Delta\right)\le\A$,
the ideal generated by $\Delta$, does not introduce any new elements
supported on $T$, namely
\begin{equation}
\left(\Delta\right)|_{T}=\Delta.\label{eq:(Delta)|_T=00003DDelta}
\end{equation}
\end{lem}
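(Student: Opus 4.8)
The inclusion $\Delta \subseteq (\Delta)|_T$ is immediate, since $\Delta \subseteq (\Delta)$ and $\Delta \subseteq \k^{\mathrm{vert}(T)}$ by hypothesis. So the content is the reverse inclusion: an element of the ideal $(\Delta)$ that happens to be supported on the vertices of $T$ must already lie in $\Delta$. The plan is to describe the ideal $(\Delta)$ concretely enough that one can track supports. Write a general element of $(\Delta)$ as a finite sum $\sum_i \delta_i g_i$ with $\delta_i \in \Delta$ and $g_i \in \F$; since $\A$ is spanned by group elements, it suffices to understand sums $\sum_i \delta_i b_{j}^{\pm 1}$ — that is, to understand how $(\Delta)$ is built up by successively right-multiplying $\Delta$ by single generators $b^{\pm 1}$, and to show that each such step does not create new vectors supported inside $T$ beyond those already in $\Delta$.

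The key mechanism is condition \textbf{C2$(T)$} applied one edge at a time. Fix $b \in B$ and consider $\delta \in \Delta$. Decompose $\delta = \delta' + \delta''$, where $\delta'$ is the part of $\delta$ supported on $D_b(T)$ (vertices of $T$ with an outgoing $b$-edge inside $T$) and $\delta''$ is supported on the remaining vertices of $T$. Then $\delta b = \delta' b + \delta'' b$. The term $\delta' b$ is supported on the $b$-termini of edges of $T$, hence again on $\mathrm{vert}(T)$, and by \textbf{C2$(T)$} (the ``$\Rightarrow$'' direction applied after first subtracting off $\delta''$), $\delta' b \in \Delta$ — more precisely, $\delta' \in \Delta \cap \k^{D_b(T)}$ would give $\delta' b \in \Delta$; but $\delta'$ need not itself lie in $\Delta$. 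This is the subtle point: one cannot simply say ``the $D_b(T)$-part of $\delta$ lies in $\Delta$.'' The correct statement to push is: if $v \in (\Delta)$ is supported on $T$, then writing $v$ as an $\A$-combination of elements of $\Delta$ and carrying out the multiplications, every intermediate vector that strays outside $T$ must ultimately cancel, and a careful leaf-stripping / induction on the tree $T$ (or on the word-length of the group elements $g_i$ appearing) shows $v \in \Delta$.

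Concretely I would argue by induction on $T$, peeling off a leaf $z$ of $T$ joined to its neighbor $z_0$ by, say, a $b$-edge (so $z = z_0 b$, or $z b^{-1} = z_0$). Let $T_0 = T \setminus \{z\}$. Given $v \in (\Delta)|_T$, split $v = v_0 + \lambda z$ with $v_0$ supported on $T_0$ and $\lambda \in \k$ the coefficient of the leaf. The subspace $\Delta|_{T_0} := \Delta \cap \k^{\mathrm{vert}(T_0)}$ satisfies \textbf{C2$(T_0)$} (the deleted edge at the leaf contributes nothing interesting: $z$ has no outgoing edges in $T$ other than possibly back along $b$, and the ``closure'' condition restricted to $T_0$ is inherited), so by induction $\left(\Delta|_{T_0}\right)|_{T_0} = \Delta|_{T_0}$. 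The hard part is then a bookkeeping step showing that the leaf coefficient $\lambda$ together with $v_0$ can be realized inside $\Delta$ itself using \textbf{C2$(T)$} at the edge $(z_0, z)$ — essentially, that any way of producing a vector supported on $T$ inside $(\Delta)$ restricts to a combination already visible in $\Delta$ because every right-multiplication that lands on $z$ must have come from a vector supported on $D_b(T) \ni z_0$, and \textbf{C2$(T)$} says precisely that such vectors and their $b$-images enter and leave $\Delta$ together. I expect this leaf-stripping bookkeeping — making rigorous the claim that no ``illegal'' intermediate cancellations can manufacture a $T$-supported element outside $\Delta$ — to be the main obstacle; once it is set up, the induction closes routinely, and the base case $T = \{1\}$ (or any single vertex) is trivial since then $(\Delta)|_T$ and $\Delta$ are each at most one-dimensional and equal.
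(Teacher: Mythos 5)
You have correctly isolated the crux — the $D_b(T)$-part of an element $\delta\in\Delta$ need not itself lie in $\Delta$, so \textbf{C2}$(T)$ cannot be applied termwise to an expression $\sum_i\delta_i g_i$ — but your proposal never resolves it: the ``leaf-stripping bookkeeping'' you defer is the entire content of the lemma, and the induction as set up does not close. Concretely: (i) your inductive hypothesis controls $\left(\Delta|_{T_0}\right)\big|_{T_0}$, where $\Delta|_{T_0}=\Delta\cap\k^{\mathrm{vert}(T_0)}$, but an element of $\left(\Delta\right)$ supported on $T_0$ may only be expressible using generators of $\Delta$ whose support includes the deleted leaf $z$, so the hypothesis says nothing about $\left(\Delta\right)\big|_{T_0}$; (ii) in an expansion $\sum_j\delta_j' g_j$ with $g_j\in\F$, the translates $\delta_j' g_j$ can be supported arbitrarily far outside $T$, and the cancellations that bring the sum back onto $T$ are exactly what must be analyzed — nothing in your sketch rules out such cancellations manufacturing a $T$-supported element outside $\Delta$. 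As written, the argument is a plan with the key step acknowledged but missing.

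The paper sidesteps this bookkeeping entirely by a realization argument. After reducing to finite $T$ (convex hull of the relevant supports), condition \textbf{C2}$(T)$ guarantees — exactly as in the counting argument for Theorem \ref{thm:rational expression fix} — that for large enough $N$ there are $g_1,\ldots,g_r\in\gln$ and vectors $v_z\in V_N$ for $z\in\mathrm{vert}(T)$, compatible with the edges of $T$, whose $\k$-linear dependencies are \emph{precisely} the elements of $\Delta$. These matrices make $V_N$ a right $\A$-module, and sending a fixed $z\in\mathrm{vert}(T)$ to $v_z$ defines an $\A$-module homomorphism $\varphi\colon\A\to V_N$ with $\varphi(z')=v_{z'}$ for all $z'\in\mathrm{vert}(T)$. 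Then $\left(\Delta\right)\le\ker\varphi$ while $\left[\ker\varphi\right]\big|_T=\Delta$ by construction, so $\left(\Delta\right)\big|_T\le\Delta$. If you insist on a purely combinatorial route, you would need a genuine farthest-vertex or leading-monomial cancellation argument (in the spirit of Lemma \ref{lem:cyclic submodules in A_w} later in the paper), not merely an induction on leaves of $T$.
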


\begin{proof}
It is clear that $\left(\Delta\right)|_{T}\supseteq\Delta$, so it
is enough to show the converse inclusion. We may assume that $T$
is finite: every element of $\A$ has finite support, and every element
of $\left(\Delta\right)$ is generated by finitely many elements of
$\Delta$. So if $T$ is not finite and \eqref{eq:(Delta)|_T=00003DDelta}
fails, replace $T$ with the finite subtree $S\subseteq T$ which
is the convex hull of the support of an element in $\left(\Delta\right)|_{T}\setminus\Delta$
and its finitely many generators in $\Delta$ and replace $\Delta$
with $\Delta|_{S}$.

As in the proof of Theorem \ref{thm:rational expression fix} above,
for large enough $N$, there are $g_{1},\ldots,g_{r}\in\gln$ and
$\overline{v}=\left\{ v_{z}\in V_{N}\right\} _{z\in\mathrm{vert}\left(T\right)}$
such that for every $b$-edge $\xymatrix{z_{1}\ar@{->}[r]_{b} & z_{2}}
$ in $T$, we have $v_{z_{1}}.g_{b}=v_{z_{2}}$, and such that the
equations over $\k$ satisfied by the vectors $\overline{v}$ are
\emph{precisely} the elements of $\Delta$. The tuple $g_{1},\ldots,g_{r}$
defines a group homomorphism $\F\to\gln$ by $b_{i}\mapsto g_{i}$.
This group homomorphism defines, in turn, a homomorphism of $\k$-algebras
$\A\to\mathrm{End}\left(V_{N}\right)$. Equivalently, such a homomorphism
of $\k$-algebras defines a structure of an $\A$-module on $V_{N}$.
Pick an arbitrary $z_{0}\in\mathrm{vert}\left(T\right)\subseteq\F$.
Now $\A$ is itself an $\A$-module, and, moreover, it is a free $\A$-module
with basis $\left\{ z_{0}\right\} $. There is a unique $\A$-module
homomorphism $\phi\colon\A\to V_{N}$ such that $\phi\left(z_{0}\right)=v_{z_{0}}$.
Since $\phi$ is an $\A$-module homomorphism and $T$ is connected,
the choice of the $g_{i}$'s guarantees that $\phi\left(z\right)=v_{z}$
for every $z\in\mathrm{vert}\left(T\right)\subseteq\F$.

Finally, $\ker\phi\le\A$ is a submodule, or an ideal, and the equations
it satisfies on $\mathrm{vert}\left(T\right)$ are precisely those
satisfied by $\overline{v}$, namely, precisely $\Delta$. Thus
\[
\left(\Delta\right)|_{T}\le\left[\ker\phi\right]|_{T}=\Delta.
\]
\end{proof}
Returning to the case $T=\left[1,w\right]$, recall that we write
$I\le_{\left[1,w\right]}\A$ if $I$ is an ideal of $\A$ with generating
set supported on $\left[1,w\right]$. Lemma \ref{lem:ideal generated by Delta with C2 does not add elements to tree}
yields that there is a one-to-one correspondence
\[
\left\{ \begin{gathered}\Delta\le\k^{\left[1,w\right]}\\
\mathrm{satisfying~\mathbf{C1~}\&~\mathbf{C2}}
\end{gathered}
\right\} ~~~\Longleftrightarrow~~~\left\{ I\le_{\left[1,w\right]}\A\,\middle|\,w-1\in I\right\} .
\]
For an ideal $I\le\A$ and every finite subtree $T$ of $\C$, define\marginpar{$d^{T}\left(I\right)$} 

\[
d^{T}\left(I\right)\defi\dim_{\k}\left(I|_{T}\right).
\]
Similarly, for every basis element $b\in B$, denote by $D_{b}\left(T\right)$
the set of vertices in the subtree $T\subset\C$ with an outgoing
$b$-edge (inside $T$), and let\marginpar{$d_{b}^{T}\left(I\right)$}
\[
d_{b}^{T}\left(I\right)\defi\dim_{\k}\left(I|_{D_{b}\left(T\right)}\right).
\]
With this notation, \eqref{eq:E=00005Bfix=00005D as rational expression elementary approach}
is equivalent to 
\begin{equation}
\mathbb{E}_{w}\left[\fix\right]=\sum_{I\le_{\left[1,w\right]}\A\colon I\ni w-1}\frac{\indep_{\left|w\right|+1-d^{\left[1,w\right]}\left(I\right)}\left(V_{N}\right)}{\prod_{b\in B}\indep_{e_{b}\left(w\right)-d_{b}^{\left[1,w\right]}\left(I\right)}\left(V_{N}\right)}.\label{eq:E_w=00005Bfix=00005D rational expression}
\end{equation}
The advantage of translating \eqref{eq:E=00005Bfix=00005D as rational expression elementary approach}
to the language of ideals as in \eqref{eq:E_w=00005Bfix=00005D rational expression}
will soon be apparent. For example, Corollary \ref{cor:contrib is m-rank}
below shows that the summand in \eqref{eq:E_w=00005Bfix=00005D rational expression}
corresponding to $I\le_{\left[1,w\right]}\A$ is of order $\left(q^{N}\right)^{1-\rk I}$.

\subsection{The general case: Theorem \ref{thm:rational general}}

Fix $w\in\F$, $m\in\mathbb{Z}_{\ge1}$ and $\B\in\glm$. Our goal
is to prove that for every large enough $N$, the expectation $\mathbb{E}_{w}\left[\btil\right]$
is a rational function in $q^{N}$. Now we need to count tuples $v_{1},\ldots,v_{m}\in V_{N}$
and $g_{1},\ldots,g_{r}\in\gln$ such that, defining $u_{i}\defi v_{i}.w\left(g_{1},\ldots,g_{r}\right)$
we have 
\begin{equation}
\left(\begin{array}{c}
u_{1}\\
\vdots\\
u_{m}
\end{array}\right)=\B\cdot\left(\begin{array}{c}
v_{1}\\
\vdots\\
v_{m}
\end{array}\right).\label{eq:equations of B}
\end{equation}
As above, we consider the entire trajectories of $v_{1},\ldots,v_{m}$
through the letters of $w$, namely,
\[
\begin{array}{cccc}
v_{1}^{0}=v_{1}~~ & v_{1}^{1}=v_{1}.g_{i_{1}}^{\varepsilon_{1}}~~ & \cdots & v_{1}^{\ell}=v_{1}.w\left(g_{1},\ldots,g_{r}\right)\\
\vdots & \vdots & \ddots & \vdots\\
v_{m}^{0}=v_{m}~~ & v_{m}^{1}=v_{m}.g_{i_{1}}^{\varepsilon_{1}}~~ & \cdots & v_{m}^{\ell}=v_{m}.w\left(g_{1},\ldots,g_{r}\right),
\end{array}
\]
which we denote by $\overline{v}$. Again we group the solutions $\left(g_{1},\ldots,g_{r};\overline{v}\right)$
according to the equations over $\k$ satisfied by $\overline{v}$.
This time, the equations are not given by ideals in $\A$, but rather
by submodules of the right free $\A$-module $\A^{m}$. Formally,
let $E=\left\{ e_{1},\ldots,e_{m}\right\} $ be a basis of the free
module $\A^{m}$. Every element of $\A^{m}$ is a finite linear combination,
with coefficients from $\k$, of monomials $ez$ with $e\in E$ and
$z\in\F$. These monomials are identified with the vertices of $m$
disjoint copies $\C_{1},\ldots,\C_{m}$ of $\mathrm{Cay}\left(\F,B\right)$,
with origins $e_{1},\ldots,e_{m}$, respectively. 

Let $\mathbb{W}$\marginpar{$\mathbb{W}$} denote the union of the
paths $\left[1,w\right]$ in $\C_{1},\ldots,\C_{m}$, so $\mathbb{W}=\bigcup_{e\in E}\left[e,ew\right]$.
Recall that $M\le_{\mathbb{W}}\A^{m}$ means that $M$ is a submodule
of $\A^{m}$ with a generating set supported on $\mathbb{W}$. If
the equations satisfied by the trajectory $\overline{v}$ are precisely
$M|_{\mathbb{W}}$, then $M$ must, in particular, contain the elements
dictated by \eqref{eq:equations of B}, which we denote by \marginpar{$\protect\eqb$}$\eqb\subseteq\A^{m}$.
For example, if $\B=\left(\begin{array}{cc}
2 & 1\\
7 & 3
\end{array}\right)\in\gl_{2}\left(\k\right)$, then $\eqb=\left\{ e_{1}w-2e_{1}-e_{2},e_{2}w-7e_{1}-3e_{2}\right\} $. 

Generalizing the notation from above, if $\T=T_{1}\cup\ldots\cup T_{m}$
is a union of (possibly empty) subtrees $T_{i}\subseteq\C_{i}$, and
$M\le\A^{m}$, define
\begin{eqnarray*}
d^{\T}\left(M\right) & \defi & \dim_{\k}\left(M|_{\mathbb{T}}\right)\\
d_{b}^{\T}\left(M\right) & \defi & \dim_{\k}\left(M|_{D_{b}\left(\T\right)}\right)~~~~~~~~~~~~~~b\in B\\
e_{b}\left(\T\right) & \defi & \left|D_{b}\left(\T\right)\right|.
\end{eqnarray*}
So $\left|D_{b}\left(\mathbb{W}\right)\right|=m\cdot e_{b}\left(w\right)$.
The same argument as above shows that for every $N\ge\max_{b\in B}e_{b}\left(\mathbb{W}\right)$,
\begin{equation}
\mathbb{E}_{w}\left[\btil\right]=\sum_{M\le_{\mathbb{W}}\A^{m}\colon M\supseteq\eqb}\frac{\indep_{m\left(\left|w\right|+1\right)-d^{\mathbb{W}}\left(M\right)}\left(V_{N}\right)}{\prod_{b\in B}\indep_{e_{b}\left(\mathbb{W}\right)-d_{b}^{\mathbb{W}}\left(M\right)}\left(V_{N}\right)}.\label{eq:general rational expression with submodules}
\end{equation}
As there are finitely many submodules $M\le_{\mathbb{W}}\A^{m}$,
the expression \eqref{eq:general rational expression with submodules}
is rational in $q^{N}$. This completes the proof of Theorem \ref{thm:rational general}.

\section{The free group algebra and its ideals\label{sec:The-free-group-algebra}}

This section gathers some known results and some new results about
the free group algebra $\A=\k\left[\F\right]$ and its (as always
in this text, right) ideals, and more generally the free right $\A$-module
$\A^{m}$ and its submodules. Although we assume throughout this paper
that $\k$ is a fixed finite field, most results of the current section
apply to an arbitrary field (not necessarily finite).

The starting point of the story is a 1964 paper of Cohn \cite{cohn1964free}
and a 1969 paper of Lewin (see Footnote \ref{fn:Cohn Vs. Lewin})
which prove that $\A$ is a \emph{free ideal ring}, in the following
sense:
\begin{thm}
\label{thm:Cohn-Lewin}\cite{cohn1964free,lewin1969free} Every ideal
$I\le\A$ is a free $\A$-module. More generally, every submodule
of a free $\A$-module is free. Moreover, every free $\A$-module
$M$ has a unique rank: all bases of $M$ have the same cardinality. 
\end{thm}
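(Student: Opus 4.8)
The statement to prove is the Cohn–Lewin theorem (Theorem~\ref{thm:Cohn-Lewin}): every submodule of a free $\A$-module is free, and all bases of a free $\A$-module have a well-defined common cardinality. This is a classical result, so a self-contained proof proposal should reconstruct the structural reason behind it rather than invoke a black box.

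\medskip

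\textbf{Plan.} The route I would take is the one that identifies $\A=\k\left[\F\right]$ as a \emph{fir} (free ideal ring) via a Euclidean-type ``weak algorithm'' relative to a suitable filtration. First I would fix the basis $B=\left\{b_1,\ldots,b_r\right\}$ of $\F$ and use the (right) Cayley graph distance: for $z\in\F$ let $|z|$ be its word length, and for $0\ne f=\sum_{z}\lambda_z z\in\A$ define its \emph{degree} (or depth) $v(f)$ in terms of the support, e.g.\ via the length filtration coming from the augmentation-ideal powers, so that $v$ is a non-negative integer, $v(fg)=v(f)+v(g)$ does not quite hold but $v$ is submultiplicative/subadditive in the right sense, and $v(f-g)\le\max\left(v(f),v(g)\right)$. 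The key technical input is the \emph{$n$-term weak algorithm}: if $f_1,\ldots,f_n\in\A$ are right $v$-dependent (meaning some nontrivial right-linear combination has degree strictly smaller than the max of the degrees), then after reordering one of the $f_i$, say $f_m$, can be reduced modulo the earlier ones, i.e.\ there exist $a_j\in\A$ with $v\!\left(f_m-\sum_{j<m} f_j a_j\right)<v(f_m)$. Proving this weak algorithm for $\k\left[\F\right]$ is the heart of the matter: it rests on the fact that in the free group algebra, leading terms (highest-length homogeneous parts) live in a free associative algebra $\k\langle b_1-1,\ldots,b_r-1,\ldots\rangle$-like object on which cancellation is controlled by freeness of $\F$, so a degree drop in a combination forces an honest algebraic relation among leading terms that can be used to rewrite one generator. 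I would present this via the associated graded ring $\mathrm{gr}\,\A$ with respect to the augmentation filtration, note it is a free associative $\k$-algebra (this is a classical computation — the graded ring of $\k[\F]$ at the augmentation ideal is the tensor algebra on the abelianization in the $\F=\mathbb{Z}^r$ case, and for nonabelian $\F$ one gets the full free associative algebra on $r$ noncommuting generators, which is well known to satisfy the weak algorithm), and then lift the weak algorithm from $\mathrm{gr}\,\A$ to $\A$.

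\medskip

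Granting the weak algorithm, the standard deduction proceeds as follows. First, any right ideal $I\le\A$ is free: take a generating set, well-order by degree, and discard any generator that is right $v$-dependent on the lower-degree ones using the weak algorithm; iterating (and invoking that the filtration is exhaustive and separated, so the process is well-founded) yields a generating set that is right $v$-independent, and right $v$-independence over a ring with a weak algorithm implies genuine right-module independence, hence that set is a basis. For submodules of $\A^m$ rather than of $\A$ itself, I would either extend the filtration to $\A^m$ coordinatewise and repeat the argument, or use the standard reduction: a fir has the property that every f.g.\ submodule of a free module is free (this is essentially Cohn's theorem that a fir is a \emph{semifir}, upgraded using that ideals are free to all — not just finitely generated — submodules by a direct-limit/transfinite argument over a well-ordered generating set). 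Finally, for the invariance of rank (the ``IBN'', invariant basis number, property, in fact the stronger ``UGN''/Hermite property): this follows because $\A$ admits a ring homomorphism to a field, namely the augmentation $\varepsilon\colon\A\to\k$, $\sum\lambda_z z\mapsto\sum\lambda_z$; any module isomorphism $\A^m\cong\A^n$ pushes forward to $\k^m\cong\k^n$, forcing $m=n$, and likewise two bases of the same free module $M$ have the same size by tensoring with $\k$ along $\varepsilon$.

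\medskip

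\textbf{Main obstacle.} The genuinely hard step is establishing the weak algorithm for $\A=\k\left[\F\right]$ — equivalently, identifying the associated graded ring for the augmentation filtration as a free associative algebra and transferring the weak algorithm across the filtration. This is where the freeness of the group $\F$ is actually used (for a non-free group the analogous graded ring has relations and the argument collapses), and it is the reason the theorem is due to Cohn and Lewin rather than being a triviality. Everything after that — pruning generating sets to bases, handling $\A^m$, and the rank-invariance via augmentation to $\k$ — is routine fir-theory bookkeeping. In the write-up I would state the weak algorithm as a named lemma, sketch its proof through $\mathrm{gr}\,\A$, and then run the two short deductions (freeness of submodules; uniqueness of rank) in sequence. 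Since the paper only needs this result as a citable input (it is attributed to \cite{cohn1964free,lewin1969free} and the subtlety about which proof is correct is flagged in Footnote~\ref{fn:Cohn Vs. Lewin}), the honest move may simply be to cite it; but if a proof is wanted, the plan above is the skeleton.
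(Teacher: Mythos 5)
You did not have to reconstruct a proof at all: the paper never proves Theorem \ref{thm:Cohn-Lewin}, it imports it from \cite{cohn1964free,lewin1969free} (with further proofs referenced), and the only piece of the machinery it actually states is Lewin's Schreier-transversal theorem (Theorem \ref{thm:Lewin}), which exhibits an explicit basis of an arbitrary submodule $M\le\A^{m}$ and is the engine of Lewin's proof; together with your (correct) augmentation argument $\varepsilon\colon\A\to\k$ for uniqueness of rank, that is the natural in-paper route. Your sketch instead goes through Cohn-style weak-algorithm theory, and it has a genuine gap precisely at the step you flag as the heart of the matter.

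The gap is that neither filtration you invoke supports the argument. With the ascending word-length filtration, even the $1$-term weak algorithm fails: a weak algorithm forces $v(fg)=v(f)+v(g)$ for nonzero $f,g$, whereas $v\left(z\cdot z^{-1}\right)=0<2$ for any $1\ne z\in\F$; correspondingly the associated graded ring has zero divisors and is \emph{not} a free associative algebra, so there is no ``honest relation among leading terms'' to exploit. With the augmentation-ideal filtration, the identification $\mathrm{gr}\,\A\cong\k\langle x_{1},\ldots,x_{r}\rangle$ is correct (Magnus), but this filtration is \emph{descending}: the transferred notion is Cohn's \emph{inverse} weak algorithm, whose reduction step pushes elements deeper into the filtration, so your ``iterate; the process is well-founded since the filtration is exhaustive and separated'' does not terminate in $\A$ (termination would require completeness, i.e.\ working in the Magnus power-series ring rather than in $\A$), and even then it only controls finitely generated submodules — a semifir-type conclusion. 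A semifir need not be a fir (commutatively: a Bezout domain that is not a PID), so the claimed ``transfinite upgrade'' from finitely generated ideals to arbitrary ideals — which is exactly what the theorem asserts — is not automatic. This is the delicate point behind the history recorded in the paper's footnote on Cohn versus Lewin; the standard correct proofs go differently, via Lewin's Schreier transversals (Theorem \ref{thm:Lewin}) or via Bergman's coproduct theorems applied to $\A\cong\k\left[t,t^{-1}\right]\ast_{\k}\cdots\ast_{\k}\k\left[t,t^{-1}\right]$. If you want a self-contained proof, replace your central lemma by one of these; the remaining bookkeeping in your proposal (pruning to a basis, passing to $\A^{m}$, rank invariance via $\varepsilon$) is fine.
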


See \cite{hog1990short,rosenmann1994ideals,rosenmann1993algorithm}
for additional proofs of this result.

There are two main new results in Section \ref{sec:The-free-group-algebra}.
In Theorem \ref{thm:basis from coincidences} below it is shown that
if an ideal $I\le_{T}\A$ has a generating set supported on some finite
subtree $T$ of $\mathrm{Cay}\left(\F,B\right)$, then it also admits
a basis supported on $T$. Our analysis also leads to Corollary \ref{cor:contrib is m-rank}:
the order of contribution of every ideal $I\le_{\left[1,w\right]}\A$
with $w-1\in I$ to the summation \eqref{eq:E_w=00005Bfix=00005D rational expression}
of $\mathbb{E}_{w}\left[\fix\right]$ is given by its rank. 

Recall that $E=\left\{ e_{1},\ldots,e_{m}\right\} $ is a basis of
the free right module $\A^{m}$, that the elements of $\A^{m}$ are
$\k$-linear combinations of \emph{monomials} $\left\{ ez\right\} _{e\in E,z\in\F}$,
and that we identify these monomials with the vertices of $m$ disjoint
copies $\C_{1},\ldots,\C_{m}$ of $\mathrm{Cay}\left(\F,B\right)$.
Let $\T=T_{1}\cup\ldots\cup T_{m}$\marginpar{$\protect\T$} be a
union of $m$ finite, possibly empty, subtrees $T_{i}\subset\C_{i}$,
and let $M\le_{\T}\A^{m}$ be a submodule generated on $\T$. In order
to study $M$, we expose the vertices of $\mathbb{T}$ one-by-one
and with them the elements of $M$ which are supported on the already-exposed
vertices. Denote by $v_{t}$ the vertex exposed in step $t$, where
$t=1,\ldots,\#\mathrm{vert}\left(\T\right)$, and let $M_{t}$ denote
the submodule generated by $M|_{\left\{ v_{1},\ldots,v_{t}\right\} }$,
so 
\[
\left(0\right)=M_{0}\le M_{1}\le\ldots\le M_{\#\mathrm{vert}\left(\T\right)}=M.
\]

The order by which we expose the vertices of $\T$ should have the
property that as often as possible, we expose neighbours of already-exposed
vertices. Formally, it should be the restriction to $\T$ of a full
order on the vertices of $\C_{1}\cup\ldots\cup\C_{m}$ which abides
to the following assumption.
\begin{defn}[Exploration]
\label{def:exploration} We call a full order $\le$ on the vertices
of $\C_{1}\cup\ldots\cup\C_{m}$ an \emph{exploration }if it is an
enumeration of the vertices (so every vertex has finitely many smaller
vertices), and every vertex is either 
\begin{enumerate}
\item a neighbour of a smaller vertex, or 
\item the smallest vertex in some $\C_{i}$.
\end{enumerate}
An order on a collection $\T=T_{1}\cup\ldots\cup T_{m}$ of (possibly
empty) subtrees $T_{i}\subseteq\C_{i}$ is called an \emph{exploration}
if it is the restriction of an exploration of $\C_{1}\cup\ldots\cup\C_{m}$.
\end{defn}

Note that an order on $\T$ is an exploration if and only if it is
an enumeration of the vertices of $\T$ which satisfies that every
vertex is either a neighbour of a smaller vertex of $\T$ or the first
vertex visited in some $T_{i}$. 

Given a finite $\T$ and $M\le_{\T}\A^{m}$ as above, every step is
either free,\textbf{ }forced or a coincidence, according to the following
conventions.\footnote{This terminology is inspired by \cite{eberhard2021babai}, which,
in turn, was inspired by earlier works dealing with random Schreier
graphs of symmetric groups (see, for example, \cite{broder1987second}).
The analogue in \cite{eberhard2021babai} of our free step is a free
step which is not a coincidence, and the analogue in the same article
of our coincidence is a free step which is also a coincidence.} Assume first that $v_{t}$ is a neighbour of an already-exposed vertex
$u$, and that the edge from $u$ to $v_{t}$ is labeled by $b\in B\cup B^{-1}=\left\{ b_{1}^{\pm1},\ldots,b_{r}^{\pm1}\right\} $:
\begin{equation}
\xymatrix{u\ar@{->}[r]_{b} & v_{t}}
\label{eq:first-type}
\end{equation}
Denote by $D_{b}^{t}$\marginpar{$D_{b}^{t}$} the set of already-exposed
vertices with an outgoing $b$-edge leading to another already-exposed
vertex. This set should include the vertex $u$. If $M|_{D_{b}^{t}}$
contains an element with $u$ in its support, we say the $t$-th step
is \textbf{forced}. If $v_{t}$ is the first vertex we expose in some
$T_{i}$, the $t$-th step is not forced. If a step is not forced,
it is a \textbf{coincidence} if there is an element of $M|_{\left\{ v_{1},\ldots,v_{t}\right\} }$
with $v_{t}$ in its support, and otherwise it is \textbf{free}. 
\begin{lem}
\label{lem:forced-free-coincidence}Let $\T$ and $M\le_{\T}\A^{m}$
be as above and let $v_{1},v_{2},\ldots$ be an exploration of $\mathrm{vert}\left(\T\right)$.
Then step $t$ in the exposure of $M$ along $\T$ is 
\begin{eqnarray*}
\mathrm{forced} & \Longleftrightarrow & M_{t-1}=M_{t}~~~~\mathrm{and~~~~}M|_{\left\{ v_{1},\ldots,v_{t-1}\right\} }\lvertneqq M|_{\left\{ v_{1},\ldots,v_{t}\right\} }\\
\mathrm{free} & \Longleftrightarrow & M_{t-1}=M_{t}~~~~\mathrm{and~~~~}M|_{\left\{ v_{1},\ldots,v_{t-1}\right\} }=M|_{\left\{ v_{1},\ldots,v_{t}\right\} }\\
\mathrm{a~coincidence} & \Longleftrightarrow & M_{t-1}\lvertneqq M_{t}.
\end{eqnarray*}
Moreover, if step $t$ is a coincidence and $f$ is an element of
$M|_{\left\{ v_{1},\ldots,v_{t}\right\} }$ with $v_{t}$ in its support,
then $M_{t}$ is generated by $M_{t-1}$ and $f$.
\end{lem}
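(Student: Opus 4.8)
The plan is to unwind the definitions of \textbf{forced}, \textbf{free}, and \textbf{coincidence} and match each case to a statement about the chain $M_{t-1} \le M_t$ and about the restriction spaces $M|_{\{v_1,\ldots,v_{t-1}\}} \le M|_{\{v_1,\ldots,v_t\}}$. The crucial observation to extract first is this: since $M$ is generated on $\T$, and more specifically $M = (M|_{\{v_1,\ldots,v_n\}})$ once all vertices are exposed, the submodule $M_t = (M|_{\{v_1,\ldots,v_t\}})$ captures ``everything we know about $M$ after step $t$.'' So the three cases are genuinely about whether exposing $v_t$ (i) adds no new supported element at all, (ii) adds a new supported element whose span is already inside $M_{t-1}$, or (iii) adds a new supported element that is genuinely outside $M_{t-1}$. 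These three possibilities are visibly exhaustive and mutually exclusive, which will give the three-way equivalence once I identify each with the combinatorial definition.

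First I would handle the distinction between $M_{t-1} = M_t$ and $M_{t-1} \lvertneqq M_t$. Clearly $M_t \supseteq M_{t-1}$ always. I claim $M_t = M_{t-1}$ iff every element of $M|_{\{v_1,\ldots,v_t\}}$ already lies in $M_{t-1}$; this is immediate since $M_t$ is generated by $M_{t-1}$ together with the new supported elements, i.e. with $M|_{\{v_1,\ldots,v_t\}}$. Next, within the case $M_{t-1}=M_t$, I would split according to whether $M|_{\{v_1,\ldots,v_{t-1}\}}$ is a proper subspace of $M|_{\{v_1,\ldots,v_t\}}$ or equal to it. If equal, then no new supported element appeared, and by the definition this is precisely the \textbf{free} case (step is not forced because nothing with $v_t$ in its support lies in $M|_{D_b^t}$, and it is not a coincidence because $M_{t-1}=M_t$ — but I need to double-check the boundary subtlety: a step is declared free only when it is "not forced" and "not a coincidence," so I must verify that $M|_{\{v_1,\ldots,v_{t-1}\}} = M|_{\{v_1,\ldots,v_t\}}$ forces both). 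If proper, a new supported element $f$ with $v_t$ in its support exists, yet $f \in M_{t-1}$; I must then argue this is exactly the \textbf{forced} case, i.e. that the edge-picture \eqref{eq:first-type} holds and $M|_{D_b^t}$ contains an element with $u$ in its support. The reason: $f \in M_{t-1} = (M|_{\{v_1,\ldots,v_{t-1}\}})$, so $f$ is an $\A$-combination of already-supported elements; looking at the monomial $v_t = ub$ in the support of $f$, the coefficient there must come from right-multiplying some already-supported element $g$ (supported on $\{v_1,\ldots,v_{t-1}\}$) by $b$ in a way that lands on $v_t$, which means $g$ has $u$ in its support and $g \in M|_{D_b^t}$ — hence forced.

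Then I would handle the remaining case $M_{t-1} \lvertneqq M_t$, showing it coincides with \textbf{coincidence}: here some $f \in M|_{\{v_1,\ldots,v_t\}}$ is not in $M_{t-1}$; such $f$ must have $v_t$ in its support (else $f \in M|_{\{v_1,\ldots,v_{t-1}\}} \subseteq M_{t-1}$), so by definition the step is a coincidence (it cannot be forced, since a forced step would have put $f$ inside $M_{t-1}$ by the argument above — this is the point where I need the converse direction of the forced characterization, so the two directions interlock and should be proved together). Finally, for the "moreover" clause: if step $t$ is a coincidence and $f \in M|_{\{v_1,\ldots,v_t\}}$ has $v_t$ in its support, I would show $M_t = M_{t-1} + f\A$. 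The inclusion $\supseteq$ is clear. For $\subseteq$, take any $g \in M|_{\{v_1,\ldots,v_t\}}$; I want $g \in M_{t-1} + f\A$. The idea is that the restriction space $M|_{\{v_1,\ldots,v_t\}}$, modulo $M|_{\{v_1,\ldots,v_{t-1}\}}$, is at most one-dimensional over $\k$ — because any element supported on $\{v_1,\ldots,v_t\}$ is determined modulo $\{v_1,\ldots,v_{t-1}\}$-supported elements by its single coefficient at $v_t$, and I can argue this coefficient lies in a $\k$-line (if two elements had linearly independent coefficients at $v_t$... actually any two scalars are $\k$-dependent, so after scaling, $g - \lambda f$ for suitable $\lambda \in \k$ is supported on $\{v_1,\ldots,v_{t-1}\}$, hence in $M_{t-1}$). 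This gives $g \in M_{t-1} + f\A$ directly, completing the proof.

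The main obstacle I anticipate is the careful bookkeeping in the \textbf{forced} case — precisely pinning down that membership of $f$ in $M_{t-1}$ forces an element of $M|_{D_b^t}$ with $u$ in its support, and conversely. This requires tracking how supports behave under right multiplication by generators: right-multiplying an element supported on a set $S$ by $b \in B \cup B^{-1}$ produces an element supported on $Sb$, and the monomial $v_t$ lies in $Sb$ iff $v_t b^{-1} = u \in S$. One has to be attentive that the generation of $M_{t-1}$ involves $\A$-linear combinations (so multiplications by many group elements, not just single generators), but since $v_t$ is a leaf adjacent only to $u$ via a $b$-edge, any $\A$-combination landing nonzero coefficient on $v_t$ must use a generator-with-$u$-in-support term — a reduced-word / no-cancellation-at-the-frontier argument that should be routine given the tree structure but needs to be stated precisely. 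Everything else is a clean case analysis.
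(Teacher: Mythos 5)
Your logical skeleton is sound: forced/free/coincidence are mutually exclusive and exhaustive, as are the three right-hand conditions, so one implication per case suffices; your treatment of the free case, of the coincidence case (granting the easy fact that a forced step keeps $M_{t-1}=M_t$), and of the ``moreover'' clause are fine and essentially match the paper. The gap is exactly at the step you call the crux: showing that if $M_{t-1}=M_t$ while $M|_{\left\{ v_{1},\ldots,v_{t-1}\right\} }\lvertneqq M|_{\left\{ v_{1},\ldots,v_{t}\right\} }$ then the step is forced (equivalently, that at a coincidence $M_{t-1}\lvertneqq M_{t}$). Your frontier argument does not establish this. First, writing $f=\sum_j h_j a_j$ with $h_j\in M|_{\left\{ v_{1},\ldots,v_{t-1}\right\} }$, the coefficient at $v_t$ receives a contribution from every pair consisting of a monomial $v_s$ of some $h_j$ and a monomial $z$ of $a_j$ with $v_s z=v_t$; such $z$ is the word read along the tree geodesic from $v_s$ to $v_t$, which does end in $b$ but can be arbitrarily long and can start at any exposed vertex, and distinct contributions can cancel. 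So no ``generator with $u$ in its support'' need be used at all. Second, and more seriously, even if you produced an element $g\in M$ supported on $\left\{ v_{1},\ldots,v_{t-1}\right\}$ with $u$ in its support, that is far weaker than what ``forced'' demands: $g$ must be supported on $D_b^t$, i.e.\ every monomial of $g$ must be the origin of a $b$-edge between already-exposed vertices. Your clause ``which means \dots{} $g\in M|_{D_b^t}$'' is precisely the assertion that needs proof; it is the entire content of the hard direction, not a bookkeeping remark.

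The paper closes this direction by a genuinely different device, which support-tracking alone does not recover: assuming the step is \emph{not} forced, one chooses vectors $u_1,\ldots,u_{t-1}\in\k^N$ whose linear relations are exactly $M|_{\left\{ v_{1},\ldots,v_{t-1}\right\} }$, adjoins an independent $u_t$, uses non-forcedness to find for each $b\in B$ an invertible $g_b$ realizing all exposed $b$-edges, and thereby obtains an $\A$-module homomorphism $\psi\colon\A^m\to\k^N$ with $M_{t-1}\le\ker\psi$ but with the coincidence element outside $\ker\psi$; this separates $M_{t-1}$ from $M_t$. You would need this construction (or an equally substantial rewriting argument that controls the cancellations described above) to justify your case (ii). Two smaller points: in the free case your parenthetical mixes up $u$ and $v_t$ (forcedness is about $u$ lying in the support of an element of $M|_{D_b^t}$; the correct observation is that a forced step would yield $g.b\in M|_{\left\{ v_{1},\ldots,v_{t}\right\} }$ with $v_t$ in its support, contradicting equality of the restrictions), and your forced-case analysis never treats the possibility that $v_t$ is the first vertex exposed in its tree, where there is no parent $u$ (that case is easy, since right multiplication never moves support between the copies $\C_1,\ldots,\C_m$, so $M_{t-1}$ has no support in the fresh copy, but it should be said).
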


Of course, if $M_{t-1}\lvertneqq M_{t}$, then, in particular, $M|_{\left\{ v_{1},\ldots,v_{t-1}\right\} }\lvertneqq M|_{\left\{ v_{1},\ldots,v_{t}\right\} }$.
\begin{proof}
First assume step $t$ is forced. There is some $f\in M|_{D_{b}^{t}}$
with $u$ in its support, and then $f.b\in M|_{\left\{ v_{1},\ldots,v_{t}\right\} }\setminus M|_{\left\{ v_{1},\ldots,v_{t-1}\right\} }$.
Yet $f.b\in M_{t-1}$ and any other element of $M|_{\left\{ v_{1},\ldots,v_{t}\right\} }$,
by subtracting a suitable $\k$-multiple of $f.b$, becomes an element
of $M_{t-1}$. Hence $M_{t-1}=M_{t}$.

If the step is free, then $M|_{\left\{ v_{1},\ldots,v_{t-1}\right\} }=M|_{\left\{ v_{1},\ldots,v_{t}\right\} }$
by definition, and so $M_{t-1}=M_{t}$.

Finally, assume that step $t$ is a coincidence. Fix $N\ge t$, and
consider (row) vectors $u_{1},\ldots,u_{t-1}\in V_{N}=\k^{N}$ with
dependencies corresponding \emph{exactly} to the the elements of $M|_{\left\{ v_{1},\ldots,v_{t-1}\right\} }$,
namely, $\sum_{i=1}^{t-1}\alpha_{i}u_{i}=0$ if and only if $\text{\ensuremath{\sum_{i=1}^{t-1}\alpha_{i}v_{i}\in M}}$.
Let $u_{t}\in V_{N}$ be some vector which is \emph{linearly independent
}of $u_{1},\ldots,u_{t-1}$. For every $b\in B$, there is an element
$g_{b}\in\gl\left(V_{N}\right)$ with $u.g_{b}=u'$ for every $b$-edge
$\left(u,u'\right)$ with $u,u'\in\left\{ u_{1},\ldots,u_{t}\right\} $
(here we rely on that the step is not forced). As in the proof of
Lemma \ref{lem:ideal generated by Delta with C2 does not add elements to tree},
these $g_{b}$'s determine a $\k$-algebra homomorphism $\varphi\colon\A\to\mathrm{End}\left(V_{N}\right)$.
This $\varphi$ gives $V_{N}$ a structure of an $\A$-module. For
every $e\in E$ with $T_{e}$ already visited, pick an arbitrary $v_{e}\in T_{e}\cap\left\{ v_{1},\ldots,v_{t}\right\} $.
Then these monomials $\left\{ v_{e}\right\} $ form a sub-basis of
the free $\A$-module $\A^{m}$, and there is a homomorphism of $\A$-modules
$\psi\colon\A^{m}\to V$ mapping $v_{e}$ to $u_{e}$. By design,
the linear dependencies among $u_{1},\ldots,u_{t}$ correspond precisely
to the elements of $\ker\psi$ supported on $\left\{ v_{1},\ldots,v_{t}\right\} $.
As $u_{t}$ is independent of the rest, we get that 
\[
M_{t-1}\le\ker\psi~~~~~\mathrm{yet}~~~~~M_{t}\nleq\ker\psi,
\]
proving that $M_{t-1}\lvertneqq M_{t}$. 

If step $t$ is a coincidence and $f\in M|_{\left\{ v_{1},\ldots,v_{t}\right\} }$
has $v_{t}$ in its support, then any other element $g\in M_{\left\{ v_{1},\ldots,v_{t}\right\} }$
satisfies that $g-\alpha f\in M|_{\left\{ v_{1},\ldots,v_{t-1}\right\} }$
for some $\alpha=\alpha\left(g\right)\in\k$. Hence the final part
of the statement of the lemma follows.
\end{proof}
\begin{lem}
\label{lem:=000023coincidences independent of order}Let $\T$ and
$M\le_{\T}\A^{m}$ be as above. In every exposure process of $M$
along $\T$ as above, the number of coincidences is the same: it does
not depend on the order of exposure (as long as it is a valid exploration
à la Definition \ref{def:exploration}).
\end{lem}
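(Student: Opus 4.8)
The plan is to show that the number of coincidences is an intrinsic invariant of $M$ and $\mathbb{T}$ by expressing it via a quantity that manifestly does not reference the exploration order. By Lemma \ref{lem:forced-free-coincidence}, each step is free, forced, or a coincidence, and a coincidence is precisely a step where $M_{t-1}\lvertneqq M_{t}$. Thus the total number of coincidences equals the number of ``jumps'' in the chain $(0)=M_{0}\le M_{1}\le\ldots\le M_{\#\mathrm{vert}(\mathbb{T})}=M$. My first move is to observe that this is bounded above by, and I claim equal to, $\rk(M)$ — the rank of $M$ as a free $\A$-module (which is well-defined by the Cohn--Lewin Theorem \ref{thm:Cohn-Lewin}) — since at each coincidence step $M_{t}$ is generated by $M_{t-1}$ together with one extra element $f$ (the ``moreover'' clause of Lemma \ref{lem:forced-free-coincidence}), so $M$ is generated by as many elements as there are coincidences, giving $\rk(M)\le \#\{\text{coincidences}\}$; conversely a generating set of size $<\rk(M)$ is impossible, but I still need the reverse inequality to pin the count down exactly.

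For the reverse inequality I would argue that the elements $f_{1},\ldots,f_{k}$ picked at the successive coincidence steps (where $k$ is the number of coincidences) form a \emph{basis} of $M$, not merely a generating set. They generate $M$ by the telescoping argument above. To see they are $\A$-linearly independent, I would use the ``staircase'' structure: order the coincidence steps $t_{1}<t_{2}<\ldots<t_{k}$, so $f_{j}\in M|_{\{v_{1},\ldots,v_{t_{j}}\}}$ has $v_{t_{j}}$ in its support, while $f_{j}$ is supported on vertices exposed no later than $t_{j}$, and in particular $f_{i}$ for $i<j$ has $v_{t_{i}}$ in its support with $v_{t_{i}}$ exposed strictly before $v_{t_{j}}$. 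A relation $\sum_{j}f_{j}\alpha_{j}=0$ with $\alpha_{j}\in\A$ would, looking at the largest index $j$ with $\alpha_{j}\ne 0$, force a contradiction: the monomial $v_{t_{j}}$ (suitably right-translated) appears in $f_{j}\alpha_{j}$ and cannot be cancelled by the other terms, which are supported on a proper sub-collection of the exposed vertices — here I would make ``largest monomial'' precise using the length filtration on $\A$ and the leading-monomial structure, essentially the same bookkeeping that underlies Cohn--Lewin. Since all bases of $M$ have the same cardinality, $k=\rk(M)$, which is independent of the exploration.

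An alternative, and perhaps cleaner, route avoids separately proving linear independence: combine the inequality $\rk(M)\le\#\{\text{coincidences}\}$ (from the generating-set observation) with the dual inequality $\#\{\text{coincidences}\}\le\rk(M)$ obtained by a direct module-theoretic computation. For the latter, one tracks the function $d^{\mathbb{T}_{\le t}}(M)=\dim_{\k}(M|_{\{v_{1},\ldots,v_{t}\}})$ and the number of ``forced'' steps, relating $\#\{\text{coincidences}\}$ to $d^{\mathbb{T}}(M)$ minus the number of internal edges of $\mathbb{T}$ plus corrections — but this threatens to reintroduce order-dependence through the forced/free split, so I would prefer the basis argument. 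The \textbf{main obstacle} I anticipate is making the leading-monomial/independence argument fully rigorous in the module setting $\A^{m}$ with $m$ copies of the Cayley graph: one must choose a monomial order on the monomials $\{ez : e\in E, z\in\F\}$ compatible with the exploration (so that $v_{t_{j}}$ is genuinely ``maximal'' in a sense that survives right-multiplication by arbitrary elements of $\A$, using that $\A$ has no zero divisors and a well-behaved length function — cf.\ the weak algorithm), and verify that cancellation across the $f_{j}\alpha_{j}$ cannot occur. Once that is in place, the conclusion that the coincidence count equals $\rk(M)$, hence is order-independent, is immediate.
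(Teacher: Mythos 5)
Your route reduces the lemma to the claim that the elements chosen at the coincidence steps form a basis of $M$ (equivalently, that the number of coincidences equals $\rk M$), but the only half of that claim you actually argue is generation; the $\A$-linear independence, which you yourself flag as the ``main obstacle'', is left unproven, and it is genuinely the hard point rather than a routine verification. The sketched leading-monomial cancellation does not work as stated: there is no order on the monomials $ez$ that is compatible with right multiplication by arbitrary elements of $\F$, because multiplying by a group element can cancel letters and shorten words, so the right-translates of the ``leading vertex'' $v_{t_j}$ of $f_j$ need not dominate, and can collide with, translates of non-leading monomials of the $f_i$ with $i<j$; the exploration order certainly does not survive right translation. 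Overcoming this is essentially the content of the weak algorithm/Lewin's Schreier-transversal theorem (Theorem \ref{thm:Lewin}), and the statement you are aiming for --- that the coincidence elements form a basis, i.e.\ that the count equals $\rk M$ --- is exactly Theorem \ref{thm:basis from coincidences}, whose proof in the paper \emph{uses} the present lemma (it computes the count for one exploration adapted to a Schreier transversal and then transfers it to an arbitrary exploration by order-independence). So as written your argument is either incomplete or circular. A smaller point: the inequality $\rk M\le\#\{\mathrm{coincidences}\}$ is true but also needs justification (a free $\A$-module of rank $s$ cannot be generated by fewer than $s$ elements; this uses non-trivial properties of $\A$ available from Cohn--Lewin), though that is minor next to the missing reverse inequality.

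For contrast, the paper's proof never mentions $\rk M$: setting $d^{t}=\dim_{\k}\bigl(M|_{\{v_{1},\ldots,v_{t}\}}\bigr)$ and $d_{b}^{t}=\dim_{\k}\bigl(M|_{D_{b}^{t}}\bigr)$, Lemma \ref{lem:forced-free-coincidence} gives that a forced step increases both $d^{t}$ and $\sum_{b\in B}d_{b}^{t}$ by one, a free step changes neither, and a coincidence increases $d^{t}$ by one while leaving $\sum_{b\in B}d_{b}^{t}$ unchanged; hence the number of coincidences equals $d^{\T}(M)-\sum_{b\in B}d_{b}^{\T}(M)$, a quantity defined with no reference to the exploration. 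This is precisely the ``direct module-theoretic computation'' you set aside: the forced/free split enters only the bookkeeping, and the apparent order-dependence cancels because only the difference of two order-free dimensions survives. If you want to keep your basis-based route, you would have to supply an independent proof of the linear independence of the coincidence elements (in effect redoing Lewin's argument), which is considerably more work than the lemma itself requires.
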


\begin{proof}
Similarly to the definition of $d^{\T}\left(M\right)$ and $d_{b}^{\T}\left(M\right)$
from Section \ref{sec:Rational-expressions}, let $d^{t}\defi\dim_{\k}\left(M|_{\left\{ v_{1},\ldots,v_{t}\right\} }\right)$
and $d_{b}^{t}\defi\dim_{\k}\left(M|_{D_{b}^{t}}\right)$. Obviously,
$d^{0}=d_{b}^{0}=0$. We now trace how $d^{t}$ and $\sum_{b\in B}d_{b}^{t}$
change with $t$, depending on the three types of steps defined above.
According to the definitions and to Lemma \ref{lem:forced-free-coincidence}:
\begin{itemize}
\item In a forced step, both $d^{t}$ and $\sum_{b}d_{b}^{t}$ increase
by one (compared to $d^{t-1}$ and $\sum_{b}d_{b}^{t-1}$, respectively). 
\item In a free step, both $d^{t}$ and $\sum_{b}d_{b}^{t}$ do not change.
\item In a coincidence, $d^{t}$ increases by one, while $\sum_{b}d_{b}^{t}$
does not change.
\end{itemize}
Therefore, the difference $d^{\T}\left(M\right)-\sum_{b}d_{b}^{\T}\left(M\right)$,
which is, of course, independent of the order of exposure, is equal
to the number of coincidences.
\end{proof}
The proof of Lemma \ref{lem:=000023coincidences independent of order}
actually shows that the number of forced and free steps is also independent
of the order of exposure, but that is not as useful. The proof also
gives the following.
\begin{cor}
\label{cor:contrib as coincidence}Consider the expression \eqref{eq:general rational expression with submodules}
giving $\mathbb{E}_{w}\left[\btil\right]$ as a sum over submodules
$M\le_{\mathbb{W}}\A^{m}$ with $M\supseteq\eqb$. The summand corresponding
to such a submodule $M$ is 
\[
\left(q^{N}\right)^{m-\#\mathrm{coincidences}}\left(1+O\left(\frac{1}{q^{N}}\right)\right),
\]
where we count coincidences in an exposure process of $M$ along $\mathbb{W}$.
\end{cor}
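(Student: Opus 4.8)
\textbf{Proof proposal for Corollary \ref{cor:contrib as coincidence}.}
The plan is to read off the order of magnitude of the summand in \eqref{eq:general rational expression with submodules} directly from the bookkeeping established in the proof of Lemma \ref{lem:=000023coincidences independent of order}. Fix a submodule $M\le_{\mathbb{W}}\A^{m}$ with $M\supseteq\eqb$, and fix an exploration of $\mathbb{W}$ (which is legitimate since $\mathbb{W}$ is a union of $m$ subtrees, so Definition \ref{def:exploration} applies); by Lemma \ref{lem:=000023coincidences independent of order} the number of coincidences does not depend on this choice. The summand is
\[
\frac{\indep_{m\left(\left|w\right|+1\right)-d^{\mathbb{W}}\left(M\right)}\left(V_{N}\right)}{\prod_{b\in B}\indep_{e_{b}\left(\mathbb{W}\right)-d_{b}^{\mathbb{W}}\left(M\right)}\left(V_{N}\right)},
\]
and the first step is to recall that $\indep_{h}\left(V_{N}\right)=\left(q^{N}-1\right)\cdots\left(q^{N}-q^{h-1}\right)=\left(q^{N}\right)^{h}\left(1+O\left(\frac{1}{q^{N}}\right)\right)$ for fixed $h$ and $N\to\infty$. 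Hence the summand equals $\left(q^{N}\right)^{D}\left(1+O\left(\frac{1}{q^{N}}\right)\right)$, where
\[
D=\left[m\left(\left|w\right|+1\right)-d^{\mathbb{W}}\left(M\right)\right]-\sum_{b\in B}\left[e_{b}\left(\mathbb{W}\right)-d_{b}^{\mathbb{W}}\left(M\right)\right].
\]

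The second, and essentially the only substantive, step is to identify $D$ with $m-\#\mathrm{coincidences}$. Group the terms: $D=\Bigl[m\left(\left|w\right|+1\right)-\sum_{b}e_{b}\left(\mathbb{W}\right)\Bigr]-\Bigl[d^{\mathbb{W}}\left(M\right)-\sum_{b}d_{b}^{\mathbb{W}}\left(M\right)\Bigr]$. For the first bracket, note $\mathbb{W}=\bigcup_{e\in E}\left[e,ew\right]$ is a disjoint union of $m$ copies of the path $\left[1,w\right]$; a path on $\left|w\right|+1$ vertices has $\left|w\right|$ edges, and these edges are partitioned by their labels in $B$, so $\sum_{b}e_{b}\left(\left[1,w\right]\right)=\left|w\right|$ and therefore $\sum_{b}e_{b}\left(\mathbb{W}\right)=m\left|w\right|$. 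Thus the first bracket is $m\left(\left|w\right|+1\right)-m\left|w\right|=m$. For the second bracket, this is precisely the quantity computed in the proof of Lemma \ref{lem:=000023coincidences independent of order}: tracking $d^{t}$ and $\sum_{b}d_{b}^{t}$ along the exploration, a forced step raises both by one, a free step changes neither, and a coincidence raises $d^{t}$ by one while leaving $\sum_{b}d_{b}^{t}$ fixed; summing over all steps, $d^{\mathbb{W}}\left(M\right)-\sum_{b}d_{b}^{\mathbb{W}}\left(M\right)=\#\mathrm{coincidences}$. Substituting gives $D=m-\#\mathrm{coincidences}$, which is the claim.

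I do not expect a genuine obstacle here: the statement is an immediate corollary, and the only care needed is (i) confirming that the combinatorial identity $\sum_{b}e_{b}\left(\mathbb{W}\right)=m\left|w\right|$ holds (a path contributes one edge fewer than its vertex count, and edges are sorted by label), and (ii) quoting the per-step accounting from the previous lemma rather than re-deriving it. One should also note in passing that the expression \eqref{eq:general rational expression with submodules} is valid for $N\ge\max_{b}e_{b}\left(\mathbb{W}\right)$, so the asymptotic statement as $N\to\infty$ makes sense for each fixed $M$; and that the exponents $m\left(\left|w\right|+1\right)-d^{\mathbb{W}}\left(M\right)$ and $e_{b}\left(\mathbb{W}\right)-d_{b}^{\mathbb{W}}\left(M\right)$ appearing as subscripts of $\indep$ are nonnegative for such $N$ (again by the interpretation of these as dimensions of span of independent vectors in $V_{N}$, exactly as in Section \ref{sec:Rational-expressions}), so the $\left(1+O\left(1/q^{N}\right)\right)$ expansion of each factor is legitimate.
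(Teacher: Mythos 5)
Your argument is correct and is essentially identical to the paper's proof: both expand each $\indep_h\left(V_N\right)$ factor as $\left(q^N\right)^h\left(1+O\left(\frac{1}{q^N}\right)\right)$, use $\sum_b e_b\left(\mathbb{W}\right)=m\left|w\right|$, and invoke the accounting from the proof of Lemma \ref{lem:=000023coincidences independent of order} identifying $d^{\mathbb{W}}\left(M\right)-\sum_b d_b^{\mathbb{W}}\left(M\right)$ with the number of coincidences. No issues.
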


\begin{proof}
The numerator in the summand corresponding to $M$ in \eqref{eq:general rational expression with submodules}
is 
\[
\indep_{m\left(\left|w\right|+1\right)-d^{\mathbb{W}}\left(M\right)}\left(V_{N}\right)=\left(q^{N}\right)^{m\left(\left|w\right|+1\right)-d^{\mathbb{W}}\left(M\right)}\left(1+O\left(\frac{1}{q^{N}}\right)\right).
\]
The denominator is 
\[
\prod_{b}\indep_{e_{b}\left(\mathbb{W}\right)-d_{b}^{\mathbb{W}}\left(M\right)}\left(V_{N}\right)=\left(q^{N}\right)^{\sum_{b}\left[e_{b}\left(\mathbb{W}\right)-d_{b}^{\mathbb{W}}\left(M\right)\right]}\left(1+O\left(\frac{1}{q^{N}}\right)\right).
\]
The result follows as $\sum_{b}e_{b}\left(\mathbb{W}\right)=m\left|w\right|$
and as $d^{\mathbb{W}}\left(M\right)-\sum_{b}d_{b}^{\mathbb{W}}$$\left(M\right)$
is equal to the number of coincidences. 
\end{proof}
Next, we show that the number of coincidences is identical to the
rank of the module $M$. The proof relies on the main theorem of \cite{lewin1969free},
which makes use of the following notion.
\begin{defn}
\label{def:Schreier-transversal}A \textbf{Schreier transversal} of
a submodule $M\le\A^{m}$ is a set $\mathrm{ST}$ of monomials of
$\A^{m}$ which satisfies

$\left(i\right)$ $\mathrm{ST}$ is closed under prefixes: if $ez\in\mathrm{ST}$
with $e\in E$ and $1\ne z\in\F$, and $b\in B\cup B^{-1}$ is the
last letter of $z$, then $ezb^{-1}\in\mathrm{ST}$, and

$\left(ii\right)$ the linear span $\spn_{\k}\left(\mathrm{ST}\right)$
of $\mathrm{ST}$ contains exactly one representative of every coset
of $\A^{m}/M$.
\end{defn}

It is not hard to show that every $M\le\A^{m}$ admits Schreier transversals
--- see \cite[pp.~456-457]{lewin1969free} for an argument as well
as for a concrete construction. Note that a Schreier transversal $\mathrm{ST}$
consists of the vertices in a collection of (possibly infinite) subtrees,
one in $\C_{i}$ for every $i=1,\ldots,m$. The main theorem of \cite{lewin1969free}
is that one may construct a basis for $M$ which is, roughly, in one-to-one
correspondence with the outgoing directed edges from $\mathrm{ST}$
to its complement. Although a version of this theorem holds for any
submodule of any free $\A$-module, we only need the case of finitely
generated $\A$-modules.
\begin{thm}
\label{thm:Lewin}\cite[Thm.~1]{lewin1969free} Let $M\le\A^{m}$
be a submodule, and let $\mathrm{ST}$ be a Schreier transversal of
$M$. For every $f\in\A^{m}$, denote by $\phi\left(f\right)$ the
representative of $f+M$ in $\spn_{\k}\left(\mathrm{ST}\right)$.
Then the set 
\begin{equation}
\left\{ ezb-\phi\left(ezb\right)\,\middle|\,ez\in\mathrm{ST},b\in B,ezb\notin\mathrm{ST}\right\} ~~~\cup~~~\left\{ e-\phi\left(e\right)\,\middle|\,e\in E\setminus\mathrm{ST}\right\} \label{eq:Lewins basis}
\end{equation}
is a basis for $M$ (as a free $\A$-module).
\end{thm}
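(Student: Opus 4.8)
The plan is to carry out the classical Reidemeister--Schreier rewriting argument, transported from free groups to the free ideal ring $\A$ and its free modules. Put $S\defi\spn_{\k}\left(\mathrm{ST}\right)$; since $S$ meets every coset of $M$ in $\A^{m}$ exactly once, $\A^{m}=M\oplus S$ as $\k$-vector spaces, and $\phi\colon\A^{m}\to S$ is the $\k$-linear projection with $\ker\phi=M$ and $\phi|_{S}=\id$; in particular $\phi|_{M}=0$. Write $X$ for the set in \eqref{eq:Lewins basis}. Every element of $X$ has the shape $\ell-\phi(\ell)$ for a monomial $\ell\notin\mathrm{ST}$, which I call its \emph{lead}; since $\ell-\phi(\ell)\in M$ we get $\left(X\right)\subseteq M$, so it remains to prove (a) $\left(X\right)=M$ and (b) $X$ is $\A$-linearly independent.

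For (a) it suffices to show $m-\phi(m)\in\left(X\right)$ for every monomial $m$, since then any $f=\sum_{m}\lambda_{m}m\in M$ satisfies $f=f-\phi(f)=\sum_{m}\lambda_{m}\left(m-\phi(m)\right)\in\left(X\right)$. First, if $m=ez$ with $e\notin\mathrm{ST}$, then $e-\phi(e)\in X$, so $\left(e-\phi(e)\right)z\in\left(X\right)$; expanding $\phi(e)z$ as a $\k$-combination of monomials $sz$ with $s\in\mathrm{ST}$ and using $\phi\left(\phi(e)z\right)=\phi(ez)$, this reduces the claim to monomials whose $e$-component lies in $\mathrm{ST}$. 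For such $m=ez$ with $z=c_{1}\cdots c_{k}$ reduced, consider the prefixes $e,ec_{1},\ldots,ec_{1}\cdots c_{k}$. Prefix-closedness of $\mathrm{ST}$ forces that once a prefix leaves $\mathrm{ST}$ it never returns, so there is a largest $j$ with $p\defi ec_{1}\cdots c_{j}\in\mathrm{ST}$, and I induct on $k-j$. If $k=j$ then $m\in\mathrm{ST}$ and $m-\phi(m)=0$. Otherwise set $q\defi pc_{j+1}\notin\mathrm{ST}$: if $c_{j+1}=b\in B$ then $q-\phi(q)\in X$ directly; if $c_{j+1}=b^{-1}$ with $b\in B$, then writing $\phi(q)=\sum_{s}\mu_{s}s$ with $s\in\mathrm{ST}$ one checks $\sum_{s}\mu_{s}\left(sb-\phi(sb)\right)=\phi(q)b-p$ (each $sb-\phi(sb)$ being $0$ if $sb\in\mathrm{ST}$ and in $X$ otherwise), and right-multiplying by $b^{-1}$ yields $q-\phi(q)\in\left(X\right)$. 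In either case $q-\phi(q)\in\left(X\right)$; since $m=q\cdot\left(c_{j+2}\cdots c_{k}\right)$, expanding $\phi(q)\left(c_{j+2}\cdots c_{k}\right)$ over monomials $s'\left(c_{j+2}\cdots c_{k}\right)$ with $s'\in\mathrm{ST}$ --- each strictly closer to $\mathrm{ST}$ than $m$ --- and using the inductive hypothesis together with $\phi\left(\phi(q)\left(c_{j+2}\cdots c_{k}\right)\right)=\phi(m)$ closes the step.

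For (b), suppose $\sum_{x\in X}x\,a_{x}=0$ with $a_{x}\in\A$ almost all zero, and suppose not all $a_{x}$ vanish. The leads $\left\{\ell_{x}\right\}$ are pairwise distinct monomials outside $\mathrm{ST}$, and the same prefix-closedness argument used in (a) shows that a monomial outside $\mathrm{ST}$ has a unique, cancellation-free expression $\ell_{x}\cdot t$ with $t\in\F$ exactly when the geodesic of its $e$-component toward $\mathrm{ST}$ exits along a \emph{positive} letter, or its $e$-component is itself outside $\mathrm{ST}$. Fix a well-ordering $\preceq$ on monomials refining the tree-distance to $\mathrm{ST}$ (larger distance is larger), broken by word length and then lexicographically, and look at the $\preceq$-maximal monomial $m_{0}$ occurring in $\sum_{x}x\,a_{x}=\sum_{x}\ell_{x}a_{x}-\sum_{x}\phi(\ell_{x})a_{x}$. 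One argues that $m_{0}$ is of the ``positive-exit'' type above, hence arises from exactly one product $\ell_{x}a_{x}$ and from a single $\preceq$-maximal monomial of that $a_{x}$, and that it cannot be cancelled by the terms $\phi(\ell_{x'})a_{x'}$, which are supported on $\mathrm{ST}\cdot\supp(a_{x'})$ and hence strictly closer to $\mathrm{ST}$. This forces the $\preceq$-maximal term of that $a_{x}$ to vanish; iterating gives $a_{x}=0$ for all $x$, a contradiction, so $X$ is independent and hence a basis of $M$.

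I expect part (b) to be the main obstacle. The delicate point is that right-multiplication can introduce cancellations in $\F$, so the monomials outside $\mathrm{ST}$ reached by a geodesic exiting along a \emph{negative} generator --- which lie in $\left(X\right)$ by the trick in (a) but are never leads --- do occur inside the products $x\,a_{x}$; one must therefore choose the well-ordering and the notion of ``distance to $\mathrm{ST}$'' carefully enough that the maximal monomial of a hypothetical nontrivial relation is provably of positive-exit type and provably uncancellable. An alternative that sidesteps term orders is to promote the rewriting of part (a) to a $\k$-linear ``coordinate map'' $c\colon M\to\bigoplus_{x\in X}\A$ with $\sum_{x}x\cdot c(f)_{x}=f$ and $c(x)=\delta_{x}$; then right-$\A$-linearity of $c$ --- itself proved by an induction mirroring (a) --- gives $\left(a_{x}\right)_{x}=c(0)=0$ at once. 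Part (a), by contrast, is mechanical once the negative-exit case is dispatched by the right-multiplication-by-$b^{-1}$ identity above.
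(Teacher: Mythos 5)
First, a point of comparison: the paper does not prove this statement at all --- it is quoted from Lewin's paper, so your proposal can only be measured against Lewin's argument, not against anything in this text. Your part (a) (generation) is essentially correct: $\phi$ is the $\k$-linear projection onto $\spn_{\k}\left(\mathrm{ST}\right)$ along $M$, the negative-exit case is handled correctly by the identity $\sum_{s}\mu_{s}\left(sb-\phi\left(sb\right)\right)=\phi\left(q\right)b-p$ followed by right multiplication by $b^{-1}$, and the induction on the number of letters beyond the longest $\mathrm{ST}$-prefix is well founded because $\mathrm{ST}$ is prefix-closed.

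The genuine gap is in part (b), and you partly flag it yourself. The pivotal assertion --- that the terms $\phi\left(\ell_{x'}\right)a_{x'}$ are supported on $\mathrm{ST}\cdot\supp\left(a_{x'}\right)$ ``and hence strictly closer to $\mathrm{ST}$'' --- is false: right multiplication by a group element can carry an $\mathrm{ST}$-monomial arbitrarily far from $\mathrm{ST}$. Already for $m=1$, $M=I_{\F}$ the augmentation ideal and $\mathrm{ST}=\left\{ 1\right\} $ (so $X=\left\{ b-1\right\} _{b\in B}$), take $a_{b}$ supported on the single group element $b^{-1}u$ for a long reduced word $u$ not starting with $b$: in $\left(b-1\right)a_{b}$ the $\phi$-term contributes the monomial $b^{-1}u$, which is strictly \emph{farther} from $\mathrm{ST}$ than the lead term $u$, and is of negative-exit type. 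So with your ordering, the maximal monomial $m_{0}$ of a hypothetical relation need not be of positive-exit type and need not come from a lead product $\ell_{x}a_{x}$ at all, and the uncancellability argument collapses at its first step. (A smaller issue: ``distance to $\mathrm{ST}$'' is undefined on the copies $\C_{i}$ with $e_{i}\notin\mathrm{ST}$, since prefix-closedness forces $\mathrm{ST}\cap\C_{i}=\emptyset$ there.) The fallback you offer --- a coordinate map $c\colon M\to\bigoplus_{x\in X}\A$ with $c\left(x\right)=\delta_{x}$, right-$\A$-linearity ``proved by an induction mirroring (a)'' --- is not a repair but a restatement: well-definedness plus right-linearity of $c$ \emph{is} the independence claim, and proving it requires exactly the cancellation bookkeeping you have not carried out. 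So the generation half stands, but the independence half of Lewin's theorem remains unproved in your proposal.
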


We stress that in \eqref{eq:Lewins basis}, $b$ is a proper basis
element and not the inverse of one. 
\begin{thm}
\label{thm:basis from coincidences}Let $\T=T_{1}\cup\ldots\cup T_{m}$
be a collection of finite, possibly empty, subtrees $T_{i}\subset\C_{i}$
and assume that $M\le_{\T}\A^{m}$. Then the number of coincidences
in an exposure of $M$ along $\T$ is \emph{equal} to $\rk M$. 

Moreover, $M$ admits a basis supported on $\T$. In fact, every set
of elements $f_{1},\ldots,f_{\rk M}$ supported on $\T$ with the
leading vertex\footnote{Given a full order on the vertices of $\C_{1}\cup\ldots\cup\C_{m}$,
the leading vertex of $0\ne f\in\A$ is the largest vertex in the
support of $f$.} of $f_{i}$ being the monomial exposed in the $i$-th coincidence
is a basis of $M$.
\end{thm}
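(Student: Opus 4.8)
The plan is to combine the exploration/exposure machinery (Lemmas \ref{lem:forced-free-coincidence} and \ref{lem:=000023coincidences independent of order}) with Lewin's basis construction (Theorem \ref{thm:Lewin}) by choosing an exploration order that is \emph{compatible} with a carefully built Schreier transversal. First I would fix an exploration $v_1, v_2, \ldots$ of $\mathrm{vert}(\T)$ and run the exposure process, letting $c_1 < c_2 < \cdots < c_k$ be the indices of the coincidence steps, where $k$ is the number of coincidences; by Lemma \ref{lem:=000023coincidences independent of order} this $k$ does not depend on the order. At each coincidence step $c_i$, by the ``moreover'' part of Lemma \ref{lem:forced-free-coincidence}, there is an element $f_i \in M|_{\{v_1,\ldots,v_{c_i}\}}$ with $v_{c_i}$ in its support, and $M_{c_i} = (M_{c_{i-1}}, f_i) = (f_1, \ldots, f_i)$. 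Since $M = M_{\#\mathrm{vert}(\T)}$, iterating gives $M = (f_1, \ldots, f_k)$, so $f_1, \ldots, f_k$ is a generating set for $M$ supported on $\T$. The content of the theorem is that this generating set is actually a \emph{basis}, i.e., $\rk M = k$ and the $f_i$ are $\A$-independent.

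The key step is to produce a Schreier transversal $\mathrm{ST}$ of $M$ for which Lewin's basis \eqref{eq:Lewins basis} has exactly $k$ elements, matched up with the coincidence steps. The idea: build $\mathrm{ST}$ greedily along the exploration. Process the vertices $v_1, v_2, \ldots$ of $\T$ in order; put $v_t$ into $\mathrm{ST}$ precisely when step $t$ is \emph{free} or \emph{forced} (i.e., not a coincidence), and also handle the vertices outside $\T$ afterwards (extending to a full Schreier transversal of $\A^m$, which is possible since $\A^m / (\text{span of partial ST})$ still needs representatives — here one extends greedily using any exploration of the complement, and every such step outside $\T$ is ``free'' because $M$ is supported on $\T$, hence adds a new basis element only when... ). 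Actually the cleaner route: restrict attention to what happens inside $\T$. One checks, using Lemma \ref{lem:forced-free-coincidence}, that at a free step the new vertex $v_t$ is genuinely a new coset representative (so it belongs in any Schreier transversal built this way), at a forced step $v_t \notin \mathrm{ST}$ and the edge $u \to v_t$ contributes the forcing relation rather than a new transversal element, and at a coincidence $v_t \notin \mathrm{ST}$ either. The subtle point is condition $(i)$: closure under prefixes. Because we add $v_t$ to $\mathrm{ST}$ only at free/forced steps, I must verify that whenever $v_t$ is added and the last edge on the path to $v_t$ inside $\C_i$ comes from a vertex $u$, that $u$ was itself added — this uses that the exploration visits neighbors of already-exposed vertices, so $u$ was exposed earlier, and one argues $u$ could not have been exposed at a coincidence step... this requires care and is, I expect, the main obstacle.

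Granting a valid Schreier transversal $\mathrm{ST}$ extending the partial one built inside $\T$, I would then count the elements of Lewin's basis \eqref{eq:Lewins basis}. Each outgoing edge $ez \to ezb$ with $ez \in \mathrm{ST}$, $ezb \notin \mathrm{ST}$ (with $b \in B$ a positive letter, and similarly the terms $e - \phi(e)$ for $e \in E \setminus \mathrm{ST}$) should correspond bijectively to a coincidence step: at a coincidence step $t$ with edge $u \xrightarrow{b^{\varepsilon}} v_t$, the vertex $v_t$ is not in $\mathrm{ST}$ while (one argues) $u$ is, giving one Lewin generator; conversely every Lewin generator arises this way because forced edges go \emph{into} $\mathrm{ST}$-vertices-with-relations and free steps keep both endpoints in $\mathrm{ST}$. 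Since Lewin's set is a basis, $\rk M = \#\{\text{Lewin generators}\} = k$. Finally, to get that $f_1,\ldots,f_k$ is itself a basis: they generate $M$ (shown above) and there are exactly $k = \rk M$ of them; a generating set of a free module of rank $k$ with exactly $k$ elements is a basis, because the surjection $\A^k \twoheadrightarrow M \cong \A^k$ of free modules over $\A$ (which is a free ideal ring, hence has invariant basis number and is Hopfian for f.g.\ free modules) must be an isomorphism. The statement that the leading vertex of $f_i$ is the monomial exposed at the $i$-th coincidence is then immediate from the construction via Lemma \ref{lem:forced-free-coincidence}, and the ``every such set'' phrasing follows since any $f_i'$ with leading vertex $v_{c_i}$ differs from $f_i$ by an element of $M_{c_i - 1}$, so $(f_1',\ldots,f_i') = (f_1,\ldots,f_i)$ by induction.
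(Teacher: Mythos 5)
Your reduction of the theorem to a counting statement is fine, and your last steps (a generating set of size $\rk M$ is a basis; the ``leading vertex'' refinement via Lemma \ref{lem:forced-free-coincidence}) are correct and essentially what the paper does. But the heart of your plan --- building a Schreier transversal $\mathrm{ST}$ adapted to the exploration so that Lewin's generators biject with the coincidence steps --- has a genuine gap, and it is not merely the verification you flag as ``the main obstacle'': the adapted transversal you want need not exist. Concretely, take $\F=\F(a,b)$, $M=(a-1)\le\A$ and $\T$ the path $1,a,ab$ (so $M\le_{\T}\A$), explored in that order. Then $1$ and $ab$ are free and $a$ is a coincidence, so your $\mathrm{ST}$ would have to contain $ab$ but not $a$; yet \emph{no} Schreier transversal contains $ab$ at all, since prefix-closure would force $a,1\in\mathrm{ST}$, and then $a-1\in M$ lies in $\spn_{\k}(\mathrm{ST})$, contradicting the ``exactly one representative per coset'' condition. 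The variant in which forced vertices are also placed in $\mathrm{ST}$ fails for the same reason: for $M=(a-1)$ and $\T=\{1,a,a^{2}\}$ the vertex $a^{2}$ is forced, but $a^{2}-1=(a-1)(a+1)\in M$ is supported on $\{1,a^{2}\}$, so $\spn_{\k}(\mathrm{ST})$ would again contain a nonzero element of $M$. Your claim that the extension outside $\T$ is harmless ``because every such step is free'' is also false (with $M=(a-1)$ and $\T=\{1,a\}$, the vertex $a^{2}$ outside $\T$ is forced); what is true is only that no \emph{coincidences} occur after $\T$ is exhausted. Finally, even granting some prefix-closed transversal, Lewin generators correspond to \emph{all} positively-labelled edges leaving $\mathrm{ST}$, not only to edges ending at coincidence vertices, so the proposed bijection is not established.

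The paper circumvents exactly this difficulty by arguing in the opposite direction: start from an \emph{arbitrary} Schreier transversal, take Lewin's basis (of size $s=\rk M$), let $\mathbb{S}$ be the finite forest spanned by the supports of these $s$ basis elements, and show that an exposure of $M$ along $\mathbb{S}$ which first visits $\mathbb{S}\cap\mathrm{ST}$ has all its initial steps free (no nonzero element of $M$ is supported on $\mathrm{ST}$) and its last $s$ steps coincidences (using that the Lewin elements are a basis, so each lies outside the submodule generated by the previous ones). The count is then transported to $\T$ by exposing a common super-forest $\mathbb{U}\supseteq\mathbb{S}\cup\T$ in two different orders and invoking Lemma \ref{lem:=000023coincidences independent of order}, together with the observation that no coincidences occur after a generating region ($\mathbb{S}$ or $\T$) has been fully exposed. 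If you want to salvage your approach you need an idea of this sort; as written, the construction of the compatible transversal --- the step carrying all the weight --- breaks down.
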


\begin{proof}
Let $s=\rk M$. Let $\mathrm{ST}$ be a Schreier transversal for $M$.
Then the basis \eqref{eq:Lewins basis} contains $s$ elements. Let
$\mathbb{S}$ be the smallest collection of finite subtrees (one in
each $\C_{i}$) which contain the whole support of these $s$ basis
elements. Note that $\mathbb{S}$ contains exactly $s$ vertices (monomials)
outside $\mathrm{ST}$, and all these vertices are either leaves or
isolated in $\mathbb{S}$ (namely, these are vertices of degree $1$
or $0$ in $\mathbb{S}$). Consider an exposure process of $M$ along
$\mathbb{S}$ according to some exploration such that the vertices
of $\mathbb{S}\cap\mathrm{ST}$ are exposed first and only then the
remaining $s$ vertices. Because there is no non-zero element of $M$
supported on $\mathrm{ST}$, the first $\left|\mathbb{S}\right|-s$
steps are all free. 

We claim that the remaining $s$ steps are all coincidences. Indeed,
\[
\left(0\right)=M_{\left|\mathbb{S}\right|-s}\le M_{\left|\mathbb{S}\right|-s+1}\le\ldots\le M_{\left|\mathbb{S}\right|-1}\le M_{\left|\mathbb{S}\right|}=M.
\]
For $i=1,\ldots,s$, let $f_{i}\in M_{\left|\mathbb{S}\right|-s+i}$
be the basis element from \eqref{eq:Lewins basis} with the vertex
exposed in step $\left|\mathbb{S}\right|-s+i$ in its support. Clearly,
$f_{i}\in M_{\left|\mathbb{S}\right|-s+i}$. By induction, $M_{\left|\mathbb{S}\right|-s+i}=\left(f_{1},\ldots,f_{i}\right)$.
Indeed, $M_{\left|\mathbb{S}\right|-s+1}=\left(f_{1}\right)$, and
if $M_{\left|\mathbb{S}\right|-s+i-1}=\left(f_{1},\ldots,f_{i-1}\right)$
then either step $i$ is a coincidence and then $M_{\left|\mathbb{S}\right|-s+i}=\left(M_{\left|\mathbb{S}\right|-s+i-1},f_{i}\right)$
by Lemma \ref{lem:forced-free-coincidence}, or step is not a coincidence
and then $M_{\left|\mathbb{S}\right|-s+i}=M_{\left|\mathbb{S}\right|-s+i-1}$.
But $\left\{ f_{1},\ldots,f_{s}\right\} $ is a basis by Theorem \ref{thm:Lewin},
so $f_{i}\notin\left(f_{1},\ldots,f_{i-1}\right)=M_{\left|\mathbb{S}\right|-s+i-1}$.
We conclude that $M_{\left|\mathbb{S}\right|-s+i-1}\lneqq M_{\left|\mathbb{S}\right|-s+i}$
so all these $s$ steps are indeed coincidences by Lemma \ref{lem:forced-free-coincidence}.

Now consider the collection of finite trees $\mathbb{U}$, which is
the collection of smallest subtrees (one in each $\C_{i}$) which
contains both $\mathbb{S}$ and the given $\mathbb{T}$. Expose $M$
along $\mathbb{U}$ by two different explorations. In the first order,
expose $\mathbb{S}$ first and then the remaining vertices of $\mathbb{U}$.
There are exactly $s$ coincidences: after we exposed all of $\mathbb{S}$,
we have $M_{\left|\mathbb{\mathbb{S}}\right|}=M$, so no more coincidences
are possible, by Lemma \ref{lem:forced-free-coincidence}. By Lemma
\ref{lem:=000023coincidences independent of order}, $s$ is also
the number of coincidences when we first expose $\mathbb{T}$ and
then the remaining vertices of $\mathbb{U}\setminus\mathbb{T}$. But
again, because $M$ is generated on $\T$, we have $M|_{\mathbb{T}}=M$
and there are no more coincidences after exposing $\mathbb{\T}$.
This shows there are exactly $s=\rk M$ coincidences in an exposure
of $M$ along $\mathbb{T}$.\\

For the second statement, assume that $M\le_{\T}\A^{m}$ and consider
an exposure of $M$ along $\T$. If step $t$ is a coincidence, then
by Lemma \ref{lem:forced-free-coincidence}, $M_{t}=\left(M_{t-1},f_{t}\right)$
where $f_{t}\in M|_{\left\{ v_{1},\ldots,v_{t}\right\} }$ with $v_{t}$
in its support. Hence $M=\left(f_{t_{1}},\ldots,f_{t_{s}}\right)$
where $t_{1},\ldots,t_{s}$ are the $s$ coincidences. But every set
of size $s=\rk M$ which generates $M$ is a basis \cite[Prop. 2.2]{cohn1964free}. 
\end{proof}
From Theorem \ref{thm:basis from coincidences} and Corollary \ref{cor:contrib as coincidence}
we immediately obtain that the order of contribution of a given ideal
to $\mathbb{E}_{w}\left[\fix\right]$ is given by its rank:
\begin{cor}
\label{cor:contrib is m-rank}Consider the expression \eqref{eq:general rational expression with submodules}
giving $\mathbb{E}_{w}\left[\btil\right]$ as a sum over submodules
$M\le_{\mathbb{W}}\A^{m}$ with $M\supseteq\eqb$. The summand corresponding
to such a submodule $M$ is 
\[
\left(q^{N}\right)^{m-\rk M}\left(1+O\left(\frac{1}{q^{N}}\right)\right).
\]
\end{cor}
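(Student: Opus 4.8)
The plan is to derive this corollary immediately by chaining together the two preceding results: Corollary \ref{cor:contrib as coincidence} and Theorem \ref{thm:basis from coincidences}. Corollary \ref{cor:contrib as coincidence} already rewrites the summand of $M$ in \eqref{eq:general rational expression with submodules} as $\left(q^{N}\right)^{m-\#\mathrm{coincidences}}\left(1+O\left(\frac{1}{q^{N}}\right)\right)$, where the coincidences are counted in an exposure process of $M$ along $\mathbb{W}$. So the only thing left is to identify $\#\mathrm{coincidences}$ with $\rk\left(M\right)$.

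First I would observe that $\mathbb{W}=\bigcup_{e\in E}\left[e,ew\right]$ is exactly a collection of $m$ finite (possibly trivial) subtrees, one inside each copy $\C_{e}$ of $\mathrm{Cay}\left(\F,B\right)$, and that by hypothesis $M\le_{\mathbb{W}}\A^{m}$, i.e.\ $M$ is generated by elements supported on $\mathbb{W}$. This is precisely the setup $\T=T_{1}\cup\ldots\cup T_{m}$, $M\le_{\T}\A^{m}$ required by Theorem \ref{thm:basis from coincidences}, with $\T=\mathbb{W}$. That theorem then asserts that the number of coincidences in any exposure of $M$ along $\mathbb{W}$ equals $\rk M$ (and, by Lemma \ref{lem:=000023coincidences independent of order}, this count is independent of the chosen exploration, so it is the same quantity appearing in Corollary \ref{cor:contrib as coincidence}). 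Substituting $\#\mathrm{coincidences}=\rk\left(M\right)$ into the expression from Corollary \ref{cor:contrib as coincidence} yields the claimed asymptotics $\left(q^{N}\right)^{m-\rk\left(M\right)}\left(1+O\left(\frac{1}{q^{N}}\right)\right)$, completing the proof.

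There is essentially no genuine obstacle here — the content has already been done in Theorem \ref{thm:basis from coincidences} (the combinatorics-to-rank identification, via Lewin's basis theorem) and in Corollary \ref{cor:contrib as coincidence} (the $q^{N}$-asymptotics of $\indep_{h}\left(V_{N}\right)$). The one point worth stating explicitly is the bookkeeping check that the two statements are talking about the same exposure process along the same object $\mathbb{W}$, so that the integer $\#\mathrm{coincidences}$ is literally the same in both; this is immediate since both invoke an exposure of $M$ along $\mathbb{W}$ and the count is order-independent. Thus the corollary is a one-line consequence, and in the write-up I would simply say: "Combine Corollary \ref{cor:contrib as coincidence} with Theorem \ref{thm:basis from coincidences}."
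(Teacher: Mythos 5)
Your argument is exactly the paper's: the corollary is stated there as an immediate consequence of Corollary \ref{cor:contrib as coincidence} together with Theorem \ref{thm:basis from coincidences}, identifying the number of coincidences in an exposure of $M$ along $\mathbb{W}$ with $\rk M$. The bookkeeping point you raise (same exposure along the same $\T=\mathbb{W}$, with order-independence from Lemma \ref{lem:=000023coincidences independent of order}) is correctly handled, so the proposal is complete and matches the paper.
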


In Section \ref{subsec:powers general} we show that there are no
submodules of rank $<m$ containing $\eqb$, and so $\lim_{N\to\infty}\mathbb{E}_{w}\left[\btil\right]$,
the limit from Theorem \ref{thm:limit powers general}, is equal to
the number of rank-$m$ submodules supported on $\mathbb{W}$ and
containing $\eqb$. Using Corollary \ref{cor:alg extensions generated on a given tree},
one can show that in this case the restriction to submodules supported
on $\mathbb{W}$ is redundant -- we elaborate in Section \ref{sec:Powers}.

Recall that Definition \ref{def:q-primitivity-rank} introduced $\pi_{q}\left(w\right)$
and $\crit_{q}\left(w\right)$ for every $w\in\F$. Theorem \ref{thm:basis from coincidences}
can also be used to show that the set $\crit_{q}\left(w\right)$ is
always finite. If $N$ is a free $\A$-module and $L\le N$ a submodule
(and therefore free as well), we say that $L$ is a free factor of
$N$ if some basis (and hence every basis) of $L$ can be extended
to a basis of $N$.
\begin{cor}
\label{cor:alg extensions generated on a given tree}Let $M\le N\le\A^{m}$
be two finitely generated submodules of $\A^{m}$, and assume that
there is no intermediate submodule which is a proper free factor of
$N$.\footnote{In analogy with subgroups of the free group $\F$, one may say that
$N$ is an \emph{algebraic} extension of $M$ -- see, e.g., \cite[Def.~2.1]{PP15}.} Namely, if $M\le L\le N$ and $L$ is a free factor of $N$ then
$L=N$. If $M\le_{\T}\A^{m}$ with $\T$ a union of subtrees as above,
then $N\le_{\T}\A^{m}$.
\end{cor}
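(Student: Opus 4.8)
The plan is to prove that $N$ coincides with the submodule $N'\defi\left(N|_{\mathrm{vert}\left(\T\right)}\right)$ generated by those elements of $N$ supported on $\T$; since $N'$ is by construction generated on $\T$, this is precisely the assertion $N\le_{\T}\A^{m}$. We may assume $\T$ is finite: since $M$ is f.g.\ and supported on $\T$, it is already supported on the (finite) convex hull $\T_{0}\subseteq\T$ of the supports of finitely many generators, and once we know $N\le_{\T_{0}}\A^{m}$ we certainly have $N\le_{\T}\A^{m}$. First note that $M\le N'$: as $M\le_{\T}\A^{m}$, it has a generating set whose elements lie in $M\subseteq N$ and are supported on $\T$, hence lie in $N|_{\mathrm{vert}\left(\T\right)}\subseteq N'$. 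Thus $M\le N'\le N$, and by the hypothesis on intermediate free factors it suffices to show that $N'$ is a free factor of $N$.

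To produce such a free factor I would apply Theorem~\ref{thm:basis from coincidences}. Since $N$ is f.g., pick a finite collection $\mathbb{U}=U_{1}\cup\ldots\cup U_{m}$ of subtrees $U_{i}\subseteq\C_{i}$ containing $\T$ and with $N\le_{\mathbb{U}}\A^{m}$ (take convex hulls of $\T$ together with the supports of finitely many generators of $N$). Expose $N$ along $\mathbb{U}$ using an exploration that first exhausts all of $\mathrm{vert}\left(\T\right)$ and only then visits $\mathbb{U}\setminus\T$, and write $\left(0\right)=N_{0}\le N_{1}\le\ldots\le N_{\left|\mathbb{U}\right|}=N$ for the running submodules. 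After the first $\left|\T\right|$ steps we have $N_{\left|\T\right|}=\left(N|_{\mathrm{vert}\left(\T\right)}\right)=N'$. Let $t_{1}<\ldots<t_{k}$ be the coincidence steps occurring within this initial segment, and let $f_{t_{j}}\in N$ be an element supported on $\left\{ v_{1},\ldots,v_{t_{j}}\right\} \subseteq\mathrm{vert}\left(\T\right)$ having the vertex $v_{t_{j}}$ exposed at step $t_{j}$ in its support. By Lemma~\ref{lem:forced-free-coincidence}, each coincidence step replaces the running submodule by the one generated by its predecessor together with $f_{t_{j}}$, while forced and free steps leave it unchanged; hence $N'=N_{\left|\T\right|}=\left(f_{t_{1}},\ldots,f_{t_{k}}\right)$.

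Now the ``moreover'' part of Theorem~\ref{thm:basis from coincidences}, applied to $N$ along $\mathbb{U}$, says that the full list of coincidence-elements over all of $\mathbb{U}$ is a basis of $N$, and $f_{t_{1}},\ldots,f_{t_{k}}$ is an initial sublist of it. Consequently $\left\{ f_{t_{1}},\ldots,f_{t_{k}}\right\}$ is an independent subset of a basis of $N$ which generates $N'$, so it is a basis of $N'$ that extends to a basis of $N$; that is, $N'$ is a free factor of $N$. Applying the hypothesis to $M\le N'\le N$ now forces $N'=N$, so $N=\left(N|_{\mathrm{vert}\left(\T\right)}\right)$ is generated on $\T$, which is the claim. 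The step I expect to be the crux is the identification, via Theorem~\ref{thm:basis from coincidences}, of the coincidence-elements recorded while traversing $\T$ with genuine basis elements of $N$ — not merely with generators of $N'$; this rests on being free to run the exploration of $\mathbb{U}$ so that $\T$ is traversed first, and everything else is bookkeeping with Lemma~\ref{lem:forced-free-coincidence}.
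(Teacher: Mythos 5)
Your proof is correct and follows essentially the same route as the paper: expose $N$ along a larger collection of subtrees containing $\T$ with the vertices of $\T$ exposed first, observe that the coincidence elements recorded inside $\T$ generate $\left(N|_{\T}\right)\supseteq M$ and, by Theorem~\ref{thm:basis from coincidences}, extend to a basis of $N$, so $\left(N|_{\T}\right)$ is a free factor of $N$ and hence equals $N$. The paper's proof is just a more condensed version of exactly this argument.
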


\begin{proof}
Take a collection $\mathbb{S}$ of subtrees which contains $\T$ and
such that $N\le_{\mathbb{S}}\A^{m}$. Expose $N$ along $\mathbb{S}$
according to some exploration which first exposes $\mathbb{T}$ and
then the remaining vertices. Let $N_{\left|\T\right|}=\left(N|_{\mathbb{T}}\right)$
denote the submodule of $N$ generated by the elements of $N$ supported
on $\T$. Clearly, $M\le N_{\left|\T\right|}$, and using Theorem
\ref{thm:basis from coincidences} to construct a basis for $N$ from
the coincidences of this exposure process, we get that $N_{\left|\T\right|}$
is a free factor of $N$. By assumption we therefore have $N_{\left|\T\right|}=N$,
so $N\le_{\T}\A^{m}$.
\end{proof}
In the following corollary we use the fact that $\k$ is finite. For
example, the element $xyx^{-1}y^{-1}-1\in\A$ has critical ideals
$\left\{ \left(\alpha x-1,\beta y-1\right)\,\middle|\,\alpha,\beta\in\k^{*}\right\} $,
which is an infinite set if $\k$ is infinite. For a general element
$f\in\A$, we say that an ideal $I\le\A$ is \emph{critical} for $f$
if it contains $f$ as an imprimitive element, and it has minimal
rank among all such ideals.
\begin{cor}
\label{cor:finitely many critical extensions}Let $f\in\A$, and suppose
that the subtree $T\subseteq\mathrm{Cay}\left(\F,B\right)$ supports
$f$. Then any critical ideal of $f$ is generated on $T$. In particular,
$\crit_{q}\left(w\right)$ is finite for every word $w\in\F$ and
every prime power $q$.
\end{cor}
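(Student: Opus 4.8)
The plan is to deduce this corollary from Corollary \ref{cor:alg extensions generated on a given tree} by reducing the statement about critical ideals to a statement about algebraic extensions. First I would recall that a critical ideal $I$ of $f$ is, by definition, a proper ideal of minimal rank that contains $f$ as an imprimitive element. The key observation is that $I$ must be an algebraic extension of the cyclic submodule $(f)\le\A$, in the sense of Corollary \ref{cor:alg extensions generated on a given tree}: if there were an intermediate submodule $(f)\le L\le I$ with $L$ a proper free factor of $I$, then on one hand $L$ would still contain $f$, and on the other hand $f$ would be imprimitive in $L$ as well (since a basis element of $L$ could be completed to a basis of $I$, so primitivity of $f$ in $L$ would force primitivity in $I$). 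But $\rk L<\rk I=\pi_q(f)$ (strict, since a proper free factor has strictly smaller rank when it is not all of $I$, using the unique-rank property from Theorem \ref{thm:Cohn-Lewin} together with the fact that $\rk I=\rk L+\rk(\text{complement})$), contradicting minimality — unless $L=\A$, which cannot happen since $I\lvertneqq\A$ and $\A$ is not a free factor of a proper ideal. Hence no such intermediate proper free factor exists, i.e. $I$ is an algebraic extension of $(f)$.

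Next, since the subtree $T$ supports $f$, the cyclic submodule $(f)$ is generated on $T$, i.e. $(f)\le_T\A$. Applying Corollary \ref{cor:alg extensions generated on a given tree} with $M=(f)$ and $N=I$ (both f.g.: $(f)$ is cyclic, and $I$ is f.g. since it is critical, hence of finite rank, hence finitely generated as a free module), we conclude $I\le_T\A$. This proves the first assertion.

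For the "in particular" clause, take $w\in\F$ and let $T=[1,w]$, the geodesic subtree from the origin to $w$; this finite tree supports $w-1$. By the first part, every critical ideal of $w-1$ — that is, every element of $\crit_q(w)$ — is generated on $T$. By Lemma \ref{lem:ideal generated by Delta with C2 does not add elements to tree} (or simply by counting), an ideal generated on the \emph{finite} tree $T$ is determined by its restriction $I|_T$, which is a linear subspace of the finite-dimensional $\k$-vector space $\k^{\mathrm{vert}(T)}$; since $\k$ is finite, there are only finitely many such subspaces, hence finitely many ideals generated on $T$, and in particular $\crit_q(w)$ is finite.

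The main obstacle is the first paragraph: one must verify carefully that a critical ideal genuinely has no intermediate proper free factor, i.e. translate the minimality-of-rank condition in Definition \ref{def:q-primitivity-rank} into the no-intermediate-free-factor hypothesis of Corollary \ref{cor:alg extensions generated on a given tree}. This requires the elementary but essential facts that (i) imprimitivity of $f$ in $I$ is inherited by any free factor of $I$ containing $f$, and (ii) a proper free factor of a finite-rank free module has strictly smaller rank — both of which follow from Theorem \ref{thm:Cohn-Lewin} and Definition \ref{def:primitive}. Once this reduction is in place, the rest is immediate from the already-established results.
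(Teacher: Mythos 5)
Your proposal is correct and follows essentially the same route as the paper: both verify that a critical ideal has no proper intermediate free factor containing $f$ (using that primitivity of $f$ in a free factor would force primitivity in $I$, plus the rank minimality), then invoke Corollary \ref{cor:alg extensions generated on a given tree} and conclude finiteness from the finitely many ideals generated on the finite tree $\left[1,w\right]$ over the finite field $\k$. The only cosmetic difference is that you argue via the strict inequality $\rk L<\rk I$ for a proper free factor, whereas the paper derives $\rk J=\rk I$ and then $J=I$; these are the same argument in contrapositive form.
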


\begin{proof}
Assume that $I\le\A$ is critical for $f$, namely, that it is an
ideal of minimal rank which contains $f$ as an imprimitive element.
Assume that $f\in J\le I$ and that $J$ is a free factor of $I$.
In particular, $\rk J\le\rk I$. If $f$ is primitive in $J$, it
is also primitive in $I$, which is impossible. So $f$ is imprimitive
in $J$. But $I$ is critical for $f$, and so $\rk J=\rk I$ and
$J=I$. Therefore the assumption of Corollary \ref{cor:alg extensions generated on a given tree}
applies to $\left(f\right)\le I$, and for every finite subtree $T\subseteq\mathrm{Cay}\left(\F,B\right)$
supporting $f$, we have $I\le_{T}\A$. For every $f\in\A$ we may
take $T$ finite, and if $\k$ is finite, there are only finitely
many ideals supported on $T$.
\end{proof}

\subsection{Properties of the $q$-primitivity rank\label{subsec:properties-of-the-q-primitivity-rank}}

In the current subsection \ref{subsec:properties-of-the-q-primitivity-rank},
we prove some basic properties of the $q$-primitivity rank of words.
Let $H$ be a subgroup of the free group $\F$. We associate to $H$
two (right) ideals of interest. The first is its augmentation ideal
$I_{H}\le\k\left[H\right]$, defined as the kernel of the augmentation
map $\varepsilon_{H}:\k\left[H\right]\rightarrow\k$ where $\varepsilon_{H}\left(\sum_{h\in H}\alpha_{h}h\right)=\sum_{h\in H}\alpha_{h}$.
If $\left\{ h_{\beta}\right\} _{\beta\in B}$ is a basis for $H$
then $\left\{ h_{\beta}-1\right\} _{\beta\in B}$ is a basis for $I_{H}$
\cite[Prop.~4.8]{cohen1972groups}, and in particular $\text{rk}I_{H}=\text{rk}H$.
The second, when considering $H$ as a subgroup of $\F,$ is the (right)
ideal $J_{H}$ of $\mathcal{A}=K\left[\boldsymbol{F}\right]$ generated
by $\left\{ h-1\right\} _{h\in H}$. The following proposition also
follows from \cite[Chap.~4]{cohen1972groups}, but as it is not stated
there explicitly, we add a short proof. 
\begin{prop}
\label{prop:basis for J_H}If $\left\{ h_{\beta}\right\} _{\beta\in B}$
is a basis for $H$ then $\left\{ h_{\beta}-1\right\} _{\beta\in B}$
is a basis for $J_{H}.$ In particular, $\text{rk}J_{H}=\text{rk}H$.
\end{prop}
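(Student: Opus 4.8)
The plan is to reduce the claim about $J_H \le \mathcal{A}$ to the already-established facts about augmentation ideals and about Schreier transversals from Lewin's theorem (Theorem \ref{thm:Lewin}). Write $B = \{h_\beta\}$ for a basis of $H$; since $\{h_\beta - 1\}$ generates $J_H$ by definition, and since $J_H$ is a free $\mathcal{A}$-module of some well-defined rank by Theorem \ref{thm:Cohn-Lewin}, it suffices to show that $\{h_\beta - 1\}$ is $\mathcal{A}$-linearly independent; then it is automatically a basis, and $\operatorname{rk} J_H = |B| = \operatorname{rk} H$. (Here I use that a generating set of a free module which has the same cardinality as the rank is a basis, via \cite[Prop.~2.2]{cohn1964free} — exactly as invoked at the end of the proof of Theorem \ref{thm:basis from coincidences}.)

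The cleanest route to linear independence is to produce an explicit Schreier transversal. Let $\mathrm{ST}$ be a right Schreier transversal for the \emph{subgroup} $H \le \F$, i.e.\ a prefix-closed set of representatives for the right cosets $H \backslash \F$; such a set is the vertex set of a spanning subtree of the Schreier graph, and it is prefix-closed in the sense of Definition \ref{def:Schreier-transversal}$(i)$ with $m = 1$, $E = \{1\}$. I claim $\mathrm{span}_K(\mathrm{ST})$ contains exactly one representative of every coset of $\mathcal{A}/J_H$, so $\mathrm{ST}$ is a Schreier transversal for $J_H$ in the sense of Definition \ref{def:Schreier-transversal}. This is where the substantive content lies: one shows $\mathcal{A} = \mathrm{span}_K(\mathrm{ST}) \oplus J_H$ as $K$-vector spaces. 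The inclusion $\mathrm{span}_K(\mathrm{ST}) + J_H = \mathcal{A}$ follows because for any $z \in \F$, writing $z = h t$ with $t \in \mathrm{ST}$ the coset representative, we have $z - t = (h-1)t \in J_H$, so $z \equiv t \pmod{J_H}$; iterating over a spanning set, every element of $\mathcal{A}$ is congruent mod $J_H$ to an element of $\mathrm{span}_K(\mathrm{ST})$. For the directness of the sum — that $\mathrm{span}_K(\mathrm{ST}) \cap J_H = 0$ — one can argue via the isomorphism $\mathcal{A} \cong K[H \backslash \F]$-type decomposition: $\mathcal{A}$ is free as a right $K[H]$-module on $\mathrm{ST}$ (since $\mathrm{ST}$ is a transversal, $\F = \bigsqcup_{t} H t$ gives $\mathcal{A} = \bigoplus_t K[H]\,t$), hence $\mathcal{A}/J_H \cong \bigoplus_t (K[H]/I_H)\, t \cong \bigoplus_t K\,\bar t$, and the composite $\mathrm{span}_K(\mathrm{ST}) \hookrightarrow \mathcal{A} \twoheadrightarrow \mathcal{A}/J_H$ is then visibly an isomorphism. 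Here I use the cited fact \cite[Prop.~4.8]{cohen1972groups} that $I_H$ has codimension $1$ in $K[H]$ with $\mathcal{A}/J_H \cong \bigoplus_t K$.

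Granting that $\mathrm{ST}$ is a Schreier transversal for $J_H$, apply Theorem \ref{thm:Lewin}: the basis \eqref{eq:Lewins basis} it produces is indexed by the outgoing edges from $\mathrm{ST}$ to its complement in $\mathrm{Cay}(\F, B)$ (the term $\{e - \phi(e) \mid e \in E \setminus \mathrm{ST}\}$ is empty since $1 \in \mathrm{ST}$). By the standard Nielsen--Schreier bookkeeping, the number of such edges equals $\operatorname{rk} H = |B|$ — this is precisely the same count that gives the rank of the subgroup $H$ from the same Schreier transversal. Moreover, since the $h_\beta$ are the Nielsen generators associated to $\mathrm{ST}$ (or: one may choose $\mathrm{ST}$ so that they are), the basis elements $ztb - \phi(ztb)$ are exactly of the form $t(h_\beta - 1)$ for suitable $t \in \mathrm{ST}$, which shows $J_H$ has a basis in bijection with $\{h_\beta - 1\}$ and in particular $\operatorname{rk} J_H = |B|$. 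Combined with the fact that $\{h_\beta - 1\}$ generates $J_H$, Cohn's \cite[Prop.~2.2]{cohn1964free} finishes: a generating set of size equal to the rank is a basis.

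The main obstacle is the directness of the decomposition $\mathcal{A} = \mathrm{span}_K(\mathrm{ST}) \oplus J_H$; everything else is bookkeeping around Lewin's theorem. An alternative, perhaps shorter, route that sidesteps the explicit transversal: map $\mathcal{A} = K[\F] \twoheadrightarrow K[H \backslash \F]$ ($K$-linearly, sending each $z$ to its coset) and check that the kernel is exactly $J_H$ — the inclusion $J_H \subseteq \ker$ is immediate since $(h-1)z \mapsto Hz - Hz = 0$, and the reverse inclusion follows by the congruence $z \equiv t \pmod{J_H}$ above together with $K$-linear independence of the cosets in $K[H\backslash\F]$. Then $\mathcal{A}/J_H \cong K[H\backslash\F]$ is $K$-free on the cosets, which is the statement that $\mathrm{ST}$ is a transversal for $J_H$; feed this into Theorem \ref{thm:Lewin} as above. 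I would present whichever of these two is cleaner once the details are written out, but I expect the quotient-map formulation to be the most economical.
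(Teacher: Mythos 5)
Your route is genuinely different from the paper's, and its first half is fine: the verification that a prefix-closed (Schreier) transversal $\mathrm{ST}$ for $H$ gives a $K$-vector-space decomposition $\A=\mathrm{span}_{\k}(\mathrm{ST})\oplus J_H$ (via $\A=\bigoplus_{t}\k[H]t$ and $J_H=\bigoplus_t I_H t$) is correct, and feeding it into Theorem \ref{thm:Lewin} together with the Nielsen--Schreier count of non-tree edges does give $\rk J_H=\rk H$. The genuine gap is in the last step. You deduce that $\{h_\beta-1\}$ is a basis from ``a generating set of cardinality equal to the rank is a basis'' (Cohn's Prop.~2.2). That principle is a finite-rank statement -- it is how the paper uses it in Theorem \ref{thm:basis from coincidences} -- and it is simply false for infinite rank: already for vector spaces (or free $\mathbb{Z}$-modules) there are spanning sets of cardinality equal to an infinite rank that are not independent, so cardinality bookkeeping cannot yield linear independence. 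The proposition carries no finite-generation hypothesis, and the paper really needs the infinite-rank case: Corollary \ref{cor:w prim in H  iff   w-1 prim in J_H} writes $w-1$ in the basis $\{s-1\}_{s\in S}$ of $J_H$ for an arbitrary subgroup $H$. Your argument could be repaired (a relation involves only finitely many $h_\beta$, which span a free factor $H'\le H$ of finite rank, so the f.g.\ case gives independence), but as written the proof only covers f.g.\ $H$. By contrast, the paper's proof uses the same decomposition $\A=\bigoplus_t \k[H]t$ but directly: it applies the projections $P_{Ht}$ to a hypothetical relation $\sum_\beta(h_\beta-1)a_\beta=0$ and invokes the fact that $\{h_\beta-1\}$ is a $\k[H]$-basis of $I_H$, which handles all cardinalities at once and needs neither Lewin's theorem nor Nielsen--Schreier.

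Two further inaccuracies, neither load-bearing given your cardinality argument but worth flagging. Lewin's basis elements are $tb-\phi(tb)=(g-1)\phi(tb)$ where $g=tb\,\phi(tb)^{-1}$ is the Schreier generator, i.e.\ of the form $(h-1)t'$ with $t'\in\mathrm{ST}$; they are not of the form $t(h_\beta-1)$, which in general does not even lie in the right ideal $J_H$. And the parenthetical ``one may choose $\mathrm{ST}$ so that the $h_\beta$ are the Nielsen generators'' is false: for $H=\F$ the only Schreier transversal is $\{1\}$, whose Schreier basis is the fixed basis of $\F$, so a basis such as $\{b_1,b_1b_2\}$ is not realized by any Schreier transversal.
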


\begin{proof}
Since $\left\{ h_{\beta}-1\right\} _{\beta\in B}$ already generates
$I_{H}$ in $K\left[H\right]$, it generates $h-1$ for any $h\in H,$
and is thus a generating set for $J_{H}.$ Let $T$ be a right transversal
for $H$ in $\F$ (i.e., a set of representatives of the right cosets
of $H$). Then for every $t\in T$ the set $K\left[H\right]t$ of
elements of $\A$ supported on the coset $Ht$ forms a left $K\left[H\right]$-module,
and the group algebra $\A$ admits a left $K\left[H\right]$-module
decomposition $\A=\bigoplus_{t\in T}K\left[H\right]t$. Let $P_{Ht}$:$\A\to K\left[H\right]t$
be the projections induced by this decomposition. Suppose now that
there is a relation $\sum_{\beta\in B}\left(h_{\beta}-1\right)a_{\beta}=0$
for some coefficients $\left\{ a_{\beta}\right\} _{\beta\in B}$ in
$\A$. For every $t\in T,$ applying the left $K\left[H\right]$-module
map $P_{Ht}$ to both sides yields the relation $\sum_{\beta\in B}\left(h_{\beta}-1\right)P_{Ht}\left(a_{\beta}\right)=0$,
and multiplying by $t^{-1}$ gives $\sum_{\beta\in B}\left(h_{\beta}-1\right)\left(P_{Ht}\left(a_{\beta}\right)t^{-1}\right)=0$.
Since $P_{Ht}\left(a_{\beta}\right)t^{-1}\in K\left[H\right]$ and
$\left\{ h_{\beta}-1\right\} _{\beta\in B}$ is a basis (for $I_{H}),$we
deduce that $P_{Ht}\left(a_{\beta}\right)=0$ for every $\beta\in B$.
Thus, $a_{\beta}=\sum_{t\in T}P_{Ht}\left(a_{\beta}\right)=0$ for
every $\beta\in B$.
\end{proof}
\begin{prop}
\label{prop:Umirbaev}Let $H\leq F$ be finitely generated and let
$w\in F$. If $w-1$ is primitive in $J_{H}$ then $w$ is primitive
in $H$.
\end{prop}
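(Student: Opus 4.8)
The plan is to contrapose and use the structure theory developed above, in particular Proposition~\ref{prop:basis for J_H}: a basis of $H$ gives, term-by-term, a basis of $J_H$. First I would record the key rigidity fact we need about $H$: the only subgroup of $H$ in which $w$ is primitive is $H$ itself when $w$ is imprimitive in $H$ — but that's circular, so instead I plan to work directly with free factors. Concretely, suppose $w$ is \emph{imprimitive} in $H$; I want to conclude $w-1$ is imprimitive in $J_H$. By Theorem~\ref{thm:Cohn-Lewin} and the discussion of free factors preceding Corollary~\ref{cor:alg extensions generated on a given tree}, $w-1 \in J_H$ is primitive iff there is a submodule $L \le J_H$ of rank $\rk J_H - 1$ with $J_H = (w-1)\A \oplus L$. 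The strategy is to show that any such decomposition of $J_H$ would descend to a decomposition of $H$ exhibiting $w$ as primitive, contradicting the hypothesis.

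The main tool for this descent is the $K[H]$-module decomposition $\A = \bigoplus_{t\in T} K[H]t$ used in the proof of Proposition~\ref{prop:basis for J_H}, together with the fact that $J_H \cap K[H] = I_H$ (the augmentation ideal of $K[H]$), since $J_H$ is generated by $\{h-1\}_{h\in H}$ and these lie in $K[H]$, while the projections $P_{Ht}$ kill the "new" part. So the plan's middle step is: given a basis $\{w-1\} \cup \{f_i\}$ of $J_H$ as a right $\A$-module, I would like to produce a basis $\{w-1\}\cup\{g_j\}$ of $I_H$ as a right $K[H]$-module, which by the cited result $\cite[\text{Prop.}~4.8]{cohh1972groups}$ — I mean $\cite[\text{Prop.}~4.8]{cohen1972groups}$ — corresponds to a basis of $H$ containing $w$. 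One clean way: show $J_H = I_H \otimes_{K[H]} \A$ (induction of the $K[H]$-module $I_H$ up to $\A$ along the inclusion $K[H]\hookrightarrow \A$, using that $\A$ is free as a left $K[H]$-module via the coset decomposition). Then primitivity is preserved and reflected by base change along this faithfully flat (indeed free) extension: $w-1$ is primitive in the induced module $I_H\otimes_{K[H]}\A$ iff $w-1$ is primitive in $I_H$. Combined with Proposition~\ref{prop:basis for J_H}'s observation that bases of $I_H$ correspond to bases of $H$, this gives the contrapositive.

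The hard part will be justifying the base-change step rigorously in the noncommutative setting — that a generator of a free summand of $I_H\otimes_{K[H]}\A$ which happens to lie in $I_H\otimes 1$ is already part of a free $K[H]$-basis of $I_H$. For this I expect to need: (i) $\A$ is free as a left $K[H]$-module (clear from $\A = \bigoplus_{t\in T}K[H]t$), hence the inclusion is faithfully flat; (ii) for a right module map, extending scalars along a free extension reflects whether a given element is unimodular/primitive — this should follow by applying the projections $P_{Ht}$ coefficient-wise, exactly mimicking the relation-chasing in the proof of Proposition~\ref{prop:basis for J_H}, to show that an $\A$-linear relation $(w-1)a + \sum f_i a_i = 0$ with everything in $I_H\otimes 1$ forces $K[H]$-linear relations among the $H$-coordinates. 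An alternative, possibly cleaner route avoiding explicit induced modules: take a hypothetical basis $\{w-1, f_1,\dots,f_{s-1}\}$ of $J_H$ with $s = \rk H$; the complement $L = (f_1,\dots,f_{s-1})$ satisfies $J_H = (w-1)\A \oplus L$; intersect with $K[H]$ and apply $P_{He}$ (the identity-coset projection) to get $I_H = (w-1)K[H] \oplus P_{He}(L\cap\text{something})$, then check the right-hand summand is a free $K[H]$-module of rank $s-1$ using Theorem~\ref{thm:Cohn-Lewin} applied inside $K[H]$ (which is also a free ideal ring, being $\mathcal{A}$ for the free group $H$). I would pursue whichever of these makes the rank bookkeeping least painful; the coset-projection argument is most in the spirit of the paper and reuses machinery already on the page.
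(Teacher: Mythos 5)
Your reduction of the problem to a unimodularity statement over $K\left[H\right]$ is sound and in fact mirrors the paper's own mechanism: writing $w-1=\sum_{i}\left(h_{i}-1\right)a_{i}$ with $a_{i}\in K\left[H\right]$, a basis of $J_{H}$ containing $w-1$ yields (via a change-of-basis computation over $\A$) a row $\left(d_{1},\ldots,d_{k}\right)$ over $\A$ with $\sum_{i}d_{i}a_{i}=1$, and the coset projection $P_{H}$ (right $K\left[H\right]$-linear, applied exactly as in the proof of Proposition~\ref{prop:basis for J_H}) pushes this down to a left inverse over $K\left[H\right]$. So the part of your plan you flagged as ``the hard part'' — that extension of scalars along the free extension $K\left[H\right]\hookrightarrow\A$ reflects unimodularity/primitivity — is correct and is essentially the paper's argument.

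However, there is a genuine gap at the final step. You conclude by saying that primitivity of $w-1$ in $I_{H}$ (as a $K\left[H\right]$-module) gives primitivity of $w$ in $H$ ``combined with Proposition~\ref{prop:basis for J_H}'s observation that bases of $I_{H}$ correspond to bases of $H$.'' That proposition (and the cited \cite[Prop.~4.8]{cohen1972groups}) only goes one way: a free basis $\left\{ h_{\beta}\right\} $ of $H$ produces the module basis $\left\{ h_{\beta}-1\right\} $ of $I_{H}$. It does not say that an element $w-1$ lying in \emph{some} $K\left[H\right]$-basis of $I_{H}$ must come from an element of a group basis of $H$; a priori a basis of $I_{H}$ need not consist of elements of the form $h-1$ at all. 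This converse is precisely the deep input the paper imports from Umirbaev \cite[Cor.~on~page~184]{umirbaev1994primitive}: left-invertibility of the coefficient column $\left(a_{1},\ldots,a_{k}\right)$ over $K\left[H\right]$ implies that $w$ is primitive in the free group $H$. Without this (or an equivalent group-ring-to-group descent theorem), neither your induced-module route nor your coset-projection route closes: both terminate at the module-theoretic statement about $I_{H}$, and the passage from there to the group-theoretic conclusion is exactly the step your proposal assumes rather than proves. The same issue afflicts the contrapositive framing: ``descending a decomposition of $J_{H}$ to a decomposition of $H$'' is the unproved content, not a bookkeeping matter.
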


\begin{proof}
Assume that $w-1$ is primitive in $J_{H}$. As $w-1\in J_{H}$, by
\cite[Lem.~4.1]{cohen1972groups}, $w$ lies in $H$. Fix a basis
$h_{1},h_{2},...,h_{k}$ for $H$. Then $\left\{ h_{i}-1\right\} _{i=1}^{k}$
is a basis for $I_{H}$ and $w-1\in I_{H}$, so we can write (uniquely)
$w-1=\sum_{i=1}^{k}\left(h_{i}-1\right)a_{i}$ for some coefficients
$a_{i}\in K\left[H\right].$ By a Theorem of Umirbaev\footnote{Umirbaev's result is actually stated for free group rings over the
integers. However, the proof uses no specific properties of $\mathbb{Z}$
and hence also applies, \textit{mutatis mutandis, }to the field $K$.} \cite[Cor.~on~page~184]{umirbaev1994primitive}, to deduce that $w$
is primitive in $H$ it is enough to show that the coefficients $\left\{ a_{i}\right\} _{i=1}^{k}$
form a left-invertible column in the sense that there exist $u_{1},u_{2},...,u_{k}\in K\left[H\right]$
such that $\sum_{i=1}^{k}u_{i}a_{i}=1.$\\
Since $w-1$ is primitive in $J_{H}$, there exist some elements $f_{2},...,f_{k}\in J_{H}$
completing $w-1$ to a basis of $J_{H}$. By Proposition \ref{prop:basis for J_H},
$\left\{ h_{i}-1\right\} _{i=1}^{k}$ is, too, a basis for $J_{H}$.
Let $C\in M_{kk}\left(\mathcal{A}\right)$ be a change-of-basis matrix
satisfying $\left(h_{1}-1,h_{2}-1,...,h_{k}-1\right)C=\left(w-1,f_{2},...,f_{k}\right),$
where by uniqueness of presenting $w-1$ in the basis $\left\{ h_{i}-1\right\} _{i=1}^{k}$
the first column of $C$ is 
\[
\begin{pmatrix}a_{1}\\
\vdots\\
a_{k}
\end{pmatrix}.
\]
As one can also change basis in the other direction, there exists
some $D\in M_{kk}\left(\mathcal{A}\right)$ such that 
\[
\left(h_{1}-1,h_{2}-1,...,h_{k}-1\right)=\left(w-1,f_{2},...,f_{k}\right)D.
\]
Thus, $\left(w-1,f_{2},...,f_{k}\right)DC=\left(w-1,f_{2},...,f_{k}\right)$,
which by the uniqueness of presentation implies that $DC$ is the
identity matrix. In particular, the first row of $D$ which we denote
by $\left(d_{1},d_{2},...,d_{k}\right)$ is a left inverse to the
first column of $C$ in the sense that 
\begin{equation}
\sum_{i=1}^{k}d_{i}a_{i}=1.\label{eq: left inverse}
\end{equation}
We next show that the elements $\left\{ d_{i}\right\} _{i=1}^{k}$
can be replaced by elements $\left\{ u_{i}\right\} _{i=1}^{k}$ lying
in $K\left[H\right]$. Let $T$ be a left transversal for $H$ in
$\boldsymbol{F}.$ Then as a right $K\left[H\right]$-module, $\A$
decomposes as $\A=\bigoplus_{t\in T}tK\left[H\right]$. Denote by
$P_{tH}$ the projection onto the summand corresponding to $t.$ Then
applying the right $K\left[H\right]$-module map $P_{H}$ to Equation
\ref{eq: left inverse} gives $\sum_{i=1}^{k}P_{H}\left(d_{i}\right)a_{i}=1$.
We finish by letting $u_{i}=P_{H}\left(d_{i}\right).$
\end{proof}
\begin{lem}
\label{lem:prim means prim in every intermediate}Let $J\leq\mathcal{A}$
be an ideal and $f\in J$ a primitive element. Then $f$ is primitive
in every intermediate ideal $f\in I\leq J$.
\end{lem}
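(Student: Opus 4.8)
The plan is to exploit the very definition of "primitive" in terms of bases, together with the Cohn--Lewin theory (Theorem~\ref{thm:Cohn-Lewin} and Theorem~\ref{thm:Lewin}). Suppose $f$ is primitive in $J$, so there is a basis $\mathcal{B}_J = \{f\} \cup \{g_\alpha\}_{\alpha}$ of $J$ as a free $\A$-module. Let $f\in I\le J$. I want to produce a basis of $I$ containing $f$. The natural candidate is to start from a basis of $I$ and modify it, but it is cleaner to work with the quotient module $J/I$: since $I\le J$ and both are free, $J/I$ is a cyclic-type quotient, and the point is that $f$, being a basis element of $J$, generates a free rank-$1$ direct summand $f\A$ with $J = f\A \oplus J'$ where $J' = \bigoplus_\alpha g_\alpha\A$. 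Because $f\in I$, we have $f\A\le I$, and hence $I = f\A \oplus (I\cap J')$ — this is the key decomposition, valid because the sum $f\A + (I\cap J')$ is direct (it sits inside $f\A\oplus J'$) and because any $x\in I\subseteq J$ can be written $x = f a + y$ with $y\in J'$, where $y = x - fa \in I$ (as $f\in I$), so $y\in I\cap J'$.

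Given this decomposition $I = f\A \oplus (I\cap J')$, the submodule $I\cap J'\le J'\le \A^m$ is itself free by Theorem~\ref{thm:Cohn-Lewin}; pick any basis $\{h_\beta\}_\beta$ of it. Then $\{f\}\cup\{h_\beta\}_\beta$ is a basis of $I$: it spans $I$ by the direct-sum decomposition, and it is a basis (not merely a spanning set that happens to be independent) precisely because a direct sum of free modules has, as a basis, the union of bases of the summands, and $\{f\}$ is a basis of the rank-$1$ free summand $f\A$ (here we use that $f$, being a member of a basis of $J$, is in particular not a zero-divisor-like degenerate element — more simply, $f\A\cong\A$ via $a\mapsto fa$ since $f$ lies in a basis of the free module $J$, so $\mathrm{ann}(f)=0$). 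Hence $f$ is contained in a basis of $I$, i.e.\ $f$ is primitive in $I$.

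The one step requiring a little care is the claim $f\A\oplus J' = J$ with $f\A\cong\A$: this is exactly what "$f$ belongs to a basis of the free module $J$" unpacks to, so it is immediate from Definition~\ref{def:primitive} once we recall that for a free module, singling out one basis element $f$ gives $J = f\A\oplus\langle\text{other basis elements}\rangle$ with $f\A$ free of rank $1$. The rest is the elementary modular-law computation $I = f\A\oplus(I\cap J')$ for $f\A\le I\le f\A\oplus J'$, which holds in any module. I do not expect a genuine obstacle here; the only thing to double-check is that we never need $I$ to be finitely generated or $\A$ to be finite — the argument uses only that submodules of free $\A$-modules are free (Theorem~\ref{thm:Cohn-Lewin}), which holds in full generality, so the lemma is valid for arbitrary $J$ and arbitrary intermediate $I$.
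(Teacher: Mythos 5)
Your proof is correct and follows essentially the same route as the paper: decompose $J=\left(f\right)\oplus M$ using primitivity of $f$ in $J$, deduce $I=\left(f\right)\oplus\left(M\cap I\right)$ by the same modular-law computation, and conclude via freeness of the complement (Cohn--Lewin) that $f$ extends to a basis of $I$. The extra details you supply (that $f\mathcal{A}$ is free of rank one and that the union of bases of the summands is a basis of $I$) are points the paper leaves implicit, but the argument is the same.
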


\begin{proof}
Since $f$ is primitive in $J$ we can write $J=\left(f\right)\oplus M$
for some ideal $M$. We claim that $I=\left(f\right)\oplus\left(M\cap I\right)$.
The directness is obvious ($M$ already intersects $\left(f\right)$
trivially). It remains to show that $I$ is indeed the sum of the
two summands. Let $a\in I.$ Then $a\in J$ and thus can be decomposed
as $a=a_{1}+m$ where $a_{1}\in\left(f\right)$ and $m\in M.$ But
then $m=a-a_{1}\in I$ and so $m\in M\cap I$.
\end{proof}
In the following corollary we do not assume that $H$ is finitely
generated.
\begin{cor}
\label{cor:w prim in H  iff   w-1 prim in J_H}Let $H\leq F$ and
let $w\in F.$ Then $w$ is primitive in $H$ if and only if $w-1$
is primitive in $J_{H}$.
\end{cor}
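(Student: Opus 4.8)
The statement is an "if and only if" about $w$ being primitive in $H$ versus $w-1$ being primitive in $J_H$, and the subtlety is that $H$ need not be finitely generated, whereas Propositions \ref{prop:Umirbaev} and \ref{prop:basis for J_H} were stated only for f.g.\ $H$. The plan is therefore to reduce to the finitely generated case on each side separately and then quote the two propositions.

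\medskip

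For the direction "$w-1$ primitive in $J_H$ $\implies$ $w$ primitive in $H$": first, since $w-1 \in J_H$, by \cite[Lem.~4.1]{cohen1972groups} we have $w \in H$. Now if $w-1$ is primitive in $J_H$, write $J_H = (w-1) \oplus M$. A primitive element lies in some \emph{finite} subset of a basis, and each basis element has finite support, so $w-1$ together with finitely many companions spans a free factor of $J_H$ supported on some finite subtree; more to the point, we want to pass to a finitely generated subgroup. Let $H_0 \le H$ be any finitely generated subgroup containing $w$ (e.g.\ $H_0 = \langle w \rangle$, or better, the subgroup generated by $w$ together with generators of $H$ appearing in the supports of finitely many basis companions of $w-1$). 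We have $(w-1) \le J_{H_0} \le J_H$. By Lemma \ref{lem:prim means prim in every intermediate}, since $w-1$ is primitive in $J_H$ and $w-1 \in J_{H_0} \le J_H$, it is primitive in $J_{H_0}$. Now $H_0$ is f.g., so Proposition \ref{prop:Umirbaev} applies and gives that $w$ is primitive in $H_0$. Finally one must upgrade "$w$ primitive in $H_0$" to "$w$ primitive in $H$": this needs that $H_0$ can be taken to be a free factor of $H$, or a separate argument; the cleanest route is to instead take $H_0$ to be a free factor of $H$ containing $w$ --- such a finitely generated free factor exists whenever $w$ is contained in a finitely generated free factor, but in general one should argue as follows: if $w$ is primitive in \emph{some} f.g.\ subgroup $H_0 \le H$, it need not be primitive in $H$, so I will instead choose $H_0$ carefully. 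The right choice: since $w-1$ is primitive in $J_H$, pick a basis of $J_H$ containing $w-1$, take the finitely many basis elements $w-1, f_2, \dots, f_k$ whose supports, together, meet only finitely many cosets $Hz$; let $H_0$ be generated by those $h_\beta$ (from a fixed basis of $H$) that appear. Then $J_{H_0}$ is a free factor of $J_H$ (its basis $\{h_\beta - 1\}$ extends to one of $J_H$ by Proposition \ref{prop:basis for J_H} plus the Schreier/Cohn--Lewin machinery), $w - 1 \in J_{H_0}$ and is primitive there, Proposition \ref{prop:Umirbaev} gives $w$ primitive in $H_0$, and $H_0$ being a free factor of $H$ gives $w$ primitive in $H$.

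\medskip

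For the converse, "$w$ primitive in $H$ $\implies$ $w-1$ primitive in $J_H$": this direction is the easy one and does not need the finiteness fuss. If $w$ is primitive in $H$, extend $\{w\}$ to a basis $\{w\} \cup \{h_\beta\}_{\beta \in B}$ of $H$. By Proposition \ref{prop:basis for J_H}, $\{w - 1\} \cup \{h_\beta - 1\}_{\beta \in B}$ is a basis of $J_H$, so in particular $w - 1$ is primitive in $J_H$. (Proposition \ref{prop:basis for J_H} as stated assumes $H$ f.g., but its proof --- the $K[H]t$ decomposition --- works verbatim for an arbitrary basis of an arbitrary $H$, since any relation involves only finitely many terms; one should note this.)

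\medskip

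The main obstacle is the non-finitely-generated case in the forward direction: the honest point is that "primitive in a subgroup $H_0 \le H$" does \emph{not} imply "primitive in $H$" in general, so one cannot simply shrink $H$ to an arbitrary f.g.\ subgroup --- one must shrink to a finitely generated \emph{free factor} of $H$ containing $w$, and verify such a free factor exists and carries $J_{H_0}$ as a free factor of $J_H$. I expect the cleanest way to organize this is to do all the finite-support bookkeeping on the algebra side (where Lemma \ref{lem:prim means prim in every intermediate} and the Cohn--Lewin basis from Theorem \ref{thm:Lewin} give a clean notion of free factor), extract a f.g.\ free factor $J_0 \le J_H$ with $w - 1 \in J_0$, and only then recognize $J_0$ as $J_{H_0}$ for a suitable f.g.\ free factor $H_0 \le H$ via Proposition \ref{prop:basis for J_H}, at which point Proposition \ref{prop:Umirbaev} finishes it.
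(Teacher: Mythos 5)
Your proposal is correct and, in its final concrete form, is essentially the paper's own argument: write $w-1$ in the basis $\left\{ h_{\beta}-1\right\}$ of $J_{H}$ coming from a fixed basis of $H$ (Proposition \ref{prop:basis for J_H}, which is stated and proved for arbitrary, not necessarily finitely generated, $H$ -- so no extra caveat is needed in the easy direction), let $H_{0}$ be generated by the finitely many $h_{\beta}$ that appear, so that $H_{0}$ is automatically a free factor of $H$ and $w-1\in J_{H_{0}}$, then apply Lemma \ref{lem:prim means prim in every intermediate}, Proposition \ref{prop:Umirbaev}, and transitivity of free factors. The only caution is that your companions $f_{2},\ldots,f_{k}$ are unnecessary, and your closing alternative (extract an abstract f.g.\ free factor $J_{0}\le J_{H}$ and then ``recognize'' it as some $J_{H_{0}}$) would not go through as stated, since such a $J_{0}$ need not be of the form $J_{H_{0}}$ -- but the construction you actually settle on avoids this.
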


\begin{proof}
One implication is immediate from Proposition \ref{prop:basis for J_H}.
For the other implication, suppose that $w-1$ is primitive in $J_{H}.$
Let $S$ be a basis for $H$. When writing $w-1$ in the basis $\left\{ s-1\right\} _{s\in S}$
of $J_{H}$, only finitely many basis elements appear, so there exist
some $h_{1},...,h_{k}\in S$ and $a_{1},..,a_{k}\in\mathcal{A}$ such
that $w-1=$$\sum_{i=1}^{k}\left(h_{i}-1\right)a_{i}.$ Let $H'=\left\langle h_{1},h_{2},...,h_{k}\right\rangle $.
Then $w-1$ lies in $J_{H'}$ and by Lemma \ref{lem:prim means prim in every intermediate}
it is primitive in it. Since $H'$ is finitely generated, Proposition
\ref{prop:Umirbaev} guarantees that $w$ is primitive in $H'$. Since
the relation of being a free factor is transitive and $\left\langle w\right\rangle \overset{*}{\leq}H'\overset{*}{\leq}H$
we are done.
\end{proof}
We can now prove Proposition \ref{prop:pi_q =00005Cle pi} stating
that for every prime power $q$, the $q$-primitivity rank is bounded
from above by the ordinary primitivity rank, namely, $\pi_{q}\left(w\right)\le\pi\left(w\right)$
for every $w\in\F$.
\begin{proof}[Proof of Proposition \ref{prop:pi_q =00005Cle pi}]
 Let $w\in\F$. The ordinary primitivity rank of a word is a non-negative
integer or $\infty$. We first deal with two trivial cases: if $\pi\left(w\right)=\infty$
then there is nothing to prove, and if $\pi\left(w\right)=0$ then
$w=1$ and so $w-1=0$ is contained in the rank-$0$ trivial ideal
of $\mathcal{A}$ as an imprimitive element, so $\pi_{q}\left(w\right)=0$
as well. Suppose now that $\pi\left(w\right)=k\notin\left\{ 0,\infty\right\} $
and let $H$ be a critical subgroup for $w$ in $\F$, i.e., a subgroup
of $\F$ of rank $k$ containing $w$ as an imprimitive element. The
ideal $J_{H}\leq\mathcal{A}$ contains $w-1$ by its definition as
$w\in H$, it contains $w-1$ as an imprimitive element by Corollary
\ref{cor:w prim in H  iff   w-1 prim in J_H}, it has rank $\text{rk}J_{H}=k$
by Proposition \ref{prop:basis for J_H}, and it is a proper ideal
of $\mathcal{A}$ since it is contained in the augmentation ideal
$I_{\F}\lneqq\A.$ We conclude that $\pi_{q}\left(w\right)\leq k=\pi\left(w\right).$
\end{proof}
If $w\in\F$ is primitive, then (analogously to Lemma \ref{lem:prim means prim in every intermediate}),
$w$ is primitive in any subgroup of $\F$ containing it (see, e.g.~\cite[Claim 2.5]{Puder2014}).
In particular, it has primitivity rank $\pi\left(w\right)=\infty$.
Furthermore, any imprimitive word $w\in\F$ must have $\pi\left(w\right)\leq\text{rk\ensuremath{\F}}$
since it is already not primitive in $\F.$ Thus, the primitivity
rank of words in $\F$ takes values in $\left\{ 0,1,2,...,\text{rk}\F\right\} \cup\left\{ \infty\right\} .$
We next show that analogous statements hold when $\pi$ is replaced
with $\pi_{q}$.
\begin{prop}
\label{prop:prim iff pi_q=00003Dinfty}For every $w\in\F$ and prime
power $q$, $\pi_{q}\left(w\right)=\infty$ if and only if $w$ is
primitive in $\F$$.$
\end{prop}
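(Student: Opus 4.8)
The plan is to prove the two implications separately; the direction $\pi_{q}(w)=\infty\Rightarrow w$ primitive is immediate from what precedes, while the converse reduces, after a normalization, to a one-line application of Lewin's basis theorem. For the easy direction I would argue contrapositively: if $w$ is not primitive in $\F$ then it is imprimitive in $\F$ itself, so (as recorded just before the statement) $\pi(w)\le\rk\F<\infty$; by Proposition~\ref{prop:pi_q =00005Cle pi}, $\pi_{q}(w)\le\pi(w)<\infty$, hence $\pi_{q}(w)\ne\infty$.

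For $w$ primitive $\Rightarrow\pi_{q}(w)=\infty$, the goal is to show that $w-1$ is a primitive element of \emph{every} proper ideal $I\le\A$ containing it; then no proper ideal can witness $\pi_{q}(w)<\infty$. The key first move is to normalize $w$ to a basis element. Since $w$ is primitive there is $\psi\in\Aut(\F)$ with $\psi(b_{1})=w$, and it extends to a ring automorphism $\psi_{*}$ of $\A=\k[\F]$ with $\psi_{*}(b_{1}-1)=w-1$. A ring automorphism of $\A$ sends proper right ideals to proper right ideals and, since $\A$ is a domain, carries a basis of a right ideal to a basis of its image; thus $w-1$ is primitive in $I$ if and only if $b_{1}-1$ is primitive in the proper ideal $I_{0}:=\psi_{*}^{-1}(I)\ni b_{1}-1$. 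So it suffices to prove: if $I_{0}\le\A$ is proper and $b_{1}-1\in I_{0}$, then $b_{1}-1$ is primitive in $I_{0}$.

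For this I would invoke Theorem~\ref{thm:Lewin} directly. Fix a Schreier transversal $\mathrm{ST}$ of $I_{0}$ (such exist --- see the discussion after Definition~\ref{def:Schreier-transversal}). As $I_{0}$ is proper, $\A/I_{0}\ne0$, so $\mathrm{ST}$ is non-empty, and being closed under prefixes it contains $1$. Since $b_{1}-1\in I_{0}$, the monomials $1$ and $b_{1}$ represent the same coset of $\A/I_{0}$; because $\spn_{\k}(\mathrm{ST})$ contains exactly one representative of each coset (in particular it meets $I_{0}$ only in $0$, so two monomials of $\mathrm{ST}$ lying in one coset must coincide) and $1\in\mathrm{ST}$, we get $b_{1}\notin\mathrm{ST}$ and $\phi(b_{1})=1$ in the notation of Theorem~\ref{thm:Lewin}. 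Hence the pair $(1,b_{1})$ --- with $1\in\mathrm{ST}$, $b_{1}\in B$, and $1\cdot b_{1}=b_{1}\notin\mathrm{ST}$ --- contributes the element $b_{1}-\phi(b_{1})=b_{1}-1$ to the basis \eqref{eq:Lewins basis} of $I_{0}$, so $b_{1}-1$ is primitive in $I_{0}$. Undoing the normalization gives that $w-1$ is primitive in $I$, completing the proof.

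The only delicate point is the normalization step, and this is where I expect the main subtlety to lie. For a general primitive $w$ one would need a Schreier transversal of $I$ containing the particular monomial $wb^{-1}$ with $b$ the last letter of $w$, and moreover $b$ would have to be a positive generator (Theorem~\ref{thm:Lewin} only produces basis elements $ezb-\phi(ezb)$ with $b\in B$); neither can be arranged in general, whereas replacing $w$ by $b_{1}$ makes both automatic. Alternatively, one can run the same argument through the exploration machinery of Theorem~\ref{thm:basis from coincidences}: explore a finite subtree supporting $I_{0}$ starting with $b_{1}$ and then $1$, observe that $I_{0}|_{\{b_{1}\}}=0$ because $b_{1}\notin I_{0}$ (else $1\in I_{0}$), conclude that exposing $1$ is a coincidence, and take $b_{1}-1$ as the corresponding basis element; but the Schreier-transversal route is shorter.
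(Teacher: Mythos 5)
Your proof is correct, and its second half takes a genuinely different route from the paper's. The normalization is identical: both you and the paper transport the problem through the ring automorphism induced by an automorphism of $\F$ carrying $w$ to $b_{1}$, using that such an automorphism sends proper ideals to proper ideals and bases to bases (your parenthetical ``since $\A$ is a domain'' is unnecessary here --- bijectivity of the ring automorphism already does the job). After that, the paper argues by contradiction: it takes a \emph{critical} ideal for $b_{1}-1$, invokes Corollary \ref{cor:finitely many critical extensions} to see it is generated on $T=\left\{ 1,b_{1}\right\} $, and a short computation on those two monomials forces the ideal to be $\left(b_{1}-1\right)$, where $b_{1}-1$ is primitive. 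You instead apply Theorem \ref{thm:Lewin} directly to an arbitrary proper ideal $I_{0}\ni b_{1}-1$: any Schreier transversal is nonempty and prefix-closed, hence contains $1$; since $b_{1}-1\in I_{0}$ and $I_{0}$ is proper, $b_{1}\notin\mathrm{ST}$ and $\phi\left(b_{1}\right)=1$, so $b_{1}-1$ literally occurs in Lewin's basis \eqref{eq:Lewins basis}. This is more direct, needs no finiteness or minimality of rank, and exhibits an explicit basis through $b_{1}-1$; the paper's route stays inside the exploration/critical-ideal machinery it develops anyway for other purposes. Your easy direction (contrapositive via $\pi\left(w\right)\le\rk\F$ and Proposition \ref{prop:pi_q =00005Cle pi}) is logically equivalent to the paper's direct appeal to Proposition \ref{prop:Umirbaev} applied to $J_{\F}$. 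One small caveat: your sketched alternative via Theorem \ref{thm:basis from coincidences} quietly assumes $I_{0}$ is generated on a finite subtree, which a general proper ideal need not be (one would first have to reduce to a critical ideal, as the paper does); but since that is only an aside, your main argument is complete as written.
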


\begin{proof}
If $\pi_{q}\left(w\right)=\infty$ then $w-1$ must be primitive in
$J_{\F}$, which implies by Proposition \ref{prop:Umirbaev} that
$w$ is primitive in $\F$. Conversely, let $w\in\F$ be primitive.
Then there exists some automorphism $\psi\in Aut\left(\F\right)$
such that $\psi\left(w\right)=b_{1}$ (recall that $\left\{ b_{1},\ldots,b_{r}\right\} $
is our fixed basis of $\F$). The automorphism $\psi$ naturally extends
(linearly) to an automorphism of the group ring $\psi:\A\rightarrow\A.$
Since $\psi$ maps ideals to ideals and bases to bases, it is enough
to show that $\psi\left(w-1\right)=b_{1}-1$ is primitive in every
ideal containing it. Suppose it is not, and let $I$ be a critical
ideal for $b_{1}-1$. By Corollary \ref{cor:finitely many critical extensions},
$I$ is generated on $T=\left\{ 1,b_{1}\right\} .$ Let $f\in I\mid_{T}$.
Then $f=\beta b_{1}-\alpha$ for some $\alpha,\beta\in\k$. By the
definition of a critical ideal, $I$ is a proper ideal and so $\alpha,\beta$
must be equal because their difference lies in $I$:
\[
\beta-\alpha=f-\left(b_{1}-1\right)\beta\in I.
\]
Thus, $I\mid_{T}=\spn_{\k}\left\{ b_{1}-1\right\} $ and since $I$
is supported on $T$, $I$ must be the principal right ideal $I=\left(b_{1}-1\right)$
in which $b_{1}-1$ is primitive, a contradiction.
\end{proof}
\begin{cor}
\label{cor:values of pi_q}For every $w\in\F$ and every prime power
$q$, $\pi_{q}\left(w\right)\in\left\{ 0,1,2,...,\text{rk}\F\right\} \cup\left\{ \infty\right\} .$
\end{cor}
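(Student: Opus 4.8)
The plan is to obtain the upper bound $\pi_{q}(w)\le\rk\F$ for every non-primitive $w$ by exhibiting one explicit proper ideal that witnesses it, and to handle the primitive case by a direct appeal to Proposition \ref{prop:prim iff pi_q=00003Dinfty}. Concretely, I would split into two cases according to whether $w$ is primitive in $\F$. If $w$ is primitive, Proposition \ref{prop:prim iff pi_q=00003Dinfty} gives $\pi_{q}(w)=\infty$, which lies in the claimed set, and there is nothing more to do.

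If $w$ is not primitive in $\F$, I would take as witness the augmentation ideal $I_{\F}=J_{\F}\le\A$, namely the kernel of the augmentation map $\varepsilon_{\F}\colon\A\to\k$, equivalently the right ideal generated by $\{h-1\mid h\in\F\}$. Four properties then need to be checked, all of them already available in this section: (i) it is a \emph{proper} ideal, since $\varepsilon_{\F}$ is onto $\k$; (ii) it contains $w-1$, since $w\in\F$; (iii) it has rank exactly $\rk\F$, by Proposition \ref{prop:basis for J_H} applied to $H=\F$; and (iv) $w-1$ is imprimitive in it, which is precisely Corollary \ref{cor:w prim in H  iff   w-1 prim in J_H} with $H=\F$, since $w$ is not primitive in $\F$. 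Thus $J_{\F}$ is an admissible competitor in the definition of $\pi_{q}(w)$, whence $\pi_{q}(w)\le\rk J_{\F}=\rk\F$. Finally, any finite value of $\pi_{q}(w)$ is by definition the rank of some ideal, hence a well-defined non-negative integer by the Cohn--Lewin theorem (Theorem \ref{thm:Cohn-Lewin}); combined with the upper bound this yields $\pi_{q}(w)\in\{0,1,\ldots,\rk\F\}$, and the case analysis is complete. (Note the degenerate case $w=1$, where $w$ is non-primitive and $w-1=0$, is subsumed in the second case, consistently with $\pi_{q}(1)=0$.)

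I do not expect a genuine obstacle: the characterization of when $\pi_{q}=\infty$, the transfer of (im)primitivity between $w$ in $\F$ and $w-1$ in $J_{\F}$, and the rank of $J_{\F}$ have all been established earlier. The only point that calls for a moment's care is confirming that the augmentation ideal is a legitimate witness in the definition of $\pi_{q}$ --- that it is proper and of rank exactly $\rk\F$, not smaller --- which is exactly what the surjectivity of $\varepsilon_{\F}$ together with Proposition \ref{prop:basis for J_H} guarantees.
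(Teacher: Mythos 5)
Your proposal is correct and follows essentially the same route as the paper: the primitive case is handled by Proposition \ref{prop:prim iff pi_q=00003Dinfty}, and the non-primitive case uses that $w-1$ is imprimitive in $J_{\F}$ (Corollary \ref{cor:w prim in H  iff   w-1 prim in J_H}) together with $\rk J_{\F}=\rk\F$ (Proposition \ref{prop:basis for J_H}). Your extra checks (properness of $J_{\F}$, the degenerate case $w=1$) are just a more explicit spelling-out of the same argument.
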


\begin{proof}
Let $w\in\F$. If $w$ is primitive in $\F$ then by Proposition \ref{prop:prim iff pi_q=00003Dinfty}
$\pi_{q}\left(w\right)=\infty$. Otherwise, by Corollary \ref{cor:w prim in H  iff   w-1 prim in J_H},
$w-1$ is already imprimitive in $J_{\F}$ and so $\pi_{q}\left(w\right)\leq\text{rk}J_{\F}=\text{rk}\F$.
\end{proof}

\section{Powers and the limit of expected values of stable functions\label{sec:Powers}}

In this section we prove Theorems \ref{thm:limit powers fix} and
\ref{thm:limit powers general}: if $w\ne1$, then $\lim_{N\to\infty}\mathbb{E}_{w}\left[\fix\right]$
and, more generally, $\lim_{N\to\infty}\mathbb{E}_{w}\left[\btil\right]$,
exist. Moreover, if we write $w=u^{d}$ with $u$ non-power and $d\ge1$,
then the limit depends only on $d$, and, in particular, $\lim_{N\to\infty}\mathbb{E}_{w}\left[\fix\right]$
is equal to the number of monic divisors of the polynomial $x^{d}-1\in\k\left[x\right]$. 

As mentioned in Section \ref{sec:Introduction}, a special case of
Theorem \ref{thm:limit powers general}, which includes Theorem \ref{thm:limit powers fix},
first appeared in \cite{West19} and, independently, in \cite{eberhard2021babai}.
Here, we prove the full version of the theorem while following the
strategy from \cite{eberhard2021babai}, which is more elegant than
the one in \cite{West19}. We use here slightly different notions
and give more details than in \cite{eberhard2021babai}. As the proof
is subtle, and for the sake of readability, we first describe the
proof of the special case which is Theorem \ref{thm:limit powers fix}.

\subsection{\label{subsec:powers fix}$\lim_{N\to\infty}\mathbb{E}_{w}\left[\protect\fix\right]$
and the proof of Theorem \ref{thm:limit powers fix}}

From \eqref{eq:E_w=00005Bfix=00005D rational expression} and Corollary
\ref{cor:contrib is m-rank} it follows that 
\[
\mathbb{E}_{w}\left[\fix\right]=\sum_{I\le_{\left[1,w\right]}\A\colon I\ni w-1}\left(q^{N}\right)^{1-\rk I}\left(1+O\left(\frac{1}{q^{N}}\right)\right).
\]
Clearly, as $w\ne1$, an ideal containing $w-1$ has rank at least
$1$. So
\begin{equation}
\mathbb{E}_{w}\left[\fix\right]=\left|\left\{ I\le_{\left[1,w\right]}\A\,\middle|\,I\ni w-1~\mathrm{and}~\rk I=1\right\} \right|+O\left(\frac{1}{q^{N}}\right).\label{eq:lim fix =00003D =000023 rank-1 ideals in =00005B1,w=00005D}
\end{equation}
By Definition \ref{def:q-primitivity-rank}, the only non-critical
rank-1 ideals containing $w-1$ are $\left(w-1\right)$ and $\left(1\right)$,
which are both generated on $\left[1,w\right]$. Any other rank-1
ideal containing $w-1$ is critical, and Corollary \ref{cor:finitely many critical extensions}
guarantees that such ideals are supported on $\left[1,w\right]$.
We obtain that
\begin{equation}
\mathbb{E}_{w}\left[\fix\right]=\left|\left\{ I\le\A\,\middle|\,I\ni w-1~\mathrm{and}~\rk I=1\right\} \right|+O\left(\frac{1}{q^{N}}\right).\label{eq:lim fix =00003D =000023 rank-1 ideals}
\end{equation}
This proves:
\begin{cor}
\label{cor:Conjecture-when-pi=00003D1}Conjecture \ref{conj:general pi and fixed vectors}
holds in the case $\pi_{q}\left(w\right)=1$. Namely, in this case
\[
\mathbb{E}_{w}\left[\fix\right]=2+\left|\crit_{q}\left(w\right)\right|+O\left(\frac{1}{q^{N}}\right).
\]
\end{cor}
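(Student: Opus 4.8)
The statement is essentially a bookkeeping consequence of equation \eqref{eq:lim fix =00003D =000023 rank-1 ideals} derived just above it, so the plan is simply to enumerate the rank-$1$ ideals $I\le\A$ with $w-1\in I$ and verify that there are exactly $2+\left|\crit_{q}\left(w\right)\right|$ of them. First I would record that $w\ne1$: this holds because $\pi_{q}\left(w\right)=1$, while $\pi_{q}\left(w\right)=0$ is equivalent to $w=1$.

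Next I would split the rank-$1$ ideals $I$ containing $w-1$ into three mutually exclusive classes. \emph{(i)} $I=\A$: the module $\A$ is free of rank $1$, and $w-1\in\A$ trivially, so $\A$ is one such ideal; here $w-1$ is imprimitive in $\A$ (it is not a unit, lying in the augmentation ideal $I_{\F}$), but $\A$ is not proper, hence not critical. \emph{(ii)} $I\lvertneqq\A$ with $w-1$ \emph{primitive} in $I$: since $\rk I=1$, a basis of $I$ consists of a single element, and if $w-1$ lies in such a basis then it \emph{is} that basis, so $I=\left(w-1\right)$. Conversely $\left(w-1\right)$ is a proper ideal (it lies in $I_{\F}$ as $w\ne1$) of rank $1$ — e.g.\ by Proposition \ref{prop:basis for J_H} applied to $H=\langle w\rangle$, which gives $\rk\left(w-1\right)=\rk J_{H}=\rk H=1$ — in which $w-1$ is visibly primitive; so this class contributes exactly the one ideal $\left(w-1\right)$. \emph{(iii)} $I\lvertneqq\A$ with $w-1$ \emph{imprimitive} in $I$: by Definition \ref{def:q-primitivity-rank} together with the hypothesis $\pi_{q}\left(w\right)=1$, these are precisely the critical ideals for $w$, and there are $\left|\crit_{q}\left(w\right)\right|$ of them, a finite number by Corollary \ref{cor:finitely many critical extensions}.

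Finally I would check disjointness across the three classes: $\A\ne\left(w-1\right)$, and $\A$ is not proper hence not of type (iii); $\left(w-1\right)$ has $w-1$ primitive, so it is not critical; and the critical ideals are pairwise distinct by definition. Hence the total number of rank-$1$ ideals containing $w-1$ is $1+1+\left|\crit_{q}\left(w\right)\right|=2+\left|\crit_{q}\left(w\right)\right|$. Substituting into \eqref{eq:lim fix =00003D =000023 rank-1 ideals} gives $\mathbb{E}_{w}\left[\fix\right]=2+\left|\crit_{q}\left(w\right)\right|+O\left(1/q^{N}\right)$, which is exactly \eqref{eq:conj general pi and fixed vectors} in the case $\pi_{q}\left(w\right)=1$ (the denominator $q^{N\cdot\left(\pi-1\right)}$ equals $1$ there). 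There is no real obstacle; the only points needing a moment's care are that $\A$ itself is a rank-$1$ ideal containing $w-1$ and so must be counted even though it is excluded from $\crit_{q}\left(w\right)$ for failing to be proper, and that a primitive element of a rank-$1$ ideal necessarily generates that ideal.
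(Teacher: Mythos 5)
Your proposal is correct and follows essentially the same route as the paper: the corollary is read off from \eqref{eq:lim fix =00003D =000023 rank-1 ideals} by observing that the rank-$1$ ideals containing $w-1$ are exactly $\left(1\right)$, $\left(w-1\right)$ (the only proper one in which $w-1$ is primitive, since a primitive element of a rank-$1$ ideal is a basis), and the critical ideals, which when $\pi_{q}\left(w\right)=1$ are precisely the proper rank-$1$ ideals containing $w-1$ imprimitively. Your extra verifications (disjointness of the three classes, that $\left(w-1\right)$ is proper of rank $1$, that $\A$ must be counted despite not being critical) are exactly the bookkeeping the paper leaves implicit.
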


In order to prove Theorem \ref{thm:limit powers fix}, it remains
to show that if $w=u^{d}$ with $u$ a non-power and $d\ge1$, then
the ideals $I$ in \eqref{eq:lim fix =00003D =000023 rank-1 ideals}
are precisely $\left\{ \left(p\left(u\right)\right)\,\middle|\,p|x^{d}-1\in\k\left[x\right],p~\mathrm{monic}\right\} $.
First, as any automorphism of $\F$ gives rise to an automorphism
of $\A$, we may replace $w$ by any element in its $\mathrm{Aut}\F$-orbit,
and, in particular, assume that $w$ is cyclically reduced. 

Throughout Section \ref{sec:Powers}, we use the ShortLex order on
monomials in $\A^{m}$ and their finite subsets.
\begin{defn}
\label{def:ShortLex} Fix an arbitrary full order on the basis $E$
of $\A^{m}$, say $e_{1}<e_{2}<\ldots<e_{m}$. Fix an arbitrary full
order on $B\cup B^{-1}$, say $b_{1}<b_{1}^{-1}<b_{2}<\ldots<b_{r}^{-1}$.
The \textbf{ShortLex} order on the monomials $\left\{ ez\right\} _{e\in E,z\in\F}$
is defined by first comparing the length of $z$ (shorter words are
smaller) and using lexicographic order to compare $ez$ with $e'z'$
when $\left|z\right|=\left|z'\right|$. This order induces a full
order on finite sets of monomials by comparing the leading monomial
in each set, breaking ties by looking at the second monomials, and
so on (the empty set is the smallest of all finite sets of monomials).
Finally, we get a pre-order on the elements of $\A^{m}$ by comparing
their supports. An element $f\in\A^{m}$ is called \textbf{monic}
if the $\k$-coefficient of the leading monomial is $1$.
\end{defn}

For example, $\alpha e_{2}b_{1}^{-1}b_{2}<\beta e_{2}b_{1}^{-1}b_{2}+e_{3}b_{1}$
and $e_{1}b_{1}b_{2}<e_{1}b_{3}b_{1}<e_{2}b_{1}b_{2}$ (here $\alpha,\beta\in\k^{*}$).
This ShortLex order is the same one used in \cite{rosenmann1993algorithm}.
(The order used in \cite{lewin1969free} is not quite the same: it
uses length and then \emph{reverse} lexicographic order, and it also
fixes a full order on $\k$ resulting in a full order on $\A^{m}$,
rather than a mere pre-order.) \\

Now, let $I\le\A$ be a rank-1 ideal containing $w-1$. As noted above,
$I$ is generated on $\left[1,w\right]$. In the notation of Section
\ref{sec:The-free-group-algebra}, consider the exposure of $I$ along
the subtree $\left[1,w\right]$, starting with the monomial $1$ and
ending with the monomial $w$. (This happens to be the restriction
of ShortLex to $\left[1,w\right]$.) We shift the indices of the vertices
by one with respect to Section \ref{sec:The-free-group-algebra},
and define $v_{0}=1,\ldots,v_{\left|w\right|}=w$. By Theorem \ref{thm:basis from coincidences},
there is exactly one coincidence is this exposure.\footnote{We remark that to analyze $\lim_{N\to\infty}\mathbb{E}_{w}\left[\fix\right]$,
one does not really need to go through Theorem \ref{thm:basis from coincidences},
nor even consider the rank of ideals. Rather, it is enough to rely
on Corollary \ref{cor:contrib as coincidence}.} In an exposure along a path, a free step is followed by either another
free step or by a coincidence, and in this particular path, the last
step is not free. Thus, the first non-free step must be a coincidence,
and the following steps must all be forced. Namely, if $v_{t}$ is
exposed in a coincidence, $t\in\left\{ 0,1,\ldots,\left|w\right|\right\} $,
then $v_{0},\ldots,v_{t-1}$ are free steps and $v_{t+1},\ldots,v_{\left|w\right|}$
are forced. Denote by $f_{I}\in I$ the monic element supported on
$\left[1,w\right]$ with $v_{t}$ its leading monomial. By Theorem
\ref{thm:basis from coincidences}, $I=\left(f_{I}\right)$. Thus,
the map 
\begin{equation}
I\mapsto f_{I}\label{eq:J mapsto f_J}
\end{equation}
is a one-to-one correspondence.
\begin{lem}
\label{lem:divisors of x^d-1}The ideals $I$ for which $f_{I}$ is
supported on $\left\langle u\right\rangle $ are in one-to-one correspondence
with monic polynomials in $\k\left[x\right]$ dividing $x^{d}-1$.
\end{lem}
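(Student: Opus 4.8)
The plan is to set up an explicit correspondence between monic divisors $p$ of $x^d-1$ and the rank-$1$ ideals $I\le\A$ with $w-1=u^d-1\in I$ whose associated monic generator $f_I$ (from the coincidence, as in the discussion preceding the lemma) is supported on the cyclic subgroup $\langle u\rangle$. The natural map sends a monic divisor $p\mid x^d-1$ to the ideal $(p(u))$, and conversely an ideal $I$ of the required form to the polynomial $p$ with $p(u)=f_I$; I first need to check this is well-defined in both directions, and then that the two maps are mutually inverse.

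First I would observe that the subalgebra of $\A$ generated by $u$ — call it $\k[u]$ — is isomorphic to the Laurent polynomial ring $\k[x,x^{-1}]$, since $u$ is a non-power and hence of infinite order, so $\langle u\rangle\cong\mathbb{Z}$ and $\k[\langle u\rangle]\cong\k[\mathbb{Z}]\cong\k[x^{\pm1}]$. Under this identification, $w-1=u^d-1$ corresponds to $x^d-1$. Now if $p\in\k[x]$ is monic with $p\mid x^d-1$, write $x^d-1=p\cdot h$; then in $\k[x^{\pm1}]$ we have $x^d-1\in(p)$, so $u^d-1\in (p(u))_{\k[x^{\pm1}]}\subseteq p(u)\A$, giving $w-1\in(p(u))$. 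To see $(p(u))$ has rank $1$ it suffices that $p(u)\ne0$, which holds since $p\ne0$ and the map $\k[x^{\pm1}]\to\A$ is injective; any principal nonzero ideal of $\A$ has rank $1$ (a single nonzero element is not a zero-divisor in $\A$, which is a domain by Theorem \ref{thm:Cohn-Lewin} applied suitably, or directly because $\A$ embeds in a division ring). Finally I would check that $f_{(p(u))}$, the monic ShortLex-minimal-leading-term generator coming from the coincidence in the exposure along $[1,w]$, is supported on $\langle u\rangle$: since $p(u)$ itself is supported on $\langle u\rangle\cap[1,w]$ (as $p\mid x^d-1$ forces $\deg p\le d$, so all powers $u^0,\dots,u^{\deg p}$ appearing lie on the path $[1,u^d]=[1,w]$ once $w$ is taken cyclically reduced so that $[1,u^d]$ literally contains the vertices $1,u,u^2,\dots,u^d$), and since $I=(p(u))$ has rank $1$ with $p(u)$ a generator, $p(u)$ is up to scalar and up to a forced-step tail exactly $f_I$; normalizing to be monic gives $f_I=p(u)$ when $p$ is monic.

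Conversely, suppose $I\le\A$ is rank $1$, contains $w-1$, and $f_I$ is supported on $\langle u\rangle$. Then $f_I\in\k[u]\cong\k[x^{\pm1}]$, so $f_I$ corresponds to a Laurent polynomial; multiplying by a unit $x^k$ (which corresponds to right-multiplication by the unit $u^k\in\A$, not changing the ideal $(f_I)=I$) I may assume $f_I$ corresponds to an honest monic polynomial $p\in\k[x]$ with $p(0)\ne0$. Since $w-1=u^d-1\in I=(f_I)=f_I\A$, I want to conclude $p\mid x^d-1$ in $\k[x]$. This is the step I expect to be the main obstacle: the containment $u^d-1\in f_I\A$ is a priori an equation $u^d-1=f_I\cdot g$ with $g\in\A$, and one must show $g$ can be taken in $\k[u]$, i.e.\ supported on $\langle u\rangle$. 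I would handle this by the coset-projection technique already used in the proof of Proposition \ref{prop:basis for J_H}: decompose $\A=\bigoplus_{t}\k[\langle u\rangle]t$ as a left $\k[\langle u\rangle]$-module over right coset representatives $t$ of $\langle u\rangle$ in $\F$, apply the projection $P_{\langle u\rangle}$ onto the identity coset to $u^d-1=f_I g$, and use that $u^d-1$ and $f_I$ both lie in $\k[\langle u\rangle]$ to get $u^d-1=f_I\cdot P_{\langle u\rangle}(g)$ with $P_{\langle u\rangle}(g)\in\k[\langle u\rangle]$. Translating to $\k[x^{\pm1}]$: $x^d-1=p\cdot q$ with $q\in\k[x^{\pm1}]$; clearing denominators and comparing degrees/constant terms (using $p(0)\ne0$) forces $q\in\k[x]$ and hence $p\mid x^d-1$ in $\k[x]$, with $p$ monic. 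This produces the inverse map $I\mapsto p$, and the two constructions are visibly mutually inverse (both amount to $f_I=p(u)$ under the identification $\k[\langle u\rangle]\cong\k[x^{\pm1}]$, normalized to monic with nonzero constant term), completing the bijection.

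A remark on the cyclic-reduction assumption: the reduction to $w$ cyclically reduced was already made before the lemma (replacing $w$ by an $\operatorname{Aut}\F$-conjugate), and an automorphism of $\F$ carries $u$ to another non-power $u'$ with $w'=u'^d$, carries ideals to ideals and $\langle u\rangle$-supported elements to $\langle u'\rangle$-supported elements, so the bijection is unaffected. The only genuinely new input beyond earlier results in the paper is the projection argument controlling the "quotient" $g$, everything else being bookkeeping inside $\k[x^{\pm1}]$.
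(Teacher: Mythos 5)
Your proposal is correct and follows essentially the same route as the paper: identify $\k\left[\left\langle u\right\rangle\right]\cong\k\left[x,x^{-1}\right]$, reduce the containment $u^{d}-1\in f_{I}\A$ to a containment inside $\k\left[\left\langle u\right\rangle\right]$ via the right-coset decomposition of $\A$ over $\left\langle u\right\rangle$ (the paper phrases this as $fz$ being supported on $\left\langle u\right\rangle z$, disjoint from $\left\langle u\right\rangle$, for $z\notin\left\langle u\right\rangle$), and then normalize by units in the Laurent polynomial ring to get a unique monic divisor of $x^{d}-1$. Your extra verification of the converse direction (that $\left(p\left(u\right)\right)$ is rank one, contains $w-1$, and has $f_{I}=p\left(u\right)$) is a harmless elaboration of what the paper leaves implicit.
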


\begin{proof}
Consider the subalgebra $\k\left[\left\langle u\right\rangle \right]$
of $\A=\k\left[\F\right]$, the elements of which are linear combinations
of the elements in $\left\langle u\right\rangle =\left\{ u^{i}\,\middle|\,i\in\mathbb{Z}\right\} $.
For every $z\in\F$ and $f\in\k\left[\left\langle u\right\rangle \right]$,
if $z\notin\left\langle u\right\rangle $ then $fz$ is supported
on $\left\langle u\right\rangle z$, which is disjoint from $\left\langle u\right\rangle $.
Thus, if $f\in\k\left[\left\langle u\right\rangle \right]$ and $w-1=u^{d}-1\in\left(f\right)\defi f\A$,
then $w-1$ is also an element of $f\k\left[\left\langle u\right\rangle \right]$,
the ideal generated by $f$ inside $\k\left[\left\langle u\right\rangle \right]$.
Now
\[
\k\left[\left\langle u\right\rangle \right]\cong\k\left[\mathbb{Z}\right]\cong\k\left[x,x^{-1}\right]
\]
is a commutative ring (a principal ideal domain, in fact). If $p\in\k\left[x,x^{-1}\right]$
satisfies $\left(p\right)\ni x^{d}-1$, we may assume, by multiplying
$p$ by a unit element if need be, that $p\in\k\left[x\right]$, $p$
monic, and $p|x^{d}-1$ in $\k\left[x\right]$. Moreover, such $p\in\k\left[x\right]$
is determined uniquely by the ideal $\left(p\right)$. This completes
the proof of the lemma.
\end{proof}
It remains to show that for every $I$ in \eqref{eq:lim fix =00003D =000023 rank-1 ideals},
$f_{I}$ is supported on $\left\langle u\right\rangle $. 
\begin{lem}
The support of $f_{I}$ contains $v_{0}=1$.
\end{lem}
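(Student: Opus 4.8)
The plan is to read off the coefficient of $v_{0}$ from the structure of the exposure of $I$ along the path $[1,w]$, whose vertices are $v_{0}=1,v_{1},\dots,v_{\ell}=w$ with $\ell=\left|w\right|$. If $I=\A$ then $1\in I$, so the first step is a coincidence, $t=0$ and $f_{I}=1\ni v_{0}$; hence assume $I\subsetneq\A$. Then $1\notin I$, the step at $v_{0}$ is free, $t\ge1$, and $f_{I}$ has at least two monomials. No forced step can precede the coincidence (in Lemma~\ref{lem:forced-free-coincidence}, $M_{t-1}=0$ forces every earlier restriction to vanish), so the coincidence is the first non‑free step, the steps $v_{t+1},\dots,v_{\ell}$ are all forced, and $\dim_{\k}I|_{[1,w]}=\ell+1-t$. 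If $t=\ell$ this dimension is $1$, so the nonzero monic element $w-1\in I|_{[1,w]}$ spans $I|_{[1,w]}$ and therefore equals $f_{I}$, which contains $v_{0}$. So from now on $1\le t<\ell$.

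Next put $I|_{[1,w]}$ in reduced row echelon form for the ordering $v_{0}<v_{1}<\dots<v_{\ell}$: the pivot positions are exactly $\{t,t+1,\dots,\ell\}$, the pivot row $\beta_{t}$ equals $f_{I}$, and each $\beta_{k}$ ($t\le k\le\ell$) is monic with leading monomial $v_{k}$ and $\supp(\beta_{k})\subseteq\{v_{0},\dots,v_{t-1}\}\cup\{v_{k}\}$. Expanding $w-1=v_{\ell}-v_{0}\in I|_{[1,w]}$ in the basis $\{\beta_{k}\}$ and comparing coefficients at the pivot monomials $v_{t},\dots,v_{\ell}$ gives $w-1=\beta_{\ell}$. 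Writing $\delta_{k}\in\k$ for the coefficient of $v_{0}$ in $\beta_{k}$, the lemma asserts exactly that $\delta_{t}\neq0$, and we already know $\delta_{\ell}=-1$. Assume, toward a contradiction, that $\delta_{t}=0$.

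Now trace how the forced steps inject $v_{0}$ into the $\beta_{k}$'s. Let $\sigma_{k}=v_{k-1}^{-1}v_{k}\in B\cup B^{-1}$ be the $k$-th letter of $w$. A forced step at $v_{k}$ ($t<k\le\ell$) produces a new element $g_{k}=h_{k}\cdot\sigma_{k}\in I|_{[1,w]}$ with $v_{k}\in\supp(g_{k})$, where $h_{k}\in I$ is supported on vertices exposed before $v_{k}$ and has $v_{k-1}$ in its support; rescale so that $g_{k}$ is monic. Since $\supp(h_{k})$ consists of prefixes of $w$ and the only length‑one prefix is $\sigma_{1}$, the vertex $v_{0}=1$ can occur in $\supp(g_{k})=\supp(h_{k})\sigma_{k}$ only if $\sigma_{k}=\sigma_{1}^{-1}$; call such a $k$ \emph{dangerous}. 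Because $w=u^{d}$ is cyclically reduced, the first and last letters of $u$ are not mutually inverse, so $k=\ell$ is not dangerous: $g_{\ell}$ avoids $v_{0}$, and since $g_{\ell}-\beta_{\ell}\in\spn_{\k}(\beta_{t},\dots,\beta_{\ell-1})$ we get $\delta_{\ell}\in\spn_{\k}(\delta_{t},\dots,\delta_{\ell-1})$. If every $k$ with $t<k<\ell$ is likewise non‑dangerous, the same reduction gives $\delta_{k}\in\spn_{\k}(\delta_{t},\dots,\delta_{k-1})$ for each such $k$, and a downward induction collapses everything into $\k\cdot\delta_{t}$; then $\delta_{\ell}=-1\neq0$ forces $\delta_{t}\neq0$, a contradiction.

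What remains — and this is the heart of the matter — is to control the dangerous pivots $k$ with $t<k<\ell$. By the periodicity of $w=u^{d}$ these occur precisely at positions congruent mod $\left|u\right|$ to one of the finitely many $k_{0}\in\{2,\dots,\left|u\right|-1\}$ with the $k_{0}$-th letter of $u$ equal to $\sigma_{1}^{-1}$ (the extremes $k_{0}=1$ and $k_{0}=\left|u\right|$ are excluded since $\sigma_{1}\neq\sigma_{1}^{-1}$ and $u$ is cyclically reduced). The plan is to use $I|_{\{v_{0},\dots,v_{t-1}\}}=0$ together with cyclic reducedness and the explicit shape of the path to show that at such a $k$ the $v_{0}$-term thrust into $g_{k}$ by the forced relation is already accounted for by the pivots below $k$, so that $\delta_{k}$ is \emph{still} a $\k$-linear combination of $\delta_{t},\dots,\delta_{k-1}$; the collapse of the previous paragraph then applies verbatim and yields $\delta_{t}\neq0$, the desired contradiction, whence $v_{0}\in\supp(f_{I})$. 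I expect disentangling the dangerous pivots to be the main obstacle in making this argument rigorous.
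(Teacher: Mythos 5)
Your setup is correct and, in substance, reproduces the paper's own scaffolding: your echelon rows $\beta_{k}$ are precisely the paper's elements $f_{s}$ (monic, leading monomial $v_{s}$, otherwise supported on the free positions $v_{0},\ldots,v_{t-1}$), the identification $\beta_{\ell}=w-1$ is right, and so are your observation that the forced step at $v_{k}$ can inject $v_{0}$ only when $\sigma_{k}=\sigma_{1}^{-1}$ and the resulting propagation $\delta_{k}\in\spn_{\k}\left(\delta_{t},\ldots,\delta_{k-1}\right)$ at non-dangerous $k$. (A small point: your parenthetical only explains why no \emph{forced} step precedes the coincidence; that all steps after $t$ are forced also needs the remark that along a path a free step can never be followed by a forced one, plus the fact that the last step is not free.) However, the case you defer --- dangerous positions $t<k<\ell$ with $\sigma_{k}=\sigma_{1}^{-1}$ --- is not a loose end: it is the entire content of the lemma (such positions exist for most words, e.g.\ any commutator), and your sketched plan for it contains no actual argument. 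Proving that $\delta_{k}$ stays in $\spn_{\k}\left(\delta_{t},\ldots,\delta_{k-1}\right)$ at the \emph{first} dangerous position where $v_{0}$ could enter is equivalent to the lemma itself, so as written the proposal has a genuine gap exactly at the crux.

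The missing idea is a ``look one step ahead'' argument. Assume $v_{0}\notin\supp\left(f_{I}\right)$ and let $s$ be the smallest index $>t$ with $v_{0}\in\supp\left(\beta_{s}\right)$ (it exists since $\beta_{\ell}=w-1$). Minimality together with $I|_{\left\{ v_{0},\ldots,v_{t-1}\right\} }=0$ gives that \emph{no} element of $I|_{\left\{ v_{0},\ldots,v_{s-1}\right\} }$ contains $v_{0}$; hence the forced witness at step $s$ must itself contain $v_{0}$, which forces $\sigma_{s}=\sigma_{1}^{-1}$ (your dangerous condition) and, by cyclic reducedness, $s<\ell$; moreover, subtracting that witness shows that \emph{every} element of $I|_{\left\{ v_{0},\ldots,v_{s}\right\} }$ with leading monomial $v_{s}$ contains $v_{0}$. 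Now examine step $s+1$, which is forced along the edge labelled $\sigma_{s+1}$: its witness must be supported on vertices among $v_{0},\ldots,v_{s}$ that are origins of $\sigma_{s+1}$-edges and must contain $v_{s}$, hence it contains $v_{0}$; but the unique edge at $v_{0}$ inside $\left[1,w\right]$ reads $\sigma_{1}=\sigma_{s}^{-1}\ne\sigma_{s+1}$ (as $w$ is reduced), so $v_{0}$ is not the origin of a $\sigma_{s+1}$-edge --- a contradiction. This step, using the edge label at $v_{0}$ against the \emph{next} forced step, is what closes the dangerous case in the paper (proof of the lemma in Section \ref{subsec:powers fix}, relying on Lemma \ref{lem:forced-free-coincidence}), and it is absent from your proposal; your suggestion to exploit the periodicity of $w=u^{d}$ modulo $\left|u\right|$ is not needed and does not by itself yield the required cancellation.
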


\begin{proof}
Recall that $t$ denotes the coincidence step in the exposure of $I$
along $\left[1,w\right]$. If $t=\left|w\right|$, then $f_{I}=w-1$
and the lemma holds. Assume that $t<\left|w\right|$ and that the
support of $f_{I}$ does not contain $1$. For every $s\ge t$, the
vertex $v_{s}$ is (exposed in) a non-free step, and let $f_{s}\in I$
be the ShortLex-minimal element among all monic elements in $I$ with
$v_{s}$ their leading monomial and which are supported on $\left\{ v_{0},\ldots,v_{s}\right\} $.
(Note that this definition is unambiguous: if $f\ne g$ are two different
monic elements with the exact same support, then there is some linear
combination $\lambda f+\left(1-\lambda\right)g$ which is strictly
smaller). In particular, $f_{t}=f_{I}$ and $f_{\left|w\right|}=w-1$.
Now fix $s\in\left\{ t+1,\ldots,\left|w\right|\right\} $ to be the
smallest index for which $v_{0}=1$ is in the support of $f_{s}$.
There is no element in $I|_{\left\{ v_{0},\ldots,v_{s-1}\right\} }$
with $v_{0}$ in its support, because $I|_{\left\{ v_{0},\ldots,v_{s-1}\right\} }=\spn_{\k}\left\{ f_{t},\ldots,f_{s-1}\right\} $.
Let $b\in B\cup B^{-1}$ denote the label of the edge from $v_{s-1}$
to $v_{s}$. As step $s$ is forced, there is some monic $g\in I|_{\left\{ v_{1},\ldots,v_{s-1}\right\} }$
with leading monomial $v_{s-1}$ such that $g.b$ is supported on
$\left\{ v_{0},\ldots,v_{s}\right\} $. This $g.b$ must have $v_{0}$
in its support, for if not, $f_{s}-g.b$ does, and the latter is supported
on $\left\{ v_{0},\ldots,v_{s-1}\right\} $. We conclude that the
first edge of $w$ must be $b^{-1}$. As $w$ is cyclically reduced,
$s<\left|w\right|$.

Now consider the vertex $v_{s+1}$, and assume the edge from $v_{s}$
to $v_{s+1}$ is $c\in B\cup B^{-1}$, $c\ne b^{-1}$:
\[
\xymatrix{v_{s-1}\ar@{->}[r]^{b} & v_{s}\ar@{->}[r]^{c} & v_{s+1}}
.
\]
As $I|_{\left\{ v_{0},\ldots,v_{s}\right\} }=\spn_{\k}\left\{ f_{t},\ldots,f_{s}\right\} $,
every element $g\in I|_{\left\{ v_{0},\ldots,v_{s}\right\} }$ with
leading monomial $v_{s}$ must have $v_{0}$ in its support. But then,
$g$ cannot possibly be supported on starting points of $c$-edges,
contradicting the fact that step $s+1$ is also forced. Thus $f_{I}$
contains $v_{0}$ in its support. 
\end{proof}
If $t=0$ then $f_{I}=1$ and $I=\left(1\right)$. If $t=\left|w\right|$,
then $f_{I}=w-1$ and $I=\left(w-1\right)$. So assume from now on
that $0<t<\left|w\right|$. Also, denote by $b\in B\cup B^{-1}$ the
first letter of $w$.
\begin{lem}
\label{lem:first letter after coincidence}The letter from $v_{t}$
to $v_{t+1}$ is $b$, and $f_{I}$ must be supported on $D_{b}^{t+1}$. 
\end{lem}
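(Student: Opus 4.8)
The plan is to analyze the structure of the exposure near the coincidence step $t$, using the fact that every step after $t$ is forced and that $w$ is cyclically reduced. First I would record the shape of $f_I = f_t$: by the previous lemma its support contains $v_0 = 1$, and by definition $v_t$ is its leading monomial; by minimality $f_I$ is supported on $\{v_0, v_1, \ldots, v_{t-1}, v_t\}$ — but in fact, since steps $v_1, \ldots, v_{t-1}$ were all free, there is no nonzero element of $I$ supported on $\{v_0, \ldots, v_{t-1}\}$, so we may go further: the only elements of $I|_{\{v_0,\ldots,v_t\}}$ are the $\k$-multiples of $f_I$, and $f_I$ has both $v_0$ and $v_t$ in its support.

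Next I would show the edge from $v_t$ to $v_{t+1}$ is labelled $b$ (the first letter of $w$, i.e.\ the letter from $v_0 = 1$ to $v_1$). Suppose the edge $v_t \to v_{t+1}$ is labelled $c \in B \cup B^{-1}$. Since step $t+1$ is forced, there is an element $g \in I|_{D_c^{t+1}}$ with $v_t$ in its support, and then $g.c$ is supported on $\{v_0, \ldots, v_{t+1}\}$ with $v_{t+1}$ its leading monomial. Now $g$ is supported on $D_c^{t+1} \subseteq \{v_0, \ldots, v_t\}$, hence $g$ is a $\k$-multiple of $f_I$ (by the previous paragraph), so $g$ has $v_0 = 1$ in its support. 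Therefore $v_0$ has an outgoing $c$-edge inside $\{v_1, \ldots, v_{t+1}\}$ leading to another exposed vertex; but the only edge out of $v_0$ in $[1,w]$ is the $b$-edge to $v_1$, so $c = b$. This also shows $v_0 . b = v_1$ lies in the support of $g.b = g.c$, which — together with the fact that $g$ is a multiple of $f_I$ and $f_I$ has $v_t$ leading — exhibits $f_I . b$ as an element of $I$ supported on $\{v_1, \ldots, v_{t+1}\}$ with leading monomial $v_{t+1}$.

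Finally, for the second assertion — that $f_I$ is supported on $D_b^{t+1}$, the set of already-exposed vertices with an outgoing $b$-edge into the exposed region at step $t+1$ — I would argue that every vertex in the support of $f_I$, which lies in $\{v_0, v_1, \ldots, v_{t-1}, v_t\}$, must have an outgoing $b$-edge leading into $\{v_0, \ldots, v_{t+1}\}$. Indeed $v_t$ does, by the previous paragraph. For $v_0 = 1$: its $b$-edge goes to $v_1$, which is exposed. For an intermediate $v_j$ ($1 \le j \le t-1$) in the support of $f_I$: if $v_j$ had \emph{no} outgoing $b$-edge inside the exposed tree, then $f_I$ would not be supported on $D_b^{t+1}$, and one would try to derive a contradiction with step $t+1$ being forced — the forcing at $v_{t+1}$ says some element of $I|_{D_b^{t+1}}$ has $v_t$ in its support, but the only elements of $I|_{\{v_0,\ldots,v_t\}}$ are multiples of $f_I$, so $f_I$ itself must be supported on $D_b^{t+1}$. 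I expect the main obstacle to be this last point: pinning down precisely why the minimal element $f_I$ cannot have a vertex in its support lacking a $b$-successor, which hinges on the interplay between cyclic reducedness (so that the $b$-edge out of $v_j$, if traversed, stays in $[1,w]$ rather than backtracking) and the forced nature of step $t+1$ forcing $I|_{D_b^{t+1}}$ to already contain a witness with $v_t$ in its support, hence $f_I$ up to scalar.
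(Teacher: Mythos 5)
Your argument is correct and is essentially the paper's proof: the forced step at $v_{t+1}$ yields a witness in $I|_{D_{c}^{t+1}}$ with $v_{t}$ in its support, which must be a scalar multiple of $f_{I}$ (since the free steps give $I|_{\{v_{0},\ldots,v_{t-1}\}}=0$), so $f_{I}$ is supported on $D_{c}^{t+1}$; and since $v_{0}$ lies in the support of $f_{I}$ and its only outgoing edge in $\left[1,w\right]$ is the $b$-edge to $v_{1}$, we get $c=b$. The ``obstacle'' you anticipate in your last paragraph does not exist: being supported on $D_{b}^{t+1}$ is exactly what membership of the witness in $I|_{D_{b}^{t+1}}$ means, so no vertex-by-vertex analysis (and no appeal to cyclic reducedness) is needed for this lemma.
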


\begin{proof}
Assume the edge from $v_{t}$ to $v_{t+1}$ is $c$. The step exposing
$v_{t+1}$ is forced, but $f_{I}$ is the only monic element in $I|_{\left\{ v_{0},\ldots,v_{t}\right\} }$.
Thus the corresponding element in $D_{c}^{t+1}$ must be $f_{I}$.
As $f_{I}$ has $v_{0}$ in its support, we must have $c=b$.
\end{proof}

\begin{proof}[Completing the proof of Theorem \ref{thm:limit powers fix}]
 Recall that we now assume that $I$ is a rank-1 ideal containing
$w-1$ with $I\ne\left(1\right),\left(w-1\right)$, and that we need
to show that $I$ is supported on $\left\langle u\right\rangle $.
As $I$ contains $w-1$ if and only if it contains $wb-b$, the argument
above (and Corollary \ref{cor:finitely many critical extensions})
show that $I\le_{\left[b,wb\right]}\A$. Expose $I$ along $\left[b,wb\right]$
according to the restriction of ShortLex, and denote the vertices
by $v_{1},\ldots,v_{\left|w\right|+1}$ (so keeping the same labels
as before). As $f_{I}$ is the only monic element in $I|_{\left\{ v_{0},\ldots,v_{t}\right\} }$,
we have that $I|_{\left\{ v_{1},\ldots,v_{t}\right\} }=0$, that $v_{t+1}$
is the first (and only) coincidence in the exposure along $\left[b,wb\right]$,
and that the coincidence is given by $f_{I}.b$. Clearly, $f_{I}.b$
has $v_{1}$ in its support. If $b'$ is the second letter of $w$,
then the same argument as in Lemma \ref{lem:first letter after coincidence}
shows that $f_{I}.b$ is supported on vertices with an outgoing $b'$-edge
in $\left[b,wb\right]$, and that the edge from $v_{t+1}$ to $v_{t+2}$
is $b'$.

By iterating the same argument we get that for every prefix $w'$
of $w$, $f_{I}.w'$ is supported on $\left[1,w^{2}\right]$. Moreover,
the direction in which one can read a prefix of $w$ from some $v_{j}$
in the support of $f_{I}$ along $\left[1,w^{2}\right]$ is necessarily
forward: if it goes backward, then after $\left\lfloor \frac{j}{2}\right\rfloor $
step this path would collide with the path reading $w$ coming from
$v_{0}$ (a letter in $\left[1,w\right]$ cannot be equal to its own
inverse or to the inverse of the following letter). We obtain that
if $f_{I}$ has some $v_{j}=z\in\F$ in its support, then $zw=wz$,
and so $z$ belongs to the centralizer of $w$ in $\F$, which is
$\left\langle u\right\rangle $. This completes the proof.
\end{proof}
\begin{cor}
\label{cor:pa=00003D1 iff power}Let $1\ne w\in\F$. Then $\pi_{q}\left(w\right)=1$
if and only if $w$ is a proper power.
\end{cor}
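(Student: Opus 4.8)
The plan is to prove the two implications separately, reading off everything we need from the description of the rank-$1$ ideals containing $w-1$ that was obtained in the course of proving Theorem~\ref{thm:limit powers fix}. Write $w=u^{d}$ with $u$ a non-power and $d\ge1$; this decomposition is unique, and $w$ is a proper power precisely when $d\ge2$. Since an automorphism of $\F$ induces an automorphism of $\A$ carrying ideals to ideals and preserving rank and primitivity, $\pi_{q}$ is invariant under $\Aut\F$, as is the property of being a proper power; so we may assume $w$ is cyclically reduced, so that the analysis of Section~\ref{subsec:powers fix} applies verbatim. By that analysis (Lemma~\ref{lem:divisors of x^d-1} together with the subsequent lemmas in Section~\ref{subsec:powers fix}), the rank-$1$ ideals $I\le\A$ with $w-1\in I$ are exactly the ideals $\left(p\left(u\right)\right)$ with $p\in\k\left[x\right]$ monic and $p\mid x^{d}-1$, and $p\mapsto\left(p\left(u\right)\right)$ is a bijection from the monic divisors of $x^{d}-1$ onto this set.

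Suppose first that $w$ is a proper power, so $d\ge2$ and $x^{d}-1$ has the monic divisor $p=x-1$ with $1\ne p\ne x^{d}-1$. I claim the corresponding ideal $\left(u-1\right)$ witnesses $\pi_{q}\left(w\right)\le1$: it is proper, since $u-1\in I_{\F}\subsetneq\A$; it has rank $1$, being a nonzero principal ideal in the domain $\A$ (left multiplication $a\mapsto\left(u-1\right)a$ is an $\A$-module isomorphism $\A\to\left(u-1\right)$); it contains $w-1=u^{d}-1=\left(u-1\right)\left(1+u+\cdots+u^{d-1}\right)$; and $w-1$ is imprimitive in it, because a basis of the rank-$1$ module $\left(u-1\right)$ is a single element generating it, while $w-1$ generates $\left(u^{d}-1\right)\ne\left(u-1\right)$, the distinctness being exactly the injectivity of $p\mapsto\left(p\left(u\right)\right)$ applied to $x^{d}-1\ne x-1$. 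Hence $\pi_{q}\left(w\right)\le1$, and $\pi_{q}\left(w\right)\ge1$ because $w\ne1$ (Corollary~\ref{cor:values of pi_q}); so $\pi_{q}\left(w\right)=1$.

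Conversely, suppose $\pi_{q}\left(w\right)=1$; I must show $d\ge2$. If instead $d=1$, then $w$ is a non-power and the bijection above collapses to the two monic divisors $1$ and $x-1$ of $x^{d}-1=x-1$: the divisor $1$ gives the non-proper ideal $\A$, and the divisor $x-1$ gives $\left(u-1\right)=\left(w-1\right)$, in which $w-1$ is primitive (it is a one-element basis). Thus no proper rank-$1$ ideal contains $w-1$ as an imprimitive element, contradicting $\pi_{q}\left(w\right)=1$. (Equivalently, combine Corollary~\ref{cor:Conjecture-when-pi=00003D1}, which gives $\lim_{N\to\infty}\mathbb{E}_{w}\left[\fix\right]=2+\left|\crit_{q}\left(w\right)\right|\ge3$ when $\pi_{q}\left(w\right)=1$, with Theorem~\ref{thm:limit powers fix} for $d=1$, which gives $\lim_{N\to\infty}\mathbb{E}_{w}\left[\fix\right]=\#\left\{ p\in\k\left[x\right]\mid p\text{ monic},\,p\mid x-1\right\} =2$.) Therefore $d\ge2$, i.e.\ $w$ is a proper power. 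The only delicate point in the whole argument is making sure the list of rank-$1$ ideals containing $w-1$ extracted from the proof of Theorem~\ref{thm:limit powers fix} is complete and that $p\mapsto\left(p\left(u\right)\right)$ is genuinely a bijection; once that is in hand, both implications reduce to identifying which single ideal on the list is improper and in which single ideal $w-1$ is a basis element.
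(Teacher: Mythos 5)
Your proposal is correct and follows essentially the same route as the paper: both deduce the corollary from the classification, obtained in Section~\ref{subsec:powers fix}, of the rank-$1$ ideals containing $w-1$ as the ideals $\left(p\left(u\right)\right)$ with $p$ a monic divisor of $x^{d}-1$, and then observe that the critical ones correspond to divisors other than $1$ and $x^{d}-1$, which exist exactly when $d\ge2$. Your extra verifications (properness and rank of $\left(u-1\right)$, imprimitivity via the injectivity of $p\mapsto\left(p\left(u\right)\right)$, and the trivial bound $\pi_{q}\left(w\right)\ge1$ for $w\ne1$, which really comes from the remark after Definition~\ref{def:q-primitivity-rank} rather than Corollary~\ref{cor:values of pi_q}) are sound and only make explicit what the paper leaves implicit.
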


\begin{proof}
Write $w=u^{d}$ with $u$ a non-power and $d\ge1$. The discussion
above shows that the rank-1 critical ideals of $w-1$ are in one to
one correspondence with the monic divisors of $x^{d}-1\in\k\left[x\right]$,
except for $1$ and $x^{d}-1$. If $d=1$, there are no such divisors,
and so $\pi_{q}\left(w\right)\ge2$. If $d\ge2$, there is at least
one such divisor: the polynomial $x-1$, and so $\pi_{q}\left(w\right)=1$.
\end{proof}
Recall that if $\lambda\in\gl_{1}\left(\k\right)\cong\k^{*}$, then
$\tilde{\lambda}\colon\gln\to\mathbb{Z}_{\ge0}$ counts, for every
element $g\in\gln$, the number of vectors $v\in V=\k^{N}$ satisfying
$v.g$ = $\lambda v$. The same argument given above for $\fix=\tilde{1}$
applies to all $\lambda\in\k^{*}$ and gives the following result.
\begin{cor}
\label{cor:vw=00003Dlambda v}Let $\lambda\in\gl_{1}\left(\k\right)\cong\k^{*}$,
let $1\ne w\in\F$ and write $w=u^{d}$ with $u$ a non-power and
$d\ge1$. Then, 
\[
\lim_{N\to\infty}\mathbb{E}_{w}\left[\tilde{\lambda}\right]=\left|\left\{ p\in\k\left[x\right]\,\middle|\,p|x^{d}-\lambda~\mathrm{and}~p~\mathrm{monic}\right\} \right|.
\]
\end{cor}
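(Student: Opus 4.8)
The plan is to run the proof of Theorem~\ref{thm:limit powers fix} essentially verbatim, with the single equation $v^{\ell}=v^{0}$ replaced by $v^{\ell}=\lambda v^{0}$, so that the algebra element $w-1\in\A$ is everywhere replaced by $w-\lambda\in\A$, and $x^{d}-1$ by $x^{d}-\lambda$. First I would note that $\tilde{\lambda}$ is the function $\btil$ attached to $\B=\left(\lambda\right)\in\gl_{1}\left(\k\right)$, so that $\eqb=\left\{ w-\lambda\right\}$ after identifying the basis element of $\A^{1}$ with $1\in\F$. Hence \eqref{eq:general rational expression with submodules} together with Corollary~\ref{cor:contrib is m-rank} gives
\[
\mathbb{E}_{w}\left[\tilde{\lambda}\right]=\sum_{I\le_{\left[1,w\right]}\A\colon I\ni w-\lambda}\left(q^{N}\right)^{1-\rk\left(I\right)}\left(1+O\left(\tfrac{1}{q^{N}}\right)\right).
\]
Because $w\ne1$, we have $w-\lambda\ne0$ in $\A$, so every ideal containing $w-\lambda$ has rank at least $1$; and any rank-$1$ ideal containing $w-\lambda$ other than $\left(w-\lambda\right)$ and $\A$ contains $w-\lambda$ as an imprimitive element of the least possible rank, hence is critical for $w-\lambda$, hence generated on $\left[1,w\right]$ by Corollary~\ref{cor:finitely many critical extensions} (and $\left(w-\lambda\right),\A$ are trivially generated on $\left[1,w\right]$). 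Thus, exactly as in \eqref{eq:lim fix =00003D =000023 rank-1 ideals},
\[
\mathbb{E}_{w}\left[\tilde{\lambda}\right]=\#\left\{ I\le\A\,\middle|\,I\ni w-\lambda~\mathrm{and}~\rk\left(I\right)=1\right\} +O\left(\tfrac{1}{q^{N}}\right),
\]
and it remains to identify these rank-$1$ ideals.

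Next I would reproduce the analysis of rank-$1$ ideals containing $w-1$. Applying an automorphism of $\F$ --- which extends to an automorphism of $\A$ fixing $\lambda$, carries ideals to ideals and bases to bases, and changes neither the centralizer of $w$ nor the polynomial $x^{d}-\lambda$ --- we may assume $w$, and hence $u$, cyclically reduced. For a rank-$1$ ideal $I\ni w-\lambda$, expose $I$ along $\left[1,w\right]=\left(v_{0}=1,\dots,v_{\left|w\right|}=w\right)$ in ShortLex order: by Theorem~\ref{thm:basis from coincidences} there is exactly one coincidence, at some $v_{t}$; the monic element $f_{I}\in I$ supported on $\left[1,w\right]$ with leading monomial $v_{t}$ generates $I$, and $I\mapsto f_{I}$ is injective. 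The three structural facts proved in Theorem~\ref{thm:limit powers fix} --- that $v_{0}=1$ lies in the support of $f_{I}$; that the edge from $v_{t}$ to $v_{t+1}$ carries the first letter $b$ of $w$, with $f_{I}$ supported on $D_{b}^{t+1}$; and that, iterating the exposure over $\left[b,wb\right]$ and its successive extensions, $f_{I}\cdot w'$ is supported on $\left[1,w^{2}\right]$ for every prefix $w'$ of $w$, which forces every monomial $z$ in the support of $f_{I}$ to commute with $w$ --- use only that the endpoints $1$ and $w$ of the path lie in the support of $f_{\left|w\right|}=w-\lambda$, and that $w$ is cyclically reduced. Neither hypothesis involves $\lambda$, so each fact holds with the same proof, and since the centralizer of $w=u^{d}$ in $\F$ is $\left\langle u\right\rangle$, we conclude $f_{I}\in\k\left[\left\langle u\right\rangle \right]$.

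Finally, I would repeat Lemma~\ref{lem:divisors of x^d-1}: for $z\notin\left\langle u\right\rangle$ the coset $\left\langle u\right\rangle z$ is disjoint from $\left\langle u\right\rangle$, so $w-\lambda=u^{d}-\lambda\in f_{I}\A$ forces $u^{d}-\lambda\in f_{I}\k\left[\left\langle u\right\rangle \right]$. Identifying $\k\left[\left\langle u\right\rangle \right]\cong\k\left[x,x^{-1}\right]$ via $u\leftrightarrow x$, the ideals of $\k\left[x,x^{-1}\right]$ containing $x^{d}-\lambda$ are in bijection with the monic divisors of $x^{d}-\lambda$ in $\k\left[x\right]$: here one uses $\lambda\ne0$, so that $x\nmid x^{d}-\lambda$, to normalize a generator --- a priori defined only up to the units $cx^{k}$ of $\k\left[x,x^{-1}\right]$ --- to a monic polynomial in $\k\left[x\right]$ dividing $x^{d}-\lambda$, uniquely determined by the ideal; conversely each such $p$ yields the rank-$1$ ideal $\left(p\left(u\right)\right)\le\A$ containing $w-\lambda$, and distinct $p$ give distinct ideals (restrict again to $\k\left[\left\langle u\right\rangle \right]$). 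This matches the rank-$1$ ideals in the count above with the monic divisors of $x^{d}-\lambda$ and proves the formula. I do not expect a genuinely new obstacle: the only point that deserves to be spelled out is that the delicate combinatorial steps in the proof of Theorem~\ref{thm:limit powers fix} (the support of $f_{I}$ containing $1$, and the ``forward direction'' argument ruling out backward readings of prefixes of $w$) never used $\lambda=1$ --- only the positions of the endpoints of the word and the cyclic reducedness of $w$.
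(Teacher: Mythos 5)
Your proposal is correct and is exactly the paper's intended argument: the paper proves this corollary by observing that the proof of Theorem \ref{thm:limit powers fix} goes through verbatim with $w-1$ replaced by $w-\lambda$ and $x^{d}-1$ by $x^{d}-\lambda$, which is what you carry out (including the only $\lambda$-sensitive points: $\lambda\ne0$ so that $1$ lies in the support of $w-\lambda$, and the normalization of a generator of an ideal of $\k\left[x,x^{-1}\right]$ to a monic divisor of $x^{d}-\lambda$).
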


\subsection{\label{subsec:powers general}$\lim_{N\to\infty}\mathbb{E}_{w}\left[\protect\btil\right]$
and the proof of Theorem \ref{thm:limit powers general}}

Our next goal is proving Theorem \ref{thm:limit powers general},
which states that for any fixed $\B\in\glm$, the limit $\lim_{N\to\infty}\mathbb{E}_{w}\left[\btil\right]$
exists and depends only on $d$, where $w=u^{d}\ne1$ as before. As
in the proof of Theorem \ref{thm:limit powers fix}, we may assume
that $w$ is cyclically reduced. From \eqref{eq:general rational expression with submodules}
and Corollary \ref{cor:contrib is m-rank} it follows that 
\begin{equation}
\mathbb{E}_{w}\left[\btil\right]=\sum_{M\le_{\mathbb{W}}\A^{m}\colon M\supseteq\eqb}\left(q^{N}\right)^{m-\rk M}\left(1+O\left(\frac{1}{q^{N}}\right)\right),\label{eq:E_w=00005BB=00005D as sum with ranks}
\end{equation}
where $\mathbb{W}$ is the union of the paths $\left[e_{i},e_{i}w\right]\in\C_{i}$
for $i=1,\ldots,m$. Throughout this Subsection \ref{subsec:powers general}
we continue using ShortLex from Definition \ref{def:ShortLex} and
its restriction to collections of subtrees such as $\mathbb{W}$.
\begin{lem}
\label{lem:The-smallest-rank is m}The smallest rank of a submodule
$M\le_{\mathbb{W}}\A^{m}$ containing $\eqb$ is $m$. In particular,
$\lim_{N\to\infty}\mathbb{\mathbb{E}}_{w}\left[\btil\right]$ exists
and 
\begin{equation}
\lim_{N\to\infty}\mathbb{E}_{w}\left[\btil\right]=\left|\left\{ M\le_{\mathbb{W}}\A^{m}\,\middle|\,M\supseteq\eqb~\mathrm{and}~\rk M=m\right\} \right|.\label{eq:limit of B is number of rank-m submodule}
\end{equation}
\end{lem}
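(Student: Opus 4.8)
The plan is to prove the rank statement algebraically, via a skew field of fractions of $\A$, and then read off the limit from the finite sum in \eqref{eq:E_w=00005BB=00005D as sum with ranks}. For the lower bound, fix a skew field $D\supseteq\A$; such a $D$ exists because $\A$ is a free ideal ring (Theorem~\ref{thm:Cohn-Lewin})---concretely one may take the Malcev--Neumann division ring $\k\left(\left(\F\right)\right)$, which is a skew field since $\F$ is bi-orderable. The decisive point is that the coefficient matrix of $\eqb$ becomes invertible over $D$: writing $\eqb=\left\{ e_{i}w-\sum_{j}\B_{ij}e_{j}\right\} _{i=1}^{m}$, this matrix in the basis $e_{1},\ldots,e_{m}$ is $wI_{m}-\B$, all of whose entries lie in the commutative subring $\k\left[\left\langle w\right\rangle \right]\cong\k\left[x\right]$ of $\A$ (here $w\ne1$ has infinite order in $\F$). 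Its determinant equals the value at $x=w$ of the characteristic polynomial $\det\left(xI_{m}-\B\right)\in\k\left[x\right]$, which is monic of degree $m$ and hence nonzero; so $\det\left(wI_{m}-\B\right)\ne0$ in $\A$, and $wI_{m}-\B$ is invertible over the subfield of $D$ generated by $\left\langle w\right\rangle$, in particular over $D$.

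Next I would push this through the tensor product $-\otimes_{\A}D$ (always right exact). Applying it to $0\to\left(\eqb\right)\to\A^{m}\to\A^{m}/\left(\eqb\right)\to0$: the images in $D^{m}=\A^{m}\otimes_{\A}D$ of the $m$ generators of $\left(\eqb\right)$ are the rows of the invertible matrix $wI_{m}-\B$, hence span $D^{m}$, and right exactness forces $\left(\A^{m}/\left(\eqb\right)\right)\otimes_{\A}D=0$. For any submodule $M$ as in the statement, $\left(\eqb\right)\subseteq M$, so $\A^{m}/M$ is a quotient of $\A^{m}/\left(\eqb\right)$ and therefore $\left(\A^{m}/M\right)\otimes_{\A}D=0$ as well; applying $-\otimes_{\A}D$ to $0\to M\to\A^{m}\to\A^{m}/M\to0$ now yields a surjection $M\otimes_{\A}D\twoheadrightarrow D^{m}$. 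Since $M\cong\A^{\rk M}$ by Theorem~\ref{thm:Cohn-Lewin}, this is a surjection $D^{\rk M}\twoheadrightarrow D^{m}$ of $D$-vector spaces, so $\rk M\ge m$.

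For the matching upper bound, the submodule $\left(\eqb\right)$ is itself supported on $\mathbb{W}$ (its generators involve only the monomials $e_{1},\ldots,e_{m},e_{1}w,\ldots,e_{m}w$, all in $\mathbb{W}$) and is generated by $m$ elements, so tensoring the surjection $\A^{m}\twoheadrightarrow\left(\eqb\right)\cong\A^{\rk\left(\eqb\right)}$ with $D$ gives $\rk\left(\eqb\right)\le m$; together with the lower bound, $\rk\left(\eqb\right)=m$, so the minimal rank is exactly $m$. Finally, $\left\{ M\le_{\mathbb{W}}\A^{m}\,\middle|\,M\supseteq\eqb\right\} $ is finite---$\k$ is finite, $\mathbb{W}$ is a finite tree, and every $M\le_{\mathbb{W}}\A^{m}$ equals $\left(M|_{\mathbb{W}}\right)$ and so is determined by the $\k$-subspace $M|_{\mathbb{W}}\le\spn_{\k}\left(\mathbb{W}\right)$---so in \eqref{eq:E_w=00005BB=00005D as sum with ranks} every summand with $\rk M=m$ tends to $1$ and every summand with $\rk M>m$ tends to $0$ as $N\to\infty$, with at least one summand (namely $\left(\eqb\right)$) of rank $m$. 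Hence the limit exists and equals $\left|\left\{ M\le_{\mathbb{W}}\A^{m}\,\middle|\,M\supseteq\eqb,\ \rk M=m\right\} \right|$, as claimed.

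The one step I expect to require genuine care is the appeal to a skew field of fractions of $\A$ together with the verification that $wI_{m}-\B$ is invertible there; after that, everything is formal (right exactness of $-\otimes_{\A}D$ and rank counting over a skew field). A more hands-on alternative, avoiding $D$ entirely, would be to mimic the exposure/coincidence bookkeeping from Section~\ref{subsec:powers fix} and show that each of the $m$ relations of $\eqb$ eventually forces a distinct coincidence in an exposure of $M$ along $\mathbb{W}$---but this seems messier than the division-ring argument.
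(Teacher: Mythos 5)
Your argument is correct, but it proves the key rank bound by a genuinely different route than the paper. You localize: embed $\A=\k\left[\F\right]$ in a division ring $D$ (Malcev--Neumann, available since $\F$ is bi-orderable), note that the coefficient matrix $wI_{m}-\B$ of $\eqb$ has entries in the commutative subring $\k\left[\left\langle w\right\rangle \right]$ and nonzero determinant (the characteristic polynomial of $\B$ evaluated at $w$, nonzero since $w\ne1$ has infinite order), hence is invertible over the commutative subfield of $D$ generated by $\left\langle w\right\rangle $; right exactness of $-\otimes_{\A}D$ then gives $\left(\A^{m}/M\right)\otimes_{\A}D=0$ for every $M\supseteq\eqb$ and a surjection $D^{\rk M}\twoheadrightarrow D^{m}$, so $\rk M\ge m$, while $m$-generation of $\left(\eqb\right)$ gives the matching upper bound; your passage to the limit via \eqref{eq:E_w=00005BB=00005D as sum with ranks} and the finiteness of $\left\{ M\le_{\mathbb{W}}\A^{m}\right\} $ is the same as in the paper. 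The paper instead argues combinatorially within its exposure machinery: exposing $M$ along $\mathbb{W}$ in ShortLex order, it shows that in each path $\left[e_{i},e_{i}w\right]$ the first non-free step must be a coincidence (no already-exposed vertex between $e_{i}zb^{-1}$ and $e_{i}z$ carries an outgoing $b$-edge, so that step cannot be forced), yielding at least $m$ coincidences and hence $\rk M\ge m$ by Theorem \ref{thm:basis from coincidences}. Your route is shorter and more conceptual, at the cost of importing an external embedding theorem for $\k\left[\F\right]$ not otherwise used in the paper (and a harmless transpose should be tracked when identifying the span of the images of $\eqb$ in $D^{m}$ with the row space of $wI_{m}-\B$); the paper's route is self-contained and, crucially for what follows, its proof is exactly what yields Corollary \ref{cor: free steps then coincidence then forced steps} (in each $\left[e_{i},e_{i}w\right]$: free steps, then a unique coincidence, then forced steps), on which Lemmas \ref{lem:theta at coincidences are basis} and \ref{lem:f_ez supported on D_b} and the remainder of the proof of Theorem \ref{thm:limit powers general} depend -- so, as you suspect at the end of your write-up, the exposure bookkeeping cannot really be dispensed with in Section \ref{subsec:powers general}, even though your division-ring argument suffices for this lemma on its own.
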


\begin{proof}
Let $M\le_{\mathbb{W}}\A^{m}$ contain $\eqb$. We expose $M$ along
$\mathbb{W}$ in the order induced on $\mathbb{W}$ from ShortLex.
So we first expose $e_{1},\ldots,e_{m}$, then, if the first letter
of $w$ is $b\in B\cup B^{-1}$, we expose $e_{1}b,\ldots,e_{m}b$,
and so on. By definition, the last $m$ steps, where $e_{1}w,\ldots,e_{m}w$
are exposed, are not free: $\eqb$ contains an element supported on
$\left\{ e_{i}w,e_{1},\ldots,e_{m}\right\} $ with $e_{i}w$ its leading
monomial. We claim that for every $e\in E$, the first non-free step
in $\left[e,ew\right]$ is a coincidence. In particular, there are
at least $m$ coincidences and so by Theorem \ref{thm:basis from coincidences},
$\rk M\ge m$.

Indeed, assume that the first non-free vertex in $\left[e_{i},e_{i}w\right]$
is $e_{i}z$ for some prefix $z$ of $w$. If $z=1$, then $e_{i}z$
is a coincidence by definition. Now assume that $z\ne1$ and that
$b\in B\cup B^{-1}$ is the last letter of $z$. As $e_{i}z$ is the
first non-free step in $\left[e_{i},e_{i}w\right]$, we have that
$e_{i}zb^{-1}$ was free, so there is no element of $M|_{\mathbb{W}}$
with leading monomial $e_{i}zb^{-1}$. Between the exposure of $e_{i}zb^{-1}$
and that of $e_{i}z$, the vertices exposed do not admit outgoing
$b$-edges (in the already-exposed part of $\mathbb{W}$): these vertices
are either $e_{j}zb^{-1}$ for $j>i$, where the only outgoing edge
is headed backwards and cannot be $b$ as $w$ is reduced; or $e_{j}z$
for $j<i$, where the only outgoing edge is $b^{-1}$. Thus, when
exposing $e_{i}z$ at step $t$, the largest monomial in $D_{b}^{t}$
is $e_{i}zb^{-1}$, but as $e_{i}zb^{-1}$ is free, there are no elements
of $M|_{D_{b}^{t}}\subseteq M|_{\mathbb{W}}$ with leading monomial
$e_{i}zb^{-1}$. So step $t$ cannot be forced and must be a coincidence.
\end{proof}
The proof of Lemma \ref{lem:The-smallest-rank is m} actually shows
that a free vertex in $\left[e,ew\right]$ cannot be followed by a
forced vertex in the same path. As the last vertex in $\left[e,ew\right]$
is non-free, we get the following.
\begin{cor}
\label{cor: free steps then coincidence then forced steps}If $M\le_{\mathbb{W}}\A^{m}$
has rank $m$ and contains $\eqb$, then for every $e\in E$, the
first non-free step in $\left[e,ew\right]$ is a coincidence, and
all later steps in $\left[e,ew\right]$ are forced.
\end{cor}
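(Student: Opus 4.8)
The plan is to get this for free from the rank count together with the structural observation already extracted inside the proof of Lemma~\ref{lem:The-smallest-rank is m}. First I would recall that, since $M\supseteq\eqb$, that proof shows that for \emph{every} $e\in E$ the first non-free step encountered along the path $\left[e,ew\right]$ (in the ShortLex exposure of $M$ along $\mathbb{W}$) is a coincidence. Because the $m$ paths $\left\{\left[e,ew\right]\right\}_{e\in E}$ have pairwise disjoint vertex sets, these are $m$ distinct coincidences, so the exposure of $M$ along $\mathbb{W}$ has at least $m$ coincidences. Now invoke the hypothesis $\rk M=m$ and Theorem~\ref{thm:basis from coincidences}, which says the number of coincidences in this exposure equals $\rk M=m$. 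Hence there are exactly $m$ coincidences, one in each $\left[e,ew\right]$, and in each path it is precisely the first non-free step. This already handles everything before and including the coincidence: every step of $\left[e,ew\right]$ strictly before it is free, by definition of ``first non-free step''.

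It remains to rule out free steps after the coincidence. Fix $e$, let $v_{\ast}$ be the (unique) coincidence in $\left[e,ew\right]$, and suppose for contradiction that some step strictly after $v_{\ast}$ in this path is free; let $v$ be such a step. Here I would use the fact recorded in the remark preceding the statement (and established inside the proof of Lemma~\ref{lem:The-smallest-rank is m} via the ShortLex ordering, noting that between the exposure of $ezb^{-1}$ and that of $ez$ no outgoing $b$-edge is available): a free vertex of $\left[e,ew\right]$ cannot be immediately followed, within the same path, by a forced vertex. Therefore the successor of $v$ in $\left[e,ew\right]$ is free or a coincidence; since the only coincidence $v_{\ast}$ occurs earlier, it is free. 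Iterating, every step from $v$ to the terminal vertex $ew$ of the path is free --- in particular the step exposing $ew$ is free. But $\eqb\subseteq M$ contains an element whose leading monomial is $ew$ (the $e$-row of the system $Mg=\B M$), so that step is non-free by Lemma~\ref{lem:forced-free-coincidence}; contradiction. Hence every step of $\left[e,ew\right]$ after $v_{\ast}$ is forced, which is exactly the claim.

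I do not expect a genuine obstacle here: the one non-routine ingredient --- that a free step in $\left[e,ew\right]$ forces the next step of that path to be non-forced --- is imported verbatim from the analysis in the proof of Lemma~\ref{lem:The-smallest-rank is m}, so the proof is essentially bookkeeping. The two points to be careful about are (i) that the exposure order on $\mathbb{W}$ is the single common ShortLex order, so that ``coincidences'' are counted consistently across all paths when applying Theorem~\ref{thm:basis from coincidences}, and (ii) that the ``free~$\Rightarrow$~next-not-forced'' statement is about consecutive vertices \emph{within one path} $\left[e,ew\right]$, not across different $\C_{i}$'s; both hold here. The upshot is the clean picture: in each $\left[e,ew\right]$ the exposure consists of a (possibly empty) run of free steps, then a single coincidence, then a (possibly empty) run of forced steps.
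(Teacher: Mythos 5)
Your argument is correct and follows essentially the same route as the paper: the paper derives this corollary by noting that the proof of Lemma \ref{lem:The-smallest-rank is m} shows a free vertex in $\left[e,ew\right]$ cannot be followed by a forced vertex in the same path, that the last vertex of each path is non-free, and (implicitly, via the rank-$m$ hypothesis and Theorem \ref{thm:basis from coincidences}) that there is exactly one coincidence per path. You have simply made the coincidence-counting and the iteration to the terminal vertex explicit, which matches the intended proof.
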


\begin{rem}
It is possible to extend Corollary \ref{cor:finitely many critical extensions}
from elements and ideals in $\A$ to subsets and submodules in $\A^{m}$,
and conclude that every rank-$m$ submodule of $\A^{m}$ containing
$\eqb$ is supported on $\mathbb{W}$.
\end{rem}

\begin{lem}
\label{lem:enough to show submodules are generated on <u>}Assume
that $1\ne w=u^{d}$ with $d\ge1$ and $u$ a non-power. To prove
Theorem \ref{thm:limit powers general}, it is enough to show that
every submodule $M\le_{\mathbb{W}}\A^{m}$ of rank $m$ with $M\supseteq\eqb$
is generated on $\left\{ eu^{j}\right\} _{e\in E,j\in\left\{ 0,\ldots,d\right\} }$.
\end{lem}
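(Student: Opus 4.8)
The claim is that in order to prove Theorem~\ref{thm:limit powers general}, it suffices to show that every rank-$m$ submodule $M\le_{\mathbb{W}}\A^{m}$ with $M\supseteq\eqb$ is in fact generated on $\mathbb{U}\defi\left\{ eu^{j}\,\middle|\,e\in E,\ 0\le j\le d\right\} $. The plan is to combine the already-established equation \eqref{eq:limit of B is number of rank-m submodule}, which identifies $\lim_{N\to\infty}\mathbb{E}_{w}\left[\btil\right]$ with the number of rank-$m$ submodules supported on $\mathbb{W}$ and containing $\eqb$, with the observation that any such submodule which is actually supported on the smaller set $\mathbb{U}$ can be analyzed intrinsically inside the subalgebra $\k\left[\left\langle u\right\rangle\right]$, whose structure depends only on $d$ and not on $u$.

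\textbf{Main argument.} First I would record that $\mathbb{U}$ depends on $w$ only through $d$: the tree $\left[e_i,e_iu^d\right]$ (once $w=u^d$ is cyclically reduced, as we may assume) is an honest path of length $d\cdot|u|$, but more importantly the subalgebra $\C_i\cap\k\left[\left\langle u\right\rangle\right]$ it lives in is, via $u\mapsto x$, canonically isomorphic to $\k\left[x,x^{-1}\right]$ regardless of which non-power $u$ we started from. So suppose the hypothesis holds: every rank-$m$ submodule $M\le_{\mathbb{W}}\A^{m}$ containing $\eqb$ is generated on $\mathbb{U}$. Then, using Corollary~\ref{cor:alg extensions generated on a given tree} (or directly Theorem~\ref{thm:basis from coincidences}), such $M$ is determined by its restriction $M|_{\mathbb{U}}$, and $M\mapsto M|_{\mathbb{U}}$ is injective on the set in \eqref{eq:limit of B is number of rank-m submodule}. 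Conversely, I would argue that a submodule $M'$ of the free $\k\left[\left\langle u\right\rangle\right]$-module $\bigoplus_{i}\k\left[\left\langle u\right\rangle\right]e_i$ which is generated on $\mathbb{U}$, contains the image of $\eqb$ (note $\eqb$ is supported on $\left\{e_i,e_iw\right\}\subseteq\mathbb{U}$ and $\B$ has entries in $\k$, so this makes sense purely inside $\k\left[\left\langle u\right\rangle\right]^{m}$), and has the correct number of coincidences, extends uniquely to a rank-$m$ submodule of $\A^{m}$ supported on $\mathbb{W}$ — by taking the $\A$-span, and invoking Lemma~\ref{lem:ideal generated by Delta with C2 does not add elements to tree} (applied to each copy $\C_i$, after checking condition~\textbf{C2} is preserved) to see no new elements are introduced on $\mathbb{W}$. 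This sets up a bijection between the set counted in \eqref{eq:limit of B is number of rank-m submodule} and a set of submodules of $\left(\k\left[x,x^{-1}\right]\right)^{m}$ — a purely polynomial object depending only on $d$. Hence the cardinality, and therefore $\lim_{N\to\infty}\mathbb{E}_{w}\left[\btil\right]$, depends only on $d$.

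\textbf{Existence of the limit, and where the work is.} The existence of the limit is already guaranteed by Lemma~\ref{lem:The-smallest-rank is m}; what remains for Theorem~\ref{thm:limit powers general} is precisely the independence from $u$, and the above shows the stated hypothesis implies it. I would be careful about one subtlety: the bijection must respect the rank-$m$ condition in both directions. In the forward direction this is automatic; in the reverse direction, one must check that taking the $\A$-span of a submodule of $\k\left[\left\langle u\right\rangle\right]^{m}$ does not drop the rank or change the number of coincidences along $\mathbb{W}$ — this is where Corollary~\ref{cor: free steps then coincidence then forced steps} is useful, since it pins down the shape of the exposure process (free steps, then one coincidence, then forced steps) along each path $\left[e,ew\right]$, and that shape is read off from the restriction to $\mathbb{U}$. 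The genuinely hard part — which this lemma is explicitly deferring — is verifying the hypothesis itself, i.e.\ that rank-$m$ submodules containing $\eqb$ really are supported on $\left\langle u\right\rangle$; the present lemma only has to cleanly package the implication, so the main obstacle here is bookkeeping: making the restriction/extension correspondence precise and checking it is a well-defined bijection preserving all relevant invariants.
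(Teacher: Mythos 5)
Your overall strategy is the paper's: starting from \eqref{eq:limit of B is number of rank-m submodule}, put the rank-$m$ submodules $M\le_{\mathbb{W}}\A^{m}$ containing $\eqb$ (which, under the hypothesis, are generated on $\left\{ eu^{j}\right\} $) in bijection with submodules of $\k\left[\left\langle u\right\rangle \right]^{m}\cong\k\left[x,x^{-1}\right]^{m}$ containing the image of $\eqb$, an object that depends only on $d$. The gap is in the tool you invoke for the key step of the reverse direction, namely that passing from a $\k\left[\left\langle u\right\rangle \right]$-submodule $M'$ to its $\A$-span $M'\A$ introduces no new elements supported on the monomials $\left\{ eu^{j}\right\} $ (equivalently, that restricting $M'\A$ back recovers $M'$, so the restriction/extension maps are mutually inverse). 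Lemma \ref{lem:ideal generated by Delta with C2 does not add elements to tree} does not apply here: it is stated for subspaces supported on (unions of) subtrees and hinges on condition \textbf{C2}, whereas the set $\left\{ eu^{j}\right\} _{e\in E,\,0\le j\le d}$ is not a union of subtrees unless $\left|u\right|=1$, and \textbf{C2} genuinely fails -- if $b$ is the first letter of $u$ and $0\ne\delta$ is supported on $\left\{ eu^{j}\right\} $, then $\delta.b$ is supported on $\left\{ eu^{j}b\right\} $, which is disjoint from $\left\{ eu^{j}\right\} $, so the ``check that \textbf{C2} is preserved'' which you defer cannot be completed. The correct argument is the one used in the proof of Lemma \ref{lem:divisors of x^d-1}: decompose $\A=\bigoplus_{t}\k\left[\left\langle u\right\rangle \right]t$ over a transversal of $\left\langle u\right\rangle $ in $\F$; for $f\in\k\left[\left\langle u\right\rangle \right]^{m}$ and $z\notin\left\langle u\right\rangle $, the element $fz$ is supported on the coset $\left\langle u\right\rangle z$, disjoint from $\left\langle u\right\rangle $, whence any element of $M'\A$ supported on $\left\langle u\right\rangle $-monomials already lies in $M'$. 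This is exactly how the paper carries out the step you attribute to the \textbf{C2} lemma.

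Two smaller simplifications. The rank bookkeeping in the reverse direction needs neither Corollary \ref{cor: free steps then coincidence then forced steps} nor any coincidence count: $M'$ is a submodule of a free rank-$m$ module over the PID $\k\left[x,x^{-1}\right]$ containing the rows of $x^{d}I_{m}-\B$, whose determinant is nonzero, so $M'$ is free of rank exactly $m$; hence $M'\A$ is generated by $m$ elements and $\rk\left(M'\A\right)\le m$, while Lemma \ref{lem:The-smallest-rank is m} gives $\rk\left(M'\A\right)\ge m$ since $M'\A\le_{\mathbb{W}}\A^{m}$ and $M'\A\supseteq\eqb$. Likewise, injectivity of $M\mapsto M|_{\left\{ eu^{j}\right\} }$ is immediate from $M$ being generated on that set, with no appeal to Corollary \ref{cor:alg extensions generated on a given tree} or Theorem \ref{thm:basis from coincidences}. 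With the coset-decomposition argument substituted for the \textbf{C2} lemma, your proof coincides with the paper's.
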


\begin{proof}
Assume that every submodule $M$ from \eqref{eq:limit of B is number of rank-m submodule}
is generated on $\left\{ eu^{j}\right\} _{e\in E,j\in\left\{ 0,\ldots,d\right\} }$.
Then, as in the proof of Lemma \ref{lem:divisors of x^d-1}, these
submodules are in one-to-one correspondence with rank-$m$ submodules
of $\k\left[\left\langle u\right\rangle \right]^{m}$ containing $\eqb$
(and generated on $\left\{ eu^{j}\right\} _{e\in E,j\in\left\{ 0,\ldots,d\right\} }$),
where $\k\left[\left\langle u\right\rangle \right]^{m}$ is the rank-$m$
free module over $\k\left[\left\langle u\right\rangle \right]$. As
before, $\k\left[\left\langle u\right\rangle \right]\cong\k\left[\mathbb{Z}\right]\cong\k\left[x,x^{-1}\right]$,
and the image of $\eqb\subseteq\k\left[\left\langle u\right\rangle \right]^{m}$
in $\k\left[\mathbb{Z}\right]^{m}$ through the corresponding isomorphism
does not depend on $u$ but only on $d$. Hence, the number of submodules
in \eqref{eq:limit of B is number of rank-m submodule} does not depend
on $u$, proving Theorem \ref{thm:limit powers general}.
\end{proof}
\begin{rem}
It is quite straightforward to show that every submodule of $\k\left[\left\langle u\right\rangle \right]^{m}$
containing $\eqb$ must be of rank exactly $m$: after the first coincidence
in each of the $m$ paths, all remaining steps are clearly forced.
\end{rem}

Now fix $M\le_{\mathbb{W}}\A^{m}$ of rank $m$ containing $\eqb$.
For every $f\in M|_{\mathbb{W}}$, denote by $\theta\left(f\right)$
the projection of $f$ to the monomials $e_{1},\ldots,e_{m}$, so
$\theta\left(f\right)$ is a $\k$-linear combination of $e_{1},\ldots,e_{m}$.
For $t=0,\ldots,\left|\mathbb{W}\right|$, let
\[
\Theta_{t}\defi\spn_{\k}\left\{ \theta\left(f\right)\,\middle|\,f\in M|_{D^{t}\left(\mathbb{W}\right)}\right\} \le\spn_{\k}\left\{ e_{1},\ldots,e_{m}\right\} 
\]
(recall that $D^{t}\left(\mathbb{W}\right)$ is the set of first $t$
monomials exposed in $\mathbb{W}$ through ShortLex). So we have
\[
\left\{ 0\right\} =\Theta_{0}\le\Theta_{1}\le\ldots\le\Theta_{\left|\mathbb{W}\right|}=\spn_{\k}\left\{ e_{1},\ldots,e_{m}\right\} ,
\]
where the last equality is due to the fact that $M\supseteq\eqb$,
the equations in $\eqb$ are supported on $\mathbb{W}$, the linear
combinations of $e_{1},\ldots,e_{m}$ given by the $m$ equations
in $\eqb$ are precisely the rows of $\B$, and $\B$ is regular by
definition. Recall (Corollary \ref{cor: free steps then coincidence then forced steps})
that there is a sole coincidence in $\left[e_{i},e_{i}w\right]$ for
every $i=1,\ldots,m$, and let $z_{i}$ denote the prefix of $w$
so that $e_{i}z_{i}$ is the step in which the coincidence of $\left[e_{i},e_{i}w\right]$
takes place. 
\begin{lem}
\label{lem:theta at coincidences are basis}We have $\Theta_{t-1}\lvertneqq\Theta_{t}$
if and only if step $t$ is a coincidence. In particular, if $g_{i}\in M|_{\mathbb{W}}$
is a (monic) element with leading monomial $e_{i}z_{i}$, then the
vectors $\theta\left(g_{1}\right),\ldots,\theta\left(g_{m}\right)$
are linearly independent.
\end{lem}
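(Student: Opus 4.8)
The statement has two parts --- the equivalence, and the linear independence of the $\theta(g_{i})$ --- and the second will drop out of the first together with Corollary~\ref{cor: free steps then coincidence then forced steps} and Theorem~\ref{thm:basis from coincidences}, so the real work is the equivalence. The plan is first to set up the dimension bookkeeping and reduce everything to one claim about forced steps. By Lemma~\ref{lem:forced-free-coincidence}, at a coincidence step $t$ with witness $g$ (monic, leading monomial $v_{t}$) we have $M|_{D^{t}(\mathbb{W})}=M|_{D^{t-1}(\mathbb{W})}+\k g$; at a forced step $t$ with forced element $h=f.b$ we have $M|_{D^{t}(\mathbb{W})}=M|_{D^{t-1}(\mathbb{W})}+\k h$; and at a free step $M|_{D^{t}(\mathbb{W})}=M|_{D^{t-1}(\mathbb{W})}$. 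Applying $\theta$ gives $\Theta_{t}=\Theta_{t-1}+\k\theta(g)$, resp.\ $\Theta_{t}=\Theta_{t-1}+\k\theta(h)$, resp.\ $\Theta_{t}=\Theta_{t-1}$; so $\dim\Theta_{t}-\dim\Theta_{t-1}\in\{0,1\}$ always, and equals $0$ at free steps. Since $\Theta_{0}=\{0\}$ and $\Theta_{|\mathbb{W}|}=\spn_{\k}\{e_{1},\ldots,e_{m}\}$ (noted just before the lemma, using $M\supseteq\eqb$ and $\B$ invertible), the total increase of $\dim\Theta$ is $m$; and by Corollary~\ref{cor: free steps then coincidence then forced steps} together with Theorem~\ref{thm:basis from coincidences} there are exactly $m$ coincidences. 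Hence everything reduces to showing that a forced step never enlarges $\Theta$, i.e.\ that $\theta(h)\in\Theta_{t-1}$ for the forced element $h$: this pins the increase to $0$ on forced (and free) steps and to exactly $1$ at each of the $m$ coincidences.

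This forced-step claim is the main obstacle. Here is the approach I would take. At a forced step $t$ the exploration is the restriction of ShortLex, so $v_{t}=e_{i}z$ for a prefix $z=c_{1}\cdots c_{k}$ of $w$, the producing edge is the \emph{forward} edge $e_{i}(c_{1}\cdots c_{k-1})\to e_{i}z$ labelled by $c=c_{k}$, and the forced element is $h=f.c$ with $f\in M|_{D_{c}^{t}}$. An elementary computation of the root-coordinates of a right translate gives $\theta(f.c)_{p}=$ (the coefficient of $f$ on the monomial $e_{p}c^{-1}$); since $f$ is supported on $\mathbb{W}$ this vanishes unless $c^{-1}$ is a prefix of $w$, i.e.\ unless $w$ begins with $c^{-1}$, in which case $e_{p}c^{-1}$ is the level-one vertex of the $p$-th copy and $\theta(h)$ is exactly the vector of coefficients of $f$ on those level-one vertices. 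To place this vector in $\Theta_{t-1}$ I would exploit the path picture of Corollary~\ref{cor: free steps then coincidence then forced steps}: within the path $[e_{i},e_{i}w]$ there is a single coincidence, after which every step is forced, so the forced elements of that path are all downstream of the coincidence witness $g_{i}$, obtained from $g_{i}$ by successive right-translation along $w$, each translation being a forward shift along a reduced path and hence free of cancellation. Tracking how such a forward shift moves the support of $g_{i}$, the level-one coefficients of $f$ --- equivalently $\theta(h)$ --- express as a $\k$-combination of vectors $\theta(g_{i}\cdot y)$ with $e_{p}y^{-1}$ ranging over vertices exposed strictly before step $t$; and because the coincidence of $[e_{i},e_{i}w]$ has already happened, $g_{i}$ is supported on $D^{t-1}(\mathbb{W})$, so all of these already lie in $\Theta_{t-1}$. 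Verifying the shift behaviour carefully --- in particular that the relevant translates stay supported on $\mathbb{W}$ --- is the one genuinely technical point, and it is here that the hypotheses $\rk M=m$ and reducedness of $w$ get used.

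Granting the forced-step claim, the dimension count from the first paragraph immediately yields the equivalence. For the linear independence I would order the $m$ coincidences by the time they occur: at the coincidence step $t_{i}$ of the path $[e_{i},e_{i}w]$ we have $\Theta_{t_{i}}=\Theta_{t_{i}-1}\oplus\k\theta(g_{i})$, while $\Theta_{t_{i}-1}$ already contains $\theta(g_{j})$ for every coincidence $j$ occurring earlier (its witness $g_{j}$ is supported on $D^{t_{j}}(\mathbb{W})\subseteq D^{t_{i}-1}(\mathbb{W})$). Hence $\theta(g_{i})\notin\spn_{\k}\{\theta(g_{j}):t_{j}<t_{i}\}$, and therefore $\theta(g_{1}),\ldots,\theta(g_{m})$ are linearly independent.
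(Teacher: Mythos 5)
Your reduction in the first paragraph (total growth of $\dim\Theta_{t}$ is $m$, each step raises the dimension by at most $1$, exactly $m$ coincidences, hence it suffices to show no growth at forced steps) and your final triangularity argument for independence both match the paper. The gap is in the middle, which is exactly the crux. Your plan is to show $\theta(h)\in\Theta_{t-1}$ at a forced step by writing the forced witness as a right-translate of the coincidence witness $g_{i}$ of the path $\left[e_{i},e_{i}w\right]$ and expressing $\theta(h)$ as a $\k$-combination of vectors $\theta\left(g_{i}\cdot y\right)$ which you assert already lie in $\Theta_{t-1}$. Neither assertion is justified, and the second does not follow from what you say: $\Theta_{t-1}$ is spanned by $\theta$-images of elements of $M$ supported on the first $t-1$ exposed vertices of $\mathbb{W}$, whereas a translate $g_{i}\cdot y$ has no reason to be supported on $\mathbb{W}$ at all --- in the paper such translates are only ever shown to be supported on the doubled paths $\mathbb{W}^{2}$, and even that support control (Lemma \ref{lem:f_ez supported on D_b} and the completion of the proof of Theorem \ref{thm:limit powers general}) is \emph{deduced from} the present lemma. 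Likewise, the claim that the forced witnesses along $\left[e_{i},e_{i}w\right]$ are translates of $g_{i}$ is, in the paper's logical order, a consequence of this lemma, not an input to it. So the ``one genuinely technical point'' you defer is precisely the content you would need, and the route you sketch for it is circular with respect to how that content is actually obtained.

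For comparison, the paper handles the forced step by a short contradiction that looks one forced step \emph{ahead} rather than back at the coincidence: if the forced element $g.b$ at step $t$ (exposing $e_{i}z$) had $\theta(g.b)\notin\Theta_{t-1}$, then $g.b$ has some root $e'\in E$ in its support, forcing $b^{-1}$ to be the first letter of $w$; since $w$ is cyclically reduced, $z$ is a proper prefix, so the next monomial $e_{i}zc$ (with $c\ne b^{-1}$) is exposed at the forced step $s=t+m$, whose witness $f$ is supported on $D_{c}^{s}\left(\mathbb{W}\right)\subseteq D^{t}\left(\mathbb{W}\right)$ and has leading monomial $e_{i}z$; then $\theta(f)=\theta(f-g.b)+\theta(g.b)\notin\Theta_{t-1}$, yet $f$ cannot contain any root in its support (roots have no outgoing $c$-edge in $\mathbb{W}$ because $c$ is not the first letter of $w$), so $\theta(f)=0$ --- a contradiction. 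That two-step look-ahead, using only that the next step is forced and that its witness sits in $D_{c}^{s}\subseteq D^{t}$, is the missing idea; without it (or an independent proof of the support control you invoke) your argument does not close.
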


\begin{proof}
We already explained why $\dim\left(\Theta_{\left|\mathbb{W}\right|}\right)=m$.
Note that $\dim\Theta_{t}-\dim\Theta_{t-1}\in\left\{ 0,1\right\} $,
because every two monic elements $g_{1},g_{2}\in M|_{\mathbb{W}}$
with leading monomial $v_{t}$ satisfy $\theta\left(g_{1}\right)-\theta\left(g_{2}\right)=\theta\left(g_{1}-g_{2}\right)\in\Theta_{t-1}$.
As there are exactly $m$ coincidences, it is enough to prove that
$\Theta_{t-1}=\Theta_{t}$ whenever step $t$ is forced or free. If
step $t$ is free, then $M|_{D^{t-1}\left(\mathbb{W}\right)}=M|_{D^{t}\left(\mathbb{W}\right)}$
and obviously $\Theta_{t-1}=\Theta_{t}$. It thus remains to show
that this is the case also if step $t$ is forced. 

Let $ez$ be the monomial exposed in step $t$ which is forced, and
let $b\in B\cup B^{-1}$ be the edge leading to $ez$. There exists
some $g\in M|_{D_{b}^{t}\left(\mathbb{W}\right)}$ with $ezb^{-1}$
in its support (in fact, its leading monomial), such that the coefficient
of $ezb^{-1}$ in $g$ is $1\in\k$. If $\theta\left(g.b\right)\in\Theta_{t-1}$,
then every other monic $f\in M|_{\mathbb{W}}$ with leading monomial
$ez$ satisfies $\theta\left(f\right)=\theta\left(f-g.b\right)+\theta\left(g.b\right)\in\Theta_{t-1}$
and we are done. So assume that $\theta\left(g.b\right)\notin\Theta_{t-1}$.
In particular, $\theta\left(g.b\right)\ne0$, so $g.b$ has some $e'\in E$
in its support, and so $b^{-1}$ is the first letter of $w$. As $w$
is assumed to be cyclically reduced, $z$ is a proper prefix of $w$. 

Now consider the monomial following $ez$ in $\left[e,ew\right]$.
Say it is $ezc$ for some $b^{-1}\ne c\in B\cup B^{-1}$, and it is
exposed at time $s$ (so $s=t+m$). Because step $t$ is forced, so
is step $s$ (by Corollary \ref{cor: free steps then coincidence then forced steps}).
As in the proof of Lemma \ref{lem:The-smallest-rank is m}, the monomials
exposed between $ez$ and $ezc$ do not belong to $D_{c}^{s}\left(\mathbb{W}\right)$,
so $D_{c}^{s}\left(\mathbb{W}\right)\subseteq D^{t}\left(\mathbb{W}\right)$.
As step $s$ is forced, there exists some monic $f\in M|_{D_{c}^{s}\left(\mathbb{W}\right)}\subseteq M|_{D^{t}\left(\mathbb{W}\right)}$
with $ez$ its leading monomial. As before, as $\theta\left(f-g.b\right)\in\Theta_{t-1}$
but $\theta\left(g.b\right)\notin\Theta_{t-1}$, we get $\theta\left(f\right)=\theta\left(f-g.b\right)+\theta\left(g.b\right)\notin\Theta_{t-1}$.
In particular, $\theta\left(f\right)\ne0$. But $c\ne b^{-1}$ is
not the first letter of $w$, so $f$ cannot have any $e\in E$ in
its support -- a contradiction. This completes the proof of the first
statement of the lemma. This also shows there exist $g_{i}\in M|_{\mathbb{W}}$
with leading monomial $e_{i}z_{i}$, for $i=1,\ldots,m$, such that
$\theta\left(g_{1}\right),\ldots,\theta\left(g_{m}\right)$ are linearly
independent. The second statement of the lemma now follows from the
fact that if $f,g\in M|_{\mathbb{W}}$ are both monic with leading
monomial the $t$-th vertex, then $\theta\left(f\right)-\theta\left(g\right)\in\Theta_{t-1}$.
\end{proof}
Define $\mathbb{W}^{2}\defi\left[e_{1},e_{1}w^{2}\right]\cup\ldots\cup\left[e_{m},e_{m}w^{2}\right]$,
and let $b\in B\cup B^{-1}$ be the first letter of $w$. For every
$i=1,\ldots,m$, let $f_{e_{i}z_{i}}\in M|_{\mathbb{W}}$ be the minimal
monic element with leading monomial $e_{i}z_{i}$.
\begin{lem}
\label{lem:f_ez supported on D_b}For every $i=1,\ldots,m$, $f_{e_{i}z_{i}}$
is supported on $D_{b}\left(\mathbb{W}^{2}\right)$, and the outgoing
$b$-edge at $e_{i}z_{i}$ is headed forward (towards $e_{i}w^{2}$).
\end{lem}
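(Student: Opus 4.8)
The plan is to mirror the proof of Lemma~\ref{lem:first letter after coincidence}, passing from a single path to the $m$ paths of $\mathbb{W}$, and to carry everything out inside $\mathbb{W}^{2}=\bigcup_{e\in E}[e,ew^{2}]$ (exposed by the restriction of ShortLex). Fix $i$, write $f=f_{e_{i}z_{i}}$, let $t$ be the step at which $e_{i}z_{i}$ is exposed, and $b=w[1]$. Exposing $M$ along $\mathbb{W}^{2}$ produces exactly the same coincidences as along $\mathbb{W}$ (there are none past $\mathbb{W}$), so by Corollary~\ref{cor: free steps then coincidence then forced steps} the coincidence in $[e_{i},e_{i}w^{2}]$ is still at $e_{i}z_{i}$ and every later step in that path is forced; this lets me treat the degenerate case $z_{i}=w$ uniformly, the monomial following $e_{i}w$ in $[e_{i},e_{i}w^{2}]$ being $e_{i}w\cdot w[1]$. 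The first input I would use is the analogue of ``$f_{I}$ has $v_{0}$ in its support'': by Lemma~\ref{lem:theta at coincidences are basis} the vectors $\theta(f_{e_{1}z_{1}}),\ldots,\theta(f_{e_{m}z_{m}})$ are linearly independent, hence $\theta(f)\neq0$, so $f$ has some origin $e_{k}$ ($k\in\{1,\ldots,m\}$) in its support; moreover $\theta(f)\notin\Theta_{t-1}$, since $M|_{D^{t}(\mathbb{W})}=M|_{D^{t-1}(\mathbb{W})}+\k f$ and step $t$ is a coincidence.

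Next I would identify the label $c$ of the edge $e_{i}z_{i}\to e_{i}z_{i}c$ inside $[e_{i},e_{i}w^{2}]$ (thus $c=w[t_{i}+1]$ if $z_{i}\neq w$, and $c=w[1]$ if $z_{i}=w$, where $t_{i}=|z_{i}|$). The step exposing $e_{i}z_{i}c$ is forced, so there is $g\in M|_{D_{c}^{s}}$ ($s$ that step) with $e_{i}z_{i}\in\mathrm{supp}(g)$. As in the proof of Lemma~\ref{lem:The-smallest-rank is m}, the monomials exposed strictly between $e_{i}z_{i}$ and $e_{i}z_{i}c$ carry no outgoing $c$-edge into the already-exposed part of $\mathbb{W}^{2}$ (because $w$ is reduced), so $D_{c}^{s}$ lies among the monomials exposed up to step $t$; in particular $e_{i}z_{i}$ is the leading monomial of $g$, and writing $\gamma\neq0$ for its leading coefficient we obtain $g=\gamma f+g'$ with $g'\in M|_{D^{t-1}(\mathbb{W})}$. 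Since $\theta(g')\in\Theta_{t-1}$ but $\theta(f)\notin\Theta_{t-1}$, we get $\theta(g)=\gamma\theta(f)+\theta(g')\neq0$, so $g$ has some origin $e_{k'}$ in its support; as $e_{k'}\in D_{c}^{s}$ and the only outgoing edge of an origin inside $\mathbb{W}^{2}$ is the forward $w[1]$-edge, this forces $c=w[1]=b$. In particular the $b$-edge at $e_{i}z_{i}$ is headed forward.

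Finally, having $c=b$ gives $g\in M|_{D_{b}^{s}}$, and every monomial of $D_{b}^{s}$ has a $b$-edge into $\mathbb{W}^{2}$, so $D_{b}^{s}\subseteq D_{b}(\mathbb{W}^{2})$. Thus it is enough to show that $f$ itself lies in $M|_{D_{b}^{s}}$ (equivalently that $g'$ may be chosen supported on $D_{b}^{s}$), for then $\mathrm{supp}(f)\subseteq D_{b}^{s}\subseteq D_{b}(\mathbb{W}^{2})$. This is the point where the module situation genuinely departs from the $\fix$ case: there $M$ restricted to the already-exposed monomials is the single line $\k f_{I}$, whereas here $M|_{D^{t-1}(\mathbb{W})}$ also contains the forced elements generated in the paths whose coincidence preceded step $t$, so $g'$ need not vanish. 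I expect this to be the main obstacle. I would handle it by strong induction on the ordering of the $m$ coincidences: granting the statement for all paths with an earlier coincidence, $M|_{D^{t-1}(\mathbb{W})}$ is spanned by the earlier reduced generators together with their right-translates by prefixes of $w$ (the forced elements), and feeding this description and the reducedness of $f$ (no non-leading monomial of $f$ is the leading monomial of an element of $M$) into $f=\gamma^{-1}(g-g')$ forces any monomial of $f$ outside $D_{b}^{s}$ to cancel, a contradiction unless there are none.
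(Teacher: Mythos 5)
Your first two paragraphs are essentially the paper's own argument: deducing $c=b$ (and hence the forward direction of the $b$-edge at $e_iz_i$) from $\theta(g)\notin\Theta_{t-1}$ via Lemma \ref{lem:theta at coincidences are basis} is exactly the paper's first step, and your uniform treatment of the case $z_i=w$ through the exposure along $\mathbb{W}^{2}$ is a harmless variant of the paper's direct use of the element of $\eqb$ supported on $E\cup\{e_iw\}$ (note only that Corollary \ref{cor: free steps then coincidence then forced steps} as stated covers $[e,ew]$, so the forcedness of the step exposing $e_iwb$ in $[e,ew^{2}]$ needs a one-line extension of that corollary).

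The genuine gap is in your third paragraph, i.e.\ exactly at the point you flag as the main obstacle. Your plan rests on describing $M|_{D^{t-1}(\mathbb{W})}$ as spanned by the earlier reduced generators $f_{e_jz_j}$ together with their right-translates by prefixes of $w$. This is not available from the induction hypothesis you set up: the hypothesis only says that $f_{e_jz_j}$ is supported on $D_b\left(\mathbb{W}^{2}\right)$, which controls $f_{e_jz_j}.b$ but says nothing about $f_{e_jz_j}.y$ for longer prefixes $y$ --- for that one would need $f_{e_jz_j}.b$ to be supported on origins of edges labelled by the second letter of $w$, and so on. That iterated statement is precisely what the paper establishes only afterwards, in the completion of the proof of Theorem \ref{thm:limit powers general}, by re-running this very lemma on the rotated configurations $\mathbb{W}^{b},\mathbb{W}^{bc},\ldots$; invoking it here is circular, and without it your concluding claim that every monomial of $f$ outside $D_b^{s}$ ``cancels'' is an unsupported assertion rather than an argument (and forced steps $e_jz_jy$ with $|y|\ge 2$ exposed before $e_iz_i$ do occur, so the issue is not vacuous). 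The paper resolves the difficulty with two moves your sketch is missing: first, it replaces $g$ by an element supported only on free vertices and coincidences, using that $M_{t+m-1}$ is generated on those vertices, that they form a valid collection of subtrees $\T$ (Corollary \ref{cor: free steps then coincidence then forced steps}), and that the step $e_iz_ib$ remains forced when the exposure is restricted to $\T\cup\{e_iz_ib\}$ (Lemma \ref{lem:forced-free-coincidence}); second, it subtracts suitable $\k^{*}$-multiples of the earlier $f_{e_jz_j}$ --- supported on $D_b\left(\mathbb{W}^{2}\right)$ by the induction hypothesis --- from $g$ until the remaining monic element has only free non-leading monomials, and such an element is necessarily $f_{e_iz_i}$ itself. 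This yields the support claim directly, with no need to analyze $g'$ or the forced elements at all.
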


\begin{proof}
We proceed by induction on the order induced by ShortLex on $\left\{ e_{i}z_{i}\right\} _{i=1,\ldots,m}$.
The argument that follows works for both the base case and the induction
step. If $z_{i}=w$, then there is an element in $\eqb$ with leading
monomial $e_{i}w$ which is supported on $E\cup\left\{ e_{i}w\right\} $,
so $f_{e_{i}z_{i}}$ is also supported on $E\cup\left\{ e_{i}w\right\} $
and the claim is clear. So assume that $\left|z_{i}\right|<\left|w\right|$,
and that $e_{i}z_{i}$ is exposed at time $t$ and admits an outgoing
$c$-edge towards $e_{i}w$. Then step $t+m$, in which $e_{i}z_{i}c$
is exposed, is forced, and there exists some $g\in M|_{D_{c}^{t+m}\left(\mathbb{W}\right)}\subseteq M|_{D^{t}\left(\mathbb{W}\right)}$
with leading monomial $e_{i}z_{i}$. By Lemma \ref{lem:theta at coincidences are basis},
$\theta\left(g\right)\notin\Theta_{t-1}$ so $g$ has some $e\in E$
in its support, and therefore $c=b$.

Moreover, we may assume that $g$ is supported on free steps and coincidences
only. Indeed, the submodule $M_{t+m-1}$ is generated on the free
steps and coincidences exposed up to step $t+m-1$ (this is always
the case in every valid exposure process), but by Corollary \ref{cor: free steps then coincidence then forced steps},
in our case these vertices form a valid collection of subtrees $\T$
($\T=T_{1}\cup\ldots\cup T_{m}$, where $T_{j}=\left[e_{j},e_{j}z_{j}\right]\cap\left[e_{j},e_{j}z_{i}\right]$
for $j\ge i$ and $T_{j}=\left[e_{j},e_{j}z_{j}\right]\cap\left[e_{j},e_{j}z_{i}b\right]$
for $j<i$). But $e_{i}z_{i}b$ is forced, so every element with leading
monomial $e_{i}z_{i}b$ belongs to $M_{t+m-1}$, and if we extend
$\T$ to $e_{i}z_{i}b$ it is still a forced step (by Lemma \ref{lem:forced-free-coincidence}).
Thus there is some $g\in M|_{D_{b}\left(\mathbb{T}\cup\left\{ e_{i}z_{i}b\right\} \right)}$
with leading monomial $e_{i}z_{i}$.

If $g$ has some coincidence $e_{j}z_{j}$ in its support other than
$e_{i}z_{i}$, then as $e_{j}z_{j}<e_{i}z_{i}$, our induction hypothesis
applies and $f_{e_{j}z_{j}}\in D_{b}\left(\mathbb{W}^{2}\right)$.
Hence we may subtract $\alpha f_{e_{j}z_{j}}$ from $g$ for some
$\alpha\in\k^{*}$ to decrease $g$, and $g-\alpha f_{e_{j}z_{j}}\in D_{b}\left(\mathbb{W}^{2}\right)$.
If we repeat such subtractions as long as we can, we end up with a
monic element $f$ which is supported entirely on free vertices inside
$D_{b}\left(\mathbb{W}^{2}\right)$ along with its leading monomial
$e_{i}z_{i}$. Because all its non-leading monomials are free, this
$f$ is exactly $f_{e_{i}z_{i}}$ (otherwise $f-f_{e_{i}z_{i}}\ne0$
is supported on free vertices, which is impossible), and we are done. 
\end{proof}

\begin{proof}[Completing the proof of Theorem \ref{thm:limit powers general}]
 Recall that $M\le_{\mathbb{W}}\A^{m}$ is a fixed submodule satisfying
$\rk M=m$ and $M\supseteq\eqb$. By Lemma \ref{lem:enough to show submodules are generated on <u>},
it is enough to show that $M$ is generated by elements supported
on $\left\{ eu^{j}\right\} _{e\in E,j\in\left\{ 0,\ldots,d\right\} }$.
By Theorem \ref{thm:basis from coincidences}, $M=\left(f_{e_{1}z_{1}},\ldots,f_{e_{m}z_{m}}\right)$,
so it is enough to show that $f_{e_{i}z_{i}}$ is supported on $\left\{ eu^{j}\right\} _{e\in E,j\in\mathbb{Z}}$
for all $i$. 

Recall that $b$ is the first letter of $w$. The submodule $M$ contains
$\eqb$ if and only if it contains $\eqb.b\defi\left\{ f.b\,\middle|\,f\in\eqb\right\} $.
Define 
\[
\mathbb{W}^{b}\defi\bigcup_{e\in E}\left[b,wb\right],
\]
and consider the exposure of $M$ along $\mathbb{W}^{b}$ in the order
induced from ShortLex. Clearly, the monomials that were free in the
exposure along $\mathbb{W}$ are free now as well. We claim that the
former coincidences $e_{i}z_{i}$ are now also free: as above, if
$f\in M|_{\mathbb{W}^{b}}$ is monic with leading monomial $e_{i}z_{i}$,
then $f_{e_{i}z_{i}}-f\in M$ is an element with $\theta\left(f_{e_{i}z_{i}}-f\right)=\theta\left(f_{e_{i}z_{i}}\right)$
but with leading monomial smaller than $e_{i}z_{i}$, which contradicts
Lemma \ref{lem:theta at coincidences are basis}. On the other hand,
by Lemma \ref{lem:f_ez supported on D_b}, $f_{e_{i}z_{i}}.b\in M|_{\mathbb{W}^{b}}$
has leading monomial $e_{i}z_{i}b$, and so $e_{i}z_{i}b$ is a coincidence
in the exposure of $M$ along $\mathbb{W}^{b}$. Moreover, the non-leading
monomials of $f_{e_{i}z_{i}}.b$ are all free in the exposure along
$\mathbb{W}^{b}$, so $f_{e_{i}z_{i}}.b$ is the minimal monic element
in $M|_{\mathbb{W}^{b}}$ with leading monomial $e_{i}z_{i}b$. The
same argument as in Lemma \ref{lem:f_ez supported on D_b} shows that
$f_{e_{i}z_{i}}.b$ is supported on $D_{c}\left(\mathbb{W}^{2}\right)$
and the outgoing $c$-edge at $e_{i}z_{i}b$ is headed towards $e_{i}w^{2}$,
where $c\in B\cup B^{-1}$ is the second letter of $w$. 

This argument can now go on to the exposure of $M$ along $\mathbb{W}^{bc}$
and so on, and shows that for every prefix $w'$ of $w$ and every
$i$, $f_{e_{i}z_{i}}.w'$ is supported on $\left[1,w^{2}\right]$.
This completes the proof exactly as in the proof of Theorem \ref{thm:limit powers fix}
in Section \ref{subsec:powers fix}.
\end{proof}

\section{The quotient module $\protect\k\left[\protect\F\right]/\left(w-1\right)$\label{sec:A_w}}

Fix $w\in\F$, and consider the right $\A$-module obtained as a quotient
of the $\A$-module $\A$ by its submodule $\left(w-1\right)$. We
denote this quotient by\marginpar{$\protect\A_{w}$}
\[
\A_{w}\defi\k\left[\F\right]/\left(w-1\right)=\A/\left(w-1\right).
\]
In this section we study this module and prove two main results about
it. First, we show that if $w$ is a non-power, then the only cyclic
generators of $\A_{w}$ are the ``obvious ones'' (Theorem \ref{thm:cyclic generators of A_w}).
Second, we prove that whenever a subtree $T\subseteq\mathrm{Cay}\left(\F,B\right)$
supports both $w-1$ and a rank-2 ideal $I\le_{T}\A$ in which $w-1$
is primitive, there is an element $f\in\A$ supported on $T$ so that
$\left\{ f,w-1\right\} $ is a basis of $I$ (Corollary \ref{cor:complement to a basis of w-1}).
In particular, the latter result yields an algorithm to test whether
$w-1$ is primitive in a given rank-2 ideal (Corollary \ref{cor:algo to test primitivity of w-1 in a rank-2 ideal}).
We need these two results for our proof of Theorem \ref{thm:fixed vectors in pi=00003D2}
in Section \ref{sec:Critical-ideals-of-rank-2}, but we also find
them interesting for their own right. See Section \ref{sec:Open-Questions}
for a discussion on potential generalizations of these results.\\

Consider the Schreier graph\marginpar{$\protect\sw$} 
\[
\sw\defi\mathrm{Sch}\left(\F\curvearrowright\left\langle w\right\rangle \backslash\F,B\right)=\left\langle w\right\rangle \backslash\mathrm{Cay}\left(\F,B\right).
\]
This is a graph whose vertices correspond to the right cosets of the
subgroup $\left\langle w\right\rangle $ in $\F$. For every vertex
$\left\langle w\right\rangle z$ and every $b\in B$, there is a directed
$b$-edge from the vertex $\left\langle w\right\rangle z$ to the
vertex $\left\langle w\right\rangle zb$. In other words, this is
the quotient of $\mathrm{Cay}\left(\F,B\right)$ by the action of
$\left\langle w\right\rangle $ from the left. Note that $\sw$ is
made of a cycle (reading the cyclic reduction of $w$) with infinite
trees hanging from it (unless $\rk\F=1$, in which case $\sw$ is
a mere cycle). This is illustrated in Figure \ref{fig:Schreier graph example}. 

\begin{figure}
\begin{centering}
\includegraphics[viewport=0bp 0bp 200bp 180bp,scale=1.2]{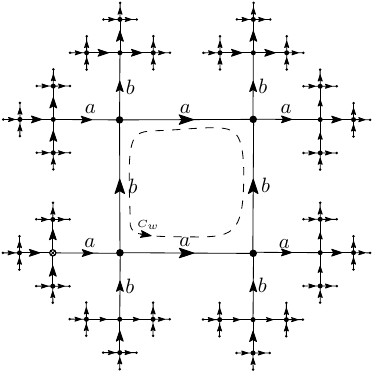}
\par\end{centering}
\caption{\label{fig:Schreier graph example}The Schreier graph $\protect\sw$
for $w=a\left[a,b\right]a^{-1}$. The unique simple cycle is marked
by $c_{w}$.}
\end{figure}

An element $f\in\A$ belongs to the ideal $\left(w-1\right)$ if and
only if for every $z\in\F$, the coefficients in $f$ of the elements
in the right coset $\left\langle w\right\rangle z$ sum up to zero.
Therefore, the elements of $\A_{w}$ are given by $\k$-linear combinations
of right cosets of $\left\langle w\right\rangle $, namely, $\k$-linear
combinations of the vertices of $\sw$. This can also be seen by the
fact that a possible Schreier transversal of the ideal $\left(w-1\right)$
is obtained by considering $\mathrm{Cay}\left(\F,B\right)$, cutting
the axis\footnote{The axis of $w$ is composed of the points in $\mathrm{Cay}\left(\F,B\right)$
moved by left multiplication by $w$ by the least distance.} of $w$ on both sides of one period of the cyclic reduction of $w$,
and taking the connected component of this period.

Now consider the quotient map
\[
\rho\colon\A\twoheadrightarrow\A_{w},
\]
which, by abuse of notation, we also regard as the graph morphism
\[
\rho\colon\mathrm{Cay}\left(\F,B\right)\to\sw.
\]
Note that whenever a subtree $T\subseteq\mathrm{Cay}\left(\F,B\right)$
contains $\left[1,w\right]$, its image $\rho\left(T\right)\subseteq\sw$
contains the cycle in $\sw$. In fact, it suffices that $T$ contains
any interval in the axis of $w$ of length at least the length of
the cyclic reduction of $w$.
\begin{lem}
\label{lem:cyclic submodules in A_w}Let $G\subseteq\sw$ be a connected
subgraph which contains the cycle of $\sw$. Let $f\in\A_{w}$ satisfy
that none of $\left\{ f.z\,\middle|\,z\in\F\right\} $ is supported
on $G.$ Then the submodule $f\A\le\A_{w}$ does not contain any non-zero
element supported on $G$. 
\end{lem}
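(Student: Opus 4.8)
The plan is to argue directly inside the Schreier graph $\sw$, where $\A_{w}=\spn_{\k}\{v:v\in\mathrm{vert}(\sw)\}$ and each $b\in B$ acts as the permutation $v\mapsto vb$ of the vertex set (so $\supp(f.z)=\{vz:v\in\supp(f)\}$). Since $\sw$ is a cycle $c_{w}$ with trees hanging off it and $G$ is connected with $c_{w}\subseteq G$, the subgraph $G$ is geodesically closed in $\sw$; hence $\sw\setminus G$ is a disjoint union of subtrees, each meeting $G$ in a single edge, and $\Delta(v):=\mathrm{dist}_{\sw}(v,G)$ is a well-defined ``height over $G$'' vanishing exactly on $G$, for which every $v\notin G$ has a unique neighbour (its \emph{parent}) of height $\Delta(v)-1$. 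First I would reduce to $G$ finite: given a nonzero $g\in f\A$ supported on $G$, the geodesically closed subgraph of $\sw$ spanned by $c_{w}$ and $\supp(g)$ is finite, connected, contains $c_{w}$, and (by geodesic closedness of $G$) lies inside $G$; replacing $G$ by it keeps ``$g$ supported on $G$'' and ``no $f.z$ supported on $G$''.

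Now suppose, for contradiction, that the hypothesis holds but some nonzero $g\in f\A$ is supported on $G$. Write $g=f\cdot a$ with $a=\sum_{z\in S}\lambda_{z}z$, $\lambda_{z}\in\k^{*}$, and choose $a$ to minimise the \emph{overshoot} $\mathcal{M}(a):=\max\{\Delta(v):v\in\supp(f.z),\ z\in S\}$; by the hypothesis each $\supp(f.z)$ meets $\sw\setminus G$, so $\mathcal{M}(a)\ge1$. Pick $z_{1}\in S$ and $v_{1}\in\supp(f.z_{1})$ with $\Delta(v_{1})=\mathcal{M}:=\mathcal{M}(a)$, lying in a branch $B$ with parent $v_{1}'$ and incident edge labelled $c\in B\cup B^{-1}$ (so $v_{1}=v_{1}'c$). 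Let $B_{v_{1}}$ be the sub-branch hanging below $v_{1}$ (the vertices whose geodesic to $G$ runs through $v_{1}$; $v_{1}$ is its unique vertex of height $\mathcal{M}$). Since $\mathcal{M}$ is the overshoot, no $\supp(f.z)$ with $z\in S$ contains a vertex of height $>\mathcal{M}$, so $\supp(f.z)\cap B_{v_{1}}\subseteq\{v_{1}\}$ for every $z\in S$: on $B_{v_{1}}$, each translate $f.z$ is a scalar multiple of the single vertex $v_{1}$. As $g$ is supported on $G$ and $\Delta(v_{1})\ge1$, comparing the coefficient of $v_{1}$ yields the relation $\sum_{z\in S_{1}}\lambda_{z}(f.z)_{v_{1}}=0$, where $S_{1}=\{z\in S:(f.z)_{v_{1}}\ne0\}$ (so $|S_{1}|\ge2$). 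The idea is to exploit this relation to ``pull back'' the $S_{1}$-translates one step along $c$: replace each $f.z$ with $z\in S_{1}$ in the expression for $g$ by the appropriately scaled $f.(zc^{-1})$, and verify --- using precisely the vanishing of $\sum_{z\in S_{1}}\lambda_{z}(f.z)_{v_{1}}$ --- that the modified element $g'=f\cdot a'$ is still nonzero, still supported on $G$, and satisfies $\mathcal{M}(a')<\mathcal{M}$, contradicting minimality. Iterating, $\mathcal{M}(a)=0$ is forced; but then every $f.z$ with $z\in S$ is supported on $G$, contradicting the hypothesis.

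The delicate point --- which I expect to be the main obstacle --- is exactly this pull-back: applied bluntly, $\cdot c^{-1}$ moves the \emph{entire} support of a translate and may raise some other height-$\mathcal{M}$ vertex to height $\mathcal{M}+1$, increasing the overshoot. Handling this is the heart of the argument: one pulls back only the $S_{1}$-translates, uses that each of them is a multiple of $v_{1}$ on $B_{v_{1}}$, and refines the choice of $a$ --- e.g. among overshoot-minimal $a$ also minimising the number of (translate, height-$\mathcal{M}$ vertex) incidences and choosing $v_{1}$ extremal for a fixed bi-ordering of $\F$ --- so that the recombination cannot create a new height-$(\mathcal{M}+1)$ vertex. (Equivalently, one may lift to the tree $\C=\mathrm{Cay}(\F,B)$: with $\tilde f\in\A$ a lift of $f$, $J=(\tilde f,w-1)\le\A$, and $\tilde G=\rho^{-1}(G)$ the $\langle w\rangle$-invariant subtree containing the axis of $w$, the claim becomes $J|_{\tilde G}=(w-1)\A|_{\tilde G}$; here $\rk J\le2$, $w-1$ is automatically primitive in $J$ when $\rk J=2$ since a generating set of size $\rk J$ is a basis \cite[Prop.~2.2]{cohn1964free}, and the rank-$1$ case is the remaining one to analyse --- but the essential difficulty is the same.)
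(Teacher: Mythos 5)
Your setup (reduction to finite $G$, the height function $\Delta$, the observation that at a top vertex $v_{1}$ the coefficient of $g=f\cdot a$ gives the relation $\sum_{z\in S_{1}}\lambda_{z}(f.z)_{v_{1}}=0$ with $|S_{1}|\ge2$) is fine, but the proof has a genuine gap exactly where you flag it, and the flagged difficulty is not a technicality one can defer: it is the whole content of the lemma. The pull-back step requires three things simultaneously --- that $g'=f\cdot a'$ is still nonzero, still supported on $G$, and has strictly smaller overshoot --- and none of them is verified, nor can they be extracted from the single scalar relation at $v_{1}$. Replacing $f.z$ ($z\in S_{1}$) by scaled translates $f.(zc^{-1})$ changes the element \emph{globally}: the difference $g'-g=\sum_{z\in S_{1}}\bigl(\mu_{z}f.(zc^{-1})-\lambda_{z}f.z\bigr)$ is an essentially arbitrary element of $f\A$ with large support, so ``still supported on $G$'' simply does not follow; the relation at $v_{1}$ only controls one coordinate. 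Moreover right-multiplication by $c^{-1}$ moves every vertex of $\supp(f.z)$, and vertices of height $\le\mathcal{M}$ away from the branch $B_{v_{1}}$ can be pushed to height $\mathcal{M}+1$, so the overshoot need not decrease; the proposed remedies (secondary minimisation of incidences, a bi-ordering of $\F$) are not carried out and there is no indication they suffice. The parenthetical reformulation via $J=(\tilde f,w-1)$ and $\tilde G=\rho^{-1}(G)$ restates the problem rather than solving it. As it stands, the argument establishes only that the top coefficient cancels, which is far short of a contradiction.

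For comparison, the paper avoids any induction on an ``overshoot'' of the coefficient element $a$ by normalising $f$ itself: fix an exploration order on $\mathrm{vert}(\sw\setminus G)$ (every vertex is a neighbour of $G$ or of an earlier vertex), and replace $f$ by the translate $f.z$ whose \emph{largest} support-vertex outside $G$, call it $v_{\max}$, is as small as possible; let $\overline{G}$ be $G$ together with the vertices $\le v_{\max}$, so $\supp(f)\subseteq\overline{G}$ and $v_{\max}$ is a leaf of $\overline{G}$. Then for any $a=\sum\beta_{k}g_{k}$ with $g_{1}$ a longest (nontrivial) monomial and $b$ its first letter, minimality of $f$ forces some monomial $f_{i}$ of $f$ with $f_{i}.b\notin\overline{G}$; consequently $f_{i}g_{1}$ lies at distance $|g_{1}|$ from $\overline{G}$, and since the unique path of that length from $\overline{G}$ to $f_{i}g_{1}$ starts at $f_{i}$ and reads $g_{1}$, no other product $f_{j}g_{k}$ can coincide with it. Hence the coefficient of $f_{i}g_{1}$ in $fa$ is nonzero and $fa$ is not supported on $\overline{G}$, let alone on $G$ --- a one-step argument in place of the descent you were unable to close. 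If you want to salvage your strategy, the missing idea is precisely such a normalisation of $f$ within its orbit \emph{before} touching $a$, rather than surgery on the translates appearing in $a$.
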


\begin{proof}
On the vertices of $\sw\setminus G$ define an ``exploration'' as
in Definition \ref{def:exploration}: this is an enumeration of these
vertices such that every vertex is a neighbour of some vertex in $G$
or of a smaller vertex. This exploration induces a pre-order on the
orbit $\left\{ f.z\,\middle|\,z\in\F\right\} $ obtained by comparing
the largest vertex in their support with respect to this exploration
order (by assumption, every element $f.z$ in this orbit has at least
one vertex outside $G$ in its support). Assume without loss of generality
that $f$ is an element of the orbit with the smallest possible maximal
vertex in its support. Denote this vertex $v_{\max}$. Denote by $\overline{G}$
the (connected) subgraph of $\sw$ consisting of $G$ together with
the prefix $\left\{ v\in\mathrm{vert}\left(\sw\backslash G\right)\,\middle|\,v\le v_{\max}\right\} $
of the exploration order on $\sw\setminus G$.

Now consider the element $fg\in\A_{w}$ for an arbitrary $g\in\A$
not supported on the identity $e\in\F$. It suffices to show that
$fg$ is not supported on $\overline{G}$ (let alone on $G$). Write
$f=\alpha_{1}f_{1}+\ldots+\alpha_{m}f_{m}$ with $\alpha_{1},\ldots,\alpha_{m}\in\k^{*}$
and distinct $f_{1},\ldots,f_{m}\in\mathrm{vert}\left(\sw\right)$,
and write $g=\beta_{1}g_{1}+\ldots+\beta_{\ell}g_{\ell}$ with $\beta_{1},\ldots,\beta_{\ell}\in\k^{*}$
and distinct $g_{1},\ldots,g_{\ell}\in\F$ and so that $\left|g_{1}\right|\ge\left|g_{2}\right|\ge\ldots\ge\left|g_{\ell}\right|$.
Denote by $b\in B\cup B^{-1}$ the first letter in $g_{1}$. Then
$f.b$ cannot be supported on $\overline{G}$: otherwise, $f.b$ would
be supported on $G$ together with vertices strictly smaller than
$v_{\max}$ in $\sw\setminus G$ (we use here the fact that $v_{\max}$
is a leaf in $\overline{G}$), contradicting our assumption about
$f$. So there is a monomial $f_{i}$ in the support of $f$ such
that $f_{i}.b$ is a monomial outside $\overline{G}$. But then $f_{i}g_{1}$
is at distance $\left|g_{1}\right|$ from $\overline{G}$, with the
closest vertex of $\overline{G}$ being $f_{i}$. Clearly, $f_{i}g_{1}\ne f_{j}g_{k}$
for every $\left(j,k\right)\ne\left(i,1\right)$, because the only
path of length $\left|g_{1}\right|$ from $\overline{G}$ to $f_{i}g_{1}$
in $\sw$, is the path starting at $f_{i}$ and reading $g_{1}$.
Thus $f_{i}g_{1}$ belongs to the support of $fg$, and $fg$ is not
supported on $\overline{G}$.
\end{proof}
\begin{cor}
\label{cor:complement to a basis of w-1}Let $1\ne w\in\F$ and $T\subseteq\mathrm{Cay}\left(\F,B\right)$
be a subtree which contains $\left[1,w\right]$. Assume that $I\le_{T}\A$
is a rank-2 ideal supported on $T$ which contains $w-1$ as a primitive
element. Then there is an element $f\in\A$ supported on $T$ so that
$\left\{ f,w-1\right\} $ is a basis for $I$.
\end{cor}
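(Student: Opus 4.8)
The plan is to use the quotient module $\A_{w} = \A/(w-1)$ and reduce the statement to a statement about cyclic submodules inside $\A_{w}$. Since $w-1$ is primitive in $I$, there is \emph{some} $g \in \A$ with $\{g, w-1\}$ a basis of $I$; the issue is only that $g$ need not be supported on $T$. Apply the quotient map $\rho\colon \A \twoheadrightarrow \A_{w}$ (equivalently $\rho\colon \mathrm{Cay}(\F,B)\to\sw$). Because $w-1 \in I$ and $\{g,w-1\}$ is a basis, the image $\rho(I) = I/(w-1)$ is exactly the cyclic submodule $\rho(g)\A \le \A_{w}$, and it is a free rank-$1$ $\A$-module generated by $\rho(g)$ (the kernel of $\rho|_I$ is precisely $(w-1)$, which is the rank-$1$ free factor spanned by $w-1$). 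So $I/(w-1) \cong \rho(g)\A$ is free of rank $1$ inside $\A_{w}$.

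Next I would transport the support hypothesis through $\rho$. By hypothesis $I \le_T \A$ with $T \supseteq [1,w]$, so $\rho(I)$ is generated by elements of $\A_{w}$ supported on $G \defi \rho(T) \subseteq \sw$, and $G$ is a connected subgraph containing the cycle of $\sw$ (since $T \supseteq [1,w]$). Now I want to produce a generator of the cyclic module $\rho(I)$ that is itself supported on $G$. This is where Lemma \ref{lem:cyclic submodules in A_w} enters, used contrapositively: if \emph{no} element of the orbit $\{\rho(g).z \mid z \in \F\}$ of a rank-$1$ generator were supported on $G$, then $\rho(g)\A$ would contain no nonzero element supported on $G$ — contradicting the fact that $\rho(I) = \rho(g)\A$ does contain nonzero elements supported on $G$ (its generators coming from $I \le_T \A$, whose images are supported on $\rho(T) = G$, and these are nonzero since $I \supsetneq (w-1)$ as $\rk I = 2$). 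Hence some $\rho(g).z_0$ \emph{is} supported on $G$. Replacing $g$ by $gz_0$ changes neither the ideal $I$ (since $z_0$ is a unit in $\A$, multiplication by $z_0$ on the right is an automorphism of the free module $\A$, carrying the basis $\{g, w-1\}$ to $\{gz_0, (w-1)z_0\}$... wait, $(w-1)z_0$ need not equal $w-1$) — so I need to be slightly careful here: right-multiplication by $z_0$ carries the basis $\{g,w-1\}$ of $I$ to the basis $\{gz_0,(w-1)z_0\}$ of the ideal $Iz_0$, which is a \emph{different} ideal in general.

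So the cleaner route is to work directly with bases rather than orbits. Since $\{g,w-1\}$ is a basis of $I$, every element of $I$ supported on $T$ has the form $g a + (w-1) b$ with $a,b\in\A$, and I claim I can choose such an element of the form $f = g\alpha + (w-1)\beta$ with $\alpha \in \k^{*}$ a \emph{unit}, still supported on $T$; then $\{f, w-1\}$ is again a basis of $I$ (change of basis matrix $\begin{pmatrix}\alpha & \beta \\ 0 & 1\end{pmatrix}$ is invertible over $\A$) and we are done. To find such $f$: the image of $I|_T$ in $\A_{w}$ equals $\rho(I)|_{G} = (\rho(g)\A)|_G$, which by Lemma \ref{lem:cyclic submodules in A_w} is nonzero only if some $\rho(g).z$ is supported on $G$; picking a minimal such $z$ (in the exploration pre-order of the lemma's proof) one sees that the nonzero element $\rho(g).z$ supported on $G$ has, in fact, $\rho(g)\cdot(\text{leading coefficient})$ appearing, so there is $f_0 \in I|_T$ whose image is $\rho(g)z$, forcing $f_0 = gz + (w-1)b_0$ for some $b_0$; I then argue that the leading part of $z$ can be absorbed — more precisely, running the exploration argument of Lemma \ref{lem:cyclic submodules in A_w} shows the minimal such $z$ is the identity, i.e.\ $\rho(g)$ itself is supported on $G$, whence $f \defi g + (w-1)b_0 \in I|_T$ for suitable $b_0$ and $\{f, w-1\}$ is a basis of $I$.

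The main obstacle is exactly this last absorption step: controlling the multiplier $z$ and showing one may take it trivial (equivalently, $\alpha$ a unit and not merely nonzero inside some truncation). I expect this to follow by re-running the leaf/exploration analysis in the proof of Lemma \ref{lem:cyclic submodules in A_w}: if $z \ne 1$ with first letter $b$, then $\rho(g)z$ sits at distance $|z|$ from $G$ (with unique nearest vertex in $\overline{G}$), so it cannot be supported on $G$ at all — a contradiction. This pins $z = 1$ and completes the argument. Everything else (free-module bookkeeping, change-of-basis matrices over $\A$, the identification $\rho(I) = \rho(g)\A$) is routine given Theorem \ref{thm:Cohn-Lewin}, Theorem \ref{thm:basis from coincidences}, and Lemma \ref{lem:cyclic submodules in A_w}.
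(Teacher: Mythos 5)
Your reduction is the same as the paper's: pass to $\A_{w}=\A/\left(w-1\right)$, observe $\rho\left(I\right)=\rho\left(g\right)\A$, and use Lemma \ref{lem:cyclic submodules in A_w} contrapositively (via a nonzero $T$-supported element of $I\setminus\left(w-1\right)$) to find $z_{0}\in\F$ with $\rho\left(g\right).z_{0}$ supported on $\rho\left(T\right)$. Up to that point you agree with the paper's proof. The genuine gap is the final ``absorption'' step: the claim that one can force $z_{0}=1$ -- equivalently, that $\rho\left(g\right)$ itself is supported on $\rho\left(T\right)$, or that one may take $f=g\alpha+\left(w-1\right)\beta$ with $\alpha\in\k^{*}$ -- is false for an arbitrary complement $g$, and the argument you sketch for it is invalid. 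In the proof of Lemma \ref{lem:cyclic submodules in A_w}, the assertion ``if $z\ne1$ then $f.z$ lies at distance $\left|z\right|$ from $G$'' is available only after normalizing $f$ to be minimal in its orbit with respect to the exploration order, and under the standing hypothesis that \emph{no} element of the orbit is supported on $G$; in your situation that hypothesis is exactly negated (some translate is supported on $\rho\left(T\right)$), so no contradiction arises and nothing pins $z_{0}$. Indeed, if $f_{0}$ is a complement supported on $T$, then $g\defi f_{0}z$ is again a complement for any $z\in\F$ (see below), and in general only the translate $\rho\left(g\right).z^{-1}=\rho\left(f_{0}\right)$, not $\rho\left(g\right)$ itself, is supported on $\rho\left(T\right)$. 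Relatedly, the correct basis criterion is that $\alpha$ be a unit of $\A$, i.e.\ $\alpha=\lambda z$ with $\lambda\in\k^{*}$, $z\in\F$, not $\alpha\in\k^{*}$; insisting on $\alpha\in\k^{*}$ is what pushes you into the untenable claim $z_{0}=1$.

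The repair is precisely the move you talked yourself out of. Keep $w-1$ fixed and replace only $g$ by $gz_{0}$: since $z_{0}$ is a unit of $\A$, $gz_{0}\A=g\A$, hence $I=g\A\oplus\left(w-1\right)\A=gz_{0}\A\oplus\left(w-1\right)\A$, so $\left\{ gz_{0},w-1\right\} $ is a basis of the \emph{same} ideal $I$; your worry about $\left(w-1\right)z_{0}$ only arises if you multiply the whole basis on the right, which is unnecessary. Now lift: $\rho\left(gz_{0}\right)$ is supported on $\rho\left(T\right)$, and $\rho$ maps the vertices of $T$ onto those of $\rho\left(T\right)$, so there is $f\in\A$ supported on $T$ with $\rho\left(f\right)=\rho\left(gz_{0}\right)$, i.e.\ $f=gz_{0}+\left(w-1\right)b$ for some $b\in\A$; this is a unitriangular change of basis, so $\left\{ f,w-1\right\} $ is a basis of $I$ supported on $T$. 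With this correction your argument coincides with the proof given in the paper, which concludes exactly by noting that $\left\{ f.z,w-1\right\} $ is a basis whenever $\left\{ f,w-1\right\} $ is, and that the basis property is unaffected by altering the complement modulo $\left(w-1\right)$.
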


\begin{proof}
As $w-1$ is primitive in $I$, there is some $f\in\A$ which completes
it to a basis of $I$. Consider $\overline{T}=\rho\left(T\right)$,
the image of $T$ in $\sw$ and let $\overline{f}=\rho\left(f\right)\in\A_{w}$.
If $\left\{ f,w-1\right\} $ is a basis for $I$, then so is $\left\{ g,w-1\right\} $
for every $g\in\rho^{-1}\left(\overline{f}\right)$, because in this
case $f-g\in\left(w-1\right)$. So if $\overline{f}.z$ is supported
on $\overline{T}$ for some $z\in\F$, we are done: if $\left\{ f,w-1\right\} $
is a basis then so is $\left\{ f.z,w-1\right\} $. Otherwise, we are
in the situation of Lemma \ref{lem:cyclic submodules in A_w}, and
$\overline{f}\A$ does not contain any element supported on $\overline{T}$.
But $\overline{f}\A$ contains $\rho\left(I\right)$ (in fact $\overline{f}\A=\rho\left(I\right)$),
and as $I$ is generated on $T$, $I$ contains an element $h\in I\setminus\left(w-1\right)$
which is supported on $T$. Then $\overline{f}\A\ni\rho\left(h\right)$,
which is a contradiction as $\rho\left(h\right)\ne0$ and is supported
on $\overline{T}$.
\end{proof}
\begin{cor}
\label{cor:algo to test primitivity of w-1 in a rank-2 ideal}If the
field $\k$ is finite,\footnote{We assume throughout the paper that $\k$ is finite, but some of the
results about free group algebras, such as Corollary \ref{cor:complement to a basis of w-1},
hold for infinite fields just as well. In contrast, Corollary \ref{cor:algo to test primitivity of w-1 in a rank-2 ideal}
relies on $\k$ being finite.} there is an algorithm to test, given a (generating set of a) rank-2
ideal $I\le\A^{m}$ and a word $w\in\F$, whether $w-1$ is primitive
in $I$.
\end{cor}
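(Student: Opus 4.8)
The plan is to turn Corollary \ref{cor:complement to a basis of w-1} into an effective procedure by bounding the size of a subtree on which one needs to search. First I would normalize: given a finite generating set $g_1,\ldots,g_k$ of the rank-2 ideal $I\le\A$ together with $w\in\F$, check that $w-1\in I$ (this is decidable — it is a linear-algebra question over $\k$ once we restrict to the finite subtree spanned by the supports of $g_1,\ldots,g_k$ and their translates needed to express $w-1$, cf.\ the exposure process of Section \ref{sec:The-free-group-algebra}); if $w-1\notin I$ it is certainly not primitive in $I$. Next, fix a finite subtree $T\subseteq\mathrm{Cay}\left(\F,B\right)$ that contains $\left[1,w\right]$ and on which $I$ is generated — such a $T$ can be read off from the given generating set together with $\left[1,w\right]$, since each $g_i$ has finite support. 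By Theorem \ref{thm:basis from coincidences}, $I$ then admits a basis supported on $T$, which can be computed explicitly by running the exposure of $I$ along $T$ and recording the two coincidences.

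The heart of the algorithm is the following equivalence: \emph{$w-1$ is primitive in $I$ if and only if there exists $f\in\A$ supported on $T$ with $\left\{f,w-1\right\}$ a basis of $I$.} The ``if'' direction is trivial. The ``only if'' direction is exactly Corollary \ref{cor:complement to a basis of w-1}. So it suffices to decide whether such an $f$ exists, and here finiteness of $\k$ is used: the $\k$-vector space $\A|_{T}$ of elements supported on $T$ is finite-dimensional, hence finite, so one can enumerate all $f\in\A|_{T}$ and test, for each, whether $\left\{f,w-1\right\}$ generates $I$. Testing whether a given pair generates $I$ is again decidable: compute the submodule $\left(f,w-1\right)$ (restricting to a large enough finite subtree, e.g.\ any $S\supseteq T$ on which both $I$ and $\left(f,w-1\right)$ are generated — again obtainable from the finite supports involved), and compare with $I$ as finite-dimensional $\k$-spaces on $S$, or equivalently check $g_1,\ldots,g_k\in\left(f,w-1\right)$ and $f\in I$. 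If some $f$ passes, output ``primitive''; if the enumeration is exhausted with no success, output ``imprimitive''.

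The step I expect to require the most care is the reduction to a \emph{finite} search space, i.e.\ pinning down that it is enough to look for the complement $f$ inside the \emph{same} subtree $T$ on which $I$ is generated — without this bound one would have to search over all of $\A$, which is infinite even for $\k$ finite. But that bound is precisely the content of Corollary \ref{cor:complement to a basis of w-1} (with its hypothesis that $T\supseteq\left[1,w\right]$, which we arranged), so no new argument is needed; one only has to be careful that the subtree used for generation indeed contains $\left[1,w\right]$, which costs nothing since we may always enlarge $T$. All remaining sub-tasks — membership in a finitely generated ideal/submodule, computing a basis from an exposure, comparing two submodules — are standard finite linear algebra over $\k$ once everything is confined to a common finite subtree, using Theorem \ref{thm:basis from coincidences} and Lemma \ref{lem:ideal generated by Delta with C2 does not add elements to tree} to guarantee that passing to a sufficiently large finite subtree loses no information.
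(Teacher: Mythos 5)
Your overall strategy is the paper's own: reduce to a finite search over elements $f$ supported on a subtree $T\supseteq\left[1,w\right]$ on which $I$ is generated (justified by Corollary \ref{cor:complement to a basis of w-1}), use finiteness of $\k$ to enumerate the candidates, and for each candidate test membership relations between $I$ and $\left(f,w-1\right)$. However, there is a genuine gap in the one step you dismiss as routine: deciding membership in a finitely generated ideal/submodule of $\A$ is \emph{not} ``standard finite linear algebra once everything is confined to a common finite subtree''. If $g$ and the generators $h_{1},\ldots,h_{s}$ are all supported on a finite subtree $S$, a representation $g=\sum_{i}h_{i}a_{i}$ may a priori require coefficients $a_{i}$ (and individual summands $h_{i}a_{i}$) of arbitrarily large support that cancel, so computing $\left(h_{1},\ldots,h_{s}\right)|_{S}$ is not obviously a computation inside $S$. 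The two results you invoke do not close this: Lemma \ref{lem:ideal generated by Delta with C2 does not add elements to tree} applies only when the generating subspace already satisfies \textbf{C2}$\left(T\right)$, which the $\k$-span of an arbitrary generating set does not, and Theorem \ref{thm:basis from coincidences} describes the exposure process in terms of the restrictions $M|_{\left\{ v_{1},\ldots,v_{t}\right\} }$, whose computation is precisely the membership problem again. Restricting to larger and larger subtrees only gives semi-decidability without an a priori bound. The paper resolves exactly this point by citing Rosenmann's algorithm \cite{rosenmann1993algorithm} for ideal membership in $\A$; your write-up needs either that citation or an explicit effective bound (e.g.\ via a weak-algorithm/Gr\"obner-type argument), and this applies both to your preliminary check $w-1\in I$ and to the tests $f\in I$, $g_{i}\in\left(f,w-1\right)$.

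A second, smaller omission: you verify that $\left\{ f,w-1\right\} $ \emph{generates} $I$, but the equivalence you set up is about $\left\{ f,w-1\right\} $ being a \emph{basis} of $I$. To pass from ``generates'' to ``basis'' you need the fact, used explicitly by the paper, that any generating set of cardinality $2$ of a rank-$2$ ideal is automatically a basis \cite[Prop.~2.2]{cohn1964free}; this is a property of $\A$ being a free ideal ring, not a formality, and should be stated. With Rosenmann's membership algorithm and this proposition added, your argument coincides with the paper's proof.
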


\begin{proof}
By \cite[Prop. 2.2]{cohn1964free}, every pair of generators of $I$
is a basis. So $\left\{ f,w-1\right\} $ is a basis for $I$ if and
only if $f,w-1\in I$ and $\left(f,w-1\right)$ contains the given
generating set of $I$. By Corollary \ref{cor:complement to a basis of w-1},
$w-1$ is primitive in $I$ if and only if there exists an element
$f$ supported on $T$ such that $\left\{ f,w-1\right\} $ is a basis
of $I$. As $\k$ is finite, there are finitely many elements supported
on $T$. Finally, Rosenmann \cite{rosenmann1993algorithm} describes
an algorithm to test whether a given element belongs to a given ideal
in $\A$ (where the ideal is given by a finite generating set).
\end{proof}
Corollaries \ref{cor:complement to a basis of w-1} and \ref{cor:algo to test primitivity of w-1 in a rank-2 ideal}
naturally raise the question to what extent they can be generalized
for ideals of rank larger than two and for elements of $\A$ which
are not of the form $w-1$ -- see Section \ref{sec:Open-Questions}
for a discussion around it.

\subsection{Cyclic generators of $\protect\k\left[\protect\F\right]/\left(w-1\right)$\label{subsec:Cyclic-generators-of-A_w}}

The group algebra $\A=\k\left[\F\right]$ has only trivial units --
a scalar times an element of the group\footnote{This is well known. It can also be seen, for example, by an argument
similar to the one in the proof of Lemma \ref{lem:cyclic submodules in A_w}:
for any $0\ne f\in\A$ with support of size at least $2$, take a
minimal subtree $T$ of $\C=\mathrm{Cay}\left(\F,B\right)$ which
supports an element in the orbit $\left\{ f.z\,\middle|\,z\in\F\right\} $.
Then the argument in the proof of Lemma \ref{lem:cyclic submodules in A_w}
shows that $f\A$ does not contain elements supported on $T$ except
for scalar multiples of $f$.} (this property was conjectured by Kaplansky to hold in all group
algebras of torsion-free groups over fields but a counterexample has
recently been found \cite{gardam2021counterexample}). The goal of
this subsection is to prove a similar result for $\A_{w}=\A/\left(w-1\right)$.
While $\A_{w}$ is not a ring and therefore does not admit units,
it does admit cyclic generators as an $\A$-module: elements $f\in\A_{w}$
such that $f\A=\A_{w}$. Clearly, for every unit of $\A$, its image
in $\A_{w}$ is a cyclic generator. Here we prove that provided that
$w$ is not a power, all cyclic generators of $\A_{w}$ are of this
sort.
\begin{thm}
\label{thm:cyclic generators of A_w}Assume that $1\ne w\in\F$ is
a non-power. Then every cyclic generator of the right $\A$-module
$\A_{w}=\A/\left(w-1\right)$ is an image of a unit of $\A$.
\end{thm}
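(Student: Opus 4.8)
The plan is to use the Schreier-graph picture of $\A_{w}$ and an "exploration/minimal support" argument analogous to the footnote proof that $\A$ has only trivial units. Let $f\in\A_{w}$ be a cyclic generator, so $f\A=\A_{w}$. First I would reduce to the case $w$ cyclically reduced, replacing $w$ by an element of its $\Aut\F$-orbit (this only changes $\A_{w}$ by an algebra isomorphism, which preserves cyclic generators and units). Next, observe that $\A_{w}$ is spanned over $\k$ by the vertices of $\sw$, which is a single cycle $c_{w}$ with (infinite) trees hanging off it. Lift $f$ to an honest element of $\A$ and let $T\subseteq\C$ be a finite subtree supporting this lift; then $\overline{T}=\rho(T)$ is a finite connected subgraph of $\sw$. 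The heart of the matter is to show that $f$ — or rather a translate $f.z$ — is supported on a \emph{single vertex} of $\sw$; then $f.z$ is (a scalar times) the image of a vertex $z'\in\F$, hence of a unit of $\A$, and since left multiplication by $z'$ is invertible in $\A$, $f$ itself is a scalar times the image of the unit $(z')^{-1}\cdot(\text{scalar}^{-1})$, wait — more precisely $f=(f.z).z^{-1}$ is the image of $\alpha z'z^{-1}$, a trivial unit. Actually it is cleanest to argue: if some translate $f.z$ has singleton support in $\sw$, then $f\A=(f.z)\A$, and a cyclic submodule generated by an element supported on one vertex of $\sw$ is easily seen to be all of $\A_w$ only in the trivial way, giving the claim.

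So the real content is: \emph{if $f$ has support of size $\ge 2$ in every translate $f.z$, then $f\A\neq\A_{w}$}. Here I would run the minimal-subtree argument of Lemma~\ref{lem:cyclic submodules in A_w} / the units footnote, but adapted to $\sw$ rather than $\C$. Among all translates $\{f.z\mid z\in\F\}$, I would pick one — still call it $f$ — whose support in $\sw$ is "minimal" in a suitable sense. The subtlety is that $\sw$ is not a tree, so one cannot literally take a minimal subtree; instead I would use the \emph{cycle} $c_{w}$ as a fixed "core." I claim that either some translate of $f$ is supported on the core $c_{w}$ (a finite graph, so there are only finitely many such elements, and I can analyze them by hand — reduce to the commutative case $\k[\langle u\rangle]\cong\k[x,x^{\pm1}]$, where the statement is that the only generator of $\k[x,x^{\pm1}]/(w-1)$ as a module over itself that... hmm, actually here one uses that $w$ is a non-power so the centralizer is exactly $\langle w\rangle$ and the cycle has length $|w|$), or no translate is supported on $c_{w}$, in which case take a translate $f$ whose support reaches "least far" into the trees hanging off $c_{w}$ — measured by the largest distance-to-$c_w$ of a vertex in $\supp(f)$, and among those, by an exploration order on that outermost shell — and derive a contradiction exactly as in Lemma~\ref{lem:cyclic submodules in A_w}: multiplying $f$ by any $g\in\A$ not supported on $e$ produces an element sticking out at least as far, so $f\A$ misses every element supported on $c_{w}$ together with the chosen shell; but $f\A=\A_{w}$ contains the vertex classes on $c_{w}$ — contradiction.

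The case analysis above can be packaged more uniformly by invoking Lemma~\ref{lem:cyclic submodules in A_w} directly with $G=$ the minimal connected subgraph of $\sw$ containing $c_{w}$ and a vertex in the support of the chosen translate of $f$; one needs $G$ to contain the cycle, which it does by construction, and one needs that no translate $f.z$ is supported on $G$ — this is precisely the minimality of the chosen $f$, provided $f$ itself is not supported on $c_w$. So the structure is: (i) if some $f.z$ is supported on $c_w$ (equivalently, on the image of some length-$|w|$ arc of the axis of $w$), pass to $\k[\langle u\rangle]/(w-1)=\k[x,x^{\pm1}]/(x^d-1)$ with $d=1$ since $w=u$ is a non-power, where the module is $\k[x,x^{\pm1}]$ itself and its only cyclic generators are units; (ii) otherwise, choose $f.z$ minimizing how far its support protrudes past $c_w$, apply Lemma~\ref{lem:cyclic submodules in A_w}, and contradict $f\A=\A_w$.

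\textbf{Main obstacle.} The delicate point is the bookkeeping in case (ii): defining the right notion of "minimal protrusion" so that Lemma~\ref{lem:cyclic submodules in A_w} applies, and checking that $f\A=\A_w$ really forces a translate of $f$ to meet the core — I expect this is where the paper spends its effort, likely by first establishing (as a lemma) that a cyclic generator must have a translate supported on $c_w$, then reducing to the commutative computation. Secondarily, one must be careful that "non-power" is used exactly once and essentially — in the reduction to $\k[x,x^{\pm1}]/(x^{1}-1)$, since for $w=u^{d}$ with $d\ge2$ the quotient $\k[x,x^{\pm1}]/(x^d-1)$ has zero divisors and genuinely more cyclic generators, so the theorem is false for proper powers and the proof must break there.
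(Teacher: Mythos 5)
Your first stage is sound and matches the paper: the reduction to a cyclically reduced $w$ and the claim that some translate $f.z$ of a cyclic generator must be supported on the cycle of $\sw$ is exactly Lemma \ref{lem:cyclic generators of A_w are on cycle}, proved by applying Lemma \ref{lem:cyclic submodules in A_w} with $G$ the cycle; your case (ii) is just this and is fine. The genuine gap is your case (i). An element of $\A_{w}$ supported on the cycle $C_{w}$ is a $\k$-combination of the $\left|w\right|$ cosets $\left\langle w\right\rangle z$ with $z$ a prefix of $w$; its lifts to $\A$ are \emph{not} supported on $\left\langle u\right\rangle=\left\langle w\right\rangle$, so there is no passage to the commutative subalgebra $\k\left[\left\langle u\right\rangle\right]$ available (and the object you name, $\k\left[x,x^{-1}\right]/\left(x-1\right)$, is just $\k$, so the intended computation does not make sense as stated). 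Even if the support did lie in $\left\langle u\right\rangle$, the hypothesis $f\A=\A_{w}$ quantifies over all of $\A$, so one would still need an argument that cycle-supported elements of $f\A$ already lie in $f\k\left[\left\langle u\right\rangle\right]$ -- the analogue of Lemma \ref{lem:divisors of x^d-1} -- which you do not supply in this setting.

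The missing content -- a cycle-supported cyclic generator must have singleton support -- is precisely where the paper invests nontrivial group-theoretic input, and your proposal has no substitute for it. The paper maps $\A_{w}$ onward to $\A/\left(\left(w-1\right)\right)\cong\k\left[\nicefrac{\F}{\ll w\gg}\right]$ (Lemma \ref{lem:iso of quotient module with qoutient-group algebra}), observes that a cyclic generator becomes a one-sided unit there, and then uses that for $w$ a non-power the one-relator group $\nicefrac{\F}{\ll w\gg}$ is torsion-free (Karrass--Magnus--Solitar), locally indicable (Brodskii, Howie), hence right-orderable (Burns--Hale, Theorem \ref{thm:one-relator no torsion is orderable}), so its group algebra has only trivial one-sided units (Lemma \ref{lem:right-order implies kaplansky unit}); finally Weinbaum's subword theorem guarantees that the $\left|w\right|$ vertices of the cycle remain distinct in $\nicefrac{\F}{\ll w\gg}$, which forces the singleton support. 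Note that any replacement argument must use non-power-ness more deeply than ``the centralizer of $w$ is $\left\langle u\right\rangle$'': for $q=3$ and $w=a^{3}$ the element $\rho\left(a+1\right)$ is a cycle-supported cyclic generator with support of size two, so the cycle-supported case is exactly where powers and non-powers diverge, and your proposed mechanism does not detect the difference.
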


Namely, every cyclic generator of $\A_{w}$ is a coset of the form
$\alpha z+\left(w-1\right)$ for some $\alpha\in\k^{*}$ and $z\in\F$.
\begin{rem}
Theorem \ref{thm:cyclic generators of A_w} is false for proper powers.
For example, if $\left|\k\right|=3$ and $w=a^{3}$, then $\rho\left(a+1\right)\in\A_{w}$
is not a $\rho$-image of a unit of $\A$: its support in $\sw$ is
of size two. Yet $a^{3}+1\in\left(a+1\right)$ and so $\rho\left(2\right)=\rho\left(a^{3}+1\right)\in\rho\left(a+1\right)\A$.
Thus $\rho\left(a+1\right)$ is a cyclic generator of $\A_{w}$.
\end{rem}

First we show that cyclic generators in $\A_{w}$ may be assumed to
be supported on the cycle of $\sw$.
\begin{lem}
\label{lem:cyclic generators of A_w are on cycle}If $f\in\A_{w}$
is a cyclic generator of $\A_{w}$, then there is some $z\in\F$ such
that $fz$ is supported on the cycle in $\sw$. 
\end{lem}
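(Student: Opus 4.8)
The plan is to use Lemma \ref{lem:cyclic submodules in A_w} as the engine. Suppose $f\in\A_{w}$ is a cyclic generator, so $f\A=\A_{w}$. Let $G\subseteq\sw$ be the cycle of $\sw$ (reading the cyclic reduction of $w$); it is a connected subgraph containing the cycle of $\sw$, trivially. The hypothesis of Lemma \ref{lem:cyclic submodules in A_w} is that none of the translates $f.z$ ($z\in\F$) is supported on $G$; its conclusion is that then $f\A$ contains no non-zero element supported on $G$. But $f\A=\A_{w}$ obviously \emph{does} contain non-zero elements supported on $G$ --- for instance, the image under $\rho$ of any single vertex lying on the cycle, which is a non-zero element of $\A_{w}$ supported on $G$ (indeed, pick any $g\in\F$ whose coset $\langle w\rangle g$ is a vertex of the cycle $c_{w}$; then $\rho(g)$ is such an element). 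This contradicts the conclusion of the lemma. Therefore the hypothesis must fail: there exists $z\in\F$ with $f.z$ supported on $G$, i.e.\ on the cycle of $\sw$.

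The only point requiring a little care is checking that Lemma \ref{lem:cyclic submodules in A_w} applies with $G$ taken to be exactly the cycle $c_{w}$: the lemma asks that $G$ be a connected subgraph of $\sw$ \emph{which contains the cycle of $\sw$}, and the cycle itself certainly satisfies this. (If $\rk\F=1$ then $\sw$ is the whole cycle and the statement is vacuous, so we may assume $\rk\F\ge2$ and $c_{w}$ is a proper subgraph; this does no harm.) One should also note that a vertex of $c_{w}$ really does give a non-zero element of $\A_{w}$ supported on $G$: a single coset $\langle w\rangle g$ maps to a non-zero element of $\A_{w}=\A/(w-1)$ since the only elements of $(w-1)$ are those whose coefficients sum to zero on each coset of $\langle w\rangle$, and a single basis vector does not have this property.

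I do not expect any genuine obstacle here: the lemma is essentially an immediate corollary of Lemma \ref{lem:cyclic submodules in A_w}, obtained by contraposition. The one modeling decision --- which subgraph $G$ to feed into Lemma \ref{lem:cyclic submodules in A_w} --- is forced, since we want the conclusion ``$f.z$ supported on the cycle'' and the cycle is the minimal connected subgraph containing the cycle. So the proof is a three-line argument: instantiate $G=c_{w}$, observe $\A_{w}$ contains a non-zero element supported on $G$, and invoke the contrapositive of Lemma \ref{lem:cyclic submodules in A_w}.
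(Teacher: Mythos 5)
Your proposal is correct and is exactly the paper's argument: the paper proves this lemma by applying Lemma \ref{lem:cyclic submodules in A_w} with $G$ taken to be the cycle of $\sw$, and your contrapositive reading (a cyclic generator gives $f\A=\A_{w}$, which certainly contains a non-zero element supported on the cycle, so some translate $f.z$ must be supported on it) is the intended one-line deduction. No gaps; the verification that a single vertex of the cycle maps to a non-zero element of $\A_{w}$ is a correct and harmless extra check.
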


\begin{proof}
This follows immediately from Lemma \ref{lem:cyclic submodules in A_w}
applied to $G$ being the cycle in $\sw$.
\end{proof}
Let $w\in\F$ be a non-power. If $w'$ is the cyclic reduction of
$w$ then the automorphism of $\F$ mapping $w$ to $w'$ extends
to an automorphism of $\A$ and induces isomorphisms $\A_{w}\stackrel{\cong}{\to}\A_{w'}$
and $\sw\stackrel{\cong}{\to}{\cal S}_{w'}$. Thus we may assume without
loss of generality that $w$ is cyclically reduced. Denote by $\ll w\gg$
the normal closure of $w$ in $\F$, and denote by $\left(\left(w-1\right)\right)$
the two-sided ideal of $\mathcal{A}$ generated by $w-1$. Since we
have $\left\{ 0\right\} \subseteq\left(w-1\right)\subseteq\left(\left(w-1\right)\right)$,
we get canonical epimorphisms of right $\A$-modules 
\[
\mathcal{A}\stackrel{\rho}{\twoheadrightarrow}\mathcal{A}_{w}\stackrel{\tau}{\twoheadrightarrow}\nicefrac{\A}{\left(\left(w-1\right)\right)}.
\]
See Figure \ref{fig:Schreier graph maps}.

\begin{figure}
\begin{centering}
\includegraphics[viewport=350bp 0bp 400bp 250bp,scale=0.6]{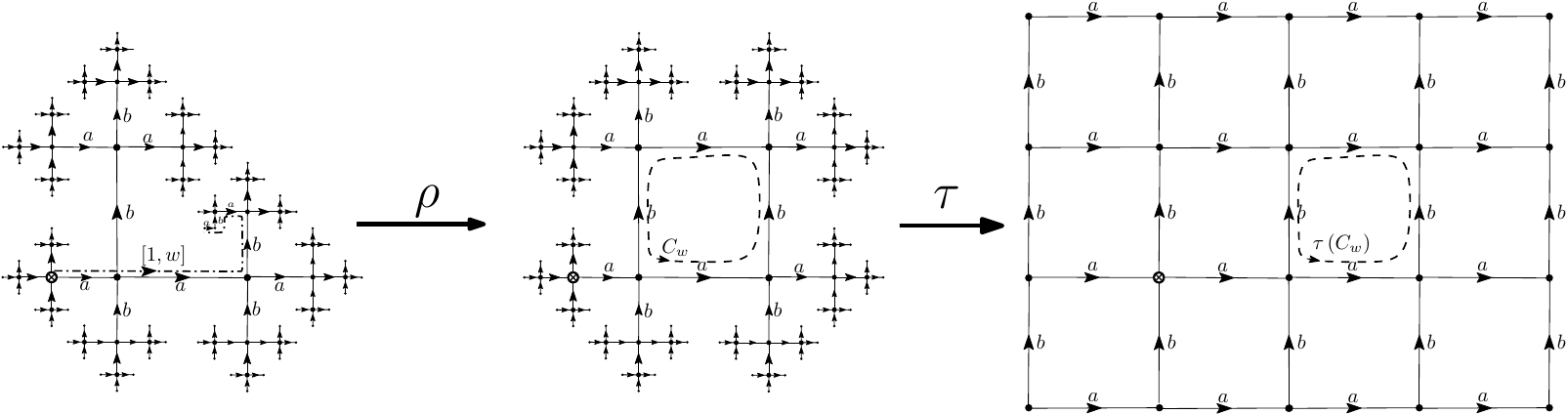}
\par\end{centering}
\caption{\label{fig:Schreier graph maps}Let $w=a\left[a,b\right]a^{-1}.$
The Cayley graph of $\protect\F=\protect\F\left(a,b\right)$ is on
the left with $\left[1,w\right]$ marked. The middle graph is $\protect\sw$,
and the graph at the right side is a piece of the Cayley graph of
$\nicefrac{\protect\F}{\ll w\gg}$. In all graphs, the vertex corresponding
to the identity element or its $\rho$-image is marked with $\otimes$.}
\end{figure}

\begin{lem}
\label{lem:iso of quotient module with qoutient-group algebra}Let
$p:\F\rightarrow\nicefrac{\F}{\ll w\gg}$ be the canonical projection.
The map $\varphi:\nicefrac{\mathcal{\A}}{\left(\left(w-1\right)\right)}\rightarrow K\left[\nicefrac{\F}{\ll w\gg}\right]$
defined by $\sum_{z\in F}\alpha_{z}z+\left(\left(w-1\right)\right)\mapsto\sum_{z\in F}\alpha_{z}p\left(z\right)$
is an isomorphism of $K$-algebras.
\end{lem}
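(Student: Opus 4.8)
The plan is to check directly that the proposed map $\varphi$ is well-defined, is a ring homomorphism, and is bijective, by producing an explicit inverse. The statement is essentially a ``transitivity of quotients'' fact for group algebras: $\k[\F]/\bigl((w-1)\bigr)$ should be $\k[\F/\ll w\gg]$, since killing the two-sided ideal generated by $w-1$ is the same as forcing $w=1$, which is the same as passing to the quotient group.

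First I would observe that the linear map $\Phi\colon\k[\F]\to\k[\F/\ll w\gg]$ given by $\sum_z\alpha_z z\mapsto\sum_z\alpha_z\,p(z)$ is a surjective homomorphism of $\k$-algebras, being the linearization of the group homomorphism $p\colon\F\to\F/\ll w\gg$. Its kernel is the two-sided ideal $\ker\Phi$, and the content of the lemma is exactly that $\ker\Phi=\bigl((w-1)\bigr)$. One inclusion is immediate: $\Phi(w-1)=p(w)-1=1-1=0$ because $w\in\ll w\gg$, and since $\ker\Phi$ is a two-sided ideal it contains the whole two-sided ideal $\bigl((w-1)\bigr)$ generated by $w-1$. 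Hence $\Phi$ factors through $\k[\F]/\bigl((w-1)\bigr)$, giving a well-defined surjective $\k$-algebra homomorphism $\varphi$ exactly as in the statement; this already shows $\varphi$ is a well-defined algebra epimorphism.

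For injectivity of $\varphi$, i.e.\ the reverse inclusion $\ker\Phi\subseteq\bigl((w-1)\bigr)$, I would build the inverse explicitly. Choose a right transversal $R$ for $\ll w\gg$ in $\F$, so each element of $\F/\ll w\gg$ is $p(s)$ for a unique $s\in R$, and define the $\k$-linear map $\psi\colon\k[\F/\ll w\gg]\to\k[\F]/\bigl((w-1)\bigr)$ by $p(s)\mapsto s+\bigl((w-1)\bigr)$ for $s\in R$. I claim $\psi$ is independent of the choice of transversal and in fact $z+\bigl((w-1)\bigr)$ depends only on $p(z)$: if $z_1,z_2\in\F$ with $p(z_1)=p(z_2)$ then $z_1 z_2^{-1}\in\ll w\gg$, and one checks that for any $a,b\in\F$ the element $ab-a$ lies in $\bigl((w-1)\bigr)$ when $b\in\ll w\gg$ (write $b$ as a product of conjugates $g_i w^{\pm1} g_i^{-1}$ and telescope, using that each $a g w^{\pm1} g^{-1}-a g g^{-1}\in a g\,(w^{\pm1}-1)\,g^{-1}$ and $w^{-1}-1=-w^{-1}(w-1)$ both lie in the two-sided ideal). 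Hence $z_1-z_2=z_2(z_2^{-1}z_1-1)\in\bigl((w-1)\bigr)$, so $\psi$ is well-defined on all of $\F$ and extends linearly. Then $\varphi\circ\psi=\mathrm{id}$ and $\psi\circ\varphi=\mathrm{id}$ are checked on the basis elements $p(z)$ and $z+\bigl((w-1)\bigr)$ respectively, so $\varphi$ is a bijection and therefore an isomorphism of $\k$-algebras.

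The only mildly delicate point — the ``hard part'', though it is quite routine — is the well-definedness of $\psi$, namely the claim that $ab-a\in\bigl((w-1)\bigr)$ whenever $b$ lies in the normal closure $\ll w\gg$; this is the telescoping identity over a normal-form expression of $b$ as a product of conjugates of $w^{\pm1}$, and it uses crucially that we quotient by the \emph{two-sided} ideal (the one-sided quotient $\A_w$ genuinely differs, which is consistent with the Schreier-graph picture in Figure \ref{fig:Schreier graph maps}). Everything else is formal manipulation with the universal property of the linearization functor $\k[-]$ from groups to algebras. In fact a cleaner phrasing of the whole argument is: $\k[-]$ sends the right-exact sequence $\ll w\gg\to\F\to\F/\ll w\gg$ to a presentation, and the kernel of $\k[\F]\to\k[\F/\ll w\gg]$ is the two-sided ideal generated by $\{g-1: g\in\ll w\gg\}$, which since $\ll w\gg$ is normally generated by $w$ equals the two-sided ideal generated by $w-1$; I would include whichever version reads more smoothly alongside the surrounding text.
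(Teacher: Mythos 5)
Your proposal is correct and follows essentially the same route as the paper's proof: the well-definedness and surjectivity half is the same linearization (universal property) argument, and the key computation in both is that $z_{1}-z_{2}\in\left(\left(w-1\right)\right)$ whenever $z_{1},z_{2}$ lie in the same coset of $\ll w\gg$, obtained by telescoping over an expression of the coset relation as a product of conjugates of $w^{\pm1}$. The only cosmetic difference is that you package injectivity as an explicit inverse defined via a transversal, whereas the paper shows directly that a kernel element, whose coefficients sum to zero on each coset, decomposes into such coset differences and hence lies in $\left(\left(w-1\right)\right)$.
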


\begin{proof}
The proof is a standard argument in algebra, but we include it for
completeness. By the universal property of group rings, the group
homomorphism $p:\F\rightarrow\nicefrac{\F}{\ll w\gg}\subseteq K\left[\nicefrac{\F}{\ll w\gg}\right]$
extends to a unique $K$-algebra epimorphism $\psi:\mathcal{A}\twoheadrightarrow K\left[\nicefrac{\F}{\ll w\gg}\right]$.
The ideal $\left(\left(w-1\right)\right)$ lies in the kernel of $\psi$:
it is enough to show that $u\left(w-1\right)v\in\ker\psi$ for $u,v\in\F$,
and 
\[
\psi\left(u\left(w-1\right)v\right)=\psi\left(uwv-uv\right)=\psi\left(uwv\right)-\psi\left(uv\right)=p\left(uwv\right)-p\left(uv\right)=0,
\]
where the last equality is because $uwv$ and $uv$ lie in the same
coset of $\ll w\gg$. Thus, the homomorphism $\psi$ induces an epimorphism
$\psi'\colon\nicefrac{\mathcal{\A}}{\left(\left(w-1\right)\right)}\twoheadrightarrow K\left[\nicefrac{\F}{\ll w\gg}\right]$.
For every $z\in\F$, $\psi'$ satisfies $\psi'\left(z+\left(\left(w-1\right)\right)\right)=\psi\left(z\right)=p\left(z\right)$,
and so by linear extension $\psi'$ agrees with $\varphi$ from the
statement of the Lemma (in particular, $\varphi$ is a well-defined
epimorphism of $K$-algebras).

It is left to show that $\varphi$ is injective. Suppose that
\[
\varphi\left(\sum_{z\in F}\alpha_{z}z+\left(\left(w-1\right)\right)\right)=\sum_{z\in F}\alpha_{z}p\left(z\right)=0.
\]
For every coset $C$ of $\ll w\gg$ we have $\sum_{z\in C}\alpha_{z}=0$.
We complete the proof by showing that this implies that $\sum_{z\in C}\alpha_{z}z\in\left(\left(w-1\right)\right)$
-- and then the sum over all cosets would also lie in $\left(\left(w-1\right)\right)$.
Such a finite sum can always be decomposed as a sum over elements
of the form $\alpha\left(z_{2}-z_{1}\right)$ where $\alpha\in K$
and $z_{1},z_{2}\in C$. In every such element, $z_{2}$ can be obtained
from $z_{1}$ by a finite sequence of multiplications from the right
by conjugates of $w$ or $w^{-1}$, and so it is enough to show that
$z_{2}-z_{1}\in\left(\left(w-1\right)\right)$ for $z_{2}=z_{1}\cdot uw^{\varepsilon}u^{-1}$
where $u\in F$ and $\varepsilon\in\left\{ \pm1\right\} .$ And indeed
we have $z_{2}-z_{1}=z_{1}u\left(w^{\varepsilon}-1\right)u^{-1}\in\left(\left(w-1\right)\right)$.
\end{proof}
In our proof of Theorem \ref{thm:cyclic generators of A_w} we use
the following well-known concept.
\begin{defn}
A \textbf{right order} on a group $\Gamma$ is a linear order on $\Gamma$
such that for every $r,s,t\in\Gamma$ with $r<s$ we have $rt<st$.
A group is called \textbf{right-orderable} if it admits a right order.
\end{defn}

It is well-known that Kaplansky's unit conjecture, mentioned above,
is true for right orderable groups -- see, e.g., \cite[Thm.~1.58]{clay2016ordered}.
We add a proof for completeness.
\begin{lem}
\label{lem:right-order implies kaplansky unit}Let $K$ be a field
and $\Gamma$ a right-orderable group. If $ts=1$ for $t,s\in\k\left[\Gamma\right]$
then $t=\lambda g$ for some $\lambda\in\k^{*}$ and $g\in\Gamma$.
\end{lem}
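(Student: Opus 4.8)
The plan is to prove the classical fact that Kaplansky's unit conjecture holds over right-orderable groups, via the support of elements. First I would fix a right order $<$ on $\Gamma$. Write $t=\sum_{i}\lambda_{i}g_{i}$ with $\lambda_{i}\in\k^{*}$ and distinct $g_{i}\in\Gamma$, ordered so that $g_{1}<g_{2}<\ldots<g_{n}$, and similarly $s=\sum_{j}\mu_{j}h_{j}$ with $\mu_{j}\in\k^{*}$ and $h_{1}<h_{2}<\ldots<h_{m}$. Assume for contradiction that $n\ge 2$ (the case $t=0$ being impossible since $ts=1$, and $m\ge1$ automatically). The key observation is that in the product $ts$, the term $g_{1}h_{1}$ is \emph{strictly minimal} among all products $g_{i}h_{j}$: indeed, for $(i,j)\ne(1,1)$ we have either $g_{i}>g_{1}$, giving $g_{i}h_{j}\ge g_{i}h_{1}>g_{1}h_{1}$ by right-invariance of the order (applied once to multiply by $h_{1}$ on the right, once more noting $h_{j}\ge h_{1}$ after translating), or $g_{i}=g_{1}$ and $h_{j}>h_{1}$, again giving $g_{i}h_{j}>g_{1}h_{1}$. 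Hence $g_{1}h_{1}$ appears in $ts$ with coefficient exactly $\lambda_{1}\mu_{1}\ne 0$ and cannot be cancelled by any other monomial. Symmetrically, $g_{n}h_{m}$ is the strictly maximal product and appears with coefficient $\lambda_{n}\mu_{m}\ne 0$.

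Now since $ts=1$, the support of $ts$ is the singleton $\{1\}$. Therefore $g_{1}h_{1}=1=g_{n}h_{m}$, which forces $g_{1}=g_{n}$ (both equal $h_{1}^{-1}$, and $h_{1}=h_{m}$) — contradicting $n\ge 2$ (and $m\ge 2$). Hence $n=1$, so $t=\lambda_{1}g_{1}$ with $\lambda_{1}\in\k^{*}$ and $g_{1}\in\Gamma$, which is the claim. I should be slightly careful in arguing strict minimality/maximality of the corner products: the clean way is to note that right-orderability gives, for $a<b$ and any $c$, $ac<bc$; combined with the fact that $<$ restricted to a finite set is linear, one gets that among the $nm$ products $g_{i}h_{j}$ there is a unique smallest and a unique largest, attained precisely at $(1,1)$ and $(n,m)$ respectively, because $g_{i}h_{j}<g_{i'}h_{j'}$ whenever $g_{i}<g_{i'}$ (multiply $g_ih_j$: first $g_i h_j < g_{i'} h_j$, and if also $j\ne j'$ we compare $g_{i'}h_j$ and $g_{i'}h_{j'}$ — but here right-invariance does not directly help, so instead one orders lexicographically and uses that the minimum over $j$ of $g_ih_j$ for fixed minimal $g_i$ is at $j$ with $h_j$ minimal among those making a distinct product). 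The cleanest formulation: let $g_{i_0}h_{j_0}$ be minimal among all products; I claim $g_{i_0}=g_1$. If not, $g_1<g_{i_0}$, so $g_1 h_{j_0}<g_{i_0}h_{j_0}$ by right-invariance, contradicting minimality. Given $g_{i_0}=g_1$, minimality among $\{g_1 h_j\}_j$ forces (after left-multiplying by $g_1^{-1}$, but order need not be left-invariant) — actually here I use that $g_1 h_j$ for varying $j$ are the products involving $g_1$, and among these the minimal one...

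The main obstacle is precisely this last subtlety: a right order need not be left-invariant, so I cannot conclude directly that $g_1 h_{j}$ is minimized at $h_{j}=h_1$. The correct fix is the standard one: take $(i_0,j_0)$ minimizing $g_{i}h_{j}$, and observe that for this pair, $g_{i_0}h_{j_0}$ occurs in $ts$ with nonzero coefficient — but to make it cancel with nothing else I need it to be attained \emph{uniquely}. So instead I argue: among all pairs achieving the minimal product value, if $(i_0,j_0)\ne(i_1,j_1)$ both achieve it then $g_{i_0}h_{j_0}=g_{i_1}h_{j_1}$; I then separately rule out $n\ge2$ by choosing, among minimal-product pairs, one with $g_i$ minimal (possible as the index set is finite), and show this product is uniquely achieved — if $g_{i_0}h_{j_0}=g_{i_1}h_{j_1}$ with $g_{i_0}\le g_{i_1}$ and the product minimal, then right-cancellation in $\Gamma$ plus minimality gives $g_{i_0}=g_{i_1}$, $j_0=j_1$. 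Then the coefficient of this minimal product in $ts$ is $\lambda_{i_0}\mu_{j_0}\ne0$, and being the strict minimum in support it cannot cancel, so $g_{i_0}h_{j_0}$ lies in $\mathrm{supp}(ts)=\{1\}$; doing the same with the maximal product yields $g_{i_1^{\max}}h_{j^{\max}}=1$ as well, and comparing the $g$-coordinates of these two via the order forces $n=1$. I expect this bookkeeping about uniqueness of the extremal monomial to be the only place requiring genuine care; everything else is routine.
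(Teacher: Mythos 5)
Your final argument is correct and essentially the paper's: the paper avoids exactly the left-invariance pitfall you identified by choosing $j_{\min}$ with $g_{1}h_{j_{\min}}$ minimal among $\left\{ g_{1}h_{j}\right\} _{j}$ (this is the same element as your global minimum, whose $g$-coordinate you show must be $g_{1}$), observing via right-invariance that it is strictly smaller than every other product $g_{i}h_{j}$ and hence survives with coefficient $\lambda_{1}\mu_{j_{\min}}\ne0$, doing the symmetric argument with $g_{n}$ and the maximum, and concluding that $ts$ has two distinct elements in its support, contradicting $ts=1$. Your opening claim that $g_{1}h_{1}$ is the minimal product is indeed false in general, but since you flagged it and the repaired version is what you ultimately propose, the proof stands as essentially identical to the paper's.
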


\begin{proof}
Write $t=\sum_{i=1}^{n}\lambda_{i}g_{i}$ for $\lambda_{i}\in\k^{*}$
and $g_{1},...,g_{n}\in\Gamma$ distinct. Since $ts=1$ we know that
$n\neq0$. Now assume towards contradiction that $n\geq2$. Let $<$
be a right order for $\Gamma$. Assume without loss of generality
that $g_{1}<g_{2}<...<g_{n}$ . Write similarly $s=\sum_{j=1}^{m}\mu_{j}h_{j}$
for $h_{1}<h_{2}<...<h_{m}$ and $\mu_{j}\in\k^{*}$. Then we have
$1=rs=\sum_{i,j}\lambda_{i}\mu_{j}g_{i}h_{j}$. \\
We now find two elements of $\Gamma$ such that their coefficients
in $rs$ are nonzero. Let $j_{\min}$ be the index such that $g_{1}h_{j_{\min}}=\min\left\{ g_{1}h_{1},g_{1}h_{2},...,g_{1}h_{m}\right\} $.
In particular, $g_{1}h_{j_{\min}}$ is strictly smaller than any other
$g_{1}h_{j}$ for $j\neq j_{\min}$. In addition, if $i\neq1$ then
$g_{1}h_{j_{\min}}\leq g_{1}h_{j}<g_{i}h_{j}$. Thus, the coefficient
of $g_{1}h_{j_{\min}}$ in $rs$ is $\lambda_{1}\mu_{j_{\min}}\neq0$.
Similarly, let $j_{\max}$ be the index such that $g_{n}h_{j_{\max}}=\max\left\{ g_{n}h_{1},g_{n}h_{2},...,g_{n}h_{m}\right\} $.
A similar argument shows that the coefficient of $g_{n}h_{j_{\max}}$
in $rs$ is $\lambda_{n}\mu_{j_{\max}}\neq0$. Finally, since $n\geq2$
, we have $g_{1}h_{j_{\min}}<g_{n}h_{j_{\min}}\leq g_{n}h_{j_{\max}}$
and so $g_{1}h_{j_{\min}}$ and $g_{n}h_{j_{\max}}$ are distinct
elements of $\Gamma$ with nonzero coefficients in $rs=1$ -- a contradiction.
\end{proof}
The following theorem is a well-known result in the theory of one-relator
groups.
\begin{thm}
\label{thm:one-relator no torsion is orderable}If $1\ne w\in\F$
is a non-power then the one-relator group $\nicefrac{\F}{\ll w\gg}$
is right-orderable.
\end{thm}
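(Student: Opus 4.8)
The plan is to recall that this is a classical result and, rather than reprove it from scratch, to assemble it from standard facts about one-relator groups. The key input is that a torsion-free one-relator group $\nicefrac{\F}{\ll w\gg}$ is locally indicable. This is a theorem of Brodski\u{i} (and independently, in part, of Howie): every torsion-free one-relator group is locally indicable, meaning every nontrivial finitely generated subgroup surjects onto $\mathbb{Z}$. Since $w$ is a non-power, the one-relator group $\nicefrac{\F}{\ll w\gg}$ is torsion-free by the Karrass--Magnus--Solitar theorem (torsion in a one-relator group $\langle X \mid r^n\rangle$ comes only from the relator being a proper power), so Brodski\u{i}'s theorem applies.

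The second ingredient is Burns--Hale: a group all of whose finitely generated subgroups are right-orderable is itself right-orderable, and more usefully, every locally indicable group is right-orderable. So first I would state the torsion-freeness of $\nicefrac{\F}{\ll w\gg}$ (citing Magnus--Karrass--Solitar or the original Karrass--Magnus--Solitar paper on one-relator groups with torsion), then invoke local indicability via Brodski\u{i}'s theorem, and finally conclude right-orderability via the Burns--Hale theorem. Each of these is a named result with a clean citation, so the ``proof'' is really a short chain of references; I would present it as such, making sure to note explicitly where the non-power hypothesis is used (it is exactly what guarantees torsion-freeness, which is what feeds Brodski\u{i}).

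The main obstacle here is not mathematical difficulty but bibliographic care: I need to cite the right sources for (a) torsion in one-relator groups, (b) local indicability of torsion-free one-relator groups, and (c) the passage from local indicability to right-orderability. For (c) the cleanest reference is the survey/book literature on ordered groups (e.g. Clay--Rolfsen, which the paper already cites as \cite{clay2016ordered} for Kaplansky's unit conjecture in right-orderable groups, and which also contains the Burns--Hale theorem and Brodski\u{i}'s theorem). So I would lean on \cite{clay2016ordered} for the whole package, possibly supplemented by a pointer to Brodski\u{i}'s original paper or to Howie's work. Since the paper has already set up \cite{clay2016ordered} for exactly this circle of ideas, the proof can be kept to two or three sentences: state torsion-freeness, cite that torsion-free one-relator groups are locally indicable hence right-orderable, done. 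If the authors want to be self-contained they could sketch the Burns--Hale induction, but given that Lemma~\ref{lem:right-order implies kaplansky unit} was already proved ``for completeness'' while this theorem is flagged as ``well-known,'' I expect the intended proof is simply a citation to the one-relator group literature.
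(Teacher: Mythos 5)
Your proposal follows exactly the same chain as the paper's proof: torsion-freeness of $\nicefrac{\F}{\ll w\gg}$ via Karrass--Magnus--Solitar (using that $w$ is a non-power), local indicability via Brodskii/Howie, and right-orderability via Burns--Hale. This matches the paper's argument step for step, so it is correct and essentially identical.
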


\begin{proof}
As $w$ is a non-power, we deduce that $\nicefrac{\F}{\ll w\gg}$
is torsion-free by a theorem of Karass, Magnus and Solitar \cite[Thm.~1]{karrass1960elements}.
By a theorem proven independently by Brodskii \cite[Cor.~2.3]{brodskii1984equations}
and Howie \cite[Cor.~4.3]{howie1982locally}, every torsion-free one-relator
group has the property of being \emph{locally indicable, }which means
that each of its non-trivial finitely generated subgroups admits a
non-trivial homomorphism to $\mathbb{Z}$. Finally, the Burns-Hale
theorem \cite[Thm.~2]{burns1972note} states that a group $H$ is
right-orderable if and only if any non-trivial finitely generated
subgroup of $H$ admits a non-trivial homomorphism to some right-orderable
group. Since $\mathbb{Z}$ is right-orderable (the usual order on
$\mathbb{Z}$ is a right order), the Burns-Hale theorem implies that
every locally indicable group is right-orderable. The combination
of the theorems above gives that $\nicefrac{\F}{\ll w\gg}$ is right-orderable.
\end{proof}

\begin{proof}[Proof of Theorem \ref{thm:cyclic generators of A_w}]
 Let $1\ne w\in\F$ be a non-power and suppose that $\overline{f}\in\mathcal{\A}_{w}$
generates $\mathcal{A}_{w}$. Since $\rho$ is surjective, there exists
some $f\in\mathcal{A}$ such that $\rho\left(f\right)=\overline{f}$.
As $\overline{f}$ generates $\mathcal{A}_{w}$, there exists some
$s\in\mathcal{A}$ such that $\overline{f}s=\rho\left(1\right)$ or,
equivalently, $\rho\left(fs\right)=\rho\left(1\right)$. Applying
$\tau$ to both sides of the equation and using the fact that $\tau\circ\rho$
is a homomorphism of $\k$-algebras we obtain $\tau\rho\left(f\right)\cdot\tau\rho\left(s\right)=\tau\rho\left(1\right)$,
and in particular $\tau\rho\left(f\right)$ has a right inverse in
the quotient $\k$-algebra $\nicefrac{\mathcal{A}}{\left(\left(w-1\right)\right)}$.
Now, since $\tau\rho\left(f\right)$ has a right inverse in $\nicefrac{\mathcal{A}}{\left(\left(w-1\right)\right)}$,
its image under the isomorphism $\varphi$ from Lemma \ref{lem:iso of quotient module with qoutient-group algebra}
has a right inverse in $\k\left[\nicefrac{\F}{\ll w\gg}\right]$.
By Theorem \ref{thm:one-relator no torsion is orderable}, as $w$
is not a power, $\nicefrac{\F}{\ll w\gg}$ is right-orderable. Lemma
\ref{lem:right-order implies kaplansky unit} applied for $\Gamma=\nicefrac{\F}{\ll w\gg}$,
implies that $\varphi\left(\tau\rho\left(f\right)\right)=\lambda g$
for some $\lambda\in\k^{*}$ and $g\in\nicefrac{\F}{\ll w\gg}$.

Without loss of generality, by Lemma \ref{lem:cyclic generators of A_w are on cycle},
we may assume that $\overline{f}=\rho\left(f\right)$ is supported
on cosets of $\left\langle w\right\rangle $ belonging to the unique
simple cycle of $\sw$. The Weinbaum subword theorem \cite[Thm.~2]{weinbaum1972relators}
asserts that none of the non-trivial proper subwords of the cyclic
reduction of $w$ lies in its normal closure $\ll w\gg$. This implies
that two distinct vertices of the cycle of $\sw$ have distinct images
through $\tau$, namely, their images belong to different elements
of $\nicefrac{\F}{\ll w\gg}$. But the $\tau$-image of $\overline{f}$
is $\lambda g$, which is supported on a single element $g\in\k\left[\nicefrac{\F}{\ll w\gg}\right]$.
Thus $\overline{f}$ itself is supported on a single element of the
cycle of $\sw$ and can be lifted to an element $f\in\A$ supported
on a single element of $\F$.
\end{proof}

\section{Critical ideals of rank 2\label{sec:Critical-ideals-of-rank-2}}

Throughout this section fix a non-power $1\ne w\in\F$ and assume
without loss of generality that it is cyclically reduced. Theorems
\ref{thm:rational expression fix} and \ref{thm:limit powers fix}
yield that $\mathbb{E}_{w}\left[\fix\right]=2+\frac{c}{q^{N}}+O\left(\frac{1}{q^{2N}}\right)$
for some constant $c$. Our goal in the current section is to prove
Theorem \ref{thm:fixed vectors in pi=00003D2}: 
\[
c=\left|\crit_{q}^{2}\left(w\right)\right|,
\]
where $\crit_{q}^{2}\left(w\right)$ is the set of rank-2 ideals $I\le\A$
containing $w-1$ as an imprimitive element. 

Recall our formula \eqref{eq:E_w=00005Bfix=00005D rational expression}
for $\mathbb{E}_{w}\left[\fix\right]$ and Corollary \ref{cor:contrib is m-rank}.
It follows that the $\frac{1}{q^{N}}$-coefficient of $\mathbb{E}_{w}\left[\fix\right]$
consists of the contributions of the rank-1 and rank-2 ideals in the
set 
\[
{\cal I}\defi\left\{ I\le_{\left[1,w\right]}\A\,\middle|\,I\ni w-1\right\} .
\]
As $w\ne1$ and is a non-power, by Corollary \ref{cor:pa=00003D1 iff power}
the rank-1 ideals in ${\cal I}$ are precisely $\left(1\right)$ and
$\left(w-1\right)$. The contribution of $\left(1\right)$ to \eqref{eq:E_w=00005Bfix=00005D rational expression}
is precisely $1$, so it does not affect $c$. Denote by $\beta_{w}$
the coefficient of $\frac{1}{q^{N}}$ in the contribution of $\left(w-1\right)$,
namely, this contribution is $1+\frac{\beta_{w}}{q^{N}}+O\left(\frac{1}{q^{2N}}\right)$.
The summand in \eqref{eq:E_w=00005Bfix=00005D rational expression}
corresponding to a rank-2 ideal is $\frac{1}{q^{N}}+O\left(\frac{1}{q^{2N}}\right)$,
so such an ideal contributes exactly $1$ to $c$. Recall that all
the ideals in $\crit_{q}^{2}\left(w\right)$ are in ${\cal I}$, by
Corollary \ref{cor:finitely many critical extensions}. Denote by
$\mathrm{Prim}^{2}\left(w\right)$ the set of rank-2 ideals in ${\cal I}$
in which $w-1$ is primitive. With this notation, the coefficient
$c$ of $\frac{1}{q^{N}}$ in $\mathbb{E}_{w}\left[\fix\right]$ is
$c=\beta_{w}+\left|\mathrm{Prim}^{2}\left(w\right)\right|+\left|\crit_{q}^{2}\left(w\right)\right|$.
Our goal is, thus, to prove that
\[
\beta_{w}+\left|\mathrm{Prim}^{2}\left(w\right)\right|=0.
\]
Recall from Section \ref{sec:A_w} the quotient $\A$-module $\A_{w}\defi\A/\left(w-1\right)$,
the projection $\rho\colon\A\to\A_{w}$ and the Schreier graph $\sw=\left\langle w\right\rangle \backslash\mathrm{Cay}\left(\F,B\right)$.
The elements of $\A_{w}$ are $\k$-linear combinations of the vertices
of $\sw$, and we use $\rho$ to denote also the quotient in the graph
level $\rho\colon\mathrm{Cay}\left(\F,B\right)\to\sw$. Let $C_{w}=\rho\left(\left[1,w\right]\right)$
denote the unique simple cycle in $\sw$ (here we use the fact that
$w$ is assumed to be cyclically reduced).
\begin{lem}
\label{lem:beta_w as expression with 1-dim subspaces}The $\frac{1}{q^{N}}$-coefficient
of the summand corresponding to $I=\left(w-1\right)$ in \eqref{eq:E_w=00005Bfix=00005D rational expression}
is
\begin{equation}
\beta_{w}=-\frac{q^{v\left(C_{w}\right)}-1}{q-1}+\sum_{b\in B}\frac{q^{e_{b}\left(C_{w}\right)}-1}{q-1}.\label{eq:1/qN coefficient of (w-1) in T_w}
\end{equation}
\end{lem}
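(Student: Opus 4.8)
The plan is to compute directly the $\frac{1}{q^{N}}$-coefficient of the summand in \eqref{eq:E_w=00005Bfix=00005D rational expression} indexed by the ideal $I=\left(w-1\right)$, using the fact that for a principal ideal we understand $d^{\left[1,w\right]}\left(I\right)$ and $d_{b}^{\left[1,w\right]}\left(I\right)$ via the Schreier graph $\sw$. First I would recall that the summand for $I$ is $\frac{\indep_{\left|w\right|+1-d^{\left[1,w\right]}\left(I\right)}\left(V_{N}\right)}{\prod_{b\in B}\indep_{e_{b}\left(w\right)-d_{b}^{\left[1,w\right]}\left(I\right)}\left(V_{N}\right)}$, and that for any $h$ one has the Laurent expansion
\[
\indep_{h}\left(V_{N}\right)=\left(q^{N}-1\right)\cdots\left(q^{N}-q^{h-1}\right)=\left(q^{N}\right)^{h}\left(1-\tfrac{q^{h}-1}{q-1}\,q^{-N}+O\left(q^{-2N}\right)\right),
\]
since $\sum_{j=0}^{h-1}q^{j}=\frac{q^{h}-1}{q-1}$. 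Hence the $\frac{1}{q^{N}}$-coefficient of the whole summand (whose leading term is $1$, because $I$ has rank $1$ and the numerator and denominator have matching top degrees) is
\[
-\frac{q^{\,\left|w\right|+1-d^{\left[1,w\right]}\left(I\right)}-1}{q-1}+\sum_{b\in B}\frac{q^{\,e_{b}\left(w\right)-d_{b}^{\left[1,w\right]}\left(I\right)}-1}{q-1}.
\]

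The heart of the argument is therefore to identify these two exponents in terms of the cycle $C_{w}\subseteq\sw$. For $I=\left(w-1\right)$, an element of $\A$ lies in $I$ iff, for every right coset $\left\langle w\right\rangle z$, the coefficients of the monomials in that coset sum to zero; equivalently, $\A_{w}=\A/\left(w-1\right)$ has as $\k$-basis the vertices of $\sw$, and $\rho$ restricted to a subtree is injective on monomials except that it folds the path $\left[1,w\right]$ around the cycle $C_{w}$. Concretely, $\rho$ restricted to the vertex set of $\left[1,w\right]$ identifies exactly the two endpoints $1$ and $w$ and is otherwise injective, so the image $C_{w}$ has $v\left(C_{w}\right)=\left|w\right|$ vertices, while $\left[1,w\right]$ has $\left|w\right|+1$. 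The space $I|_{\left[1,w\right]}=\left(w-1\right)\cap\k^{\mathrm{vert}\left(\left[1,w\right]\right)}$ is precisely the kernel of the linear map $\k^{\mathrm{vert}\left(\left[1,w\right]\right)}\to\k^{\mathrm{vert}\left(C_{w}\right)}$ induced by $\rho$, which is surjective; hence $d^{\left[1,w\right]}\left(I\right)=\left(\left|w\right|+1\right)-v\left(C_{w}\right)$, so $\left|w\right|+1-d^{\left[1,w\right]}\left(I\right)=v\left(C_{w}\right)$. The same reasoning applied to the subset $D_{b}\left(\left[1,w\right]\right)$ of tails of $b$-edges: its $\rho$-image is $D_{b}\left(C_{w}\right)$, the set of tails of $b$-edges on the cycle, and the restricted map is again surjective onto $\k^{D_{b}\left(C_{w}\right)}$ with kernel $I|_{D_{b}\left(\left[1,w\right]\right)}$, giving $d_{b}^{\left[1,w\right]}\left(I\right)=e_{b}\left(w\right)-e_{b}\left(C_{w}\right)$ and therefore $e_{b}\left(w\right)-d_{b}^{\left[1,w\right]}\left(I\right)=e_{b}\left(C_{w}\right)$. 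Substituting these two identities into the displayed expression for the coefficient yields exactly \eqref{eq:1/qN coefficient of (w-1) in T_w}.

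The one point that needs genuine care — and which I expect to be the main obstacle — is justifying that the relevant restriction maps are surjective and have the claimed kernels, i.e.\ that intersecting the principal ideal $\left(w-1\right)$ with $\k^{\mathrm{vert}\left(\left[1,w\right]\right)}$ (resp.\ with $\k^{D_{b}\left(\left[1,w\right]\right)}$) really does compute the kernel of the folding map, with no extra elements of $\left(w-1\right)$ sneaking in supported on those vertices and no deficiency in the image. For the full tree $\left[1,w\right]$ this is clean: $f\in\left(w-1\right)$ supported on $\left[1,w\right]$ iff every coset-sum vanishes, and since $\rho$ identifies only $1$ with $w$ on $\left[1,w\right]$, the coset-sum conditions visible on $\left[1,w\right]$ are exactly ``$\rho\left(f\right)=0$ in $\k^{\mathrm{vert}\left(C_{w}\right)}$''; surjectivity of $\rho$ on $\k^{\mathrm{vert}\left(\left[1,w\right]\right)}\to\k^{\mathrm{vert}\left(C_{w}\right)}$ is immediate since $\rho$ is onto on vertices. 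For the subsets $D_{b}$ one must note that $\rho$ is still injective on $D_{b}\left(\left[1,w\right]\right)$ — the two identified vertices $1$ and $w$ of $\left[1,w\right]$ are not both tails of $b$-edges inside $\left[1,w\right]$ (indeed $w$ is a leaf of $\left[1,w\right]$, so it has no outgoing edge in $\left[1,w\right]$) — so in fact the map $\k^{D_{b}\left(\left[1,w\right]\right)}\to\k^{D_{b}\left(C_{w}\right)}$ is a bijection on a subset of coordinates, and $I|_{D_{b}\left(\left[1,w\right]\right)}$ is simply the kernel of ``$\rho\left(f\right)=0$'' restricted there; but one should double-check the edge case where the cyclic word $w$ begins and ends with letters using the same generator $b$, so that both the initial and a terminal vertex of $\left[1,w\right]$ carry $b$-edges — here the folding on $C_{w}$ may merge $D_{b}$-vertices and the count $e_{b}\left(C_{w}\right)$ versus $e_{b}\left(w\right)$ has to be re-examined, which is exactly where the $-d_{b}^{\left[1,w\right]}\left(I\right)$ correction earns its keep. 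Once the bookkeeping of which vertices and which edges of $\left[1,w\right]$ survive or get merged in $C_{w}$ is pinned down, the formula drops out.
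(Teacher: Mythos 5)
Your proposal is correct and follows essentially the same route as the paper's proof: identify $d^{\left[1,w\right]}\left(\left(w-1\right)\right)$ and $d_{b}^{\left[1,w\right]}\left(\left(w-1\right)\right)$ via the coset/fiber description of membership in $\left(w-1\right)$, so that the exponents in the summand become $v\left(C_{w}\right)$ and $e_{b}\left(C_{w}\right)$, and then read off the $q^{-N}$-coefficient from the expansion of $\indep_{h}\left(V_{N}\right)$, using that $C_{w}$ is a cycle so the leading terms cancel. One small repair: your justification that $1$ and $w$ are never both in $D_{b}\left(\left[1,w\right]\right)$ --- namely that $w$ is a leaf of $\left[1,w\right]$ and hence has no outgoing edge there --- is false when the last letter of $w$ is $b^{-1}$, since then the final edge of the path is a $b$-edge directed \emph{from} $w$ to its neighbour. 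The correct reason, which also settles the ``edge case'' you left open, is that $w$ is assumed cyclically reduced (as fixed at the start of Section \ref{sec:Critical-ideals-of-rank-2}), so the first letter cannot be $b$ while the last letter is $b^{-1}$; hence $\rho$ never merges two tails of $b$-edges, $e_{b}\left(w\right)-d_{b}^{\left[1,w\right]}\left(I\right)=e_{b}\left(C_{w}\right)$ holds as you claim, and in fact $d_{b}^{\left[1,w\right]}\left(I\right)=0$ and $d^{\left[1,w\right]}\left(I\right)=1$, exactly as in the paper.
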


\begin{proof}
Recall that $\beta_{w}$ is the $\frac{1}{q^{N}}$-coefficient of
the Laurent expansion of 
\begin{equation}
\frac{\indep_{\left|w\right|+1-d^{\left[1,w\right]}\left(I\right)}\left(V_{N}\right)}{\prod_{b\in B}\indep_{e_{b}\left(w\right)-d_{b}^{\left[1,w\right]}\left(I\right)}\left(V_{N}\right)},\label{eq:contrib of (w-1) no. 1}
\end{equation}
for $I=\left(w-1\right)$. Because $f\in\A|_{\left[1,w\right]}$ belongs
to $I=\left(w-1\right)$ if and only if its coefficients in every
fiber over $C_{w}$ sum up to zero, the dimension over $\k$ of $I|_{\left[1,w\right]}$
is precisely $d^{\left[1,w\right]}\left(I\right)=v\left(\left[1,w\right]\right)-v\left(C_{w}\right)=1$
. Similarly, the dimension over $\k$ of $I|_{D_{b}\left(\left[1,w\right]\right)}$
is precisely $d_{b}^{\left[1,w\right]}\left(I\right)=e_{b}\left(\left[1,w\right]\right)-e_{b}\left(C_{w}\right)=0$.
Hence, \eqref{eq:contrib of (w-1) no. 1} is equal to 
\begin{equation}
\frac{\indep_{v\left(C_{w}\right)}\left(V_{N}\right)}{\prod_{b\in B}\indep_{e_{b}\left(C_{w}\right)}\left(V_{N}\right)}=\frac{\left(q^{N}-1\right)\left(q^{N}-q\right)\cdots\left(q^{N}-q^{v\left(C_{w}\right)-1}\right)}{\prod_{b\in B}\left(q^{N}-1\right)\left(q^{N}-q\right)\cdots\left(q^{N}-q^{e_{b}\left(C_{w}\right)-1}\right)}.\label{eq:contrib of (w-1) No. 2}
\end{equation}
Because $C_{w}$ is a cycle, the number of vertices is identical to
the total number of edges. Hence \eqref{eq:contrib of (w-1) No. 2}
is equal to 
\[
\frac{\left(1-\frac{1}{q^{N}}\right)\left(1-\frac{q}{q^{N}}\right)\cdots\left(1-\frac{q^{v\left(C_{w}\right)-1}}{q^{N}}\right)}{\prod_{b\in B}\left(1-\frac{1}{q^{N}}\right)\left(1-\frac{q}{q^{N}}\right)\cdots\left(1-\frac{q^{e_{b}\left(C_{w}\right)-1}}{q^{N}}\right)},
\]
and the $\frac{1}{q^{N}}$-coefficient of the Laurent expansion of
this expression is
\[
\left[-1-q-\ldots-q^{v\left(C_{w}\right)-1}\right]-\sum_{b\in B}\left[-1-q-\ldots-q^{e_{b}\left(C_{w}\right)-1}\right],
\]
which is equal to \eqref{eq:1/qN coefficient of (w-1) in T_w}.
\end{proof}
Denote by $D_{w}$ the set of proper non-trivial cyclic submodules\footnote{Recall that a cyclic submodule is a submodule generated by a single
element.} of $\A_{w}$ generated by some element supported on the cycle $C_{w}$:
\[
D_{w}\defi\left\{ g\A\lvertneqq\A_{w}\,\middle|\,0\ne g\in\A_{w}~\mathrm{and}~g~\mathrm{supported~on}~C_{w}\right\} .
\]

\begin{lem}
\label{lem:couing rank-2 ideals by quotient A/(w-1)}There is a one-to-one
correspondence between $D_{w}$ and $\mathrm{Prim}^{2}\left(w\right)$.
\end{lem}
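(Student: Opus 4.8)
The plan is to set up the bijection via the quotient map $\rho\colon\A\to\A_{w}$. Given $I\in\mathrm{Prim}^{2}(w)$, the element $w-1$ is primitive in $I$, so by Corollary~\ref{cor:complement to a basis of w-1} (applicable since $[1,w]\subseteq[1,w]$ supports both $w-1$ and the rank-$2$ ideal $I\le_{[1,w]}\A$) there is $f\in\A$ supported on $[1,w]$ with $\{f,w-1\}$ a basis of $I$. I would send $I$ to the cyclic submodule $\rho(f)\A\le\A_{w}$. First I must check this is well-defined: if $\{f',w-1\}$ is another basis of $I$ with $f'$ supported on $[1,w]$, then $f-f'\in(w-1)$, so $\rho(f)=\rho(f')$; moreover any two bases of a rank-$2$ free module with $w-1$ in common share the second generator up to the relation $f'=f\lambda+(w-1)a$ with $\lambda\in\k^{*}$ (change of basis fixing the first coordinate direction is unipotent-triangular), hence $\rho(f')=\rho(f)\lambda$ and $\rho(f')\A=\rho(f)\A$. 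Then I must check $\rho(f)\A\in D_{w}$: it is generated by $\rho(f)$ which, after applying Corollary~\ref{cor:complement to a basis of w-1} in the form that $f$ may be taken supported on $[1,w]$ and then (as in Lemma~\ref{lem:cyclic submodules in A_w} / the proof of Corollary~\ref{cor:complement to a basis of w-1}) translated so that $\rho(f)$ is supported on the cycle $C_{w}$; it is non-zero since $f\notin(w-1)$; and it is a proper submodule of $\A_{w}$, because $I\lvertneqq\A$ forces $\rho(I)=\rho(f)\A\lvertneqq\rho(\A)=\A_{w}$.

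Conversely, given $g\A\in D_{w}$ with $0\ne g$ supported on $C_{w}$, lift $g$ to some $\tilde g\in\A$ supported on $[1,w]$ (a section of $\rho$ over $C_{w}$ given by choosing one vertex of $[1,w]$ in each fiber), and map $g\A$ to $I\defi(\tilde g,\,w-1)\le\A$. I would verify: $I$ is supported on $[1,w]$; $I\ni w-1$; $\rk I=2$ — here one uses that $\tilde g\notin(w-1)$ (since $g\ne0$) so $I$ is not rank $1$, and $\rk I\le 2$ since it has a $2$-element generating set, while $\rk I\neq 0$; and $w-1$ is primitive in $I$ — indeed $\{\tilde g,w-1\}$ generates $I$, and by \cite[Prop.~2.2]{cohn1964free} any generating set of a rank-$2$ free module is a basis, so $w-1$ sits in the basis $\{\tilde g,w-1\}$. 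This gives a well-defined map $D_{w}\to\mathrm{Prim}^{2}(w)$, independent of the choice of lift $\tilde g$: a different lift $\tilde g'$ satisfies $\tilde g'-\tilde g\in(w-1)$, so $(\tilde g',w-1)=(\tilde g,w-1)$; and replacing $g$ by $g\lambda$ ($\lambda\in\k^{*}$) does not change the ideal either.

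Finally I would check the two composites are identities. Starting from $I\in\mathrm{Prim}^{2}(w)$ with basis $\{f,w-1\}$, $f$ supported on $[1,w]$: we get $\rho(f)\A\in D_{w}$, then lift $\rho(f)$ back to (a translate of) $f$, landing on $(f,w-1)=I$. Starting from $g\A\in D_{w}$: we get $I=(\tilde g,w-1)$ with basis $\{\tilde g,w-1\}$, and applying Corollary~\ref{cor:complement to a basis of w-1} returns an element $f$ supported on $[1,w]$ completing $w-1$ to a basis, with $\rho(f)\A=\rho(\tilde g)\A=g\A$ by the well-definedness discussion (the two complements differ by the triangular change of basis, hence generate the same cyclic submodule). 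The main obstacle I anticipate is the well-definedness on the $\mathrm{Prim}^{2}(w)$ side: pinning down precisely that any two complements of $w-1$ in a rank-$2$ ideal project to scalar multiples of each other in $\A_{w}$, and that translation by $\F$ does not leave $D_{w}$ (i.e. that one can always arrange the generator of the cyclic submodule to be supported on the cycle $C_{w}$) — but both of these are exactly what Corollary~\ref{cor:complement to a basis of w-1}, Lemma~\ref{lem:cyclic submodules in A_w} and \cite[Prop.~2.2]{cohn1964free} are designed to supply, so the argument should be short once these are invoked correctly.
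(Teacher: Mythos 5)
Your overall route is the same as the paper's: Corollary \ref{cor:complement to a basis of w-1} identifies the ideals in $\mathrm{Prim}^{2}\left(w\right)$ as exactly those of the form $\left(w-1,f\right)$ with $f$ supported on $\left[1,w\right]$, and the correspondence with $D_{w}$ is then read off through $\rho$; the paper packages both directions at once via the identity $\left(w-1,f\right)=\rho^{-1}\left(\rho\left(f\right)\A\right)$. That identity also makes the well-definedness you worry about immediate, since $\rho\left(f\right)\A=\rho\left(I\right)$ depends only on $I$. Two of your auxiliary claims there are misstated, though harmlessly: it is not true that a second complement $f'$ supported on $\left[1,w\right]$ satisfies $f-f'\in\left(w-1\right)$ (a scalar multiple of $f$ already violates this), and the change-of-basis entry need not lie in $\k^{*}$ --- it is a trivial unit $\lambda z$ with $\lambda\in\k^{*}$, $z\in\F$ --- but in either case $\rho\left(f'\right)\A=\rho\left(f\right)\A$, which is all you need.

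The genuine gap is in the converse direction, in your justification that $\rk\left(\tilde g,w-1\right)=2$. The inference ``$\tilde g\notin\left(w-1\right)$, hence $I$ is not of rank $1$'' is invalid: $\A=\left(1\right)$ is itself a rank-$1$ ideal properly containing $\left(w-1\right)$, and a priori there could be further rank-$1$ ideals strictly between $\left(w-1\right)$ and $\A$. What is actually needed --- and what the paper invokes at this exact point --- is that, because $w$ is a non-power, the only rank-$1$ ideals containing $w-1$ are $\left(1\right)$ and $\left(w-1\right)$ (Corollary \ref{cor:pa=00003D1 iff power} and the discussion opening Section \ref{sec:Critical-ideals-of-rank-2}). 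Granting that, $I=\left(w-1\right)$ is excluded because $g\ne0$, and $I=\left(1\right)$ is excluded because $g\A$ is a \emph{proper} submodule of $\A_{w}$ --- a hypothesis your converse argument never uses, although it is essential: if $g$ were a cyclic generator of $\A_{w}$ (say $g=\rho\left(1\right)$), your reasoning would wrongly conclude that $\left(\tilde g,w-1\right)=\A$ has rank $2$. This is also the only place where the standing non-power assumption enters the lemma. Once this line is added, the rest of your verification (primitivity via \cite[Prop.~2.2]{cohn1964free}, and the two composites being identities) goes through.
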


\begin{proof}
By Corollary \ref{cor:complement to a basis of w-1}, the rank-2 ideals
$I\le_{\left[1,w\right]}\A$ containing $w-1$ as a primitive element
are exactly the rank-2 ideals of the form $\left(w-1,f\right)$ with
$f$ supported on $\left[1,w\right]$ (here we use again the fact
that every pair of generators of a rank-$2$ ideal is a basis --
\cite[Prop.~2.2]{cohn1964free}). Now $g\in\A_{w}$ is supported on
$C_{w}$ if and only if there is some $f\in\A$ supported on $\left[1,w\right]$
with $\rho\left(f\right)=g$. Note that
\[
\left(w-1,f\right)=\rho^{-1}\left(\rho\left(f\right)\A\right),
\]
where $\rho\left(f\right)\A$ is the submodule of $\A_{w}$ generated
by the image of $f$ in $A_{w}$. Because the only rank-1 ideals containing
$w-1$ are $\left(1\right)$ and $\left(w-1\right)$, we have that
$\left(w-1,f\right)$ is of rank 2 if and only if $\rho\left(f\right)\A$
is a non-zero proper submodule of $\A_{w}$.
\end{proof}
Next, we study the different elements $g\in\A_{w}$ supported on $C_{w}$.
In order to understand when two different elements $g,g'$ generate
the same submodule, we construct a graph $\Upsilon$. The vertices
of $\Upsilon$ are the $1$-dimensional linear subspaces of $\k^{\mathrm{vert}\left(C_{w}\right)}$,
so their number is $v\left(\Upsilon\right)=\frac{q^{v\left(C_{w}\right)}-1}{q-1}$.
For every $b\in B$ and every $1$-dimensional subspace $U\le\k^{b\textnormal{-}\mathrm{edges}\left(C_{w}\right)}$
(here $K^{b\textnormal{-}\mathrm{edges}}$ is the space of $K$-linear
combinations of the $b$-edges in $C_{w}$), the subspace $U$ corresponds
to a 1-dimensional subspace $o\left(U\right)$ of the vertices supported
on the origins of the $b$-edges, as well as a 1-dimensional subspace
$t\left(U\right)$ supported on the termini of the $b$-edges. For
every such $U$ we draw a directed $b$-edge from the vertex $o\left(U\right)$
to the vertex $t\left(U\right)$ in $\Upsilon$. Note that $e\left(\Upsilon\right)=\sum_{b\in B}\frac{q^{e_{b}\left(C_{w}\right)}-1}{q-1}$,
so overall
\begin{equation}
\chi\left(\Upsilon\right)\defi v\left(\Upsilon\right)-e\left(\Upsilon\right)\stackrel{Lemma~\ref{lem:beta_w as expression with 1-dim subspaces}}{=}-\beta_{w}.\label{eq:EC of Upsilon is -beta}
\end{equation}
Denote by $\C\left(\Upsilon\right)$ the connected components of $\Upsilon$.
Because $g\A=g.b\A$ for every $g\in\A_{w}$ and $b\in B$, there
is a well-defined surjective map 
\[
\Phi\colon\C\left(\Upsilon\right)\twoheadrightarrow\left\{ g\A\,\middle|\,0\ne g\in\A_{w}~\mathrm{supported~on}~C_{w}\right\} =D_{w}\cup\left\{ \A_{w}\right\} .
\]
One of the connected components of $\Upsilon$ is isomorphic to $C_{w}$:
this is the component consisting of vertices and edges of $\Upsilon$
corresponding to $1$-dimensional subspaces supported on a single
vertex or on a single edge. Denote this component by $C_{0}$. Clearly,
$\Phi\left(C_{0}\right)=\A_{w}$.
\begin{lem}
\label{lem:All-connected-components-but_one-are-trees}All connected
components of $\Upsilon$ except for $C_{0}$ are paths.
\end{lem}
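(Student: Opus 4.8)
The plan is to understand the structure of $\Upsilon$ by exploiting the fact that it is, essentially, built from the Schreier graph $C_w$ by passing to projective lines. First I would set up the map back to $C_w$: every vertex of $\Upsilon$ is a $1$-dimensional subspace $U \le \k^{\mathrm{vert}(C_w)}$, and every edge of $\Upsilon$ comes from a $1$-dimensional subspace of $\k^{b\text{-edges}(C_w)}$ together with its origin- and terminus-projections. The key local observation is that at each vertex $U$ of $\Upsilon$ and each letter $b \in B$, there is \emph{at most one} outgoing $b$-edge and \emph{at most one} incoming $b$-edge: indeed, the origins (resp.\ termini) of the $b$-edges of $C_w$ are a subset of $\mathrm{vert}(C_w)$, and a $1$-dimensional subspace $U$ supported on the origins determines at most one $1$-dimensional $U' \le \k^{b\text{-edges}(C_w)}$ with $o(U') = U$ (because $C_w$ being a cycle, each vertex has at most one outgoing $b$-edge, so the projection from $b$-edges to their origins is injective; similarly for termini). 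Hence $\Upsilon$ has maximum in-degree and out-degree $1$ in each label, exactly like $C_w$ itself, so every connected component of $\Upsilon$ is either a cycle or a (finite or bi-infinite) path.

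Next I would rule out cycles other than $C_0$. Suppose $Z$ is a connected component of $\Upsilon$ that is a cycle. Reading the labels around $Z$ gives a cyclic word, and since each step is ``apply $b$'' or ``apply $b^{-1}$'' and the degree bound forces the traversal to be consistent, $Z$ projects down to a closed walk in $C_w$ whose label is (a power of) the cyclic reduction of $w$ — more precisely, following a vertex $U$ of $Z$ around the cycle multiplies it on the right by an element of $\langle w\rangle$ inside $\A_w$, but $g \cdot w = g$ in $\A_w$, so after one period we return to the same $1$-dimensional subspace. The point is then that such a $Z$ forces all elements in the support of a representing $g \in \A_w$ to sit on a single $\langle w\rangle$-coset, i.e.\ a single vertex of $C_w$; tracking this carefully shows $Z$ must coincide with $C_0$. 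I would make this rigorous using the structure already established: in the exploration/forced-step language of Section~\ref{sec:The-free-group-algebra}, a nontrivial cyclic component corresponds to an element $g$ supported on $C_w$ with $g\A \ni$ something forcing $g$ around the whole cycle with the coefficients reproducing themselves, and by the argument analogous to the proof of Lemma~\ref{lem:cyclic submodules in A_w} (or directly: the only $1$-dim subspaces closed under going around $C_w$ are the coordinate lines), this can only happen for $C_0$.

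The main obstacle I expect is the second step — precisely pinning down why no cycle other than $C_0$ occurs. The degree-$\le 1$ bound immediately gives ``cycles or paths,'' which is the easy half; the delicate half is that a would-be cycle $Z \ne C_0$ would give, via $\Phi$, a proper cyclic submodule $g\A \subsetneq \A_w$ with $g$ supported on $C_w$ and with $g$ ``periodic'' under the $C_w$-loop, and one must show this periodicity forces $g$ to be a scalar multiple of a single vertex. I would handle this by lifting: pick $f \in \A$ supported on $[1,w]$ with $\rho(f) = g$; the cycle condition says $f \cdot w - f \in (w-1)$ with the supports matching up edge-by-edge around $C_w$, and comparing coefficients on the axis of $w$ (using that $w$ is cyclically reduced, so $[1,w]$ is a genuine arc of the axis) forces $f$ to be a monomial — this is where cyclic reducedness and the Weinbaum-type rigidity of subwords of $w$ enter, mirroring the end of the proof of Theorem~\ref{thm:cyclic generators of A_w}. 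Once cycles are excluded, every remaining component is a path, completing the proof; and as a bonus the edge/vertex count $\chi(\Upsilon) = -\beta_w$ from \eqref{eq:EC of Upsilon is -beta} will later let us read off $\beta_w$ from the number of path components, which is the real payoff driving the identity $\beta_w + |\mathrm{Prim}^2(w)| = 0$.
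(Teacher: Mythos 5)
Your first step has a small but real flaw: bounding the in- and out-degree by $1$ \emph{per label} does not force components to be paths or cycles (the Cayley graph of $\F$ itself has in- and out-degree $1$ in each label and is a tree of degree $2r$). What actually gives the bound is that a vertex $U$ of $\Upsilon$ is supported on vertices of $C_{w}$, each of which has exactly two incident edges in the cycle $C_{w}$; distinct edges of $\Upsilon$ at $U$ (distinct label--orientation pairs) force distinct edges of $C_{w}$ at any fixed $s\in\supp\left(U\right)$, whence $\deg_{\Upsilon}\left(U\right)\le2$. This is easy to repair, but as written the inference is invalid.

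The essential gap is in ruling out cycles other than $C_{0}$, and here your proposed mechanism does not work. It rests on the claim that ``$g\cdot w=g$ in $\A_{w}$'', which is false: $\left(w-1\right)$ is a \emph{right} ideal, so right multiplication by $w$ on $\A_{w}$ sends the coset $\left\langle w\right\rangle z$ to $\left\langle w\right\rangle zw$ and (for a non-power $w$) fixes only the base vertex. Correspondingly, your ``cycle condition'' $fw-f\in\left(w-1\right)$ is vacuous -- $f\left(w-1\right)\in\left(w-1\right)$ for every $f$ -- so it cannot force $f$ to be a monomial. Moreover, a priori the word read around a cycle component is just some cyclically reduced $z$; identifying it with (a conjugate of) a power of $w$ is part of what must be proved, not an input. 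The appeal to Weinbaum rigidity or to Theorem \ref{thm:cyclic generators of A_w} also cannot fill the hole: a cycle component need not be mapped by $\Phi$ onto all of $\A_{w}$, so results about cyclic \emph{generators} say nothing about it (Theorem \ref{thm:cyclic generators of A_w} is what the paper uses later, for the injectivity of $\Phi$ on the $C_{0}$-fiber, not here). The paper's argument at this point is purely group-theoretic: if $Z\ne C_{0}$ were a cycle reading a cyclically reduced $z$, then from each of the at least two vertices of $\supp\left(U\right)$ one can read $z$ inside $C_{w}$, landing again in $\supp\left(U\right)$; hence some power $z^{k}$ closes up, without backtracking, at every $s\in\supp\left(U\right)$, so $z^{k}$ is conjugate to a power of $w$ via the relevant paths, with the same orientation for all $s$ because $w$ is not conjugate to $w^{-1}$ in $\F$. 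Comparing two distinct support vertices then produces some $y\notin\left\langle w\right\rangle $ commuting with $w$ (or a power of it), contradicting that the centralizer of a non-power is $\left\langle w\right\rangle $. Your goal for this step -- that a cycle component forces single-vertex support -- is the right one, but the route you sketch would not reach it, and this centralizer/conjugacy argument (or an equivalent) is missing.
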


\begin{proof}
The degree of every vertex in $\Upsilon$ is at most $2$, so every
connected component is a path or a cycle. Assume that some component
$C_{0}\ne C\in\C\left(\Upsilon\right)$ is a cycle. Let $U$ be a
vertex in $C$, and assume that this cycle reads the (cyclically reduced)
word $z\in\F$ starting (and ending) at $U$. Recall that $U$ is
a 1-dimensional subspace of $\k^{\mathrm{vert}\left(C_{w}\right)}$
supported on at least two vertices of $C_{w}$, and denote the support
of $U$ by $\supp\left(U\right)$, so $\left|\supp\left(U\right)\right|\ge2$.
In particular, for every $s\in\supp\left(U\right)$, there is a path
in $C_{w}$ reading $z$ leaving $s$ and reaching some $s'\in\supp\left(U\right)$.
Hence, some power $z^{k}$ of $z$ is a path from $s$ to itself for
every $s\in\supp\left(U\right)$. Because $w$ is not conjugate to
$w^{-1}$, every such copy of $z^{k}$ has the same orientation along
$c_{w}$. We get that there is some $y\in\F\setminus\left\langle w\right\rangle $
so that $ywy^{-1}=w$. This is not possible unless $w$ is a proper
power, which is not the case.
\end{proof}
\begin{lem}
\label{lem:every proper ideal corresponds to a single component}The
map $\Phi\colon\C\left(\Upsilon\right)\to D_{w}\cup\left\{ \A_{w}\right\} $
is one-to-one.
\end{lem}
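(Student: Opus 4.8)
The plan is to show that each proper cyclic submodule in $D_{w}$ comes from exactly one connected component of $\Upsilon$, equivalently that no two distinct components $C \ne C'$ of $\Upsilon$ (both different from $C_{0}$) can satisfy $\Phi(C) = \Phi(C')$. Suppose for contradiction that $C, C' \in \C(\Upsilon)$ are distinct components with $\Phi(C) = \Phi(C') = g\A$ for some $0 \ne g \in \A_{w}$ supported on $C_{w}$. By construction of $\Phi$, this means we can pick a vertex $U$ of $C$ and a vertex $U'$ of $C'$ such that the corresponding $1$-dimensional subspaces of $\k^{\mathrm{vert}(C_{w})}$, viewed as elements $g_{U}, g_{U'} \in \A_{w}$ (each well-defined up to a scalar in $\k^{*}$), generate the same submodule: $g_{U}\A = g_{U'}\A = g\A$. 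The key structural input is that a component of $\Upsilon$ that is \emph{not} $C_{0}$ consists of subspaces whose support in $C_{w}$ has size at least two — this is exactly the dichotomy used to single out $C_{0}$ in the paragraph preceding Lemma~\ref{lem:All-connected-components-but_one-are-trees}, together with Lemma~\ref{lem:All-connected-components-but_one-are-trees} which tells us such a component is a finite path.

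First I would observe that $g_{U}\A = g_{U'}\A$ forces $g_{U'} = g_{U}\cdot a$ and $g_{U} = g_{U'}\cdot a'$ for some $a, a' \in \A$, hence $g_{U} = g_{U}\cdot(a a')$ in $\A_{w}$. I then want to lift this to a statement about a subtree of $\sw$ and invoke Lemma~\ref{lem:cyclic submodules in A_w}: the point is that $g_{U'}$ is supported on $C_{w}$, so if $g_{U'}$ lay in $g_{U}\A$ without being a right translate $g_{U}.z$ for some $z \in \F$, we would contradict Lemma~\ref{lem:cyclic submodules in A_w} applied with $G = C_{w}$. Concretely: if none of the translates $\{g_{U}.z \mid z \in \F\}$ is supported on $C_{w}$, then Lemma~\ref{lem:cyclic submodules in A_w} says $g_{U}\A$ contains no nonzero element supported on $C_{w}$ — contradicting that $g_{U'} \in g_{U}\A$ is such an element. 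So some translate $g_{U}.z$ \emph{is} supported on $C_{w}$; but $g_{U}$ is already supported on $C_{w}$, and a translate $g_{U}.z$ is supported on $C_{w}$ only when $z$ stabilizes the support setwise within $\sw$, which (since the support is a nonempty finite subset of the cycle $C_{w}$, and $w$ is cyclically reduced and a non-power) forces $z \in \langle w \rangle$, i.e.\ $g_{U}.z = g_{U}$ in $\A_{w}$ up to the cyclic action being trivial. Hence $g_{U'} = \lambda g_{U}$ for some $\lambda \in \k^{*}$ after all. But then $U$ and $U'$ are the \emph{same} $1$-dimensional subspace of $\k^{\mathrm{vert}(C_{w})}$, so $U = U'$ as a vertex of $\Upsilon$, whence $C = C'$: contradiction.

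The main obstacle I anticipate is the middle step — rigorously pinning down that a right translate $g.z$ of an element supported on the cycle $C_{w}$ can again be supported on $C_{w}$ only when $z \in \langle w\rangle$ (so that $g.z$ and $g$ represent the same element of $\A_{w}$). This is where the hypotheses that $w$ is cyclically reduced and a non-power do the work: a nontrivial $z \notin \langle w \rangle$ either moves some vertex of $C_{w}$ off the cycle (into one of the hanging trees of $\sw$, killing support on $C_{w}$), or, if it maps the cycle into itself, gives a nontrivial self-map of the cycle, which as in the proof of Lemma~\ref{lem:All-connected-components-but_one-are-trees} forces $w$ to be conjugate to $w^{-1}$ or a proper power — excluded. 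One must also handle the degenerate sub-case where $\supp(U) = \supp(U')$ but the subspaces differ; this is absorbed into the conclusion $g_{U'} = \lambda g_{U}$, which pins down the subspace uniquely. I would present the argument by first reducing to "some translate of $g_{U}$ is supported on $C_{w}$" via Lemma~\ref{lem:cyclic submodules in A_w}, then running the centralizer-of-$w$ argument (identical in spirit to the one closing the proof of Theorem~\ref{thm:limit powers fix}) to conclude $z \in \langle w \rangle = Z_{\F}(w)$.
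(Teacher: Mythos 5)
Your argument has two genuine breakdowns, and it also omits a necessary case. First, the pivotal identification is never established: you need to know that $g_{U'}$, being a nonzero element of $g_{U}\A$ supported on $C_{w}$, is (up to a scalar) a right translate $g_{U}.z$ with $z\in\F$. Lemma \ref{lem:cyclic submodules in A_w} does not give this — its hypothesis (``no translate of $g_{U}$ is supported on $C_{w}$'') fails trivially here because $g_{U}$ itself is supported on $C_{w}$, so the lemma yields no information, and its conclusion in any case never identifies a particular element of $g_{U}\A$ with a translate of $g_{U}$. In the paper this step is exactly where the real work happens: one lifts $g_{U},g_{U'}$ to elements $f,f'$ supported on $\left[1,w\right]$, uses that $\left(f,w-1\right)=\left(f',w-1\right)$ is a rank-$2$ ideal in which both pairs are bases (Lemma \ref{lem:couing rank-2 ideals by quotient A/(w-1)}, resting on Corollary \ref{cor:complement to a basis of w-1}), and then invokes uniqueness of coordinates in a free-module basis together with the triviality of the units of $\A$ to get $f'\equiv f.p_{1}\pmod{\left(w-1\right)}$ with $p_{1}\in\F$. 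Nothing in your proposal substitutes for that argument.

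Second, even granting $g_{U'}=\lambda\, g_{U}.z$, your next step is false: a translate $g_{U}.z$ can be supported on $C_{w}$ for many $z\notin\left\langle w\right\rangle$, since translation by a reduced word simply slides the support along the cycle (indeed $g.b$ is supported on $C_{w}$ whenever $g$ is supported on the origins of the $b$-edges of $C_{w}$ — this is precisely how the edges of $\Upsilon$ are built). Your conclusion $U=U'$ therefore proves too much: it would say that any two vertices of $\Upsilon$ with the same image coincide, contradicting the construction in which adjacent vertices of $\Upsilon$ generate the same submodule. The correct conclusion from $g_{U'}=\lambda\,g_{U}.z$ is only that there is a path in $\Upsilon$ reading $z$ from $U$ to $U'$, i.e.\ that they lie in the same \emph{component}, which is what the lemma asserts. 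Finally, injectivity also requires that no component other than $C_{0}$ maps to $\A_{w}$; your framing excludes $C_{0}$ from consideration, but this case is essential and in the paper it is exactly where Theorem \ref{thm:cyclic generators of A_w} (the Kaplansky-type statement, via right-orderability and Weinbaum's theorem) is used — it cannot be skipped.
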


\begin{proof}
Theorem \ref{thm:cyclic generators of A_w} states the only cyclic
generators of $\A_{w}$ are elements supported on a single vertex
of $\sw$, and so $C_{0}$ is the only connected component in $\Upsilon$
mapped to $\A_{w}$. It remains to show that every element of $D_{w}$
has a single preimage in $\C\left(\Upsilon\right)$. Suppose that
$g,g'\in\A_{w}$, both supported on $C_{w}$, so that $g'\A=g\A\in D_{w}$,
namely, $\left\{ 0\right\} \ne g\A=g'\A\lvertneqq\A_{w}$. Let $f,f'\in\A$
be preimages of $g,g'$, respectively, through $\rho^{-1}$, which
are supported on $\left[1,w\right]$. Then $\left(f,w-1\right)=\left(f',w-1\right)$
is a rank-2 ideal by Lemma \ref{lem:couing rank-2 ideals by quotient A/(w-1)}.
Thus there are $p_{1},p_{2},q_{1},q_{2}\in\A$ such that 
\begin{eqnarray*}
f' & = & fp_{1}+\left(w-1\right)p_{2}\\
f & = & f'q_{1}+\left(w-1\right)q_{2},
\end{eqnarray*}
so
\begin{eqnarray*}
f & = & \left(fp_{1}+\left(w-1\right)p_{2}\right)q_{1}+\left(w-1\right)q_{2}=f\cdot p_{1}q_{1}+\left(w-1\right)\left(p_{2}q_{1}+q_{2}\right).
\end{eqnarray*}
But $\left\{ f,w-1\right\} $ is a basis, so by uniqueness we get
$p_{1}q_{1}=1$ (and $p_{2}q_{1}+q_{2}=0$). The only units of $\A$
are scalar product of monomials of the form $\alpha z$ with $\alpha\in\k^{*}$
and $z\in\F$ (this was mentioned and explained in Section \ref{subsec:Cyclic-generators-of-A_w}).
By multiplying $g'$ by a scalar if necessary, we may thus assume
that $p_{1}\in\F$ is a word, and we get that 
\[
g'=\rho\left(f'\right)=\rho\left(fp_{1}+\left(w-1\right)p_{2}\right)=\rho\left(fp_{1}\right)=\rho\left(f\right)p_{1}=gp_{1}.
\]
But $C_{w}$ contains every reduced path between every two of its
vertices, so inside $\Upsilon$ there is a path (reading $p_{1}$)
from the vertex corresponding to $g$ to the one corresponding to
$g'$. In particular, they both belong to the same connected component.
\end{proof}

\begin{proof}[Completing the proof of Theorem \ref{thm:fixed vectors in pi=00003D2}]
Recall that we need to show that $\beta_{w}+\left|\mathrm{Prim}^{2}\left(w\right)\right|=0$.
Consider the above-mentioned map $\Phi\colon\C\left(\Upsilon\right)\to D_{w}\cup\left\{ \A_{w}\right\} $.
As $C_{0}$ is a cycle isomorphic to $C_{w}$, we have $\chi\left(C_{0}\right)=0$.
By Lemma \ref{lem:All-connected-components-but_one-are-trees}, $\chi\left(C\right)=1$
for any $C_{0}\ne C\in\C\left(\Upsilon\right)$, so $\left|\C\left(\Upsilon\right)\setminus\left\{ C_{0}\right\} \right|=\chi\left(\Upsilon\right)$.
Thus
\[
\left|\mathrm{Prim}^{2}\left(w\right)\right|\stackrel{\mathrm{Lemma}~\ref{lem:couing rank-2 ideals by quotient A/(w-1)}}{=}\left|D_{w}\right|\stackrel{\mathrm{Lemma}~\ref{lem:every proper ideal corresponds to a single component}}{=}\left|\C\left(\Upsilon\right)\setminus\left\{ C_{0}\right\} \right|=\chi\left(\Upsilon\right)\stackrel{\eqref{eq:EC of Upsilon is -beta}}{=}-\beta_{w}.
\]
\end{proof}

\section{Open Questions\label{sec:Open-Questions}}

This paper raises quite a few questions and directions for future
research, and we gather the main ones here. As above, $\A=\k\left[\F\right]$
and $\pi_{q}\left(w\right)$ is the $q$-primitivity rank of $w\in\F$
(see Definition \ref{def:q-primitivity-rank}).

\paragraph{Expected number of fixed vectors}

As stated in Conjecture \ref{conj:general pi and fixed vectors},
is it true that for every $w\in\F$, we have $\mathbb{E}_{w}\left[\fix\right]=2+\frac{\left|\crit_{q}\left(w\right)\right|}{q^{N\left(\pi-1\right)}}+O\left(\frac{1}{q^{N\pi}}\right)$
where $\pi=\pi_{q}\left(w\right)$? If true, this would generalize
Corollary \ref{cor:F2} and yield that in free groups of arbitrary
finite rank the words inducing the uniform measure on $\gln$ for
every $N$ are precisely the primitive words -- a result analogous
to \cite[Thm 1.1]{PP15} dealing with $S_{N}$. 

\paragraph{The $q$-primitivity rank}

Recall Conjecture \ref{conj:pi and pi_q}: is it true that $\pi_{q}\left(w\right)=\pi\left(w\right)$
for every $w\in\F$ and every prime power $q$? What is the value
for a generic word (compare with \cite[Cor.~8.3]{Puder2015} and \cite{kapovich2022primitivity})?
Moreover, the Cohn-Lewin theorem applies to the free group algebra
over an arbitrary field, not necessarily finite, and one can analogously
define the $\k$-primitivity rank of $w$ for an arbitrary field $\k$
(and even for certain rings). Is it true that the $\k$-primitivity
rank is equal to $\pi\left(w\right)$ for every field $\k$? 

What about general elements of $\A$? One can define the primitivity
rank $\pi_{\A}\left(f\right)$ of arbitrary $f\in\A$ as the rank
of critical ideals, so $\pi_{q}\left(w\right)=\pi_{\A}\left(w-1\right)$
(and see the paragraph preceding Corollary \ref{cor:finitely many critical extensions}).
What are the possible values of $\pi_{\A}\left(f\right)$ for $f\in\A$?
Does this number have any combinatorial meaning (à la Conjecture \ref{conj:general pi and fixed vectors})?

\paragraph{The expected value of stable irreducible characters}

Recall Conjecture \ref{conj:irreducible chars} which says that for
every stable irreducible character $\chi$ of $\gl_{\bullet}\left(\k\right)$,
$\mathbb{E}_{w}\left[\chi\right]=O\left(\left(\dim\chi\right)^{1-\pi_{q}\left(w\right)}\right)$.
This conjecture should be quite difficult to tackle, as it is not
even known in the somewhat simpler case of the symmetric group. It
is more conceivable that one may be able to prove the weaker result
that $\mathbb{E}_{w}\left[\chi\right]=O\left(q^{-N\cdot\pi_{q}\left(w\right)}\right)$
for every non-power $w$ and every stable irreducible character of
dimension $\Omega\left(q^{2N}\right)$. This kind of result was proved
for stable irreducible characters of $\left\{ S_{N}\right\} _{N}$
\cite[Cor.~1.7]{hanany2020word}, for $\left\{ U\left(N\right)\right\} _{N}$
\cite{brodsky2021unitary} and for $\left\{ G\wr S_{N}\right\} _{N}$
for any finite group $G$ \cite{shomroni2023wreathII}. See also \cite[Appendix A]{stable2023}
for further discussion and a more refined conjecture.

\paragraph{Spectral gap in random Schreier graphs of $\protect\gln$}

Part of the original motivation for studying word measures on $\gln$
lies in questions regarding expansion and spectral gaps in random
Schreier graphs of the groups $\gln$ when $\k$ is fixed and $N\to\infty$.
A recent milestone here is \cite{eberhard2021babai}. Still, the following
question is still open: Consider a random Schreier graph depicting
the linear action of $\gln$ on $\k^{N}\setminus\left\{ 0\right\} $
with respect to two random generators. Do these graphs admit a uniform
spectral gap with probability $\to1$ as $N\to\infty$? If so, is
the spectral gap optimal? It is plausible that the results and conjectures
in this paper may contribute to obtaining such results, in a fashion
similar to analogous proofs for Schreier graphs of $S_{N}$ \cite{Linial2010,Puder2015,friedman2020note,hanany2020word}.

\paragraph{Limit distributions}

Theorem \ref{thm:limit distr} states that for $w$ a non-power, the
distribution of the number of fixed vectors in a $w$-random element
of $\gln$ converges in distribution, as $N\to\infty$, to a limit
distribution which is independent of $w$. Is this true for powers
too? Is this true for an arbitrary stable class function in the ring
$\R$ from page \pageref{page:R ring of stable class functions}?
(This is known for $S_{N}$ -- see \cite[Thm.~1.1]{nica1994number}
and \cite[Thm.~1.14]{puder2022local} for a more general result about
cycles of bounded length.)

\paragraph{Free group algebras}

This paper gives rise to quite a few questions about the free group
algebra $\A$. First, it is natural to guess that Corollary \ref{cor:complement to a basis of w-1}
can be generalized as follows: if $T\subseteq\mathrm{Cay}\left(\F,B\right)$
is a subtree and $f$, supported on $T$, is a primitive element of
$I\le_{T}\A$, can $\left\{ f\right\} $ be extended to a basis of
$I$ which is supported on $T$? 

Recall Theorem \ref{thm:cyclic generators of A_w} that when $w$
is a non-power, the only cyclic generators of the right $\A$-module
$\A/\left(w-1\right)$ are images of unit elements of $\A$. Is this
true for general subgroups of $\F$? Namely, let $H\le\F$ be a finitely
generated subgroup which is not contained in any other subgroup of
equal or smaller rank (in the language of \cite{Puder2014}, this
is $\pi\left(H\right)>\rk H$). Let $J_{H}\defi I_{H}\A=\left(\left\{ h-1\,\middle|\,h\in H\right\} \right)$
(see \cite[Chap.~4]{cohen1972groups}). Is it true that the only cyclic
generators of the quotient $\A$-module $\A/J_{H}$ are images of
unit elements of $\A$? This would be a Kaplansky-type result for
such modules. 

There are many other famous theorems and algorithms about free groups
and their subgroups and we wonder if they have versions that apply
to the free group algebra and its ideals. For example, is there an
analogue of Whitehead's cut vertex criterion which may detect efficiently
whether a given element belongs to a free factor of a given ideal?
See the recent survey \cite{delgado2021list} giving a list of results
about free groups and their subgroups using Stalling core graphs.

\section*{Appendix}

\begin{appendices}

\section{The limit distribution of $\protect\fix$ \label{sec:Matan}}

Fix a non-power $1\ne w\in\F$. Recall that $\fix_{w,N}$ denotes
the number of fixed vectors of a $w$-random element in $\gln$. In
this appendix we explain why the method of moments is applicable for
proving convergence in distribution for $\fix_{w,N}$, thus proving
Theorem \ref{thm:limit distr}. We begin by recalling some basic definitions
for the moment problem.

Given a sequence of real numbers $\left(m_{n}\right)_{n\geq0}$ and
an interval $I\subseteq\mathbb{R}$, a solution to the associated
moment problem is a positive Borel measure $\theta$ supported on
$I$ with moments $\intop_{I}x^{n}d\theta\left(x\right)=m_{n}$. When
$I=\mathbb{R}$ (respectively, $I=[0,\infty)$), the problem is called
a \emph{Hamburger} (respectively, \emph{Stieltjes}) moment problem.
If a solution exists, the moment problem is said to be \emph{solvable}.
A solvable moment problem is further categorized by the number of
solutions: if a unique solution exists, the moment problem is said
to be \emph{determinate} and otherwise it is called \emph{indeterminate},
in which case there are infinitely many solutions since the set of
solutions is convex.

The limiting measure of $\fix_{w,N}$ is a special case of a well-studied
family of measures in the field of orthogonal polynomials. We next
recall this family of measures, and then explain how previous analysis
of the determinacy of its associated moment problems allows us to
deduce the desired convergence in distribution.

Let $p\in\left(0,1\right)$ and $a>0$. The \emph{Al-Salam Carlitz}
polynomials of the second kind $V_{n}^{\left(a\right)}\left(x;p\right)$
(see \cite[pp.~195-198]{chihara1978introduction}, \cite[Sect.~14.24]{koekoek2010hypergeometric},
\cite[pp.~30-33]{christiansen2004indeterminate}) are orthogonal with
respect to the probability measure supported on the sequence $\left\{ p^{-k}\right\} _{k\geq0}$
with masses 
\begin{equation}
w_{AC}\left(p^{-k};a;p\right)=\left(ap;p\right)_{\infty}\frac{a^{k}p^{k^{2}}}{\left(p;p\right)_{k}\left(ap;p\right)_{k}},\label{eq: AC weight}
\end{equation}
where $\left(x;y\right)_{n}=\prod_{j=0}^{n-1}\left(1-xy^{j}\right)$
is the $q$-shifted factorial, or $q$-Pochhammer symbol, and $\left(x;y\right)_{\infty}=\prod_{j=0}^{\infty}\left(1-xy^{j}\right)$.

Let $q$ be a prime power. The limiting measure $\nu$ of $\fix_{w,N}$
is a special case of the family of measures \eqref{eq: AC weight}
with parameters $p=q^{-1}$ and $a=1$. Explicitly, $\nu=\sum_{k=0}^{\infty}w_{AC}\left(q^{k};1;q^{-1}\right)\delta_{q^{k}}$.
The $n$-th moment of $\nu$ is equal to the number of linear subspaces
of an $n$-dimensional vector space over a field with $q$ elements
(see \cite[Prop.~5.7]{fulman2016distribution} or \cite[Eq.~10.10]{chihara1978introduction}).

Let $\nu'$ be the pushforward of $\nu$ under the translation map
$x\mapsto x-1$, i.e.,
\[
\nu'=\sum_{k=0}^{\infty}w_{AC}\left(q^{k};1;q^{-1}\right)\delta_{q^{k}-1}.
\]
The measure $\nu'$ exhibits the interesting phenomenon of having
its Hamburger moment problem be indeterminate while its Stieltjes
moment problem is determinate (see \cite[Sect.~4]{berg1994nevanlinna}).
Since the moments of a random variable $Z$ determine the moments
of $Z-1$ and vice versa, the pushforward map induced by $x\mapsto x-1$
forms a bijection between solutions to the moment problem associated
to $\nu$ on $I=[1,\infty)$ and solutions to the Stieltjes moment
problem associated to $\nu'$. In particular, any measure supported
on $[1,\infty)$ with the same moments as $\nu$ must be equal to
$\nu$.

Let $\nu_{n}$ be a sequence of Borel probability measures on $\mathbb{R}$
supported on $[1,\infty)$, and suppose that for every $k\in\mathbb{N}$
the $k$-th moment of $\nu_{n}$ converges as $k\rightarrow\infty$
to the $k$-th moment of $\nu$, as Theorem \ref{thm:limit powers general}
applied with $\B=I_{k}\in\gl_{k}\left(\k\right)$ yields for $\fix_{w,N}$.
We are now ready to deduce that $\nu_{n}$ converges weakly\footnote{The convergence in distribution of random variables is equivalent
to the weak convergence of their measures.} to $\nu$. The set of Borel probability measures on $\mathbb{R}$
equipped with the topology of weak convergence is metrizable (the
Lévy metric, for example; see \cite[Exer.~3.2.6]{durrett_2019}),
and so it is enough to show that every subsequence of $\nu_{n}$ has
a further subsequence converging weakly to $\nu$. Let $\nu_{n_{k}}$
be such a subsequence. The convergence of the second moments implies
that the sequence $\left(\nu_{n}\right)_{n\in\mathbb{N}}$ is tight
\cite[Thm.~3.2.14]{durrett_2019}, and by Prokhorov's Theorem \cite[Thm.~5.1]{billing},
$\nu_{n_{k}}$ has a further subsequence $\nu_{n_{k_{l}}}$ converging
weakly to some probability measure $\widetilde{\nu}$. The convergence
of moments of $\nu_{n}$ to the moments of $\nu$ implies that $\widetilde{\nu}$
has the same sequence of moments as $\nu$ (\cite[Exer.~3.2.5]{durrett_2019}).
Furthermore, using the Portmanteau Theorem (\cite[Thm.~3.2.11]{durrett_2019})
on the closed set $[1,\infty)\subseteq\mathbb{R}$, we get
\[
\widetilde{\nu}\left([1,\infty)\right)\geq\limsup_{l\rightarrow\infty}\nu_{n_{k_{l}}}\left([1,\infty)\right)=\limsup_{l\rightarrow\infty}1=1,
\]
and so $\widetilde{\nu}$ must also be supported on $[1,\infty)$.
The determinacy of the moment problem associated to $\nu$ on $I=[1,\infty)$
implies that $\widetilde{\nu}=\nu$, finishing the argument.

\end{appendices}

\bibliographystyle{alpha}
\bibliography{word_measures_on_GL}

\noindent Danielle Ernst-West, School of Mathematical Sciences, Tel
Aviv University, Tel Aviv, 6997801, Israel\\
\texttt{daniellewest@mail.tau.ac.il }~\\

\noindent Doron Puder, School of Mathematical Sciences, Tel Aviv University,
Tel Aviv, 6997801, Israel\\
\texttt{doronpuder@gmail.com}~\\

\noindent Matan Seidel, School of Mathematical Sciences, Tel Aviv
University, Tel Aviv, 6997801, Israel\\
\texttt{matanseidel@gmail.com}~\\

\end{document}